\documentclass[11pt]{amsart}

\usepackage{etex}
\usepackage{amsmath, amssymb}
\usepackage{array}
\usepackage[frame,cmtip,arrow,matrix,line,graph,curve]{xy}
\usepackage{graphpap, color, paralist, pstricks}
\usepackage[mathscr]{eucal}
\usepackage[pdftex]{graphicx}
\usepackage[pdftex,colorlinks,backref=page,citecolor=blue]{hyperref}
\usepackage{tikz}

\setlength{\oddsidemargin}{0in}
\setlength{\evensidemargin}{0in}
\setlength{\marginparwidth}{0in}
\setlength{\marginparsep}{0in}
\setlength{\marginparpush}{0in}
\setlength{\topmargin}{0in}
\setlength{\headsep}{8pt}
\setlength{\footskip}{.3in}
\setlength{\textheight}{9.2in}
\setlength{\textwidth}{6.5in}
\setlength{\parskip}{4pt}
\linespread{1.2}

\newtheorem{theorem}{Theorem}[section]
\newtheorem{proposition}[theorem]{Proposition}
\newtheorem{corollary}[theorem]{Corollary}
\newtheorem{lemma}[theorem]{Lemma}

\newtheorem{conjecture}[theorem]{Conjecture}

\theoremstyle{definition}
\newtheorem{definition}[theorem]{Definition}
\newtheorem{example}[theorem]{Example}

\newtheorem{question}[theorem]{Question}
\newtheorem{remark}[theorem]{Remark}

\newtheorem{algorithm}[theorem]{Algorithm}

\newcommand{\PP}{\mathbb{P}}
\newcommand{\QQ}{\mathbb{Q}}
\newcommand{\CC}{\mathbb{C}}

\newcommand{\ZZ}{\mathbb{Z}}
\newcommand{\NN}{\mathbb{N}}

\newcommand{\cO}{\mathcal{O} }

\newcommand{\cC}{\mathcal{C} }
\newcommand{\cE}{\mathcal{E} }
\newcommand{\cF}{\mathcal{F} }

\newcommand{\cI}{\mathcal{I} }
\newcommand{\cM}{\mathcal{M} }

\newcommand{\cP}{\mathcal{P} }

\newcommand{\cS}{\mathcal{S} }

\newcommand{\rH}{\mathrm{H} }

\newcommand{\proj}{\mathrm{Proj}\;}

\newcommand{\Mznb}{\overline{\mathrm{M}}_{0,n}}
\newcommand{\Mznt}{\widetilde{\mathrm{M}}_{0,n}}
\newcommand{\Mzn}{\mathrm{M}_{0,n}}
\newcommand{\Mza}{\overline{\mathrm{M}}_{0,A}}

\newcommand*\circled[1]{\tikz[baseline=(char.base)]{
    \node[shape=circle,draw,inner sep=1pt] (char) {#1};}}

\def\SL{\mathrm{SL}}

\def\git{/\!/ }

\makeatletter
\providecommand{\leftsquigarrow}{%
  \mathrel{\mathpalette\reflect@squig\relax}%
}
\newcommand{\reflect@squig}[2]{%
  \reflectbox{$\m@th#1\rightsquigarrow$}%
}
\makeatother

\hypersetup{%
pdftitle={Birational contractions of $\Mznb$ and combinatorics of extremal assignments},%
pdfauthor={Han-Bom Moon, Charles Summers, James von Albade, Ranze Xie},%
pdfkeywords={moduli space, rational curves, birational geometry, Mori's program, extremal assignment, intersecting family, complete multipartite graph, integer partition},%
citecolor=blue,%
linkcolor=blue,%
}

\begin{document}

\title{Birational contractions of $\Mznb$ and combinatorics of extremal assignments}
\date{\today}

\author{Han-Bom Moon}
\address{Department of Mathematics, Fordham University, Bronx, NY 10458}
\email{hmoon8@fordham.edu}

\author{Charles Summers}
\address{Department of Mathematics, Fordham University, Bronx, NY 10458}
\email{csummers1@fordham.edu}

\author{James von Albade}
\address{Department of Mathematics, Fordham University, Bronx, NY 10458}
\email{jvonalbade@fordham.edu}

\author{Ranze Xie}
\address{Department of Mathematics, Fordham University, Bronx, NY 10458}
\email{jxie17@fordham.edu}

\begin{abstract}
From Smyth's classification, modular compactifications of pointed smooth rational curves are indexed by combinatorial data, so-called extremal assignments. We explore their combinatorial structures and show that any extremal assignment is a finite union of atomic extremal assignments. We discuss a connection with the birational geometry of the moduli space of stable pointed curves. As applications, we study three special classes of extremal assignments: smooth, toric, and invariant with respect to the symmetric group action. We identify them with three combinatorial objects: simple intersecting families, complete multipartite graphs, and special families of integer partitions, respectively. 
\end{abstract}

\maketitle


\section{Introduction}

A fascinating fact about the moduli space $\Mznb$ of stable $n$-pointed rational curves is that it has rich combinatorial structures. There is a natural stratification indexed by the set of stable $n$-labeled trees, or matroid polytope decompositions (\cite{Kap93b}). The closure of each stratum is isomorphic to a product of $\Mznb$'s with a small number of marked points, and the universal family over $\Mznb$ is isomorphic to $\overline{\mathrm{M}}_{0, n+1}$, so there are two inductive structures. The limit computation on $\Mznb$ can be explained in terms of Bruhat-Tits building (\cite{Kap93a}). A natural connection with root systems of type A was also observed (\cite{Sek96}).

Recently Smyth found yet another example of an interplay between geometry on $\Mznb$ and combinatorics. In \cite{Smy13}, he gave a classification of modular compactifications of the moduli space $\mathrm{M}_{0, n}$ of smooth rational curves with distinct $n$-points in the algebraic stack of all pointed curves. He showed that these compactifications can be indexed by combinatorial data, so-called \emph{extremal assignments}. An extremal assignment $Z$ is a collection of subsets $Z(G) \subset V(G)$ for every stable $n$-labeled tree $G$, with two conditions (Definition \ref{def:assignment}):
\begin{enumerate}
\item $Z(G) \ne V(G)$;
\item For any contraction $G \rightsquigarrow G'$ which contracts $\{v_{1}, v_{2}, \cdots, v_{k}\} \subset V(G)$ to $v' \in V(G')$, $v_{1}, v_{2}, \cdots, v_{k} \in Z(G)$ if and only if $v' \in G'$. 
\end{enumerate}
For each extremal assignment $Z$, Smyth constructed a moduli space $\Mznb(Z)$ of $n$-pointed rational curves with certain singularities, in the category of algebraic spaces. It is shown that there is a dominant regular map $\Mznb \to \Mznb(Z)$ which preserves $\Mzn$ (Proposition \ref{prop:functoriality}). Thus $\Mznb(Z)$ is a \emph{birational contraction} of $\Mznb$.

The aim of this paper is to study the combinatorics of extremal assignments and to translate the result in terms of birational contractions of $\Mznb$. 

\subsection{Structure theorem of extremal assignments}\label{ssec:stateofresult}

Although the definition of an extremal assignment is simple and natural, answering the following natural questions is surprisingly subtle. 

\begin{question}\label{que:fundamentalquestion}
Let $G_{1}, G_{2}, \cdots, G_{k}$ be stable $n$-labeled trees and let $v_{1} \in V(G_{1}), v_{2} \in V(G_{2}), \cdots, v_{k} \in V(G_{k})$. 
\begin{enumerate}
\item Is there an extremal assignment $Z$ such that $v_{i} \in Z(G_{i})$?
\item Describe the smallest $Z$ such that $v_{i} \in Z(G_{i})$. Or equivalently, for any $G$ and any $v \in V(G)$, determine whether $v$ must be assigned or not. 
\end{enumerate}
\end{question}

To answer these questions, first we prove a structure theorem of extremal assignments. An \emph{atomic} extremal assignment is the smallest extremal assignment with an assigned vertex. We will explain relevant notations (mainly for the partial ordering of set partitions) in the statement below in Section \ref{sec:structuretheorem}. 

\begin{theorem}\label{thm:structurethmintro}
\begin{enumerate}
\item There is a one-to-one correspondence between the set of atomic extremal assignments and the set of set partitions $P$ of $[n]$ with $3 \le |P| \le n-1$. 
\item Any extremal assignment $Z$ of order $n$ is a union of finitely many atomic extremal assignments. 
\item For atomic extremal assignments $Z_{1}, Z_{2}, \cdots, Z_{k}$ and the corresponding set partitions $P_{1}, P_{2}, \cdots, P_{k}$, the union $Z := \bigcup_{i=1}^{k}Z_{i}$ is an extremal assignment if and only if for any two $Q_{1} \preceq P_{i}$ and $Q_{2} \preceq P_{j}$ with the tight upper bound $R$, there is $P_{\ell}$ such that $R \le P_{\ell}$.
\end{enumerate}
\end{theorem}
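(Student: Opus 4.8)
The plan is to reduce the whole theorem to combinatorics of set partitions via one observation about the boundary stratification. To every vertex $v$ of a stable $n$-labeled tree $G$ attach the set partition $\tau(v)$ of $[n]$ whose blocks record the marked points lying in the various branches at $v$; stability gives $3\le|\tau(v)|\le n$. The key point is that edge contraction turns $\tau$ into the meet: if $G\rightsquigarrow G'$ collapses a connected subtree $T$ to a vertex $v'$, then $\tau(v')$ is the common refinement $\bigwedge_{v\in V(T)}\tau(v)$. I would prove this by the standard path argument — two marked points sit in distinct branches at $v'$ exactly when some vertex of $T$ already separates them into distinct branches — and record the converse: every way of expanding a vertex with branch partition $P$ produces vertices whose branch partitions are coarsenings of $P$ with common refinement $P$. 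Together with a preliminary lemma that, for an extremal assignment $Z$, membership $v\in Z(G)$ depends only on $\tau(v)$ (which in turn uses that contractions performed away from $v$ leave $\tau(v)$ unchanged), this is all that is needed.

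For part (1): given $P$ with $3\le|P|\le n-1$, define $Z_P$ by $v\in Z_P(G)$ iff $\tau(v)$ is a coarsening of $P$ (i.e. $\tau(v)\preceq P$ in the order of Section~\ref{sec:structuretheorem}). Axiom (2) for $Z_P$ follows from the meet description together with the lattice fact that a meet of partitions coarsens $P$ iff each of them does; axiom (1) then holds because otherwise contracting $G$ all the way to a point would, by axiom (2), put that point — which has branch partition $\{\{1\},\dots,\{n\}\}$ — into $Z_P$, impossible as $|P|\le n-1$. That $Z_P$ is the smallest extremal assignment assigning $P$ comes from the expansion converse: if $Z$ assigns $P$, then expanding the $P$-vertex one branch at a time and applying axiom (2) shows, by induction on the number of blocks merged, that every coarsening of $P$ with at least three blocks is assigned, so $Z\supseteq Z_P$. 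Distinct $P$ give distinct $Z_P$ (the partitions assigned by $Z_P$ are exactly the coarsenings of $P$, from which $P$ is recovered as their common refinement), and these are exactly the atomic extremal assignments; the excluded cases $|P|\le2$ and $|P|=n$ are respectively the partitions that are never branch partitions and the one whose assignment would violate axiom (1). Part (2) is then formal: $Z=\bigcup_{P\text{ assigned by }Z}Z_P$, a finite union.

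The content is part (3). Write $Z=\bigcup_{i=1}^{k}Z_{P_i}$, so $v\in Z(G)$ iff $\tau(v)$ coarsens some $P_i$. The converse half of axiom (2) — $v'\in Z(G')$ forces all contracted vertices into $Z(G)$ — holds for any such union, since each $\tau(v)$ coarsens $\tau(v')$ hence some $P_i$; and once the direct half of axiom (2) is known, axiom (1) is automatic by the contract-to-a-point argument above (using that vertices untouched by an edge contraction keep their $\tau$, hence their assignment status). So $Z$ is an extremal assignment iff the direct half of axiom (2) holds, and I would reduce this to single-edge contractions: if every vertex of a connected subtree $T$ is assigned, peel off a leaf $v_0$ of $T$, contract $T\setminus\{v_0\}$ first (valid since $v_0$ has degree $1$ in $T$), whose image is assigned by the induction hypothesis on $|V(T)|$; the vertex $v_0$ is still assigned since its branch partition is unchanged; now contract the edge joining that image to $v_0$, both of whose endpoints are assigned, so the single-edge case yields that the image of $T$ is assigned. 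For a single edge between adjacent vertices $v_1,v_2$ with branch partitions $Q_1,Q_2$, realizability forces a block $C_1\in Q_1$ with $[n]\setminus C_1\in Q_2$, necessarily unique since $|Q_i|\ge3$, and the contracted vertex has branch partition exactly the tight upper bound $R$ of $Q_1,Q_2$; conversely any such admissible pair $(Q_1,Q_2)$ is realized by an explicit two-vertex tree. Hence the direct half of axiom (2) is precisely the stated condition: whenever $Q_1\preceq P_i$ and $Q_2\preceq P_j$ admit a tight upper bound $R$, one has $R\preceq P_\ell$ for some $\ell$. I would also note the simplification that it suffices to test the coarsest admissible $Q_1,Q_2$ (collapse $P_i$ on $C_1$, collapse $P_j$ on $[n]\setminus C_1$), since enlarging $Q_1,Q_2$ only coarsens $R$.

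I expect the main obstacle to be the reduction of axiom (2) to single-edge contractions: making precise that an arbitrary contraction factors as a sequence of edge contractions meshing with the leaf-peeling induction, and that the branch partitions — hence the assignment status — of vertices away from a contracted edge are genuinely unaffected, so that "all assigned" propagates through the factorization. The other care points are the well-definedness of the tight upper bound (uniqueness of the complementary block $C_1$) and the explicit two-vertex realization of admissible pairs; and everything downstream rests on the identity $\tau(v')=\bigwedge_{v\in V(T)}\tau(v)$ and its expansion converse, which is where I would be most careful.
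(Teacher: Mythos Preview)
Your approach via the branch partition $\tau$ is correct and is cleaner than the paper's for parts (1) and (2): the paper defines the atomic assignment attached to $P$ as the explicit union $\bigcup_{Q\le P,\;|Q|\ge 3}Z_Q$ and verifies extremality by hand through several steps of degeneration and contraction, whereas your observation that $\tau(v')$ is the common refinement of the $\tau(v)$ over the contracted subtree turns axiom~(2) into the lattice identity ``the join is $\le P$ iff every term is'' and deduces axiom~(1) by contracting to a single vertex. The paper's argument is doing the same thing underneath, but never isolates $\tau$ or the join formula, so your packaging is a genuine improvement.

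There is, however, a gap in part (3). Your reduction to single-edge contractions is fine, and you correctly identify that for adjacent vertices the pair $(Q_1,Q_2)=(\tau(v_1),\tau(v_2))$ must have complementary blocks $C_1\in Q_1$, $[n]\setminus C_1\in Q_2$, with contracted branch partition equal to the tight upper bound. The problem is the word ``precisely'': you conclude that the direct half of axiom~(2) is the stated condition on \emph{all} pairs $(Q_1,Q_2)$ admitting a tight upper bound, but your single-edge analysis only reaches the pairs with the complementary-block structure. These are not the same class. For instance $P=\{\{1,2\},\{3\},\{4\},\{5\}\}$ and $Q=\{\{1\},\{2\},\{3,4\},\{5\}\}$ have the complete partition as tight upper bound, yet no block of $P$ has its complement in $Q$, so $(P,Q)$ is never realized as $(\tau(v_1),\tau(v_2))$ for adjacent $v_1,v_2$. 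Your argument therefore proves that $Z$ is extremal iff the condition holds for admissible pairs; to match the theorem as stated you still need to show that this implies the condition for \emph{all} $Q_1\preceq P_i$, $Q_2\preceq P_j$ with a tight upper bound. (The paper handles this in Proposition~\ref{prop:extremaltightbound} by, given an arbitrary such $(Q_1,Q_2,R)$, constructing an explicit degeneration of the star $G_R$ into two vertices whose branch partitions are coarsenings of $Q_1,Q_2$ --- in the example above one gets $P$ and $\{\{1\},\{2\},\{3,4,5\}\}\le Q$ --- and then invoking extremality.)

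A smaller point: you use $\preceq$ throughout to mean ``is a coarsening of'', but in the paper $\preceq$ is the restricted relation of Definition~\ref{def:vdash} (coarsening with no block formed solely from singletons of $P$), while plain $\le$ is coarsening. The theorem's hypothesis is genuinely $Q_1\preceq P_i$ in the paper's sense, and its conclusion is $R\le P_\ell$; keeping these straight matters once you try to close the gap above.
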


Based on Theorem \ref{thm:structurethmintro}, we provide an algorithm and its implementation for finding the smallest $Z$ in Question \ref{que:fundamentalquestion}. 

\subsection{Smooth models and simple intersecting families}

As applications of the structure theorem, we explore three special cases of extremal assignments and their associated birational models. 

The first natural class is the collection of smooth models. We give a characterization of extremal assignments that provide smooth models. A \emph{smooth extremal assignment} is an extremal assignment $Z$ such that for any $G$ and $v \in Z(G)$, there is $G'$ with two vertices and $v' \in Z(G')$ such that $G \rightsquigarrow G'$ and $v \rightsquigarrow v'$. 

\begin{theorem}\label{thm:divisorialintro}
\begin{enumerate}
\item An extremal assignment $Z$ provides a smooth birational contraction $\Mznb(Z)$ if and only if $Z$ is equivalent to a smooth extremal assignment. 
\item There is a one-to-one correspondence between the set of smooth extremal assignments of order $n$ and the set of simple intersecting families of order $n$, rank $\le n-2$, and antirank $\ge 2$. 
\end{enumerate}
\end{theorem}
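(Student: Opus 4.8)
The plan is to prove both parts by \emph{restricting every extremal assignment to two-vertex stable trees}. A two-vertex stable $n$-labeled tree is determined by an unordered bipartition $\{S,S^{c}\}$ of $[n]$ with $2\le|S|\le n-2$, and its two vertices are labeled by $S$ and $S^{c}$; since $Z(G)\ne V(G)$ (Definition~\ref{def:assignment}), an extremal assignment assigns at most one of them. So to a smooth extremal assignment $Z$ I attach the family
\[
\cF(Z):=\{\,S\subseteq[n]\ :\ 2\le|S|\le n-2,\ \text{the $S$-vertex of the tree $\{S,S^{c}\}$ lies in $Z$}\,\},
\]
which is automatically of rank $\le n-2$ and antirank $\ge 2$. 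Conversely, to a family $\cF$ of such subsets I attach $Z(\cF):=\bigcup_{S\in\cF}Z_{P_{S}}$, where $P_{S}$ is the set partition of $[n]$ whose unique non-singleton block is $S^{c}$; note $3\le|P_{S}|=|S|+1\le n-1$, so $Z_{P_{S}}$ is atomic by Theorem~\ref{thm:structurethmintro}(1). The proof then splits into: (i) $Z$ is smooth $\iff Z=Z(\cF(Z))$; (ii) $Z(\cF)$ is an extremal assignment $\iff\cF$ is a simple intersecting family; and (iii) the geometric statement of part~(1). Parts (i)--(ii), together with the tautology $\cF(Z(\cF))=\cF$ (read off from two-vertex trees), give the bijection in part~(2).

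For (i): each $Z_{P_{S}}$ is smooth, because by the explicit description of atomic assignments (Theorem~\ref{thm:structurethmintro}) every vertex it assigns contracts, together with a compatible vertex, onto the $S$-vertex of $\{S,S^{c}\}$ --- the partition $P_{S}$ has a single non-singleton block --- and a union of smooth assignments that happens to be an extremal assignment is again smooth; hence $Z(\cF)$ is smooth whenever it is an assignment. Conversely, let $Z$ be smooth and $v\in Z(G)$. By definition of smoothness there is a two-vertex $G'$ with $v\rightsquigarrow v'\in Z(G')$; if $v'$ is the $S$-vertex then $S\in\cF(Z)$, and comparing branch partitions through the contraction $G\rightsquigarrow G'$ forces $v\in Z_{P_{S}}(G)$. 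Thus $Z\subseteq Z(\cF(Z))$; the reverse inclusion holds because $Z$ is an extremal assignment with the $S$-vertex of $\{S,S^{c}\}$ assigned for each $S\in\cF(Z)$, so by minimality of atomic assignments $Z\supseteq Z_{P_{S}}$ for each such $S$. The delicate point here is the bookkeeping that turns the \emph{a priori} weak smoothness condition into membership in one specific atomic assignment $Z_{P_{S}}$; this is exactly where Theorem~\ref{thm:structurethmintro}'s description of $Z_{P}$ is needed.

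For (ii): apply the compatibility criterion of Theorem~\ref{thm:structurethmintro}(3) to $Z(\cF)=\bigcup_{S\in\cF}Z_{P_{S}}$. Because forming the tight upper bound is monotone in its arguments and $P_{S}$ is the coarsest refinement of $P_{S}$, the binding instance of the criterion is $Q_{1}=P_{S}$, $Q_{2}=P_{T}$ for $S,T\in\cF$; one computes that the tight upper bound $R$ of $P_{S}$ and $P_{T}$ has non-singleton part $S^{c}\cup T^{c}=[n]\setminus(S\cap T)$ with the elements of $S\cap T$ as singletons --- so $R=P_{S\cap T}$ when $2\le|S\cap T|\le n-2$, and $R$ has fewer than three blocks otherwise. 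The requirement ``$R\le P_{\ell}$ for some $P_{\ell}$'' then unwinds to ``there is $S'\in\cF$ with $S'\subseteq S\cap T$''; in particular $|S\cap T|\ge2$, so $\cF$ is intersecting, and the full condition is precisely the defining condition of a simple intersecting family. Running the same computation backwards, a simple intersecting family of rank $\le n-2$ and antirank $\ge 2$ yields a $Z(\cF)$ that passes Theorem~\ref{thm:structurethmintro}(3), hence is an extremal assignment, smooth by (i). This proves part~(2).

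Finally, part~(1). If $Z$ is equivalent to a smooth extremal assignment, write the latter as $Z(\cF)$ by part~(2), list $\cF=\{S_{1},\dots,S_{r}\}$ in an order compatible with inclusion, and show that $\Mznb(Z(\cF))$ is obtained from $\Mznb$ by successively contracting the boundary divisors $\delta_{0,S_{1}},\dots,\delta_{0,S_{r}}$, identifying each intermediate space with an explicit Hassett-type moduli space and each step with a smooth blow-down, via Smyth's functoriality (Proposition~\ref{prop:functoriality}) and the local structure of his spaces. For the converse, if $Z$ is not equivalent to a smooth assignment, then by Theorem~\ref{thm:structurethmintro}(2) its atomic decomposition must involve some $Z_{P}$ with $P$ possessing a non-singleton block that no two-vertex tree can realize, which forces $\Mznb(Z)$ to collapse a boundary stratum of codimension $\ge2$ to a point and hence to be non-$\QQ$-factorial, in particular singular, there. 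The main obstacle is exactly this geometric step: Proposition~\ref{prop:functoriality} only supplies a birational morphism, so to conclude smoothness one must either pin down the local analytic model of $\Mznb(Z)$ along the contracted locus or exhibit $\Mznb(Z(\cF))$ as a genuine tower of blow-downs with smooth centers; a secondary check is that the combinatorial condition extracted in (ii) matches the paper's definition of ``simple'' on the nose.
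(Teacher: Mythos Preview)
Your approach to part~(2) contains a concrete error: the family $\cF(Z)$ of assigned label sets on two-vertex trees is \emph{not} a simple intersecting family. It is the paper's \emph{contraction indicator} $\cC_{Z}$ (Definition~\ref{def:contractionindicator}, Lemma~\ref{lem:assignmenttocontractionindicator}), which is downward-closed (hence not simple as a hypergraph) and satisfies $B_{1}\cup B_{2}\ne[n]$ rather than $B_{1}\cap B_{2}\ne\emptyset$; for instance, with $n=6$ and the weight assignment $A=(1,1,\tfrac14,\tfrac14,\tfrac14,\tfrac14)$, both $\{3,4\}$ and $\{5,6\}$ lie in $\cF(Z_{A})$ but are disjoint. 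Your tight-upper-bound computation is wrong as well: the set $S^{c}\cup T^{c}$ is in general a block of neither $P_{S}$ nor $P_{T}$, so by Definition~\ref{def:commonupperbound} it cannot occur in a \emph{tight} upper bound; in fact when $S^{c}\cap T^{c}\ne\emptyset$ and neither of $S,T$ contains the other, $P_{S}$ and $P_{T}$ have \emph{no} tight upper bound at all, which is exactly how the paper (Proposition~\ref{prop:divassignment}) verifies strong transversality and hence extremality of $Z(\cF)$. The correct bijection (Theorem~\ref{thm:divisorialsimpleintersecting}) passes to the \emph{complements of the maximal} elements of $\cC_{Z}$: then ``$B_{1}\cup B_{2}\ne[n]$'' becomes ``$B_{1}^{c}\cap B_{2}^{c}\ne\emptyset$'', and maximality yields simplicity. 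Your steps (i) and the tautology $\cF(Z(\cF))=\cF$ are fine and match Lemma~\ref{lem:divisorialdeterminedbyS2}.

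For part~(1), your converse is along the right lines --- the paper also factors through the smooth part $Z_{d}\subset Z$ and observes that $\Mznb(Z_{d})\to\Mznb(Z)$ is a small contraction of a smooth space, hence has singular target --- but your forward direction is harder than necessary and partly incorrect. The intermediate spaces in a tower of blow-downs along boundary divisors are not ``Hassett-type'' in general (Example~\ref{ex:divisorialbutnotHassett} exhibits a smooth $\Mznb(Z)$ that is not any $\Mza$), and some smooth $\Mznb(Z)$ are not even projective (Section~\ref{sec:nonprojectiveexamples}), so one cannot argue via projective blow-downs. The paper's argument (Theorem~\ref{thm:divisorialsmooth}) is direct: when $Z$ is smooth, every $Z$-stable curve is obtained from a stable curve by contracting \emph{tails}, so it is nodal with all marked points at smooth points, and the deformation space of such a pointed curve is smooth; no blow-down factorization is needed.
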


A simple intersecting family is a special class of hypergraphs (\cite[Section 1.3]{Ber89}) such that any two hyperedges properly intersect. 

\subsection{Toric models and complete multipartite graphs}

The next class of examples is toric models. The Losev-Manin space $\Mznb^{LM}$ (\cite{LM00}) is the closest toric variety to $\Mznb$ among Hassett's moduli spaces $\Mza$ of weighted stable curves (\cite{Has03}). $\Mznb^{LM}$ is constructed as an iterated toric blow-up of $\PP^{n-3}$. Thus if we take an intermediate space of this construction, it gives a toric birational model of $\Mznb$. For any connected graph $G$ of order $n-2$, one can associate a polytope $PG$, a so-called \emph{graph associahedron}, and its corresponding toric variety $X(PG)$ is one of the intermediate spaces. In \cite{dRJR14}, the authors found a characterization of $G$ where $X(PG)$ is indeed one of Hassett's moduli spaces of weighted stable curves. We prove a parallel theorem, in the context of extremal assignments. 

\begin{theorem}
For any connected graph $G$ of order $n-2$, $X(PG)$ is $\Mznb(Z)$ for some extremal assignment $Z$ if and only if $G$ is a complete multipartite graph. 
\end{theorem}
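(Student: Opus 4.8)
The plan is to translate both sides into the combinatorics of building sets on the set $[n-2]$ of light markings and then to apply the structure theorem, Theorem~\ref{thm:structurethmintro}.

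\emph{Reduction to combinatorics.} Every toric model of $\Mznb$ lying between the Losev--Manin space $\Mznb^{LM}$ of \cite{LM00} and $\PP^{n-3}$ is determined by a building set $\cB$ on $[n-2]$: its boundary divisors are indexed by the proper elements of $\cB$, and $X(PG)$ is the toric variety attached to the \emph{graphical} building set $\cB(G)$ of all connected induced subgraphs of $G$. On the other side, by the structure theorem an extremal assignment is a finite union of atomic pieces; recording which boundary divisors of $\Mznb$ survive in $\Mznb(Z)$ attaches to a toric extremal assignment $Z$ a building set $\cB_Z$ on $[n-2]$, with $\cB_Z=\bigcap_i \cB_{Z_i}$ when $Z=\bigcup_i Z_i$. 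Thus $X(PG)\cong\Mznb(Z)$ for some extremal assignment $Z$ precisely when $\cB(G)$ can be written as $\cB_Z$; by the compatibility criterion in Theorem~\ref{thm:structurethmintro}(3) applied to the atomic constituents, this is a purely combinatorial condition on the graphical building set, and the theorem becomes: \emph{$\cB(G)$ equals $\cB_Z$ for some extremal assignment $Z$ if and only if $G$ is complete multipartite.}

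\emph{The forward direction.} Assume $G$ is complete multipartite with parts $V_1,\dots,V_k$. For $|S|\ge 2$ the subgraph $G[S]$ is disconnected exactly when $S$ is contained in one part, so $\cB(G)$ is controlled by the honest partition $\{V_1,\dots,V_k\}$ of $[n-2]$. I would take $Z$ to be the union of the Losev--Manin assignment with the atomic assignments corresponding to the set partitions of $[n]$ that encode the individual parts $V_i$, and then verify condition (3) of Theorem~\ref{thm:structurethmintro} directly: because the $V_i$ partition the light markings, the tight upper bound $R$ of any two coarsenings of these partitions has its nontrivial light block either sitting inside a single $V_i$ --- hence realized by the atomic piece for $V_i$ --- or meeting at least two parts, in which case that block is connected in $G$ and $R$ is dominated by the piece of $\cB(G)$ attached to it. This gives $\cB_Z=\cB(G)$, i.e.\ $\Mznb(Z)\cong X(PG)$.

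\emph{The reverse direction and the main obstacle.} If $G$ is not complete multipartite, then the complementary graph is not a disjoint union of cliques, so $G$ contains an induced $K_2+K_1$: vertices $a,b,c$ with $a\sim b$, $c\not\sim a$, $c\not\sim b$. Then $\{a,b\}\in\cB(G)$ while $\{a,c\},\{b,c\},\{a,b,c\}\notin\cB(G)$, and the edge $\{a,b\}$ forces a set partition $P$ into the atomic decomposition underlying $\cB_Z$. The plan is to exhibit coarsenings $Q_1\preceq P$ and $Q_2\preceq P'$, for a second forced partition $P'$ coming from a connected set that links $c$ to $\{a,b\}$, whose tight upper bound $R$ is not $\le P_\ell$ for any forced $P_\ell$; by Theorem~\ref{thm:structurethmintro}(3) this shows $\cB(G)$ is not of the form $\cB_Z$, and since any extremal assignment realizing $X(PG)$ must produce exactly $\cB(G)$, no such assignment exists. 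The delicate point --- and the step I expect to be the main obstacle --- is to carry this out uniformly in $n$: a single induced $K_2+K_1$ gives only a ``local'' obstruction, which can be absorbed when $G$ is large because some bigger connected subset may cover the offending tight upper bound $R$. Overcoming this requires the precise description, from Section~\ref{sec:structuretheorem}, of which tight upper bounds are attainable, together with a careful choice of $Q_1,Q_2$ so that $R$ is rigid enough that no element of $\cB(G)$ can dominate it. One also has to rule out realizing $X(PG)$ by an extremal assignment $Z$ with $\Mznb(Z)$ not dominated by $\Mznb^{LM}$, but this is automatic: $X(PG)$ is so dominated and contracts every pure-light boundary divisor of $\Mznb$.
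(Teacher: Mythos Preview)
Your approach is genuinely different from the paper's, and the reverse direction has a real gap that you yourself flag but do not close.

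The paper's route is much shorter because it first reduces to \emph{smooth} extremal assignments. Since $X(PG)$ is a smooth algebraic space (an iterated toric blow-up of $\PP^{n-3}$), Theorem~\ref{thm:divisorialsmooth} forces any $Z$ with $\Mznb(Z)\cong X(PG)$ to be equivalent to a smooth one. Smooth extremal assignments are governed not by the tight-upper-bound criterion of Theorem~\ref{thm:structurethmintro}(3) but by the simpler notion of a \emph{contraction indicator} (Definition~\ref{def:contractionindicator}, Proposition~\ref{prop:divassignment}): a collection $\cC$ of subsets of $[n]$ that is downward closed (for subsets of size $\ge 2$) and has no two members covering $[n]$. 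The paper writes down explicitly the set $\cC$ of boundary indices contracted by $\Mznb\to X(PG)$; conditions (1) and (3) are automatic because $n\notin B$ for every $B\in\cC$, so the whole theorem reduces to checking condition (2). Unwinding the definition of $\cC$, condition (2) says precisely that every subset of a non-tube is a non-tube, which is co-transitivity of $G$ (Lemma~\ref{lem:connectingpropertytube}), equivalently $G$ complete multipartite (Lemma~\ref{lem:cotransitivemultipartite}). Both directions drop out of this single equivalence.

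In your framework the obstacle you identify is genuine: an induced $K_2+K_1$ gives only a local violation, and in the tight-upper-bound language you must argue globally that no larger atomic piece can dominate the offending $R$. You do not carry this out, and it is not clear how to do so cleanly without essentially rediscovering the downward-closure condition. In the paper's framing the same $K_2+K_1$ on $\{a,b,c\}$ with $a\sim b$ kills the argument in one line: $\{a,b,c\}$ is a non-tube, so $\{a,b,c,n-1\}\in\cC$, but $\{a,b\}$ is a tube, so its subset $\{a,b,n-1\}\notin\cC$, violating condition (2) immediately---no global absorption analysis is needed. Your forward direction is also more laborious than necessary: once one passes through contraction indicators, Proposition~\ref{prop:divassignment} manufactures $Z$ from $\cC$ directly, and the tight-upper-bound check is already packaged inside its proof via Corollary~\ref{cor:unionextremal}.
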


Therefore for almost every graph $G$, $X(PG)$ is not modular. This implies that for the study of contractions of $\Mznb$, studying modular compactifications from extremal assignments is insufficient. 

\subsection{$S_{n}$-invariant models and integer partitions}

We study $S_{n}$-invariant extremal assignments. The natural $S_{n}$-action permuting $n$ labels induces the $S_{n}$-action on $\Mznb$ and the set of extremal assignments of order $n$. For an $S_{n}$-invariant extremal assignment $Z$, on $\Mznb(Z)$, $S_{n}$ acts too, and the contraction $\Mznb \to \Mznb(Z)$ is $S_{n}$-equivariant. Thus an $S_{n}$-invariant extremal assignment $Z$ provides a contraction $\Mznt := \Mznb/S_{n} \to \Mznb(Z)/S_{n}$. 

We prove the following analogue of the structure theorem of ordinary extremal assignments. As one may expect, the theorem can be described in terms of integer partitions, rather than set partitions. However, the result is not a simple restatement, because if we forget labels, then there are nontrivial automorphisms of the underlying graphs even for trees. For the definition of the special family of integer partitions, see Definition \ref{def:specialpartition}.

\begin{theorem}\label{thm:invariantthmintro}
There is a one-to-one correspondence between the set of $S_{n}$-invariant extremal assignments and the set of special families of integer partitions. 
\end{theorem}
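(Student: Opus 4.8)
The plan is to push the structure theorem (Theorem~\ref{thm:structurethmintro}) through the quotient by $S_{n}$, keeping track of the extra combinatorics forced by automorphisms of unlabelled trees. Write $Z_{P}$ for the atomic extremal assignment attached to a set partition $P$ under Theorem~\ref{thm:structurethmintro}(1), and $Z' \subseteq Z$ for the pointwise inclusion $Z'(G) \subseteq Z(G)$ on all stable $n$-labelled trees $G$. First I would check that the bijection of Theorem~\ref{thm:structurethmintro}(1) is $S_{n}$-equivariant: $S_{n}$ acts by relabelling on the left and by permuting $[n]$ on the right, and since $Z_{P}$ is built from the ``star'' tree whose branches carry the blocks of $P$, relabelling sends $Z_{P}$ to $Z_{\sigma(P)}$. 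Hence $S_{n}$-orbits of atomic extremal assignments of order $n$ correspond exactly to integer partitions $\lambda \vdash n$ with $3 \le \ell(\lambda) \le n-1$, where $\ell(\lambda)$ is the number of parts of $\lambda$.

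Next, for an $S_{n}$-invariant extremal assignment $Z$, set $\mathcal{S}(Z) = \{\, P : Z_{P} \subseteq Z \,\}$. Writing $Z = \bigcup_{i} Z_{P_{i}}$ with each $Z_{P_{i}}$ atomic (Theorem~\ref{thm:structurethmintro}(2)), each $P_{i} \in \mathcal{S}(Z)$, so $Z \subseteq \bigcup_{P \in \mathcal{S}(Z)} Z_{P} \subseteq Z$ and thus $Z = \bigcup_{P \in \mathcal{S}(Z)} Z_{P}$; equivariance and $\sigma Z = Z$ give $\sigma\,\mathcal{S}(Z) = \mathcal{S}(Z)$, so $\mathcal{S}(Z)$ is a union of $S_{n}$-orbits and defines a set $\Phi(Z)$ of integer partitions. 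If $\Phi(Z) = \Phi(Z')$ then $\mathcal{S}(Z) = \mathcal{S}(Z')$ and hence $Z = Z'$, so $\Phi$ is injective. I would also record the elementary fact that $Z_{P''} \subseteq Z_{P}$ whenever $P''$ is a coarsening of $P$ with $\ell(P'') \ge 3$: build a tree with a vertex $u$ of type $P''$ whose subtrees branch further to realize the finer blocks of $P$, contract those subtrees to get the star of $P$, and apply Definition~\ref{def:assignment}(2) (also to the untouched vertex $u$, i.e.\ to the trivial blob $\{u\}$). Thus $\mathcal{S}(Z)$, hence $\Phi(Z)$, is closed under merging parts subject to staying at $\ge 3$ parts; closure under part-merging is built into the notion of a special family (Definition~\ref{def:specialpartition}).

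For surjectivity onto special families, start from a merge-closed set $\mathcal{F}$ of integer partitions with $3 \le \ell(\lambda) \le n-1$, let $\mathcal{S}_{\mathcal{F}}$ be the $S_{n}$-invariant set of all set partitions of type in $\mathcal{F}$, and put $Z := \bigcup_{P \in \mathcal{S}_{\mathcal{F}}} Z_{P}$. By Theorem~\ref{thm:structurethmintro}(3), $Z$ is an extremal assignment iff for all $P, P' \in \mathcal{S}_{\mathcal{F}}$, all coarsenings $Q \preceq P$ and $Q' \preceq P'$, and the tight upper bound $R$ of $Q, Q'$, there is $P'' \in \mathcal{S}_{\mathcal{F}}$ with $R \le P''$. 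Using that the tight-upper-bound operation is $S_{n}$-equivariant, I can fix $Q$ up to the action and let $Q'$ range over representatives of the orbits of the stabilizer of $Q$; what remains to quantify is then purely the types of $Q$ and $Q'$ (arbitrary merges of members of $\mathcal{F}$) together with the combinatorial ``overlap'' datum recording how two set partitions of those types meet in $[n]$, and each such datum pins down the type $\nu$ of $R$. Since $\mathcal{S}_{\mathcal{F}}$ is $S_{n}$-invariant and merge-closed, ``$R \le P''$ for some $P'' \in \mathcal{S}_{\mathcal{F}}$'' is equivalent to ``$\ell(\nu) \ge 3$ and some merge of $\nu$ lies in $\mathcal{F}$''. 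Collecting these conditions over all $\lambda, \mu \in \mathcal{F}$, all merges, and all overlap data is by design exactly Definition~\ref{def:specialpartition}; so $Z$ is an extremal assignment iff $\mathcal{F}$ is special, and in that case $\mathcal{S}(Z) = \mathcal{S}_{\mathcal{F}}$ (by the coarsening fact above and minimality), whence $\Phi(Z) = \mathcal{F}$. With injectivity this yields the claimed bijection.

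The substantive difficulty, and the reason the theorem ``is not a simple restatement'', is this last step: enumerating the overlap data of two set partitions of prescribed types and verifying that the constraints they force on the type of $R$ match precisely what Definition~\ref{def:specialpartition} records. This is exactly where forgetting labels is felt: two pairs $(Q, Q')$ and $(Q, Q'')$ with $Q', Q''$ of equal type but inequivalent under the stabilizer of $Q$ produce genuinely different tight upper bounds, so the condition cannot be stated on integer partitions alone without also ranging over these patterns (equivalently, over the automorphisms of the underlying unlabelled stars). Making that finite enumeration explicit and reconciling it with the definition is the technical heart; the remaining steps are routine transport of the structure theorem across the quotient.
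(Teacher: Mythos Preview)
Your overall plan is correct and, for the direction ``special family $\Rightarrow$ extremal'', it is essentially the paper's argument: both apply the criterion of Theorem~\ref{thm:structurethm} to the $S_n$-saturated collection of set partitions and translate the tight-upper-bound condition into the replacement condition via Lemma~\ref{lem:comparisonofupperbound}. Two small points you glossed over: Theorem~\ref{thm:structurethm} is stated for pairwise incomparable $P_i$, so you must first pass to the maximal elements of $\mathcal{S}_{\mathcal{F}}$ before invoking it; and ``merge-closure'' is not literally part of Definition~\ref{def:specialpartition} --- it enters only through the clause ``start from $q\le p_s$'' in the notion of a replacement in $\mathcal{F}$, so your sentence ``closure under part-merging is built into the notion of a special family'' needs a word of justification.

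The genuine gap is in the converse. You propose to obtain ``extremal $\Rightarrow$ special'' from the necessary half of the structure theorem (Proposition~\ref{prop:extremaltightbound}) by claiming that the tight-upper-bound condition on $\mathcal{S}_{\mathcal{F}}$ is ``by design exactly Definition~\ref{def:specialpartition}''. But that identification is the content of the theorem, and you only have one direction: Lemma~\ref{lem:comparisonofupperbound}(1) gives ``tight upper bound $\Rightarrow$ replacement'', not the converse. To run your argument you would need, for every (iterated) replacement $r$ in $\mathcal{F}$, to exhibit set partitions $Q_1\preceq \sigma P_i$ and $Q_2\preceq \tau P_j$ whose tight upper bound $R$ satisfies $f(R)=r$, and to do this compatibly with the $\preceq$ constraint and with iterated swaps. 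You acknowledge this ``enumeration of overlap data'' is the technical heart but do not carry it out. The paper sidesteps this completely: instead of going back through the structure theorem, it gives a direct six-step degeneration/contraction argument (Figure~\ref{fig:degcont}) showing that if $Z$ is $S_n$-invariant extremal and the central vertices for the $p_i$ are assigned, then so is the central vertex for any single replacement; induction handles iterated replacements, and then $r\le p_\ell$ is read off from Proposition~\ref{prop:extremaltightbound}. That concrete construction is what your sketch is missing, and it is the cleaner route --- it avoids having to invert Lemma~\ref{lem:comparisonofupperbound}(1) altogether.
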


In summary, in these three special cases, one may translate a question about extremal assignments or modular contractions of $\Mznb$ into the terms of simpler combinatorial objects. We may summarize the situation as Figure \ref{fig:correspondence}.

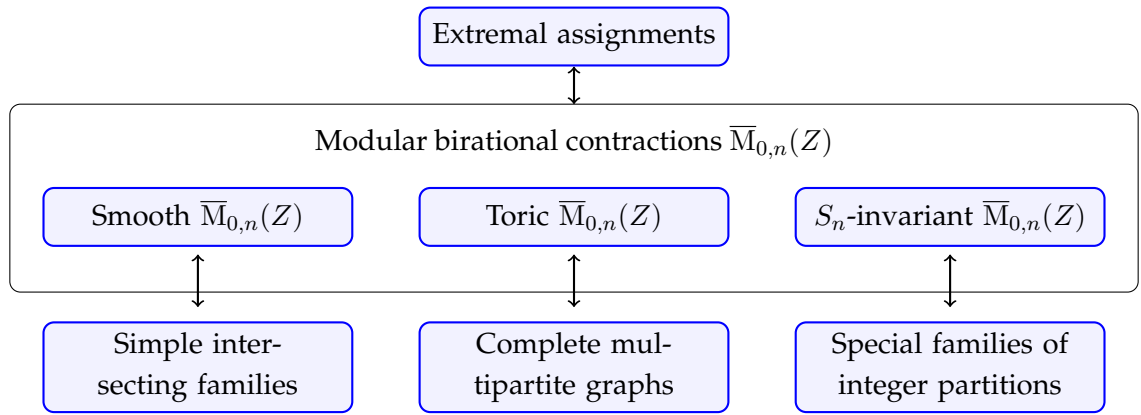
\begin{figure}[!ht]
\begin{tikzpicture}
[auto, block/.style ={rectangle, draw=blue, thick, fill=blue!5, text width=10em,align=center, rounded corners, minimum height=2em}]
\draw (5, 1) node (BC) {Modular birational contractions $\Mznb(Z)$};
\draw [rounded corners] (-2.5,1.5)--(-2.5,-1)--(12.5,-1)--(12.5,1.5)--cycle;
\draw [<->, thick] (0, -0.5) -- (0,-1.2);
\draw [<->, thick] (5, -0.5) -- (5,-1.2);
\draw [<->, thick] (10, -0.5) -- (10,-1.2);
\draw [<->, thick] (5, 1.5) -- (5,2);
\draw (5, 2.4) node[block] (E) {Extremal assignments};
\draw (0, 0) node[block] (S) {Smooth $\Mznb(Z)$};
\draw (5, 0) node[block] (T){Toric $\Mznb(Z)$};
\draw (10, 0) node[block] (I){$S_{n}$-invariant $\Mznb(Z)$};
\draw (0, -2) node[block] (SI) {Simple intersecting families};
\draw (5, -2) node[block] (CM) {Complete multipartite graphs};
\draw (10, -2) node[block] (IP) {Special families of integer partitions};
\end{tikzpicture}
\caption{Three correspondences of combinatorial objects}
\label{fig:correspondence}
\end{figure}

\subsection{Computer programs}

Based on Theorem \ref{thm:structurethmintro} (resp. Theorem \ref{thm:invariantthmintro}), we are able to describe an algorithm for finding the smallest extremal assignment $Z$ in Question \ref{que:fundamentalquestion}. We implemented this algorithm as a program in Sage (\cite{Sage}). It can be found on the website of the first author:
\begin{center}
	\url{http://www.hanbommoon.net/publications/extremal}
\end{center}

\subsection{Classification of contractions and projectivity question}

In the moduli-theoretic viewpoint, there are two known families of birational contractions of $\Mznb$. One is obtained from extremal assignments, as we discussed above. The other family, the so-called family of \emph{Veronese quotients}, is obtained from geometric invariant theory (GIT) (\cite{GJM13, GJMS13}). Constructions of birational contractions in these two viewpoints are very general and exhaustive in some sense, so one may wonder whether all of the projective birational contractions (in the sense of Mori's program or the log minimal model program) of $\Mznb$ are essentially obtained from these two families. 

The structure of the nef cone of $\Mznb$ is generally open for $n \ge 8$, and it is complicated even for small $n$. Thus we focused on the $S_{n}$-invariant nef cone of $\Mznb$, or, equivalently, the nef cone of $\Mznt := \Mznb/S_{n}$. In Section \ref{sec:geography}, we show that for $n \le 8$, all birational models are obtained from extremal assignments and Veronese quotients, but from $n = 9$, some contractions do not come from these two families. 

One interesting result is the following. A priori, the modular contractions $\Mznb(Z)$ from extremal assignments exist in the category of algebraic spaces. However, some of them are not projective varieties. We leave three explicit examples of non-projective contractions. Indeed, one of them is a smooth proper non-projective variety. Thus, by using extremal assignments, we are able to construct many examples of smooth, non-projective proper algebraic varieties. Finally, for smooth extremal assignments, we give a partial criterion for the projectivity (Corollary \ref{cor:projective}, Proposition \ref{prop:existenceweightassignment}), and show that the associated birational contractions are always varieties (Proposition \ref{prop:scheme}). 

\subsection{Further works}

There have been many results (\cite{Man95, Cey09, Moo11, BH11, BM13, BM14}) on the computation of motivic invariants of $\Mznb$, and more generally, for $\Mza$. For instance, their Poincar\'e polynomials were computed in some special cases (\cite{Moo11, BM13, BM14}). It would be great if one could find a concise closed or iterative formula describing the Poincar\'e polynomial for arbitrary weight data. However, it appears that the weight data are not appropriate input data for such a formula, as many different weights give the same space. We expect that the formula may be extracted from the associated smooth extremal assignment, since it determines the moduli space uniquely. We will explore this motivic computation in a forthcoming paper.

\subsection{Structure of the paper}

This paper is organized in the following format. In Section \ref{sec:preliminaries}, we review the definition and basic properties of $\Mznb$, its subvarieties, and dual graphs. We recall the definition of an extremal assignment and its functorial properties in Section \ref{sec:extremalassignment}. Section \ref{sec:example} gives some important classes of examples. In Section \ref{sec:n=5}, we give a complete list of equivalent classes of extremal assignments for $n = 5$, as a warm-up. Section \ref{sec:structuretheorem} is devoted to the proof of the structure theorem (Theorem \ref{thm:structurethm}). In the next several sections, we give applications of this structure theorem. In Section \ref{sec:divisorial}, we give a characterization of extremal assignments giving smooth birational contractions. In Section \ref{sec:toric}, we study toric birational models. $S_{n}$-invariant extremal assignments are discussed in Section \ref{sec:invariant}. By applying this theory, we compute the birational contractions associated to the nef cone of $\Mznt$ for small $n$ in Section \ref{sec:geography}. Finally, we give examples of non-projective models in the last section.

\subsection*{Notation}

\begin{itemize}
\item Let $[n] := \{1, 2, \cdots, n\}$. A \emph{$k$-subset} of $[n]$ is $S \subset [n]$ such that $|S| = k$. 
\item A \emph{labeled graph} is a graph $G$ with a bijective map between the set of end vertices and a given label set. Usually we use $[n]$ as a label set.
\item For any labeled graph $G$, $V(G)$ is the set of internal (thus non-labeled) vertices.
\item An $n$-labeled tree $G$ is called a \emph{star} if every internal vertex is connected to a single vertex $v$. $v$ is called a \emph{central vertex}. 
\item $S(n)$ is the set of all stable $n$-labeled trees.
\item $\cP(n)$ is the set of set partitions of $[n]$. 
\item A \emph{tail} $T$ of a graph (or labeled graph) $G$ is a connected subgraph of $G$ such that there is only one vertex $v \in T$ which is adjacent to $T^{c}$.
\item For a subgraph $T$, the label set $\ell(T)$ is the set of all labels adjacent to $T$. $\ell(v) = \ell(\{v\})$. 
\item We denote an $n$-dimensional weight vector $(a, a, \cdots, a)$ by $a^{n}$. $(a, a, \cdots, a, b)$ is denoted by $(a^{n-1}, b)$, and so on.
\end{itemize}

\subsection*{Acknowledgements}

The first author would like to thank Andreas Blass, Daniel Solt\'esz, and David Swinarski. The first, second, and third authors were partially supported by the Fordham University Summer Undergraduate Research Program.


\section{Preliminaries}\label{sec:preliminaries}

\subsection{The moduli space $\Mznb$}

In this section, we summarize the definition and basic facts of the moduli space $\Mznb$ of stable $n$-pointed rational curves.

\begin{definition}\label{def:stablecurve}
A \emph{stable $n$-pointed rational curve} is a complex $n$-pointed curve $(C, x_{1}, x_{2}, \cdots, x_{n})$ such that 
\begin{enumerate}
\item $C$ is a connected, projective curve of arithmetic genus 0 with at worst nodal singularities;
\item $x_{i}$'s are distinct smooth points on $C$;
\item Every irreducible component of $C$ has at least 3 special points (singular points or marked points).
\end{enumerate}
Let $\Mznb$ be the moduli space of stable $n$-pointed rational curves.
\end{definition}

The space $\Mznb$ is a fine moduli space and it is a smooth projective variety of dimension $n - 3$. If we want to use a specific index set $S$ instead of $[n]$, we denote by $\overline{\mathrm{M}}_{0, S}$. 

By considering loci of singular curves with fixed topological types, we can obtain subvarieties that define a stratification structure on $\Mznb$. 

\begin{definition}
For any $I \subset [n]$ with $2 \le |I| \le n-2$, let $D_{I}$ be the closure of the locus of curves $(C = C_{1} \cup C_{2}, x_{1}, x_{2}, \cdots, x_{n})$ with two irreducible components $C_{1}$ and $C_{2}$ such that $x_{i} \in C_{1}$ if and only if $i \in I$. By definition, $D_{I} = D_{I^{c}}$. $D_{I}$ is called a \emph{boundary divisor}. Let $D = \bigcup_{I}D_{I}$ and $D_{i} = \bigcup_{|I| = i}D_{I}$. 
\end{definition}

Each $D_{I}$ is a divisor of $\Mznb$, and it is isomorphic to $\overline{\mathrm{M}}_{0, I \cup \{p\}} \times \overline{\mathrm{M}}_{0, I^{c} \cup \{q\}}$. The total boundary divisor $D$ is a simple normal crossing divisor. Thus, by taking a nonempty intersection of its irreducible components, we obtain a subvariety with the expected codimension. The intersection $D_{I_{1}} \cap D_{I_{2}} \cap \cdots \cap D_{I_{r}}$ is nonempty if and only if for any two distinct indices $i$ and $j$, $I_{i} \subset I_{j}$ or $I_{i} \cap I_{j} = \emptyset$. The stratification on $\Mznb$ obtained by these nonempty intersections of boundary divisors is called the \emph{boundary stratification}.

\begin{definition}\label{def:Fcurves}
Let $I_{1}\sqcup I_{2}\sqcup I_{3}\sqcup I_{4} = [n]$ be a set partition. For each $i$, we define a `tail' $C_{i}$ as the following. If $|I_{i}| > 1$, then fix a $\PP^{1}$ with $|I_{i}|+1$ distinct fixed points with labels $I_{i}\cup \{p_{i}\}$. Let $C_{i}$ be the pointed curve. If $|I_{i}| = 1$, $C_{i} = \{p_{i}\}$ is a point. Construct $(C, x_{1}, x_{2}, \cdots, x_{n}) \in \Mznb$ by gluing four fixed tails $C_{i}$ to a four pointed curve $(\PP^{1}, q_{1}, q_{2}, q_{3}, q_{4})$ along $p_{i}$ and $q_{i}$. An \emph{F-curve} $F_{I_{1}, I_{2}, I_{3}, I_{4}} \subset \Mznb$ is obtained by varying the cross ratio of $q_{j}$'s. On $\Mznt := \Mznb/S_{n}$, we define an F-curve $F_{i_{1}, i_{2}, i_{3}, i_{4}}$ as any F-curve $F_{I_{1}, I_{2}, I_{3}, I_{4}}$ so that $|I_{j}| = i_{j}$. 
\end{definition}

An F-curve $F_{I_{1}, I_{2}, I_{3}, I_{4}}$ is numerically equivalent to any 1-dimensional nonempty intersection of boundaries 
\[
	\left(\bigcap_{|I_{i}| > 1}D_{I_{i}}\right) 
	\cap D_{J_{1}} \cap D_{J_{2}} \cap \cdots.
\]

The intersection number of a boundary divisor and an F-curve is given by the following lemma.

\begin{lemma}\label{lem:intersection}\cite[Lemma 4.3]{KM13}.
\[
	F_{I_{1}, I_{2}, I_{3}, I_{4}} \cdot D_{J} = 
	\begin{cases}
	1, & J = I_{i}\cup I_{j} \mbox{ for two $i, j$},\\
	-1, & J = I_{i},\\
	0, & \mbox{otherwise.}\end{cases}
\]
\end{lemma}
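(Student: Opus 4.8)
This is classical (it is the cited \cite{KM13}); the plan is to realize the F-curve through a forgetful map to $\overline{\mathrm{M}}_{0,4}\cong\PP^{1}$ and reduce everything to two standard identities for forgetful morphisms. Fix a label $a_{s}\in I_{s}$ for each $s=1,2,3,4$ and let $\pi\colon\Mznb\to\overline{\mathrm{M}}_{0,\{a_{1},a_{2},a_{3},a_{4}\}}\cong\PP^{1}$ be the morphism forgetting every label other than $a_{1},\dots,a_{4}$. On a generic member of $F_{I_{1},I_{2},I_{3},I_{4}}$ the fixed tail $C_{i}$ keeps only the label $a_{i}$ and its node, hence is unstable and gets contracted; what remains is the central $\PP^{1}$ with its four points $q_{i}$, whose cross-ratio is exactly the modulus being varied. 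So $\pi$ restricts to an isomorphism $F\xrightarrow{\ \sim\ }\overline{\mathrm{M}}_{0,4}$ and $\pi_{*}[F]=[\overline{\mathrm{M}}_{0,4}]$. Since moreover each $C_{i}$ is a \emph{smooth} rational curve, the dual graph of any point of $F$ is a star (breaking into two central vertices at the three special points of $F$) whose nodes separate $I_{i}\,|\,I_{i}^{c}$, or, at the special points, $I_{i}\cup I_{j}\,|\,I_{k}\cup I_{l}$. Hence a boundary divisor $D_{J}$ can meet $F$ only when $J$ (or $J^{c}$) is some $I_{i}$ --- then $F\subset D_{I_{i}}$, which forces $|I_{i}|\ge 2$ --- or some $I_{i}\cup I_{j}$; for every other $J$ this already gives $F\cdot D_{J}=0$, the ``otherwise'' case.

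\emph{The value $1$.} For $J=I_{i}\cup I_{j}$ (so $J^{c}=I_{k}\cup I_{l}$), the standard formula for the pullback of a boundary divisor under $\pi$ reads $\pi^{*}D_{\{a_{i},a_{j}\}}=\sum_{S}D_{S}$, summed over $S$ with $a_{i},a_{j}\in S$, $a_{k},a_{l}\notin S$. By the previous paragraph only terms with $S$ a union of parts contribute to $D_{S}\cdot F$, and the only such $S$ is $I_{i}\cup I_{j}$ (a union of parts containing $a_{i},a_{j}$ but not $a_{k},a_{l}$ is caught between $I_{i}\cup I_{j}$ and $(I_{k}\cup I_{l})^{c}=I_{i}\cup I_{j}$). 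The projection formula then gives
\[
D_{I_{i}\cup I_{j}}\cdot F=\pi^{*}D_{\{a_{i},a_{j}\}}\cdot F=D_{\{a_{i},a_{j}\}}\cdot\pi_{*}[F]=1,
\]
the last equality because a boundary point of $\overline{\mathrm{M}}_{0,4}\cong\PP^{1}$ has degree $1$.

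\emph{The value $-1$.} For $J=I_{i}$ with $|I_{i}|\ge 2$ we have $F\subset D_{I_{i}}$, and here I would use the comparison identity $\psi_{a_{i}}=\pi^{*}\psi_{a_{i}}+\sum_{S}D_{S}$, summed over $S$ with $a_{i}\in S$, $a_{j},a_{k},a_{l}\notin S$. Intersecting with $F$: the left side is $0$ since $a_{i}$ sits on the fixed smooth tail $C_{i}$, so its cotangent line is constant along $F$; $\pi^{*}\psi_{a_{i}}\cdot F=1$ by pushing forward to $\overline{\mathrm{M}}_{0,4}\cong\PP^{1}$, where every $\psi$-class has degree $1$; and the only $S$ in the sum meeting $F$ is $S=I_{i}$ (a union of parts containing $a_{i}$ but none of $a_{j},a_{k},a_{l}$ must equal $I_{i}$). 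Thus $0=1+D_{I_{i}}\cdot F$, i.e.\ $D_{I_{i}}\cdot F=-1$. Alternatively, $D_{I_{i}}\cdot F=\deg\bigl(N_{D_{I_{i}}/\Mznb}|_{F}\bigr)$ can be computed from $D_{I_{i}}\cong\overline{\mathrm{M}}_{0,I_{i}\cup\{\star\}}\times\overline{\mathrm{M}}_{0,I_{i}^{c}\cup\{\bullet\}}$ and $N_{D_{I_{i}}}\cong\mathcal{O}(-\psi_{\star})\boxtimes\mathcal{O}(-\psi_{\bullet})$, under which $F$ becomes a point times the F-curve $F_{I_{j},I_{k},I_{l},\{\bullet\}}$, so the answer is $-\psi_{\bullet}\cdot F_{I_{j},I_{k},I_{l},\{\bullet\}}=-1$.

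\emph{Main difficulty.} The substantive points are checking that $\pi|_{F}$ is an isomorphism --- i.e.\ correctly following the stabilization that contracts the tails, which is what makes $\pi_{*}[F]=[\overline{\mathrm{M}}_{0,4}]$ --- and the dual-graph bookkeeping in the ``otherwise'' case, where it is essential that the tails $C_{i}$ are \emph{smooth}: if one tail were allowed to degenerate, $F$ would meet further boundary divisors. The two displayed formulas (pullback of $D_{\{a_{i},a_{j}\}}$, and the $\psi$-class comparison) are classical properties of forgetful morphisms, which I would simply quote.
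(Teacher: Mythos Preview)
Your argument is correct and is a standard way to prove this classical fact: reduce via the forgetful map to $\overline{\mathrm{M}}_{0,4}$, use the dual-graph bookkeeping to kill the ``otherwise'' terms, and then invoke the pullback formula for boundary divisors (for the $+1$) and either the $\psi$-class comparison or the normal-bundle description (for the $-1$). All the case analyses you carry out---checking which $S$ in the two pullback sums can actually touch $F$---are accurate, and your remark that the smoothness of the tails $C_{i}$ is what pins down the dual graphs along $F$ is exactly the point.

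There is nothing to compare against in the paper itself: Lemma~\ref{lem:intersection} is stated with a citation to \cite[Lemma~4.3]{KM13} and no proof is given. So your write-up is not an alternative to the paper's proof but rather a self-contained justification of a result the paper simply quotes. If anything, you could streamline slightly by noting that once the ``otherwise'' case is established, the $+1$ case follows immediately from the set-theoretic fact that $F$ meets $D_{I_{i}\cup I_{j}}$ transversally in a single reduced point (the unique boundary point of $\overline{\mathrm{M}}_{0,4}$ of that type), without needing the full pullback formula; but the projection-formula route you chose is equally clean and avoids checking transversality directly.
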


Boundary divisors generate $\mathrm{N}^{1}(\Mznb)$, and F-curves generate $\mathrm{N}_{1}(\Mznb)$.

\subsection{Graph theoretic notation}

The boundary stratification of $\Mznb$ can be described in terms of stable labeled trees. In this section, we briefly review their combinatorics. 

\begin{definition}
A \emph{stable $n$-labeled tree} is a finite graph with $n$ leaves $\{1, 2, \cdots,n\}$ satisfying:
\begin{enumerate}
\item The underlying graph is a tree (i.e., there is no circuit).
\item Every (internal) vertex has at least 3 adjacent edges.
\end{enumerate}
Let $S(n)$ be the set of all stable $n$-labeled trees. Let $S_{k}(n) \subset S(n)$ be the subset of stable $n$-labeled trees with precisely $k$ vertices. It is straightforward to check that 
\[
	S(n) = \bigcup_{k=1}^{n-2}S_{k}(n).
\]
\end{definition}

If there is no further explanation, a vertex of a labeled tree is always an internal vertex. 

We can impose a natural partially ordered set structure on $S(n)$. 

\begin{definition}
For $G, G' \in S(n)$, we say $G'$ is a contraction of $G$ (denoted by $G \rightsquigarrow G'$) if we can make $G'$ by collapsing some vertices and edges connecting them to a vertex. For a contraction $\pi : G \rightsquigarrow G'$ and $v' \in V(G')$, by abuse of notation we denote $V(G) \to V(G')$ by $\pi$. If $\{v_{1}, v_{2}, \cdots, v_{k}\} = \pi^{-1}(v')$, we write $\{v_{1}, v_{2}, \cdots, v_{k}\} \rightsquigarrow v$. 

Let $G \le G'$ if $G \rightsquigarrow G'$. Then $S(n)$ is a partially ordered set.
\end{definition}

\begin{example}
Figure \ref{fig:contraction} shows two stable 6-labeled trees such that $G\leadsto G'$ and $\{v_0,v_1\}\leadsto v'$.
\begin{figure}[!ht]
\begin{tikzpicture}[scale=0.3]
\draw[line width = 1 pt] (5, 7) -- (5, 9);
\draw[line width = 1 pt] (5, 9) -- (3, 11);
\draw[line width = 1 pt] (5, 9) -- (7, 11);
\draw[line width = 1 pt] (5, 5) -- (7, 3);
\draw[line width = 1 pt] (5, 5) -- (5, 7);
\draw[line width = 1 pt] (5, 5) -- (3, 3);
\draw[line width = 1 pt] (5, 7) -- (3, 7);
\draw[line width = 1 pt] (5, 7) -- (7, 7);
\fill (5, 5) circle (6pt);
\fill (5, 7) circle (6pt);
\node (v) at (5, 4) {$v_{0}$};
\node (1) at (3,2) {$1$};
\node (2) at (7,2) {$2$};
\node (3) at (2,7) {$3$};
\node (4) at (8,7) {$4$};
\node (5) at (3,12) {$5$};
\node (6) at (7,12) {$6$};
\node (v) at (4.5, 8) {$v_{1}$};
\node (G) at (5, 1) {$G$};
\node (a) at (9.75, 6) {$\leadsto$};
\end{tikzpicture}
\begin{tikzpicture}[scale=0.3]
\draw[line width = 1 pt] (5, 7) -- (5, 9);
\draw[line width = 1 pt] (5, 9) -- (3, 11);
\draw[line width = 1 pt] (5, 9) -- (7, 11);
\draw[line width = 1 pt] (5, 6) -- (3, 6);
\draw[line width = 1 pt] (5, 6) -- (7, 6);
\draw[line width = 1 pt] (5, 6) -- (6.5, 3.5);
\draw[line width = 1 pt] (5, 6) -- (5, 7);
\draw[line width = 1 pt] (5, 6) -- (3.5, 3.5);
\fill (5, 6) circle (6pt);
\node (v) at (5, 4.8) {$v'$};
\node (G) at (5, 1) {$G'$};
\node (1) at (3,2) {$1$};
\node (2) at (7,2) {$2$};
\node (3) at (2.5,6) {$3$};
\node (4) at (7.5,6) {$4$};
\node (5) at (3,12) {$5$};
\node (6) at (7,12) {$6$};
\end{tikzpicture}
\caption{An example of contraction}
\label{fig:contraction}
\end{figure}
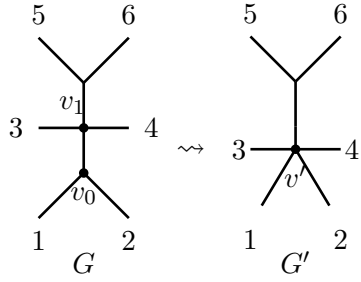
\end{example}

\begin{definition}
For $X := (C, x_{1}, x_{2}, \cdots, x_{n}) \in \Mznb$, the \emph{dual graph} $G$ is a stable $n$-labeled tree constructed as the following. The set $V(G)$ of vertices is the set of irreducible components of $C$. Two vertices $v, w$ are connected if two corresponding irreducible components meet. We put a label $i$ on a vertex $v$ if $x_{i}$ lies on the corresponding irreducible component. The condition (3) of Definition \ref{def:stablecurve} implies the stability of $G$.
\end{definition}

\begin{example}
Figure \ref{fig:dual} shows a curve in $\overline{\mathrm{M}}_{0,6}$ and its dual graph.
\begin{figure}[!ht]
\begin{tikzpicture}[scale=0.45]
\draw[line width = 1 pt] (3, 5) -- (8, 5);
\draw[line width = 1 pt] (3.5, 4.5) -- (3.5, 8);
\draw[line width = 1 pt] (7.5, 4.5) -- (7.5, 8);
\fill (3.5, 6.25) circle (6pt);
\fill (3.5, 7.5) circle (6pt);
\fill (7.5, 6.25) circle (6pt);
\fill (7.5, 7.5) circle (6pt);
\fill (4.75, 5) circle (6pt);
\fill (6.5, 5) circle (6pt);
\node (1) at (2.5,6.25) {$1$};
\node (2) at (2.5,7.5) {$2$};
\node (3) at (4.75,4) {$3$};
\node (4) at (6.5,4) {$4$};
\node (5) at (8.5,6.25) {$5$};
\node (6) at (8.5,7.5) {$6$};
\end{tikzpicture}
\qquad
\begin{tikzpicture}[scale=0.3]
\draw[line width = 1 pt] (5, 7) -- (5, 9);
\draw[line width = 1 pt] (5, 9) -- (3, 11);
\draw[line width = 1 pt] (5, 9) -- (7, 11);
\draw[line width = 1 pt] (5, 5) -- (7, 3);
\draw[line width = 1 pt] (5, 5) -- (5, 7);
\draw[line width = 1 pt] (5, 5) -- (3, 3);
\draw[line width = 1 pt] (5, 7) -- (3, 7);
\draw[line width = 1 pt] (5, 7) -- (7, 7);
\node (1) at (3,2) {$1$};
\node (2) at (7,2) {$2$};
\node (3) at (2.5,7) {$3$};
\node (4) at (7.5,7) {$4$};
\node (5) at (3,12) {$5$};
\node (6) at (7,12) {$6$};
\end{tikzpicture}
\caption{A stable 6-pointed rational curve and its dual graph}
\label{fig:dual}
\end{figure}
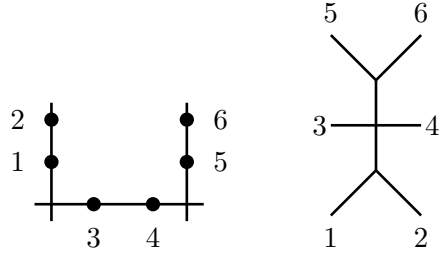
\end{example}

\begin{example}
Suppose that $G$ is a tree with two vertices, and the set of labels on a vertex is $I\subset [n]$. Then the corresponding curve $X = (C_{1}, x_{1}, x_{2}, \cdots, x_{n})$ is on $D_{I}$. 
\end{example}

For $G \in S(n)$, let $D_{G} \subset \Mznb$ be the locus of curves whose dual graph is $G$. Then $D_{G}$ is a smooth locally-closed subvariety of $\Mznb$ that is an intersection of several boundary divisors. Also $G \le G'$ if and only if $D_{G} \subset \overline{D_{G'}}$.


\section{Extremal assignments and birational contractions of $\Mznb$}\label{sec:extremalassignment}

In \cite{Smy13}, Smyth introduced a combinatorial object, a so-called extremal assignment, which defines a contraction of $\Mznb$. In this section, we give its definition and basic properties. 

\begin{definition}\label{def:assignment}
\begin{enumerate}
\item An \emph{assignment} $\overline{Z}$ on $T \subset S(n)$ is a rule assigning for each $G \in T$ a subset $\overline{Z}(G) \subset V(G)$. 
\item An assignment $Z$ on $S(n)$ is called an \emph{extremal assignment of order $n$} if:
\begin{enumerate}
\item $Z(G) \ne V(G)$ for any $G \in S(n)$;
\item If $G \rightsquigarrow G'$ and $\{v_{1}, v_{2}, \cdots, v_{k}\} \rightsquigarrow v'$, then $v' \in G' \Leftrightarrow v_{1}, v_{2}, \cdots, v_{k} \in G$. 
\end{enumerate}
\end{enumerate}
\end{definition}

\begin{remark}
In \cite[Definition 1.5]{Smy13}, Smyth defines an extremal assignment for arbitrary genus $g$ graphs. It is straightforward to see that his condition (2) is obvious for the $g = 0$ case, because any stable $n$-labeled tree has a trivial automorphism group.
\end{remark}

We define several natural operations. 

\begin{definition}
An extremal assignment $Z'$ is called a \emph{sub extremal assignment} of $Z$ if $Z'(G) \subset Z(G)$ for every $G \in S(n)$. We denote by $Z' \subset Z$.
\end{definition}

\begin{definition}
Let $Z_{1}, \cdots, Z_{k}$ be extremal assignments. The \emph{intersection} $Z := \bigcap_{i=1}^{k}Z_{i}$ is defined by $Z(G) := \bigcap_{i=1}^{k}Z_{i}(G)$. $Z$ is also an extremal assignment. 
\end{definition}

However, in general the \emph{union} $Z := \bigcup_{i=1}^{k}Z_{i}$, which is defined by $Z(G) = \bigcup_{i=1}^{k}Z_{i}(G)$, is \emph{not} an extremal assignment. 

\begin{example}
Let $G\in S_2(n)$ have two vertices, $v_1$ and $v_2$. Let $Z_1$ and $Z_2$ be two extremal assignments with $v_1\in Z_1(G)$ and $v_2\in Z_2(G)$. Then $Z_1\cup Z_2$ is not an extremal assignment, since $Z(G)=V(G)$.
\end{example}

For a classification of modular compactifications of $\Mzn$, extremal assignments have a central role. A \emph{modular compactification} of $\Mzn$ is a compactification that can be interpreted as a moduli space of a certain type of pointed curves. More precisely, it is defined as a proper open substack $\cM$ of the stack of all $n$-pointed genus 0 curves containing $\Mzn$. 

By using an extremal assignment $Z$, we can define a modular compactification $\Mznb(Z)$ as follows. Since there is a one-to-one correspondence between the set of irreducible components on $(C, x_{1}, \cdots, x_{n}) \in \Mznb$ and $V(G)$ for its dual graph $G$, $Z(G)$ defines a subset of irreducible components of $C$. We will denote the subset by $Z(C)$. 

\begin{definition}[\protect{\cite[Definition 1.8]{Smy13}}]\label{def:Zstability}
Let $Z$ be an extremal assignment. A smoothable $n$-pointed curve $(C, x_{1}, x_{2}, \cdots, x_{n})$ of arithmetic genus 0 is \emph{$Z$-stable} if there exists $(C^{s}, x_{1}^{s}, \cdots, x_{n}^{s}) \in \Mznb$ and a map $\phi : (C^{s}, x_{1}^{s}, \cdots, x_{n}^{s}) \to (C, x_{1}, \cdots, x_{n})$ satisfying
\begin{enumerate}
\item $\phi$ is surjective with connected fibers;
\item $\phi|_{C^{s} - Z(C^{s})}$ is an isomorphism;
\item For a connected component $Z'$ of $Z(C^{s})$, $\phi(Z')$ is a single point.
\end{enumerate}
We can naturally define the notion of flat families of $Z$-stable curves. Let $\Mznb(Z)$ be the moduli stack of $Z$-stable curves. 
\end{definition}

\begin{theorem}[\protect{\cite[Theorem 1.9, Theorem 1.21]{Smy13}}]\label{thm:extassignmentmodel}
Let $Z$ be an extremal assignment. Then $\Mznb(Z)$ is a modular compactification of $\Mzn$. Conversely, for any modular compactification $\cM$ of $\Mzn$, there exists an extremal assignment $Z$ such that $\cM \cong \Mznb(Z)$. The compactification $\Mznb(Z)$ exists in the category of algebraic spaces. 
\end{theorem}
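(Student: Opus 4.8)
The plan is essentially to recall Smyth's argument, since this is his classification theorem; what follows is the architecture one would reconstruct. The first assertion is that $\Mznb(Z)$ is a modular compactification, i.e.\ a proper open substack of the stack of all genus-$0$ $n$-pointed curves containing $\Mzn$. Openness and the containment of $\Mzn$ are essentially formal once one observes that $Z$-stability agrees with ordinary stability on the smooth (hence generic) locus: a smooth curve has irreducible dual graph, so $Z$ of it is empty and no contraction occurs. The substantive point is properness, checked through the valuative criterion. Starting from a family over the fraction field of a DVR with smooth generic fiber, Deligne--Mumford--Knudsen stable reduction produces, after a finite base change, a stable limit $(C^{s}, x^{s})$; one then contracts each connected component of $Z(C^{s}_{0})$ in the special fiber to a point and checks that the resulting (possibly more singular) curve is $Z$-stable and is the unique such limit. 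This is precisely where the two axioms of Definition~\ref{def:assignment} are used: condition~(a), $Z(G)\neq V(G)$, guarantees that one never collapses an entire connected dual graph, so the limit remains a connected arithmetic-genus-$0$ curve of the right type; condition~(b), compatibility with contractions $G\rightsquigarrow G'$, guarantees that the contraction operation does not depend on the choices made during stable reduction, which yields both existence of the limit and separatedness.

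For the converse, one starts with an arbitrary modular compactification $\cM$ and reconstructs an extremal assignment from it. Given $G\in S(n)$ and an internal vertex $v\in V(G)$, degenerate $\Mzn$ into the boundary stratum $D_{G}\subset\Mznb$ and transport the resulting stable curve to $\cM$ along the rational map $\Mznb \dashrightarrow \cM$ that exists because both are separated proper moduli stacks birational to $\Mzn$; declare $v\in Z(G)$ exactly when the component indexed by $v$ is collapsed to a point in the image curve. The bulk of the work is verifying that $Z$ so defined is an extremal assignment: condition~(a) holds because separatedness of $\cM$ forbids collapsing a whole fiber while keeping a flat limit, and condition~(b) holds by functoriality of specialization, comparing the limits taken along $G$ and along $G'$ for a contraction $G\rightsquigarrow G'$ and using that $\cM$, being a proper moduli of curves, has limits that commute with further degeneration. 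Once $Z$ is in hand, one proves $\cM\cong\Mznb(Z)$ by constructing, over an arbitrary base, the comparison morphism out of the universal stable curve (contracting the $Z$-components) and invoking properness and separatedness of both stacks to conclude it is an isomorphism of moduli problems.

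The final clause, that $\Mznb(Z)$ lives in the category of algebraic spaces and not merely of algebraic stacks, follows once one knows it is a separated Deligne--Mumford stack all of whose objects have trivial automorphism group: a $Z$-stable curve is a canonical partial contraction of a stable curve, and an automorphism of it would induce an automorphism of the stable model fixing the $n$ distinct marked points pointwise, of which there are none; hence the stack is equivalent to its coarse space, an algebraic space. (This is also where the hypothesis that we are in genus $0$ with distinct labelled points is essential.)

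The hardest part, and the reason this is a theorem rather than an exercise, is the converse direction: manufacturing a combinatorial object from an a priori arbitrary modular compactification and proving both compatibility axioms in that generality. Establishing condition~(b) in particular requires a delicate analysis of how one-parameter degenerations interact---one must exclude the possibility that $\cM$ retains the component indexed by $v$ along one degeneration path but collapses it along another---and this is where separatedness and properness of $\cM$ enter in an essential, non-formal way. I also expect the properness of $\Mznb(Z)$ in the forward direction to be genuinely delicate: the contracted curves can acquire non-Gorenstein or otherwise pathological singularities, so one cannot simply quote standard stable-reduction machinery but must check by hand that the contracted limit is again an object of the moduli problem and that the flat structure on the total space extends.
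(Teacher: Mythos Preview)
The paper does not prove this theorem at all: it is stated purely as a citation of Smyth's results \cite[Theorem 1.9, Theorem 1.21]{Smy13}, with no argument given. So there is no ``paper's own proof'' to compare your proposal against. Your sketch is a reasonable reconstruction of the architecture of Smyth's original argument (valuative criterion for properness via stable reduction followed by contraction of $Z$-assigned components; recovery of $Z$ from an arbitrary modular compactification by tracking which components are collapsed under specialization; triviality of automorphisms to descend to an algebraic space), and it correctly identifies where each axiom of Definition~\ref{def:assignment} enters. But since the present paper treats this result as a black box imported from \cite{Smy13}, the appropriate response here is simply to cite Smyth rather than to reproduce the proof.
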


\begin{remark}
Smyth proved a more general result for arbitrary genus $g$. In the case of genus 0, which will be discussed in this paper, we have the following result as well.
\end{remark}

\begin{proposition}\label{prop:functoriality}
\begin{enumerate}
\item For any extremal assignment $Z$, there is a birational contraction $\pi_{Z} : \Mznb \to \Mznb(Z)$ which has connected fibers. 
\item More generally, if $Z_{1} \subset Z_{2}$, there is a birational map $\pi_{Z_{1}, Z_{2}} : \Mznb(Z_{1}) \to \Mznb(Z_{2})$. Furthermore, for any three extremal assignments $Z_{1} \subset Z_{2} \subset Z_{3}$, there is a commutative diagram
\[
	\xymatrix{\Mznb(Z_{1}) \ar[rr]^{\pi_{Z_{1},Z_{3}}} 
	\ar[rd]_{\pi_{Z_{1}, Z_{2}}}&& \Mznb(Z_{3})\\
	& \Mznb(Z_{2}) \ar[ru]_{\pi_{Z_{2},Z_{3}}}.}
\]
\end{enumerate}
\end{proposition}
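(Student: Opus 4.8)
The plan is to derive both statements from a single operation, the \emph{$Z$-stabilization} of a stable curve. Given $(C^{s}, x_{1}^{s}, \dots, x_{n}^{s}) \in \Mznb$ with dual graph $G$, the subset $Z(G) \subset V(G)$ singles out a subcurve of $C^{s}$; collapsing each of its connected components to a point yields a pointed curve $(C, x_{1}, \dots, x_{n})$ together with the collapsing map $\phi \colon C^{s} \to C$. By construction $\phi$ is surjective with connected fibres, is an isomorphism over $C^{s} \setminus Z(C^{s})$, and contracts each connected component of $Z(C^{s})$ to a point, so $(C, x_{1}, \dots, x_{n})$ is $Z$-stable in the sense of Definition \ref{def:Zstability}. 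Thus the $Z$-stabilization is a well-defined map $\Mznb \to \Mznb(Z)$ on geometric points; moreover a single-vertex graph $G$ has $Z(G) = \emptyset$ by condition (1) of Definition \ref{def:assignment}, so the $Z$-stabilization is the identity on $\Mzn$.

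To see that this map is a morphism of algebraic spaces, I would carry the construction out over the universal curve $p \colon \mathcal{C} \to \Mznb$. There is a closed subscheme $\mathcal{Z} \subset \mathcal{C}$ meeting each fibre in the subcurve indexed by $Z$ of that fibre's dual graph, and the point is that condition (2) of Definition \ref{def:assignment} is exactly what guarantees that the fibrewise contraction of the connected components of $\mathcal{Z}$ assembles into a single flat family $\mathcal{C} \to \mathcal{C}^{Z} \to \Mznb$ of genus-$0$ curves -- this is Smyth's simultaneous contraction, which I would invoke from \cite{Smy13}, or reprove by checking flatness after base change to a trait: when a generic-fibre stable curve degenerates, condition (2) says a component is contracted precisely when every component it breaks into in the special fibre is contracted, so the contracted subcurve of the special fibre is the flat limit of the contracted subcurve of the generic fibre and no arithmetic genus is lost. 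The family $\mathcal{C}^{Z} \to \Mznb$ consists of $Z$-stable curves and restricts to the universal curve over $\Mzn$, so the modular description of $\Mznb(Z)$ in Definition \ref{def:Zstability} and Theorem \ref{thm:extassignmentmodel} produces a morphism $\pi_{Z} \colon \Mznb \to \Mznb(Z)$ which is the identity over $\Mzn$, hence birational. It is surjective because a $Z$-stable curve is the $Z$-stabilization of any of its stable stabilizations; and it has connected fibres -- if $\Mznb(Z)$ is normal this is Zariski's main theorem, and in any case the fibre over $[C]$ can be described explicitly, by re-expanding each collapsed point of $C$ into an arbitrary stable genus-$0$ subcurve carrying the prescribed branches, as a connected union of products of spaces $\overline{\mathrm{M}}_{0,S}$ glued along boundary strata. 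Being a proper birational morphism with connected fibres, $\pi_{Z}$ is a birational contraction.

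For part (2), note that $Z_{1} \subset Z_{2}$ forces $Z_{1}(G) \subset Z_{2}(G)$ for every $G$, so on any stable curve the connected components of the $Z_{1}$-subcurve refine those of the $Z_{2}$-subcurve, and the $Z_{2}$-stabilization factors canonically as $C^{s} \to C_{1} \to C_{2}$ through the $Z_{1}$-stabilization. Carrying this factorization out relatively over the universal family of $Z_{1}$-stable curves -- using condition (2) once more for flatness of the relevant subcurve -- gives a flat family of $Z_{2}$-stable curves over $\Mznb(Z_{1})$, hence by the modular property of $\Mznb(Z_{2})$ a morphism $\pi_{Z_{1}, Z_{2}} \colon \Mznb(Z_{1}) \to \Mznb(Z_{2})$, which is an isomorphism over $\Mzn$ and so birational; taking $Z_{1} = \emptyset$ recovers $\pi_{Z}$. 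Finally, for $Z_{1} \subset Z_{2} \subset Z_{3}$ both $\pi_{Z_{2}, Z_{3}} \circ \pi_{Z_{1}, Z_{2}}$ and $\pi_{Z_{1}, Z_{3}}$ classify the same family -- the one obtained from the universal $Z_{1}$-stable family by contracting the components of $\mathcal{Z}_{3}$ -- so they coincide by the universal property, and the triangle commutes.

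The step I expect to be the real obstacle is the flatness assertion of the second paragraph: that the simultaneous contraction along the whole of $\Mznb$ (and, in part (2), along $\Mznb(Z_{1})$) is a flat family. Everything else is formal bookkeeping with universal properties, whereas this is the one place where the non-obvious compatibility axiom (2) for extremal assignments does essential work; it is the statement whose genus-$0$ verification I would either import wholesale from \cite{Smy13} or re-establish by the trait computation sketched above.
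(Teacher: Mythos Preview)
Your argument is correct and in part (1) largely parallels the paper's, which likewise contracts the $Z$-assigned subcurve in families. The paper is terser about flatness but makes explicit the genus-zero point that drives uniqueness: the singularity created at each collapsed component is multinodal (a union of coordinate axes), and such singularities have no moduli, so the contracted curve is determined on the nose and the assignment $C^{s} \mapsto C$ is a well-defined map to $\Mznb(Z)$. This is the substitute for the flatness verification you flag as the crux; it is genuinely special to $g=0$ and is worth stating.

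For part (2) the paper takes a different and shorter route. Rather than rebuild a flat $Z_{2}$-stable family over $\Mznb(Z_{1})$ (which, as you note, requires redoing the contraction argument over a base that is only an algebraic space and may be singular), the paper observes that every fibre $\pi_{Z_{1}}^{-1}(x) \subset \Mznb$ is mapped by $\pi_{Z_{2}}$ to a single point of $\Mznb(Z_{2})$, and then invokes the rigidity lemma to factor $\pi_{Z_{2}}$ through $\pi_{Z_{1}}$. This sidesteps any new flatness issue and immediately gives the commutative triangle set-theoretically, hence as morphisms. Your direct construction is also valid and has the virtue of producing $\pi_{Z_{1},Z_{2}}$ with its own modular interpretation, but the rigidity-lemma approach is cleaner and avoids exactly the step you identified as the likely obstacle.
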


\begin{proof}
For any family $(\rho : \cC \to S, \sigma_{1}, \cdots, \sigma_{n} : S \to \cC)$ of stable $n$-pointed rational curves, by contracting irreducible components assigned by the data $Z$, we obtain a family of $Z$-stable curves $\cC' \to S$. At the image of a contracted component, a fiber of $\cC'$ has a singularity of arithmetic genus zero. From the classification of arithmetic genus zero singularities (\cite[Lemma 1.17]{Smy13}), it is a multinodal singularity, which is locally the union of coordinate axes in $\CC^{k}$. In particular, there is no nontrivial moduli for this singularity. Therefore, the contracted curve is defined uniquely, so we have a well-defined map $\pi_{Z} : \Mznb \to \Mznb(Z)$. This proves (1). 

If $Z_{1} \subset Z_{2}$, it is straightforward to check that for any $x \in \Mznb(Z_{1})$, the fiber $\pi_{Z_{1}}^{-1}(x) \subset \Mznb$ is contracted to a single point in $\Mznb(Z_{2})$ by $\pi_{Z_{2}}$. By the rigidity lemma, the map $\pi_{Z_{2}}$ factors through $\Mznb(Z_{1})$. We will denote this map by $\pi_{Z_{1}, Z_{2}}$. The commutativity is obvious from their set theoretic descriptions. 
\end{proof}

The contracted locus of the reduction map $\pi_{Z_{1}, Z_{2}}$ can be described in terms of restricted extremal assignments.

\begin{definition}\label{def:restriction}
Fix an extremal assignment $Z$. Let $G \in S(n)$ and let $v \in V(G) \setminus Z(G)$. Let $E_{v}$ be the set of edges adjacent to $v$. Then with new label set $\ell(v) \cup E_{v}$, we can define the \emph{restriction} $Z|_{v}$ of $Z$ as the following. For any $H \in S(|\ell(v)| + |E_{v}|)$, we can find a degeneration $\pi : G' \rightsquigarrow G$ such that $\pi^{-1}(v)$ in $G'$ is isomorphic to $H$. Then for $w \in V(H)$, set $w \in Z|_{v}(H) \Leftrightarrow w \in Z(G')$. If $G \in S_{2}(n)$ with two vertices $v, w$ and the label set for $v$ (resp. $w$) is $B$ (resp. $B^{c}$), then we write $Z^{B} := Z|_{v}$ (resp. $Z^{B^{c}} := Z|_{w}$). 
\end{definition}

\begin{remark}\label{rem:contractedlocus}
For a graph $G \in S(n)$, recall that $D_{G} \subset \Mznb$ is the locus of curves whose dual graph is $G$. We denote its image on $\Mznb(Z_{1})$ with the same notation. Then 
\[
	\overline{D_{G}} \cong \prod_{v \in V(G) \setminus Z_{1}(G)}
	\overline{\mathrm{M}}_{0, |\ell(v)|+|E_{v}|}(Z_{1}|_{v}).
\]
The image $\pi_{Z_{1}, Z_{2}}(\overline{D_{G}})$ is obtained by forgetting newly assigned components. More precisely, 
\[
	\pi_{Z_{1}, Z_{2}}(\overline{D_{G}}) \cong \prod_{v \in V(G) \setminus Z_{2}(G)}
	\overline{\mathrm{M}}_{0, |\ell(v)|+|E_{v}|}(Z_{2}|_{v}).
\]
\end{remark}

\begin{example}
Let $Z$ be an extremal assignment that assigns $v \in V(G)$ for a dual graph $G$ in Figure \ref{fig:contractionexample}. Let $(C, x_{1}, x_{2}, \cdots, x_{6}) \in \Mznb$ be a stable 6-pointed curve whose dual graph is $G$. Then the central component is assigned, thus the contraction $(C', x_{1}', x_{2}', \cdots, x_{6}')$ is a $Z$-stable curve. See Figure \ref{fig:contractionexample}. In general, two or more marked points may collide, and some marked points may lie on a singular point. If an assigned vertex has three or more adjacent internal vertices, then the contracted curve has a multinodal singularity.
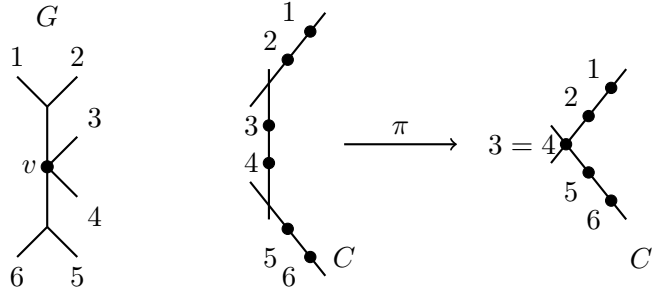
\begin{figure}[!ht]
\begin{tikzpicture}[scale=0.4]
\draw [thick] (0,2) -- (0,6);
\draw [thick] (0,4) -- (1,5);
\node [above right] at (1,5) {$3$};
\draw [thick] (0,4) -- (1,3);
\node [below right] at (1,3) {$4$};
\draw [thick] (0,6) -- (1,7);
\node [above] at (1,7) {$2$};
\draw [thick] (0,6) -- (-1,7);
\node [above] at (-1,7) {$1$};
\draw [thick] (0,2) -- (-1,1);
\node [below] at (-1,1) {$6$};
\draw [thick] (0,2) -- (1,1);
\node [below] at (1,1) {$5$};
\draw [fill] (0,4) circle [radius=0.2];
\node [left] at (0, 4) {$v$};
\node at (0,9) {$G$};
\end{tikzpicture}
\qquad\qquad
\begin{tikzpicture}[scale=0.5]
\draw [thick] (-4,-2) -- (-4,2);
\draw [thick] (-4.5,1) -- (-2.5,3.5);
\draw [thick] (-4.5,-1) -- (-2.5,-3.5);
\draw [fill] (-3.5,2.25) circle [radius=0.15];
\node [above left] at (-3.5,2.25) {$2$};
\draw [fill] (-2.9,3) circle [radius=0.15];
\node [above left] at (-3,3) {$1$};
\draw [fill] (-4,0.5) circle [radius=0.15];
\node [left] at (-4,0.5) {$3$};
\draw [fill] (-4,-0.5) circle [radius=0.15];
\node [left] at (-4,-0.5) {$4$};
\draw [fill] (-3.5,-2.25) circle [radius=0.15];
\node [below left] at (-3.5,-2.5) {$5$};
\draw [fill] (-2.9,-3) circle [radius=0.15];
\node [below left] at (-3,-3) {$6$};
\draw [thick] [->] (-2,0) -- (1, 0);
\draw [thick] (3.5,-.5) -- (5.5,2);
\draw [thick] (3.5,.5) -- (5.5,-2);
\node [left] at (3.9,0) {$3=4$};
\draw [fill] (4.5,0.75) circle [radius=0.15];
\node [above left] at (4.5,0.75) {$2$};
\draw [fill] (5.1,1.5) circle [radius=0.15];
\node [above left] at (5.1,1.5) {$1$};
\draw [fill] (4.5, -0.75) circle [radius=0.15];
\node [below left] at (4.5,-0.75) {$5$};
\draw [fill] (5.1,-1.5) circle [radius=0.15];
\node [below left] at (5.1,-1.5) {$6$};
\draw [fill] (3.9,0) circle [radius=0.15];
\node at (-2, -3) {$C$};
\node at (6,-3) {$C'$};
\node [above] at (-0.5,0) {$\pi$};
\end{tikzpicture}
\caption{An example of contraction map}
\label{fig:contractionexample}
\end{figure}
\end{example}

Sometimes for two extremal assignments $Z_{1}, Z_{2}$, the corresponding contractions $\Mznb(Z_{1})$ and $\Mznb(Z_{2})$ are bijective. We would like to identify these cases.

\begin{definition}
Let $G \in S(n)$ and $W(G) \subset V(G)$. A vertex $v \in W(G)$ is called \emph{isolated} if none of the vertices adjacent to $v$ is in $W(G)$. 
\end{definition}

\begin{definition}
We say two extremal assignments $Z_{1}$ and $Z_{2}$ of order $n$ are \emph{equivalent} (denoted by $Z_{1} \sim Z_{2}$) if for any $G \in S(n)$, both $Z_{1}(G)\setminus Z_{2}(G)$ and $Z_{2}(G) \setminus Z_{1}(G)$ are unions of isolated 3-valenced points. 
\end{definition}

\begin{lemma}
The equivalence of extremal assignments is an equivalence relation. 
\end{lemma}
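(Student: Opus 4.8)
Reflexivity is immediate, since $Z(G)\setminus Z(G)=\emptyset$ for every $G\in S(n)$ and the empty set is vacuously a union of isolated $3$-valenced vertices; symmetry is built into the definition. So the only content is transitivity: assume $Z_{1}\sim Z_{2}$ and $Z_{2}\sim Z_{3}$, and let me show $Z_{1}\sim Z_{3}$. Since the hypotheses are symmetric in $Z_{1}$ and $Z_{3}$ (once symmetry of $\sim$ is known), it is enough to fix $G\in S(n)$ and verify that $Z_{1}(G)\setminus Z_{3}(G)$ is a union of isolated $3$-valenced vertices; the statement for $Z_{3}(G)\setminus Z_{1}(G)$ then follows by interchanging the roles of $Z_{1}$ and $Z_{3}$.

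First, every $v\in Z_{1}(G)\setminus Z_{3}(G)$ is $3$-valenced. If $v\notin Z_{2}(G)$ then $v\in Z_{1}(G)\setminus Z_{2}(G)$; if $v\in Z_{2}(G)$ then $v\in Z_{2}(G)\setminus Z_{3}(G)$. In either case $v$ lies in one of the difference sets controlled by the hypotheses, hence is $3$-valenced. This part is routine.

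The main step is that $Z_{1}(G)\setminus Z_{3}(G)$ contains no two adjacent vertices. Suppose for contradiction that $v,w\in Z_{1}(G)\setminus Z_{3}(G)$ are adjacent. Since $v,w\in Z_{1}(G)$ and $Z_{1}(G)\setminus Z_{2}(G)$ contains no adjacent pair, at least one of $v,w$ lies in $Z_{2}(G)$; since $v,w\notin Z_{3}(G)$ and $Z_{2}(G)\setminus Z_{3}(G)$ contains no adjacent pair, at least one of $v,w$ lies outside $Z_{2}(G)$. Hence exactly one of them, say $w$, lies in $Z_{2}(G)$ and the other, $v$, does not (the opposite labelling is handled by swapping $v$ and $w$). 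Then $v\in Z_{1}(G)\setminus Z_{2}(G)$ and $w\in Z_{2}(G)\setminus Z_{3}(G)$, so $v$ and $w$ are both $3$-valenced in $G$. Now let $G'\in S(n)$ be obtained by contracting the single edge joining $v$ and $w$ to a vertex $v''$, so that $G\rightsquigarrow G'$ and $\{v,w\}\rightsquigarrow v''$; because a $3$-valenced vertex contributes two edges other than the collapsed one, $v''$ has valence $3+3-2=4$ in $G'$. Applying Definition~\ref{def:assignment}(2) to $Z_{1}$ gives $v''\in Z_{1}(G')$ (as $v,w\in Z_{1}(G)$), and applying it to $Z_{2}$ gives $v''\notin Z_{2}(G')$ (as $v\notin Z_{2}(G)$). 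Thus $v''$ is a $4$-valenced vertex of $Z_{1}(G')\setminus Z_{2}(G')$, contradicting $Z_{1}\sim Z_{2}$. Hence no such $w$ exists, completing the proof of transitivity.

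The step that requires care — and the point where the extremal-assignment axiom, rather than bare set theory, is essential — is this edge contraction: collapsing the offending edge fuses two $3$-valenced vertices into a single $4$-valenced vertex, and condition (2) of Definition~\ref{def:assignment} transports membership in $Z_{1}$ and non-membership in $Z_{2}$ to the contracted graph, yielding a vertex that violates $Z_{1}\sim Z_{2}$. The only side verification is that $G'$ is again a stable $n$-labeled tree: contracting an edge between two internal vertices of a tree yields a tree, the leaf set is unchanged, and every internal vertex still has at least $3$ adjacent edges, so the axiom indeed applies.
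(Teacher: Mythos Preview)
Your proof is correct and follows essentially the same approach as the paper: both arguments contract the edge joining two adjacent vertices of $Z_{1}(G)\setminus Z_{3}(G)$ to produce a $4$-valenced vertex, then use axiom~(2) of Definition~\ref{def:assignment} to derive a contradiction with one of the hypothesized equivalences. The only minor difference is that the paper observes directly that a $4$-valenced vertex must have identical membership in $Z_{1}(G')$, $Z_{2}(G')$, and $Z_{3}(G')$ (since the symmetric differences contain only $3$-valenced vertices), obtaining $v''\in Z_{3}(G')$ and hence $v,w\in Z_{3}(G)$; you instead first pin down which of $v,w$ lies in $Z_{2}(G)$ and then contradict $Z_{1}\sim Z_{2}$ specifically. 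Your version is slightly longer but also more explicit, in particular in separately verifying that every element of $Z_{1}(G)\setminus Z_{3}(G)$ is $3$-valenced, a step the paper leaves implicit.
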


\begin{proof}
From the definition, $Z \sim Z$ and $Z_{1} \sim Z_{2} \Rightarrow Z_{2} \sim Z_{1}$ are obvious. Suppose that $Z_{1} \sim Z_{2}$ and $Z_{2} \sim Z_{3}$. Suppose that $v_{1}, v_{2}$ are two adjacent 3-valenced vertices in $Z_{1}(G) \setminus Z_{3}(G)$. Let $G' \in S(n)$ be the graph obtained by contracting $v_{1}, v_{2}$ to a 4-valenced vertex $w$ from $G$. Then $w \in Z_{1}(G')$. Since $w$ is a 4-valenced vertex, $Z_{1}(G') = Z_{2}(G') = Z_{3}(G')$. From $w \in Z_{3}(G')$, $v_{1}, v_{2} \in Z_{3}(G)$, so there arises a contradiction. So only one of $v_{1}$ and $v_{2}$ can be in $Z_{1}(G) \setminus Z_{3}(G)$ and $Z_{1}(G) \setminus Z_{3}(G)$ is a union of isolated 3-valenced points. $Z_{3}(G) \setminus Z_{1}(G)$ is the same.
\end{proof}

The below proposition justifies the definition of the equivalence relation. 

\begin{proposition}\label{prop:equivalentsamemodel}
Let $Z_{1}, Z_{2}$ be two extremal assignments. If $Z_{1}\sim Z_{2}$, then there is a homeomorphism between $\Mznb(Z_{1})$ and $\Mznb(Z_{2})$, which preserves $\Mzn$. In this case, if $X^{\nu}$ is the normalization of $X$, then $\Mznb(Z_{1})^{\nu} \cong \Mznb(Z_{2})^{\nu}$.
\end{proposition}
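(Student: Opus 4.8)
The plan is to reduce the statement to the case of a containment $Z'\subset Z$ of extremal assignments, prove that there the reduction map $\pi:=\pi_{Z',Z}\colon\Mznb(Z')\to\Mznb(Z)$ of Proposition~\ref{prop:functoriality} is a homeomorphism fixing $\Mzn$ and inducing an isomorphism on normalizations, and then apply this twice to a common lower bound of $Z_1$ and $Z_2$. For the reduction put $Z_0:=Z_1\cap Z_2$, an extremal assignment with $Z_0\subset Z_1$, $Z_0\subset Z_2$, and note $Z_1(G)\setminus Z_0(G)=Z_1(G)\setminus Z_2(G)$ for all $G$. First I would strengthen the hypothesis $Z_1\sim Z_2$ to: every $v\in Z_1(G)\setminus Z_2(G)$ is $3$-valenced and has \emph{no} neighbour in $Z_1(G)$ at all. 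Indeed $v$ is $3$-valenced by definition of $\sim$; and if a neighbour $w$ of $v$ lay in $Z_1(G)$, then as $Z_1(G)\setminus Z_2(G)$ has no two adjacent vertices we would have $w\in Z_2(G)$ too, so contracting the edge $vw$ gives $G'$ with $\{v,w\}\rightsquigarrow u$ where $\deg u=\deg v+\deg w-2\ge 4$; by the contraction axiom in the definition of an extremal assignment (Definition~\ref{def:assignment}) we get $u\in Z_1(G')$ but $u\notin Z_2(G')$, a vertex of valence $\ge 4$ in $Z_1(G')\setminus Z_2(G')$, contradicting $Z_1\sim Z_2$. The symmetric statement holds for $Z_2(G)\setminus Z_0(G)$. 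So it suffices to treat $Z'\subset Z$ such that, for all $G$, each $v\in Z(G)\setminus Z'(G)$ is $3$-valenced with no neighbour in $Z(G)$, and then apply the result to $Z_0\subset Z_1$ and $Z_0\subset Z_2$ (homeomorphisms and isomorphisms compose, and every map here fixes $\Mzn$).

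For such a $Z'\subset Z$, the map $\pi$ is a proper birational morphism restricting to the identity on $\Mzn$ by Proposition~\ref{prop:functoriality}, so the crux is bijectivity on closed points. Fix $y\in\Mznb(Z)$ and consider $\pi_Z^{-1}(y)\subset\Mznb$, where $\pi_Z=\pi\circ\pi_{Z'}$, so that $\pi^{-1}(y)=\pi_{Z'}(\pi_Z^{-1}(y))$. From the construction in Proposition~\ref{prop:functoriality} together with Remark~\ref{rem:contractedlocus}, $\pi_Z^{-1}(y)$ decomposes as a product, over the connected configurations of components of a stable model that are crushed by $Z$ to form $y$, of their moduli. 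Each such configuration is a connected subgraph all of whose vertices lie in $Z(G)$ for the relevant dual graph $G$. If a configuration contains some $v\in Z(G)\setminus Z'(G)$, then $v$ has no neighbour in $Z(G)$, so the configuration is the single vertex $\{v\}$, which is $3$-valenced and hence has trivial ($\overline{\mathrm{M}}_{0,3}$) moduli; and every other configuration consists entirely of vertices of $Z'(G)$, so $\pi_{Z'}$ crushes it exactly as $\pi_Z$ does. Therefore $\pi_{Z'}$ is constant on $\pi_Z^{-1}(y)$, i.e.\ $\pi^{-1}(y)$ is a single point; surjectivity is automatic since $\pi$ is proper and dominant. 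The geometric content is simply that $Z$-stability differs from $Z'$-stability only by crushing the rigid, moduli-free $3$-pointed $\PP^1$'s sitting at these isolated assigned vertices.

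A proper bijective morphism of separated finite-type algebraic spaces over $\CC$ is a homeomorphism (properness gives closedness, and a closed continuous bijection is a homeomorphism), which proves the first assertion, and compatibility with $\Mzn$ is built in. For normalizations, with $X=\Mznb(Z')$ and $Y=\Mznb(Z)$, the composite $X^\nu\to X\xrightarrow{\pi}Y$ is dominant with $X^\nu$ normal, hence factors through a morphism $g\colon X^\nu\to Y^\nu$ which is proper, birational, and quasi-finite (each fibre of $g$ injects into the corresponding fibre of the finite morphism $X^\nu\to X$ lying over a one-point fibre of $\pi$), hence finite; a finite birational morphism onto the normal space $Y^\nu$ is an isomorphism, so $\Mznb(Z')^\nu\cong\Mznb(Z)^\nu$. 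Applying this to $Z_0\subset Z_1$ and $Z_0\subset Z_2$ gives $\Mznb(Z_1)^\nu\cong\Mznb(Z_0)^\nu\cong\Mznb(Z_2)^\nu$ and, composing the two homeomorphisms, a homeomorphism $\Mznb(Z_1)\cong\Mznb(Z_2)$ fixing $\Mzn$.

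The step I expect to demand the most care is the bijectivity argument: one must make precise, using Remark~\ref{rem:contractedlocus} and the explicit description of the reduction maps, that $\pi_Z^{-1}(y)$ really is the asserted product over crushed configurations, and that for configurations meeting $Z(G)\setminus Z'(G)$ the ``isolated $3$-valenced'' property forces the configuration to be a single moduli-free $\PP^1$ — in particular, that isolatedness is preserved under the passage between dual graphs and the restriction operation $Z|_v$ that enters Remark~\ref{rem:contractedlocus}. Once bijectivity is secured, the topological and normalization conclusions are formal.
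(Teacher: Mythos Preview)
Your proof is correct and follows the same strategy as the paper: pass to $Z_0=Z_1\cap Z_2$, use Proposition~\ref{prop:functoriality} to obtain reduction maps $\Mznb(Z_0)\to\Mznb(Z_i)$, argue that these are bijective because the extra components crushed are $3$-pointed and hence have no moduli, and then deduce the normalization statement. Your strengthening (that each $v\in Z_1(G)\setminus Z_0(G)$ has \emph{no} neighbour in $Z_1(G)$, not merely none in $Z_1(G)\setminus Z_0(G)$) is a useful clarification the paper leaves implicit, and your normalization argument via Zariski's Main Theorem is a valid alternative to the paper's route through bijectivity of $\Mznb(Z_i)^\nu\to\Mznb(Z_i)$; both reach the same conclusion with comparable effort.
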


\begin{proof}
Suppose that $Z_{1} \sim Z_{2}$. Let $Z_{3} := Z_{1} \cap Z_{2}$. Then it is straightforward to check that $Z_{1} \sim Z_{3} \sim Z_{2}$. By Proposition \ref{prop:functoriality}, we have two morphisms 
\begin{equation}\label{eqn:decomposition}
	\xymatrix{&\Mznb(Z_{3}) \ar[rd]^{\pi_{Z_{3}, Z_{1}}} 
	\ar[ld]_{\pi_{Z_{3}, Z_{2}}}\\
	\Mznb(Z_{1}) && \Mznb(Z_{2}).}
\end{equation}
These are surjective since they are dominant morphisms between proper varieties. For $X := (C, x_{1}, x_{2}, \cdots, x_{n}) \in \Mznb(Z_{3})$, two maps $\pi_{Z_{3},Z_{1}}$ and $\pi_{Z_{3},Z_{2}}$ contract 3-pointed components of $X$ only. Since there is no positive-dimensional moduli on 3-pointed rational curves, $\pi_{Z_{3}, Z_{1}}$ and $\pi_{Z_{3}, Z_{2}}$ are bijective. 

Furthermore, since $\pi_{Z_{i}} : \Mznb \to \Mznb(Z_{i})$ is a morphism from a smooth variety with connected fibers, the normalization map $\Mznb(Z_{i})^{\nu} \to \Mznb(Z_{i})$ is bijective. Therefore $\Mznb(Z_{3})^{\nu} \to \Mznb(Z_{i})^{\nu}$ is a bijective map between normal varieties, so it is an isomorphism.
\end{proof}

If $Z_{2} \subset Z_{1}$, unless $Z_{1}(G)\setminus Z_{2}(G)$ is a disjoint union of isolated 3 valenced vertices, there is a further contraction on the moduli space. Thus $\pi_{Z_{2}, Z_{1}} : \Mznb(Z_{2}) \to \Mznb(Z_{1})$ is not injective. Thus we have the following partial converse.

\begin{proposition}\label{prop:bijectiveimpliesequivalence}
Suppose that $Z_{2} \subset Z_{1}$ and the reduction map $\pi_{Z_{2}, Z_{1}} : \Mznb(Z_{2}) \to \Mznb(Z_{1})$ is bijective. Then $Z_{1} \sim Z_{2}$. 
\end{proposition}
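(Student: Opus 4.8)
The plan is as follows. Since $Z_{2}\subset Z_{1}$ we have $Z_{2}(G)\subset Z_{1}(G)$ for every $G\in S(n)$, so $Z_{2}(G)\setminus Z_{1}(G)=\emptyset$, and the assertion $Z_{1}\sim Z_{2}$ reduces to: for every $G\in S(n)$, the set $Z_{1}(G)\setminus Z_{2}(G)$ is a union of isolated $3$-valenced vertices. I would prove this by contradiction. If it fails for some $G$, then $Z_{1}(G)\setminus Z_{2}(G)$ either (i) contains a vertex of valence $\ge 4$, or (ii) contains two adjacent vertices, and in each case I will produce a positive-dimensional fibre of $\pi_{Z_{2},Z_{1}}$, contradicting the assumed bijectivity.

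\emph{Step 1: reduce (ii) to (i).} Suppose $v,v'\in Z_{1}(G)\setminus Z_{2}(G)$ are adjacent. Since $v\ne v'$ both lie in $Z_{1}(G)$, the requirement $Z_{1}(G)\ne V(G)$ forces $|V(G)|\ge 3$. Let $G\rightsquigarrow G'$ be the contraction collapsing the edge $vv'$, so that $\{v,v'\}\rightsquigarrow w$ for a new vertex $w$; since $v$ and $v'$ are each at least $3$-valenced, $w$ is at least $4$-valenced, and $G'\in S(n)$. Applying the contraction axiom of Definition \ref{def:assignment} to $Z_{1}$ gives $w\in Z_{1}(G')$ (as $v,v'\in Z_{1}(G)$), and applying it to $Z_{2}$ gives $w\notin Z_{2}(G')$ (as $v\notin Z_{2}(G)$). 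So $w\in Z_{1}(G')\setminus Z_{2}(G')$ is a vertex of valence $\ge 4$, and it suffices to treat case (i).

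\emph{Step 2: case (i) produces a positive-dimensional fibre.} Let $v\in Z_{1}(G)\setminus Z_{2}(G)$ with $d:=|\ell(v)|+|E_{v}|\ge 4$. Since $v\notin Z_{2}(G)$, the restriction $Z_{2}|_{v}$ (Definition \ref{def:restriction}) is an extremal assignment of order $d$, and by Proposition \ref{prop:functoriality} the space $\overline{\mathrm{M}}_{0,d}(Z_{2}|_{v})$ is a birational contraction of $\overline{\mathrm{M}}_{0,d}$, so it has dimension $d-3\ge 1$. By Remark \ref{rem:contractedlocus}, inside $\Mznb(Z_{2})$,
\[
	\overline{D_{G}}\;\cong\;\prod_{w\in V(G)\setminus Z_{2}(G)}\overline{\mathrm{M}}_{0,|\ell(w)|+|E_{w}|}(Z_{2}|_{w}),
\]
while $\pi_{Z_{2},Z_{1}}$ maps $\overline{D_{G}}$ onto
\[
	\pi_{Z_{2},Z_{1}}(\overline{D_{G}})\;\cong\;\prod_{w\in V(G)\setminus Z_{1}(G)}\overline{\mathrm{M}}_{0,|\ell(w)|+|E_{w}|}(Z_{1}|_{w}),
\]
by dropping the factors indexed by the vertices of $Z_{1}(G)\setminus Z_{2}(G)$ and applying the reduction $\pi_{Z_{2}|_{w},Z_{1}|_{w}}$ in each of the remaining factors. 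As $v\in Z_{1}(G)\setminus Z_{2}(G)$, the factor $\overline{\mathrm{M}}_{0,d}(Z_{2}|_{v})$ is among those dropped, hence
\[
	\dim\overline{D_{G}}-\dim\pi_{Z_{2},Z_{1}}(\overline{D_{G}})\;\ge\;\dim\overline{\mathrm{M}}_{0,d}(Z_{2}|_{v})\;=\;d-3\;\ge\;1.
\]
So $\pi_{Z_{2},Z_{1}}$ restricted to $\overline{D_{G}}$ surjects onto a space of strictly smaller dimension and therefore has a positive-dimensional fibre; in particular $\pi_{Z_{2},Z_{1}}$ is not injective, contradicting the hypothesis. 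Hence no such $G$ exists and $Z_{1}\sim Z_{2}$.

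The step I expect to require the most care is the description of $\pi_{Z_{2},Z_{1}}$ on $\overline{D_{G}}$ used in Step 2 — that it is literally the product morphism forgetting the factors attached to the vertices of $Z_{1}(G)\setminus Z_{2}(G)$. This is the substance of Remark \ref{rem:contractedlocus}; the only work is to unwind it together with the book-keeping that $v\in V(G)\setminus Z_{2}(G)$ (so the factor $\overline{\mathrm{M}}_{0,d}(Z_{2}|_{v})$ of $\overline{D_{G}}$ is present) while $v\notin V(G)\setminus Z_{1}(G)$ (so that factor is absent from the image). Granting that, the dimension count is immediate and the argument is otherwise formal.
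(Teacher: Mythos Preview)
Your proof is correct and follows the same approach the paper sketches in the sentence immediately preceding the proposition: if $Z_{1}(G)\setminus Z_{2}(G)$ is not a union of isolated $3$-valenced vertices, then the reduction map contracts a positive-dimensional locus and hence is not injective. You have simply supplied the details (the reduction of case (ii) to case (i), and the dimension count via Remark~\ref{rem:contractedlocus}) that the paper leaves implicit.
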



\section{Examples}\label{sec:example}

Before investigating their combinatorics, we leave some important classes of examples of extremal assignments. 

\subsection{Trivial assignment}

If $Z(G) = \emptyset$ for every $G \in S(n)$, then there is no contracted component on any $(C, x_{1}, x_{2}, \cdots, x_{n}) \in \Mznb$. Therefore $\Mznb(Z) = \Mznb$. 

\subsection{Weight assignments}\label{ssec:weightassignment}

In \cite{Has03}, Hassett introduced a large family of modular contractions of $\Mznb$.

\begin{definition}
Fix weight data $A = (a_{1}, a_{2}, \cdots, a_{n})$, that is, a sequence of $a_{i} \in \QQ$ such that $0 < a_{i} \le 1$ and $\sum a_{i} > 2$. A pointed curve $(C, p_{1}, p_{2}, \cdots, p_{n})$ is \emph{$A$-stable} if 
\begin{enumerate}
\item $C$ is a connected, projective curve of arithmetic genus zero with at worst nodal singularities;
\item $p_{i}$'s are all smooth points on $C$;
\item For any smooth point $x \in C$, $\sum_{p_{i} = x}a_{i} \le 1$;
\item For every irreducible component $K$ of $C$, the number of singular points plus $\sum_{p_{i} \in K}a_{i}$ is greater than 2. 
\end{enumerate}
Let $\Mza$ be the moduli space of $A$-stable curves. 
\end{definition}

\begin{remark}
It is straightforwrad to check that $\Mza = \Mznb$ if $A = (1, 1, \cdots, 1)$. 
\end{remark}

\begin{theorem}[\protect{\cite[Theorem 4.1]{Has03}}]
For any weight data $A = (a_{1}, a_{2}, \cdots, a_{n})$, 
\begin{enumerate}
\item $\Mza$ is a smooth projective variety;
\item If $B = (b_{1}, b_{2}, \cdots, b_{n})$ is some other weight data such that $b_{i} \ge a_{i}$, there is a reduction map $\rho_{B, A} : \overline{\mathrm{M}}_{0, B} \to \Mza$. 
\end{enumerate}
\end{theorem}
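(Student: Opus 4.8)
The plan is to prove the statement in the genus-$0$ case needed here in three steps: (i) represent the functor of $A$-stable curves by a smooth proper algebraic space; (ii) produce an ample line bundle on it, upgrading it to a smooth projective variety; (iii) build the reduction morphism $\rho_{B,A}$ by contracting the subcurve that fails $A$-stability.

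For step (i), I would write down the moduli functor sending a scheme $S$ to flat proper families $\pi\colon\mathcal{C}\to S$ of connected arithmetic-genus-$0$ nodal curves with sections $\sigma_{1},\cdots,\sigma_{n}$ landing in the smooth locus, satisfying conditions (3) and (4) fibrewise, and verify Artin's representability axioms. The only nonformal input is the deformation theory of a nodal curve with its (possibly colliding) sections: a nodal curve is unobstructed because $\mathrm{Ext}^{2}(\Omega_{C},\mathcal{O}_{C})$ vanishes for dimension reasons, and the sections deform freely, so the functor is smooth of relative dimension $n-3$. Condition (4) is designed so that on an $A$-stable curve every component $K\cong\PP^{1}$ carries at least three distinct special points: if $K$ has $k$ nodes, then $k+\sum_{p_{i}\in K}a_{i}>2$ together with the bound $\sum_{p_{i}=x}a_{i}\le1$ at each smooth point forces $k+\#\{\text{marked loci on }K\}\ge3$. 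Hence $A$-stable curves have no automorphisms and the functor is a fine moduli space, i.e.\ a smooth algebraic space. Properness follows from the valuative criterion: over a DVR, spread out a given $A$-stable curve and, after a finite base change, replace it by a flat proper model with nodal special fibre and extended sections (semistable reduction); then contract the $A$-unstable components of the special fibre, iterating, to obtain an $A$-stable limit, whose uniqueness is the usual argument over a common dominating model using rigidity of $A$-stable curves.

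For step (ii), on the universal curve $\pi\colon\mathcal{C}\to\Mza$ fix $N$ clearing the denominators of the $a_{i}$ and set $\mathcal{L}=\omega_{\mathcal{C}/\Mza}^{\otimes N}(\sum Na_{i}\sigma_{i})$. On every fibre and every component $K$ one has $\deg\mathcal{L}|_{K}=N(k+\sum_{p_{i}\in K}a_{i}-2)>0$, which is exactly $A$-stability, so a high power of $\mathcal{L}$ is relatively very ample and $\det\pi_{*}\mathcal{L}^{\otimes m}$ for $m\gg0$ is an ample line bundle on $\Mza$ by Koll\'ar's ampleness lemma — semipositivity of the direct image, plus strict positivity from the fact that $\mathcal{L}$ separates curves with distinct moduli. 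Since a proper algebraic space carrying an ample line bundle is a projective scheme, $\Mza$ is a smooth projective variety. (In genus $0$ one may instead quote the explicit realisation of $\Mza$ as an iterated blow-up of $\PP^{n-3}$, or of $\Mznb$, which is manifestly projective.) This is the step I expect to be the main obstacle: smoothness and properness are essentially mechanical given standard curve deformation theory, whereas the ampleness of the tautological bundle genuinely requires Koll\'ar-type semipositivity of Hodge-theoretic direct images, together with the separate argument for strict positivity, or else the explicit birational geometry of $\Mza$.

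For step (iii), assume $b_{i}\ge a_{i}$. At any point of a $B$-stable curve, $\sum_{p_{i}=x}a_{i}\le\sum_{p_{i}=x}b_{i}\le1$, so condition (3) for $A$ holds automatically; the only possible failure of $A$-stability is a component $K$ with $\#\{\text{nodes on }K\}+\sum_{p_{i}\in K}a_{i}\le2$, necessarily a tail or a bridge. Contracting all such components, sending the marked points lying on $K$ to the image point, and iterating produces an $A$-stable curve; on a family $\mathcal{C}\to S$ this operation is the relative ample model of $\omega_{\mathcal{C}/S}^{\otimes N}(\sum Na_{i}\sigma_{i})$, hence functorial, and defines a morphism $\rho_{B,A}\colon\overline{\mathrm{M}}_{0,B}\to\Mza$. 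It restricts to the identity on the dense open locus of curves that are simultaneously $B$- and $A$-stable, so it is birational; taking $B=(1^{n})$ recovers the contraction $\Mznb\to\Mza$.
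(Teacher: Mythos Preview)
The paper does not prove this theorem; it is quoted verbatim from Hassett's paper \cite[Theorem~4.1]{Has03} and used as a black box, so there is no ``paper's own proof'' to compare against.  Your sketch is therefore not a comparison target but rather an attempted reproduction of Hassett's original argument, and as such it is broadly correct: representability and smoothness via deformation theory of nodal curves, properness via stable reduction followed by contraction of $A$-unstable components, projectivity via Koll\'ar's ampleness lemma applied to $\det\pi_{*}\bigl(\omega_{\pi}(\sum Na_{i}\sigma_{i})\bigr)^{\otimes m}$, and the reduction map $\rho_{B,A}$ as the relative $\mathrm{Proj}$ of the log-canonical algebra.  This is essentially Hassett's strategy.

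Two small remarks on your write-up.  First, your parenthetical that in genus~$0$ one may bypass Koll\'ar by realising $\Mza$ as an explicit iterated blow-up of $\PP^{n-3}$ is the cleanest route for this paper's purposes and is in fact what the present paper exploits later (Proposition~\ref{prop:blowup}, Corollary~\ref{cor:projective}); you might promote it from a parenthesis to the main argument.  Second, in step~(iii) the claim that an $A$-unstable component of a $B$-stable genus-$0$ curve is ``necessarily a tail or a bridge'' deserves a word of justification: since each $a_{i}>0$, a component $K$ with $k$ nodes and $k+\sum_{p_{i}\in K}a_{i}\le 2$ has $k\le 2$, and the tree structure of the dual graph then forces $K$ to be a leaf ($k=1$) or a chain vertex ($k=2$).
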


Any $\Mza$ can be described as $\Mznb(Z)$ for some $Z$. 

\begin{definition}\label{def:Hassettextremalassignment}
Let $A$ be a collection of weight data. Define an extremal assignment $Z_{A}$ as follows. For $G \in S(n)$, 
\[
	Z_{A}(G) = \{v \in G\;|\; \exists \; \mbox{ a tail } T \subset G, 
	v \in T, \sum_{i \in T}a_{i} \le 1\}.
\]
We will call $Z_{A}$ a \emph{weight assignment} from weight data $A$. 
\end{definition}

It was shown that $\Mza = \Mznb(Z_{A})$ (\cite[Example 1.11]{Smy13}). 

\subsection{Kontsevich-Boggi compactification}

Define $Z_{B}(G)$ as the set of all vertices without any labels on $G$. Then $Z$ is an extremal assignment. The corresponding birational model $\Mznb(Z_{B})$ is called \emph{Kontsevich-Boggi compactification}, which was introduced in \cite{Bog99} with completely different terminology. 

\subsection{GIT compactifications}\label{ssec:GITmodels}

All of the above examples were unified and generalized in \cite{GJM13}. There is a broader family of \emph{Veronese quotients} or \emph{GIT compactifications}. This is also a generalization of $(\PP^{1})^{n}\git \SL_{2}$, which is a natural compactification of $\Mzn$ from a viewpoint of invariant theory. It contains $\Mznb$ itself, Hassett's moduli spaces of weighted pointed curves, and Kontevich-Boggi compactification. 

Fix a positive integer $d$. We fix $(\gamma, c_{1}, c_{2}, \cdots, c_{n}) \in \QQ^{n+1}$ such that $(d-1)\gamma + \sum c_{i} = d + 1$ and $0 \le \gamma < 1$ and $0 < c_{i} < 1$ for all $i$. For $G \in S(n)$ and its tail $T$, $c_{T} := \sum_{i \in T}c_{i}$. Assume that $\frac{c_{T}-1}{1-\gamma}$ is not an integer for any $T$ (Remark \ref{rem:semistable}). Define a degree of $T$ as 
\begin{equation}\label{eqn:degreefunction}
	\sigma(T) = \begin{cases}
	\lceil \frac{c_{T}-1}{1-\gamma}\rceil, & 
	\mbox{if } 1 < c_{T} < c_{[n]}-1,\\
	0, & \mbox{if } c_{T} < 1,\\
	d, & \mbox{if } c_{T} > c - 1.\end{cases}
\end{equation}
Note that any vertex $v \in V(G)$ is a difference of two tails $T_{1} \subset T_{2}$, so we may define $\sigma(v) := \sigma(T_{2}) - \sigma(T_{1})$. Two combinatorial results in \cite[Section 3]{GJM13} are:
\begin{enumerate}
\item The degree function $\sigma : V(G) \to \ZZ$ is well-defined;
\item $\sum_{v \in V(G)}\sigma(v) = d$.
\end{enumerate}

\begin{definition}
Define an assignment $Z_{\gamma, \vec{c}}$ as the following:
\[
	Z_{\gamma, \vec{c}}(G) = \{v \in V(G)\;|\; \sigma(v) = 0\}
\]
Then it is indeed an extremal assignment (\cite[Proposition 5.7]{GJM13}). 
\end{definition}

\begin{theorem}[\protect{\cite[Theorem 1.1, Theorem 5.2]{GJM13}}]
For any parameters $d \in \NN$ and $(\gamma, \vec{c}) \in \QQ_{> 0}^{n+1}$ such that $(d-1)\gamma + \sum c_{i} = d + 1$, there is a projective variety $V_{\gamma, \vec{c}}^{d}$ with a birational contraction morphism $\Mznb \to V_{\gamma, \vec{c}}^{d}$ with connected fibers. The space $V_{\gamma, \vec{c}}^{d}$ is constructed as a GIT quotient $U_{d, n}\git_{\gamma, \vec{c}}\SL_{d+1}$ where $U_{d, n}$ is an incidence variety in $\mathrm{Chow}_{1,d}(\PP^{d}) \times (\PP^{d})^{n}$. Furthermore, if $\frac{c_{T} -1}{1-\gamma}$ is not an integer for every tail $T$ of $G \in S(n)$, $V_{\gamma, \vec{c}}^{d} \cong \Mznb(Z_{\gamma, \vec{c}})$.
\end{theorem}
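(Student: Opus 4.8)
Since this statement is imported verbatim from \cite{GJM13}, I will only sketch the architecture of the argument; the details are carried out there. The proof has three layers: the GIT construction and projectivity, the existence of the contraction morphism out of $\Mznb$, and the moduli-theoretic identification with $\Mznb(Z_{\gamma,\vec{c}})$ under the genericity hypothesis.

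First I would set up the parameter space. Let $U_{d,n} \subset \mathrm{Chow}_{1,d}(\PP^{d}) \times (\PP^{d})^{n}$ be (the closure of) the locus of tuples $(\Gamma; p_{1}, \dots, p_{n})$ where $\Gamma$ is a connected reduced curve of arithmetic genus $0$ and degree $d$ spanning $\PP^{d}$ and $p_{i} \in \Gamma$. The group $\SL_{d+1}$ acts, and after clearing denominators the data $(\gamma, c_{1}, \dots, c_{n})$ determines an ample fractional linearization $\cO_{\mathrm{Chow}}(\gamma) \boxtimes \cO(c_{1}) \boxtimes \cdots \boxtimes \cO(c_{n})$ on $U_{d,n}$. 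Set $V_{\gamma,\vec{c}}^{d} := U_{d,n} \git_{\gamma,\vec{c}} \SL_{d+1}$; as a GIT quotient of a projective variety it is projective, which is the projectivity assertion. The numerical identity $(d-1)\gamma + \sum c_{i} = d+1$ is exactly the condition that a rational normal curve of degree $d$ carrying $n$ general points has vanishing total Hilbert--Mumford weight against every one-parameter subgroup, i.e. it is the ``balanced'' normalization that renders the generic configuration (semi)stable.

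Next, for the contraction morphism, given $(C, x_{1}, \dots, x_{n}) \in \Mznb$ with dual graph $G$ I would produce a point of $U_{d,n}$ via the degree function $\sigma : V(G) \to \ZZ_{\ge 0}$ of \eqref{eqn:degreefunction} (with $\sum_{v} \sigma(v) = d$): choose a line bundle $L$ on $C$ with $\deg L|_{C_{v}} = \sigma(v)$ for each component, arranged to be globally generated with $h^{0}(L) = d+1$, so that the associated morphism $C \to \PP^{d}$ contracts precisely the components with $\sigma(v) = 0$ and embeds the rest, and the images of the $x_{i}$ supply the $n$ points. This defines a family over $\Mznb$, hence --- after verifying GIT-semistability of every such point by a Hilbert--Mumford computation that reduces to the linear stability of the weighted configuration (Mumford-type estimates for embedded curves adapted to $n$ weighted marked points) --- a morphism $\varphi : \Mznb \to V_{\gamma,\vec{c}}^{d}$. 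On $\Mzn$ the configuration is a smooth rational normal curve with general points and $\varphi$ is injective there, so $\varphi$ is birational; since $V_{\gamma,\vec{c}}^{d}$ is normal (a GIT quotient of a normal variety) and $\Mznb$ is proper, Stein factorization together with Zariski's main theorem forces $\varphi$ to have connected fibers, so it is a birational contraction.

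Finally, under the hypothesis $\frac{c_{T}-1}{1-\gamma} \notin \ZZ$ for every tail $T$ of every $G \in S(n)$, the function $\sigma$ is unambiguous on all of $S(n)$ and, crucially, $U_{d,n}$ has no strictly semistable points --- this is where the genericity is used. Hence $V_{\gamma,\vec{c}}^{d}$ is a geometric quotient and it in fact represents a moduli functor: the image of $(C, x_{1}, \dots, x_{n})$ under the $\sigma$-twisted embedding is exactly the curve obtained from $C$ by contracting the components indexed by $Z_{\gamma,\vec{c}}(G) = \{v \mid \sigma(v) = 0\}$, which is precisely a $Z_{\gamma,\vec{c}}$-stable curve in the sense of Definition \ref{def:Zstability}. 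Matching universal families, or invoking the uniqueness clause of Theorem \ref{thm:extassignmentmodel}, yields $V_{\gamma,\vec{c}}^{d} \cong \Mznb(Z_{\gamma,\vec{c}})$. I expect the main obstacle to be the third step of the second layer: checking GIT-semistability of the $\sigma$-twisted image of an \emph{arbitrary} stable curve, which needs a careful Hilbert--Mumford analysis extending the Mumford--Kempf stability estimates for embedded curves to the presence of weighted points; the genericity hypothesis is what removes the boundary (strictly semistable) phenomena and makes the moduli identification clean.
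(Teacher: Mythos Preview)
The paper does not prove this theorem at all: it is quoted as a black box from \cite{GJM13} (Theorems 1.1 and 5.2 there), with no accompanying proof or sketch. You correctly recognize this in your opening sentence, and your three-layer outline is a reasonable summary of the architecture of the argument in \cite{GJM13}, so there is nothing to compare against in the present paper.

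One small caution on your sketch: the claim that $V_{\gamma,\vec{c}}^{d}$ is normal because it is a GIT quotient of a normal variety presumes that $U_{d,n}$ (or at least its semistable locus) is normal, which is not automatic for incidence loci in Chow varieties; in \cite{GJM13} this point requires care. Otherwise your outline matches the structure of the cited source.
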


\begin{remark}\label{rem:semistable}
If $\frac{c_{T} - 1}{1-\gamma}$ is an integer for some tail $T$, then $\sigma$ is not well-defined. However, still $V_{\gamma, \vec{c}}^{d}$ has a moduli theoretic meaning in a weak sense, namely, as a good moduli space of an Artin stack. For the details, see \cite[Section 6.3]{GJM13}.
\end{remark}


\section{$n = 5$ case}\label{sec:n=5}

As a first nontrivial case, in this section we classify all equivalent classes of extremal assignments of order 5. Note that there are three topological types of 5-labeled trees. 
\begin{table}[!ht]
\begin{tabular}{|>{\centering\arraybackslash}m{2.2cm}|>{\centering\arraybackslash}m{10ex}|>{\centering\arraybackslash}m{10ex}|>{\centering\arraybackslash}m{10ex}|}\hline
type & I & II & III \\ \hline 
graph & 
\begin{tikzpicture}[scale=0.3]
\draw[line width = 1 pt] (5, 5) -- (2, 4);
\draw[line width = 1 pt] (5, 5) -- (8, 4);
\draw[line width = 1 pt] (5, 5) -- (3, 8);
\draw[line width = 1 pt] (5, 5) -- (7, 8);
\draw[line width = 1 pt] (5, 5) -- (5, 1);
\fill (5, 5) circle (10pt);
\end{tikzpicture} & 
\begin{tikzpicture}[scale=0.3]
\draw[line width = 1 pt] (5, 3) -- (2, 1);
\draw[line width = 1 pt] (5, 3) -- (8, 1);
\draw[line width = 1 pt] (5, 3) -- (5, 6);
\draw[line width = 1 pt] (5, 6) -- (8, 7);
\draw[line width = 1 pt] (5, 6) -- (2, 7);
\draw[line width = 1 pt] (5, 6) -- (5, 9);
\fill (5, 3) circle (10pt);
\fill (5, 6) circle (10pt);
\end{tikzpicture} & 
\begin{tikzpicture}[scale=0.3]
\draw[line width = 1 pt] (5, 3) -- (2, 1);
\draw[line width = 1 pt] (5, 3) -- (8, 1);
\draw[line width = 1 pt] (5, 4.5) -- (8, 4.5);
\draw[line width = 1 pt] (5, 3) -- (5, 6);
\draw[line width = 1 pt] (5, 6) -- (8, 8);
\draw[line width = 1 pt] (5, 6) -- (2, 8);
\fill (5, 6) circle (10pt);
\fill (5, 4.5) circle (10pt);
\fill (5, 3) circle (10pt);
\end{tikzpicture}
\\ \hline
contained in & $S_{1}(5)$ & $S_{2}(5)$ & $S_{3}(5)$\\ \hline
\end{tabular}
\medskip
\caption{topological types of dual graphs in $S(5)$}
\end{table}

\begin{lemma}\label{lem:assignmentandS2}
Suppose that $Z_{1}$ and $Z_{2}$ are two extremal assignments of order 5 and both $Z_{1}(G)\setminus Z_{2}(G)$ and $Z_{2}(G)\setminus Z_{1}(G)$ are empty or isolated 3-valenced for all $G \in S_{2}(5)$. Then $Z_{1} \sim Z_{2}$. 
\end{lemma}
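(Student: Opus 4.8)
The plan is to reduce the equivalence of $Z_1$ and $Z_2$ on all of $S(5)$ to their equivalence on $S_2(5)$, which is the hypothesis. Since $S(5) = S_1(5) \cup S_2(5) \cup S_3(5)$, and on $S_1(5)$ the unique graph is a star with a single $4$-valenced central vertex for which $Z_1(G) \ne V(G)$ and $Z_2(G) \ne V(G)$ force $Z_i(G) = \emptyset$ (so there is nothing to check there), the only remaining case is $G \in S_3(5)$. Such a $G$ is the ``caterpillar'' of type III: three $3$-valenced internal vertices $v_1, v_2, v_3$ in a path, with $v_1, v_3$ each carrying two labels and $v_2$ carrying one label. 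I would analyze $Z_1(G)$ and $Z_2(G)$ directly by contracting edges of $G$ to land in $S_2(5)$ and invoking the defining property (2) of extremal assignments together with the $S_2(5)$ hypothesis.

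The key steps, in order: (i) Fix $G \in S_3(5)$ of type III with path $v_1 - v_2 - v_3$. There are exactly two nontrivial contractions of $G$ into $S_2(5)$: contract the edge $v_1 v_2$ to get $G'$ with vertices $\{w, v_3\}$ (where $\{v_1, v_2\} \rightsquigarrow w$, so $w$ is $4$-valenced and $v_3$ is $3$-valenced), and contract $v_2 v_3$ to get $G''$ with vertices $\{v_1, w'\}$ similarly. (ii) Apply Definition \ref{def:assignment}(2) to $G \rightsquigarrow G'$: $v_1, v_2 \in Z_i(G) \Leftrightarrow w \in Z_i(G')$, and $v_3 \in Z_i(G) \Leftrightarrow v_3 \in Z_i(G')$; likewise for $G \rightsquigarrow G''$. (iii) Note that the $4$-valenced vertex $w$ (resp. $w'$) cannot be isolated-$3$-valenced, so by the hypothesis $w \in Z_1(G') \Leftrightarrow w \in Z_2(G')$ and $w' \in Z_1(G'') \Leftrightarrow w' \in Z_2(G'')$; hence $Z_1(G)$ and $Z_2(G)$ agree on the pair $\{v_1, v_2\}$ and on the pair $\{v_2, v_3\}$, so in fact they agree on $v_1$, $v_2$, $v_3$ individually — wait, one must be careful: agreement of the \emph{conjunctions} $v_1 \wedge v_2$ and $v_2 \wedge v_3$ does not immediately give agreement of each $v_j$. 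So instead I would argue: the statuses of $v_1$ and $v_3$ are controlled by the $3$-valenced vertex $v_3 \in Z_i(G')$ and $v_1 \in Z_i(G'')$ respectively, and the hypothesis allows these to differ between $Z_1$ and $Z_2$ only when the differing vertex is isolated $3$-valenced in the relevant graph. Tracking this back through the contractions, any vertex of $G$ on which $Z_1(G)$ and $Z_2(G)$ disagree must be $3$-valenced and isolated in $Z_i(G)$: if two adjacent vertices of $G$ were both in a symmetric difference, contracting the edge between them would produce a $4$-valenced vertex in the symmetric difference of $Z_1$ and $Z_2$ on a graph in $S_2(5)$, contradicting the hypothesis — this is exactly the mechanism used in the proof that $\sim$ is an equivalence relation, and I would reuse it verbatim.

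So the cleanest route is: suppose $v, v' \in V(G)$ are adjacent and both lie in $Z_1(G) \setminus Z_2(G)$ (or both in $Z_2(G) \setminus Z_1(G)$); since every internal vertex of a type-III graph is $3$-valenced, $v$ and $v'$ are $3$-valenced, so contracting the edge $vv'$ yields $G^\dagger \in S_2(5)$ with a $4$-valenced vertex $u$, and by Definition \ref{def:assignment}(2), $u \in Z_1(G^\dagger) \setminus Z_2(G^\dagger)$; but $u$ is $4$-valenced, not isolated $3$-valenced, contradicting the hypothesis on $S_2(5)$. Therefore no two adjacent vertices of $G$ lie in the same symmetric difference, i.e., $Z_1(G) \setminus Z_2(G)$ and $Z_2(G) \setminus Z_1(G)$ consist of isolated $3$-valenced points. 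Combined with the trivial $S_1(5)$ case and the hypothesis on $S_2(5)$, this shows $Z_1 \sim Z_2$. The main obstacle — really the only thing requiring care — is making sure the contraction $vv' \rightsquigarrow u$ genuinely lands in $S_2(5)$ and that property (2) transfers ``both in the symmetric difference'' correctly; this is routine given that type-III graphs have only $3$-valenced vertices, so there is no genuine difficulty, and the lemma is essentially a bookkeeping consequence of the argument already given for transitivity of $\sim$.
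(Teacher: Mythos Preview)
Your proposal is correct and, after the exploratory detour in the middle, lands on exactly the paper's argument: if $Z_1 \not\sim Z_2$ the failure must occur on some $G \in S_3(5)$ via two adjacent (necessarily $3$-valenced) vertices in the same set difference, and contracting that edge yields a $4$-valenced vertex in the corresponding set difference on a graph in $S_2(5)$, contradicting the hypothesis. The paper's proof is precisely this contraction-to-$S_2(5)$ contradiction, so your approach matches it.
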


\begin{proof}
Suppose not. If $Z_{1} \not\sim Z_{2}$, then there is a graph $G$ with two adjacent 3-valenced vertices $v_{1}, v_{2}$ in $Z_{1}(G) \setminus Z_{2}(G)$. Then $G \in S_{3}(5)$ and there is $G' \in S_{2}(5)$ such that $G \rightsquigarrow G'$ and $\{v_{1}, v_{2}\} \rightsquigarrow v'$. Then $\ell(v') = 4$ and $v' \in Z_{1}(G') \setminus Z_{2}(G')$. This makes a contradiction.
\end{proof}

\begin{lemma}\label{lem:avoidance}
Suppose that $Z$ is an extremal assignment and $G_{1}, G_{2} \in S_{2}(5)$. Assume that $v_{1}, v_{2}$ are 4-valenced vertices of $G_{1}$ and $G_{2}$ respectively. Furthermore, the labels adjacent to $v_{1}$ are $i, j, k$ and those adjacent to $v_{2}$ are $k, \ell, m$. Then at most one of $v_{1}, v_{2}$ can be assigned. 
\end{lemma}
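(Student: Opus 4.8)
The plan is to argue by contradiction. Suppose both $v_{1} \in Z(G_{1})$ and $v_{2} \in Z(G_{2})$. First I would pin down the combinatorics of $G_{1}$ and $G_{2}$. A tree in $S_{2}(5)$ has one vertex carrying two labels (hence $3$-valenced) and one carrying three labels (hence $4$-valenced); since $v_{1}$ (resp.\ $v_{2}$) is the $4$-valenced vertex of $G_{1}$ (resp.\ $G_{2}$), each of these graphs is completely determined by the three labels on its $4$-valenced vertex. So the other vertex $w_{1}$ of $G_{1}$ satisfies $\ell(w_{1}) = [5] \setminus \{i,j,k\}$ and the other vertex $w_{2}$ of $G_{2}$ satisfies $\ell(w_{2}) = [5] \setminus \{k, \ell, m\}$. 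In particular $\{i,j,k\} \cup \{k,\ell,m\}$ has five elements, so $i, j, k, \ell, m$ are distinct, $[5] = \{i,j,k,\ell,m\}$, $\ell(w_{1}) = \{\ell, m\}$, and $\ell(w_{2}) = \{i, j\}$.

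Next I would introduce the auxiliary graph $G \in S_{3}(5)$ whose internal vertices form the path $w_{1} - u - w_{2}$, with $\ell(w_{1}) = \{\ell, m\}$, $\ell(u) = \{k\}$, and $\ell(w_{2}) = \{i, j\}$; each internal vertex is then $3$-valenced, so $G$ is a stable $5$-labeled tree. There are two obvious contractions of $G$. Contracting the edge $u w_{2}$ merges $\{u, w_{2}\} \rightsquigarrow v_{1}$ (the merged vertex carries $\{k\} \cup \{i,j\} = \{i,j,k\}$, so the result is $G_{1}$), and contracting the edge $w_{1} u$ merges $\{w_{1}, u\} \rightsquigarrow v_{2}$ (the merged vertex carries $\{\ell, m\} \cup \{k\} = \{k, \ell, m\}$, so the result is $G_{2}$). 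Applying condition (2b) of Definition \ref{def:assignment} to the first contraction together with $v_{1} \in Z(G_{1})$ yields $u, w_{2} \in Z(G)$; applying it to the second together with $v_{2} \in Z(G_{2})$ yields $w_{1}, u \in Z(G)$. Hence $Z(G) = \{w_{1}, u, w_{2}\} = V(G)$, contradicting condition (2a) of Definition \ref{def:assignment}. Therefore at most one of $v_{1}, v_{2}$ is assigned.

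Since the argument amounts to exhibiting one well-chosen three-vertex graph, I do not expect a genuine obstacle; the only points that need care are the bookkeeping that forces the label sets of $G_{1}$ and $G_{2}$ (hence the distinctness of $i, j, k, \ell, m$) and the routine verification that the two edge-contractions of $G$ recover $G_{1}$ and $G_{2}$ with exactly the claimed vertex images.
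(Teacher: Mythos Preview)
Your proof is correct and is essentially the paper's argument written out in full detail: the graph $G$ you construct is exactly the ``common degeneration $K \in S_{3}(5)$'' the paper invokes, and the contradiction with axiom (2a) is the same. The paper just omits the explicit label-bookkeeping and the verification that the two edge-contractions of $K$ recover $G_{1}$ and $G_{2}$.
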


\begin{proof}
Suppose that $K \in S_{3}(5)$ is the common degeneration of $G_{1}$ and $G_{2}$. If $v_{1} \in Z(G_{1})$ and $v_{2} \in Z(G_{2})$, then all of three vertices of $K$ must be assigned. This violates the first condition of extremal assignments.
\end{proof}

\begin{proposition}\label{prop:extass5}
There are precisely 76 equivalent classes of extremal assignments of order 5. 
\end{proposition}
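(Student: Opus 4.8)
The plan is to enumerate all extremal assignments of order $5$ by analyzing the restrictions to $S_1(5)$, $S_2(5)$, and $S_3(5)$ separately, then using Lemma \ref{lem:assignmentandS2} to reduce the problem to understanding $Z$ on $S_2(5)$ modulo equivalence. By Lemma \ref{lem:assignmentandS2}, two extremal assignments agreeing up to isolated $3$-valenced vertices on all of $S_2(5)$ are equivalent, so a representative of an equivalence class is essentially determined by which $4$-valenced vertices of graphs in $S_2(5)$ it assigns — together with the behavior on the unique vertex of the star in $S_1(5)$, which (being $5$-valenced, hence not coverable by the ``isolated $3$-valenced'' clause) genuinely matters. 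So first I would set up the bookkeeping: a graph $G \in S_2(5)$ is determined by an unordered partition $[5] = B \sqcup B^c$ with $|B| = 2$, $|B^c| = 3$; there are $\binom{5}{2} = 10$ such graphs, each with one $3$-valenced vertex (the one carrying the $2$-element label set $B$) and one $4$-valenced vertex (carrying $B^c$). The $3$-valenced vertices are irrelevant up to equivalence, so the data reduces to a subset $\mathcal{F}$ of the $10$ three-element subsets of $[5]$ — namely the labels of the assigned $4$-valenced vertices — plus a bit ($0$ or $1$) recording whether the central vertex of the star is assigned.

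Next I would extract the constraints. The first axiom $Z(G) \neq V(G)$ applied to graphs in $S_1(5)$ forces the star's central vertex to be unassigned only if... actually it is automatically fine since there is a unique vertex. Wait — for the star $Z(G)\ne V(G)$ means the central vertex is \emph{not} assigned; so the star's vertex is always unassigned, killing that extra bit. Then the real constraints come from graphs in $S_3(5)$. Each $G \in S_3(5)$ has three vertices in a path $w_1 - w_2 - w_3$, with $w_1, w_3$ the $3$-valenced ends and $w_2$ the $3$-valenced middle; contracting any adjacent pair gives a graph in $S_2(5)$. The axioms then say: (i) not all three of $w_1, w_2, w_3$ are assigned (first axiom), and (ii) assignment on $S_3(5)$ is forced by assignment on the $S_2(5)$ contractions via condition (2) of Definition \ref{def:assignment}. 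The key contrapositive — captured in Lemma \ref{lem:avoidance} — is that if two graphs $G_1, G_2 \in S_2(5)$ share a common degeneration $K \in S_3(5)$ and $G_1$'s $4$-valenced vertex has label set $\{i,j,k\}$ and $G_2$'s has $\{k,\ell,m\}$, then at most one of them can be assigned. Two $3$-subsets $\{i,j,k\}, \{k,\ell,m\}$ of $[5]$ with a common degeneration are exactly those sharing \emph{exactly one} element (i.e. $|A \cap A'| = 1$); so the constraint is: $\mathcal{F}$ cannot contain two $3$-subsets meeting in a single point. Equivalently, any two members of $\mathcal{F}$ must intersect in at least $2$ points (they cannot be disjoint either, since two $3$-subsets of $[5]$ always meet). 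So admissible $\mathcal{F}$ are precisely the \emph{$2$-intersecting} families of $3$-subsets of $[5]$: families where any two members share at least two elements.

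Then the count is a finite combinatorial enumeration. The complement map $A \mapsto A^c$ sends $3$-subsets of $[5]$ to $2$-subsets, and $|A \cap A'| \ge 2 \iff |A^c \cap (A')^c| = |(A\cup A')^c| \ge \dots$; more directly, $|A \cap A'| \ge 2$ for $3$-subsets is equivalent to $A, A'$ both containing some common $2$-subset, i.e. the $2$-subsets $A^c, (A')^c$ are disjoint or... let me just enumerate by hand: a $2$-intersecting family of $3$-subsets of a $5$-set is either (a) empty; (b) a ``star'' $\{A : S \subseteq A\}$ for a fixed $2$-subset $S$ — but these overlap across different $S$; better to organize by cases. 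I would classify maximal such families: the ones of the form $\{A : |A| = 3,\ S \subseteq A\}$ for a $2$-set $S$ (there are $3$ such $A$'s, giving a family of size $3$), and the ``sunflower-free'' possibilities, then count all sub-families, using inclusion–exclusion to avoid double counting, arriving at the number $76$. Concretely: the number of $2$-intersecting families of $3$-subsets of $[5]$ can be tallied as $1$ (empty) $+\ 10$ (singletons) $+$ (pairs meeting in $2$ elements) $+ \cdots$; I would just do the finite sum and verify it equals $76$. \textbf{The main obstacle} is making sure the reduction to ``$2$-intersecting families of $3$-subsets of $[5]$'' is tight in both directions: that \emph{every} such family actually arises from an extremal assignment (constructing $Z$ from $\mathcal{F}$ and checking both axioms on all of $S(5)$, in particular that the forced values on $S_3(5)$ are consistent and never produce $Z(G) = V(G)$), and that the equivalence classes are in genuine bijection with the families — i.e. no two distinct $2$-intersecting families give equivalent assignments, which follows from Lemma \ref{lem:assignmentandS2} since a $4$-valenced vertex cannot be an isolated $3$-valenced vertex. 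Once both directions are nailed down, the enumeration is routine; I expect the construction direction (verifying the axioms for the candidate $Z$ attached to each $\mathcal{F}$) to require the most care, since one must check that assigning exactly the $4$-valenced vertices prescribed by $\mathcal{F}$, together with whatever that forces elsewhere by Definition \ref{def:assignment}(2), does not cascade into assigning all vertices of some three-vertex tree.
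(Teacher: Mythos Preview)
Your approach is correct and essentially identical to the paper's: both reduce via Lemmas \ref{lem:assignmentandS2} and \ref{lem:avoidance} to families $\mathcal{F}$ of $3$-subsets of $[5]$ with $B_i \cup B_j \ne [5]$ (equivalently, your ``$2$-intersecting'' condition), then count. The only difference is cosmetic: the paper passes to complements $b_i = B_i^c$, turning the problem into counting subgraphs of $K_5$ in which any two edges meet---these are the empty graph, stars, and triangles---yielding $1 + 65 + 10 = 76$ without the case analysis you anticipate.
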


\begin{proof}
By lemmas \ref{lem:assignmentandS2} and \ref{lem:avoidance}, to construct an extremal assignment, we have to choose a set of 3-sets $\cS := \{B_{i}\;|\; B_{i} \mbox { is a 3-subset of $[5]$}, B_{i} \cup B_{j} \ne [5]\}$. Then for each $\cS$, we can make an extremal assignment $Z_{\cS}$ by assigning 4-valenced vertices with 3 lables $B_{i}$ for a graph $G \in S_{2}(5)$. We will call the set $\cS$ the set of \emph{contraction indicators}. For each contraction indicator $\cS = \{B_{i}\}$, by taking the set of its complements $\{b_{i}\}$ where $b_{i} = B_{i}^{c}$, we obtain a subgraph of $K_{5}$ since $|b_{i}| = 2$. Furthermore, $B_{i} \cup B_{j}\ne [5]$ implies that $b_{i} \cap b_{j} \ne \emptyset$.Therefore, there is a one-to-one correspondence between the set of equivalent classes of extremal assignments of order 5 and 
\[
	\cC := \{H\;|\; H \mbox{ is a subgraph of } K_{5}, 
	\mbox{ any two edges of $H$ meet}\}.
\] 
There are exactly three types of such graphs: the empty graph, stars, and $K_{3}$'s. There are ${5 \choose 2} + 5 \times {4 \choose 2} + 5 \times {4 \choose 3} + 5 = 65$ stars and ${5 \choose 3} = 10$ $K_{3}$'s. 
\end{proof}

\begin{remark}
This result will be generalized in Section \ref{sec:divisorial} in terms of smooth extremal assignments. 
\end{remark}

\begin{proposition}
Every $\overline{\mathrm{M}}_{0,5}(Z)$ is isomorphic to $\overline{\mathrm{M}}_{0, A}$ for some weight data $A$. In particular, every $\overline{\mathrm{M}}_{0, 5}(Z)$ is a smooth projective variety. 
\end{proposition}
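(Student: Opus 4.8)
The plan is to go through the $76$ equivalence classes classified in Proposition~\ref{prop:extass5} and, for each one, exhibit weight data $A$ realizing $\overline{\mathrm{M}}_{0,5}(Z)$ via a weight assignment $Z_A$ (Definition~\ref{def:Hassettextremalassignment}). Since $\overline{\mathrm{M}}_{0,A}$ is always a smooth projective variety by Hassett's theorem, this immediately gives the ``in particular'' clause. Because $\overline{\mathrm{M}}_{0,5}(Z)$ depends (up to homeomorphism, and up to isomorphism after normalization) only on the equivalence class of $Z$ by Proposition~\ref{prop:equivalentsamemodel}, and in fact here $\overline{\mathrm{M}}_{0,5}(Z)$ is already a variety, it suffices to match each class to some $Z_A$ up to equivalence. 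By the proof of Proposition~\ref{prop:extass5}, a class is recorded by a set $\cS$ of $3$-subsets $B$ of $[5]$ with $B_i \cup B_j \ne [5]$ (equivalently, a subgraph $H \subset K_5$ in which any two edges meet), and $Z_{\cS}$ assigns precisely the $4$-valenced vertex of $G \in S_2(5)$ carrying label set $B$ whenever $B \in \cS$; the only vertices that ever get assigned have exactly three labels.

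The key step is a dictionary between weight data on $5$ points and the contraction indicators $\cS$. For weight data $A = (a_1,\dots,a_5)$ with $\sum a_i > 2$, the weight assignment $Z_A$ assigns a $4$-valenced vertex of $G \in S_2(5)$ with label set $B$ (here $|B| = 3$, so the other vertex carries the $2$-subset $B^c$, and $B$ is a tail) exactly when $\sum_{i \in B} a_i \le 1$, equivalently $\sum_{i \in B^c} a_i \ge 1 + (\sum a_i - 2) > 1$. So the contraction indicator of $Z_A$ is $\cS_A = \{B : |B| = 3,\ \sum_{i \in B} a_i \le 1\}$; note that if $\sum_{i\in B_1}a_i \le 1$ and $\sum_{i\in B_2}a_i\le 1$ then $\sum_{i\in B_1\cup B_2}a_i + \sum_{i\in B_1\cap B_2}a_i \le 2 < \sum a_i$, forcing $B_1\cap B_2\ne\emptyset$, i.e. $B_1\cup B_2\ne[5]$, consistent with the constraint. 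The task is then to show that every admissible $\cS$ arises as $\cS_A$ for suitable $A$: the empty $\cS$ comes from $A=(1,1,1,1,1)$ (or any weights all close to $1$); a star centered at $i$ (whose edges $b = B^c$ all contain a fixed label, so each $B$ avoids $i$) is realized by making $a_i$ large (close to $1$) and the remaining $a_j$ small enough that any three of them sum to $\le 1$ but no smaller configuration is forced — concretely one tunes which $3$-subsets of the small-weight labels sum below $1$; and a triangle $K_3$ on labels $\{i,j,k\}$ (so $\cS = \{[5]\setminus\{i,j\}, [5]\setminus\{i,k\}, [5]\setminus\{j,k\}\}$, the $3$-subsets meeting $\{i,j,k\}$ in exactly one element) is realized by giving $i,j,k$ intermediate weight and the other two small weight.

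The main obstacle is the case analysis for realizing every star: a star at $i$ in $K_5$ can have $1$, $2$, $3$, or $4$ edges, and the corresponding $\cS$ consists of the complements of those edges, so one must choose $a_i$ and the four other weights so that \emph{exactly} the prescribed set of $3$-subsets sums to $\le 1$. I expect this to be a short but slightly fiddly interpolation: fix $a_i = 1 - \varepsilon$ and assign the four remaining labels distinct small values $a_j \in (0, \tfrac{1}{3})$ chosen generically, then observe that as the $a_j$ range over a suitable region the induced family $\cS_A$ already exhausts all stars centered at $i$ (the key point being that a $3$-subset $B \not\ni i$ consists of three of the four small weights, and by ordering $a_2 < a_3 < a_4 < a_5$ one controls exactly which such triples fall below $1$). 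Alternatively — and this is probably cleaner and the route I would ultimately take — since we only need the \emph{existence} of \emph{some} weight data and the homeomorphism type is what matters, one can invoke the known fact that the Hassett spaces $\overline{\mathrm{M}}_{0,A}$ for $n=5$ already give all of $\overline{\mathrm{M}}_{0,5}$'s $\mathbb{Q}$-factorial contractions (they are exactly the blow-downs of $\overline{\mathrm{M}}_{0,5}$, a del Pezzo surface of degree $5$, at disjoint subsets of the five $(-1)$-curves that form its pentagon of boundary divisors), identify each equivalence class of extremal assignment with such a blow-down, and read off the matching weights; I would present the explicit weight-data construction for completeness but lean on this geometric picture to organize the case analysis.
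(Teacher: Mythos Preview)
Your proposal is correct and follows essentially the same approach as the paper: a case analysis over the possible contraction indicators $\cS$ (organized there by $k = |\cS| \in \{0,1,2,3,4\}$, with a sub-split at $k=3$ into the star and triangle cases), exhibiting for each an explicit weight datum $A$ with $Z_{A}$ matching $\cS$. The paper simply writes down concrete weights for each case---e.g.\ $((1/3)^{3},1^{2})$ for $k=1$ and $((1/6)^{2},(2/3)^{3})$ for the triangle---rather than describing an interpolation scheme or invoking the del Pezzo picture.
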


\begin{proof}
Let $k$ be the size of the associated contraction indicator. If $k = 0$, $Z$ is an empty assignment and $\overline{\mathrm{M}}_{0, 5}(Z) = \overline{\mathrm{M}}_{0, 5}$. If $k = 1$, (say the corresponding contraction indicator is $\{\{1,2,3\}\}$) then $\overline{\mathrm{M}}_{0, 5}(Z) \cong \overline{\mathrm{M}}_{0, ((1/3)^{3},1^{2})}$. If $k = 2$ and the contraction indicator is $\{\{1, 2, 3\}, \{1, 2, 4\}\}$, $\overline{\mathrm{M}}_{0, 5}(Z) \cong \overline{\mathrm{M}}_{0, ((1/4)^{2}, (1/2)^{2}, 1)}$. When $k = 3$, up to $S_{5}$-action, there are two cases. If $\{\{1, 2, 3\}, \{1, 2, 4\}, \{1, 3, 4\}\}$ is assigned, then $\overline{\mathrm{M}}_{0, 5}(Z) \cong \overline{\mathrm{M}}_{0, (1/5,(2/5)^{3},1)}$. If $\{\{1,2,3\}, \{1,2,4\}, \{1,2,5\}\}$ is assigned, then $\overline{\mathrm{M}}_{0, 5}(Z) \cong \overline{\mathrm{M}}_{0, ((1/6)^{2},(2/3)^{3})}$. Finally, if $k = 4$ and the contraction indicator is the set of 3-sets excluding 5, then $\overline{\mathrm{M}}_{0, 5}(Z) \cong \overline{\mathrm{M}}_{0, ((1/3)^{4},1)}$.
\end{proof}

\begin{remark}
However, the number of extremal assignments is already huge. There are 15 type III graphs. We may freely assign an arbitrary number of central vertices to make an extremal assignment. Therefore the number of extremal assignments of order 5 is larger than $2^{15}$. Note that all of these $2^{15}$ extremal assignments are equivalent to the empty assignment. 
\end{remark}


\section{Structure theorem for extremal assignments}\label{sec:structuretheorem}

In this section we will answer Question \ref{que:fundamentalquestion} and provide a structure theorem for extremal assignments, which will be useful to study concrete examples. 

Let $G \in S(n) \setminus S_{1}(n)$ be a star and $v$ be a central vertex of $G$. We say $(G, v)$ is called a \emph{basic pair}. For a basic pair $(G, v)$, we can assign a set partition $P := \{B_{1}, B_{2}, \cdots, B_{k}\}$ where $B_{i}$ is the set of labels on a tail of $(G, v)$. Note that $|P| \ge 3$ and there is $B_{i}$ such that $|B_{i}| \ge 2$. Conversely, for any set partition $P$ with $3 \le |P| \le n-1$, we can construct a basic pair $(G, v)$. Let $\cP(n)$ be the set of set partitions of $[n]$.

\begin{example}
Consider $G\in S_3(6)$ in Figure \ref{fig:contraction}. For the basic pair $(G, v_{1})$, the corresponding set partition is $P = \{\{1,2\},\{3\},\{4\},\{5,6\}\}$. For $(G', v')$ in Figure \ref{fig:contraction}, the corresponding set partition is $P' = \{\{1\}, \{2\}, \{3\}, \{4\}, \{5, 6\}\}$. 
\end{example}

\begin{definition}
Let $(G, v)$ be a basic pair and let $P \in \cP(n)$ be the associated set partition. 
\begin{enumerate}
\item An \emph{assignment generated by $(G, v)$} is an assignment $Z_{P}$ defined by 
\[
	Z_{P}(H) = \{w \in V(H)\;|\; \exists H \rightsquigarrow G, 
	w \rightsquigarrow v\}.
\]
\item An \emph{atomic extremal assignment} generated by $(G, v)$ is the smallest extremal assignment $Z$ such that $v \in Z(G)$. 
\end{enumerate}
\end{definition}

Let $P = \{B_{1}, B_{2}, \cdots, B_{r}\}$, $Q = \{C_{1}, C_{2}, \cdots, C_{s}\}$ be two set partitions of $[n]$. We say $P \le Q$ if for any $C_{i} \in Q$, there is $B_{j} \in P$ such that $C_{i} \subset B_{j}$. In this situation, we say that $Q$ is a \emph{refinement} of $P$ and $P$ is a \emph{corruption} of $Q$. With respect to this partial order, $\cP(n)$ is a partially ordered set. The maximum element is the complete partition $\{\{1\}, \{2\}, \cdots , \{n\}\}$, and the minimum element is $\{[n]\}$. 

\begin{definition}\label{def:vdash}
We denote by $P \preceq Q$ if $P \le Q$ and there is no $B \in P$ such that $B$ is a union of some singleton sets in $Q$. In other words, if $Q = \{\{i_{1}\}, \{i_{2}\}, \cdots, \{i_{r}\}, B_{1}, B_{2}, \cdots, B_{s}\}$ with $|B_{i}| \ge 2$, then there is no $B \in P$ such that $|B| \ge 2$ and $B \subset \{i_{1}, i_{2}, \cdots, i_{r}\}$. 
\end{definition}

\begin{lemma}\label{lem:curlyless}
Let $Q \le P$. Then there is $Q' \preceq P$ such that $Q \le Q'$. Furthermore, $Z_{Q} \subset Z_{Q'}$. 
\end{lemma}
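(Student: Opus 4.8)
The plan is to construct $Q'$ explicitly from $Q$ by "re-merging" exactly those blocks of $Q$ that are illegitimately split relative to $P$, and then verify the three required properties: $Q \le Q'$, $Q' \preceq P$, and $Z_Q \subset Z_{Q'}$. Write $Q = \{C_1, C_2, \cdots, C_s\}$ and $P = \{B_1, B_2, \cdots, B_r\}$ with $Q \le P$, so each $C_i$ is contained in a unique block $B_{\tau(i)}$ of $P$. The obstruction to $Q \preceq P$ is precisely the existence of some $B \in P$ with $|B| \ge 2$ that is a union of singleton blocks of $Q$; so for each such "bad" block $B$ of $P$, replace the singletons of $Q$ lying inside $B$ by the single block $B$ itself. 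Formally, let $\cB \subset P$ be the set of blocks $B$ with $|B|\ge 2$ that are a union of singleton blocks of $Q$, and define
\[
	Q' := \left(Q \setminus \bigcup_{B \in \cB}\{\{i\} : i \in B\}\right) \cup \cB.
\]
Since the blocks in $\cB$ are pairwise disjoint and the singletons removed are exactly their elements, $Q'$ is again a set partition of $[n]$.

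Next I would check $Q \le Q'$: every block of $Q'$ is either a block of $Q$ (hence trivially contained in a block of $Q$) or a block $B \in \cB$, but $B$ being a union of singletons of $Q$ means $B$ itself already appears "spread out" in $Q$, so... wait — I need the inequality in the right direction. Recall $Q \le Q'$ means $Q'$ refines $Q$, i.e. every block of $Q'$ sits inside a block of $Q$. That is false here; rather $Q'$ is obtained by \emph{merging} blocks of $Q$, so $Q' \le Q$. Let me re-read: "$Q \le Q'$" with the stated convention ($P \le Q$ iff $Q$ refines $P$) means $Q'$ is finer than $Q$. Merging makes things coarser. So in fact the correct reading is that we should be \emph{splitting} $Q$ further, or the lemma's $Q'$ lies between $Q$ and $P$ with $Q \le Q' \le P$ and $Q'$ is a coarsening of $P$ that still refines... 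Hmm. Since $Q \le P$ already, and we want $Q \le Q'$ and $Q' \preceq P$ (so in particular $Q' \le P$), $Q'$ sits strictly between: $Q \le Q' \le P$. Thus $Q'$ is a coarsening of $Q$ and a refinement of $P$. So the construction above (merging singletons of $Q$ inside bad blocks of $P$) does give $Q \le Q'$ (we only merged, making $Q'$ coarser than $Q$, i.e. $Q \le Q'$ in this convention — coarser means "smaller index set but each block larger", and $P \le Q$ meaning $Q$ refines $P$ forces "$\le$" to mean "coarser $\le$ finer"; merging $Q$ to $Q'$ makes $Q'$ coarser so $Q' \le Q$). I realize the direction convention needs to be pinned down carefully against Definition of $\le$ on $\cP(n)$; I would state it cleanly first and then the construction is forced: take $Q'$ to be the coarsening of $Q$ obtained by merging, for each block $B\in P$ with $|B|\ge 2$ that is a union of singletons of $Q$, those singletons back into $B$. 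One then checks $Q \le Q' \le P$ directly from the block containments, and $Q' \preceq P$ because after this surgery no block of $P$ of size $\ge 2$ is a union of singletons of $Q'$ (we have removed all such, and merging cannot create new all-singleton preimages since the newly added blocks have size $\ge 2$). A small point to verify: a block $B\in P$ with $|B|\ge2$ that was \emph{not} a union of singletons of $Q$ stays non-bad for $Q'$, since $Q'$ restricted to $B$ differs from $Q$ restricted to $B$ only if $B \in \cB$.

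Finally, for the inclusion $Z_Q \subset Z_{Q'}$: by definition $Z_{Q}(H) = \{w \in V(H) : \exists\, H \rightsquigarrow G_Q,\ w \rightsquigarrow v_Q\}$ where $(G_Q, v_Q)$ is a basic pair with set partition $Q$, and similarly for $Q'$. Since $Q \le Q'$ means $Q'$ refines $Q$ — equivalently the star $(G_{Q'}, v_{Q'})$ is obtained from $(G_Q, v_Q)$ by further subdividing tails — there is a contraction $G_{Q'} \rightsquigarrow G_Q$ carrying $v_{Q'} \rightsquigarrow v_Q$. Wait, again the direction: $Q'$ coarser than $Q$ (from the previous paragraph, $Q \le Q'$ with the pinned convention actually meaning $Q'$ \emph{coarser}) gives a contraction $G_{Q} \rightsquigarrow G_{Q'}$ with $v_Q \rightsquigarrow v_{Q'}$. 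Then for any $H$ with $H \rightsquigarrow G_Q$ and $w \rightsquigarrow v_Q$, composing gives $H \rightsquigarrow G_{Q'}$ with $w \rightsquigarrow v_{Q'}$, so $w \in Z_{Q'}(H)$; hence $Z_Q(H) \subset Z_{Q'}(H)$ for all $H$. The main obstacle I anticipate is purely bookkeeping: getting the direction of $\le$ (refinement versus corruption) consistent across the set-partition poset, the star graphs, and the assignments $Z_{(-)}$, and making sure the "re-merging" surgery is well-defined (the bad blocks of $P$ are pairwise disjoint, and a block can be bad only if \emph{all} of its $Q$-blocks are singletons). Once the conventions are fixed, each of the three verifications is a one-line check from the definitions, so there is no serious analytic or combinatorial difficulty beyond this.
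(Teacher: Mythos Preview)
Your proposal has a genuine error rooted in the direction of the partial order on $\cP(n)$, and this error propagates through the entire argument. In the paper's convention, $P \le Q$ means every block of $Q$ lies inside a block of $P$, so $Q$ is \emph{finer} and $P$ is \emph{coarser}; the complete partition is the maximum. Hence in the hypothesis $Q \le P$, it is $P$ that refines $Q$, and blocks of $P$ sit inside blocks of $Q$ (not the other way around, as you wrote). Consequently the obstruction to $Q \preceq P$ is a block $B \in Q$ (not in $P$) with $|B| \ge 2$ that happens to be a union of singleton blocks of $P$. Under the correct convention your ``bad blocks of $P$ that are unions of singletons of $Q$'' cannot exist at all: any block of $P$ lies in a single block of $Q$, so it cannot be a union of two or more singleton blocks of $Q$. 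Your construction is therefore vacuous and returns $Q' = Q$, which fails whenever $Q \not\preceq P$.

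The correct surgery is the opposite of what you propose: for each offending $B \in Q$, \emph{split} it into the singletons $\{i\}$, $i \in B$ (all of which are blocks of $P$), rather than merge anything. This produces $Q'$ strictly finer than $Q$, so $Q \le Q'$; one still has $Q' \le P$ because the new singleton blocks of $Q'$ are already blocks of $P$; and $Q' \preceq P$ since the bad blocks have been eliminated and no new ones are created. Crucially, splitting a non-singleton block $B$ of $Q$ into singletons removes the corresponding tail vertex from the star $G_Q$ (its labels become labels on the central vertex), so there is a genuine contraction $G_Q \rightsquigarrow G_{Q'}$ with $v_Q \rightsquigarrow v_{Q'}$, and composing with any $H \rightsquigarrow G_Q$ gives $Z_Q(H) \subset Z_{Q'}(H)$. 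Your merging construction goes the other way: it \emph{adds} a tail vertex, so one only gets $G_{Q'} \rightsquigarrow G_Q$, which would yield $Z_{Q'} \subset Z_Q$, the reverse of what is needed. Once you fix the direction and switch ``merge'' to ``split,'' your outline becomes essentially the paper's proof.
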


\begin{proof}
If $B \in Q$ is a union of singleton sets in $P$, by splitting it into singleton sets, we obtain $Q'$. Let $(G_{Q}, v_{Q})$ (resp. $(G_{Q'}, v_{Q}')$) be the basic pair associated to $Q$ (resp. $Q'$). Then $G_{Q} \rightsquigarrow G_{Q'}$ and $v_{Q} \rightsquigarrow v_{Q'}$. Therefore $Z_{Q}(G_{Q})  \subset Z_{Q'}(G_{Q})$. This implies $Z_{Q}(G) \subset Z_{Q'}(G)$ for all $G$.
\end{proof}

The following proposition describes an atomic extremal assignment generated by $(G, v)$. 

\begin{proposition}\label{prop:atomic}
Let $(G, v)$ be a basic pair with a set partition $P = \{B_{1}, B_{2}, \cdots, B_{r}\}$. An atomic extremal assignment generated by $(G, v)$ is
\[
	Z := \bigcup_{Q \le P,\; |Q| \ge 3}Z_{Q}
	= \bigcup_{Q \preceq P, \;|Q| \ge 3}Z_{Q}.
\]
\end{proposition}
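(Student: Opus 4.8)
The plan is to establish the two equalities and the ``atomic'' property in three stages. First, I would verify the second equality $\bigcup_{Q \le P,\, |Q| \ge 3}Z_{Q} = \bigcup_{Q \preceq P,\, |Q| \ge 3}Z_{Q}$: the inclusion $\supseteq$ is trivial since $Q \preceq P$ implies $Q \le P$, and for $\subseteq$ I would take any $Q \le P$ with $|Q| \ge 3$ and apply Lemma \ref{lem:curlyless} to produce $Q' \preceq P$ with $Q \le Q'$ and $Z_{Q} \subset Z_{Q'}$. The only subtlety is checking that the $Q'$ produced still satisfies $|Q'| \ge 3$; but splitting a block into singletons only increases the number of parts, so $|Q'| \ge |Q| \ge 3$. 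This disposes of the second equality, so from now on I write $Z := \bigcup_{Q \le P,\, |Q| \ge 3}Z_{Q}$.

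Second, I would show $Z$ is an extremal assignment and that $v \in Z(G)$. The latter is immediate: taking $Q = P$ gives $v \in Z_{P}(G) \subset Z(G)$. For the extremal assignment axioms, I would check condition (a) $Z(H) \ne V(H)$ and condition (b) (the contraction compatibility) directly from the definition $Z_{Q}(H) = \{w \mid \exists\, H \rightsquigarrow G_{Q},\ w \rightsquigarrow v_{Q}\}$. Condition (b) is essentially formal for each individual $Z_{Q}$ — if $H \rightsquigarrow H'$ and $\{w_{1},\dots,w_{k}\} \rightsquigarrow w'$, then $w' \rightsquigarrow v_{Q}$ via some $H' \rightsquigarrow G_{Q}$ iff all $w_{i} \rightsquigarrow v_{Q}$ via the composite $H \rightsquigarrow H' \rightsquigarrow G_{Q}$, using that contractions compose and that the preimage of $v_{Q}$ under a composite is the union of preimages — and condition (b) passes to unions. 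The real content is condition (a): I must rule out $Z(H) = V(H)$ for every $H$. Suppose $Z(H) = V(H)$; then for every vertex $w$ of $H$ there is a set partition $Q_{w} \le P$ with $|Q_{w}| \ge 3$ and a contraction $H \rightsquigarrow G_{Q_{w}}$ sending $w$ to the center. I would argue that this forces a contradiction by contracting $H$ all the way down: pick any two adjacent vertices or analyze the image of $H$ in a two-vertex quotient, and show that having every vertex ``assignable'' via some $Q \le P$ would force $P$ itself to collapse below $\{[n]\}$ or to have fewer than $3$ parts. Concretely, if $H$ has a two-vertex contraction $H \rightsquigarrow H_0 \in S_2(n)$ with label sets $I, I^c$, and both vertices of $H_0$ lie in $Z(H_0)$, then there are $Q_1, Q_2 \le P$ realizing each — but the vertex of $H_0$ with labels $I$ being the center of some $G_{Q_1}$ forces $I$ to be a union of blocks of $Q_1$, which (combined with $Q_1 \le P$) constrains $I$ relative to $P$; doing this for both vertices and both sides of every splitting, one finds $P$ must have a block meeting both $I$ and $I^c$ nontrivially for \emph{every} $I$, which is impossible once $|P| \ge 2$. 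I expect this verification of axiom (a) to be the main obstacle, since it is where the hypothesis $|Q| \ge 3$ (equivalently $|P| \ge 3$, inherited since $Q \le P$ and — wait, $Q \le P$ means $Q$ refines... no, $P \le Q$; here $Q \le P$ means $P$ refines $Q$, so $|Q| \le |P|$, hence $|Q| \ge 3$ is a genuine restriction) does its work.

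Third, I would prove minimality: if $Z'$ is any extremal assignment with $v \in Z'(G)$, then $Z \subset Z'$. It suffices to show $Z_{Q}(H) \subset Z'(H)$ for each $Q \le P$ with $|Q| \ge 3$ and each $H$. Fix $w \in Z_{Q}(H)$, so there is $H \rightsquigarrow G_{Q}$ with $w \rightsquigarrow v_{Q}$. By condition (b) of extremal assignments applied to this contraction, $w \in Z'(H)$ iff $v_{Q} \in Z'(G_{Q})$, so it reduces to showing $v_{Q} \in Z'(G_{Q})$ for every $Q \le P$ with $|Q| \ge 3$. Since $v \in Z'(G)$ and $(G,v)$ corresponds to $P$, condition (b) applied to $G \rightsquigarrow G$ gives nothing new, but I want to \emph{refine}: for $Q \le P$, there is a contraction $G_Q \rightsquigarrow G$ with $v_Q \rightsquigarrow v$ (refining the central partition corresponds to a contraction downward). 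Hmm — the direction must be checked: a refinement $P \le Q$ would give $G_Q \rightsquigarrow G_P$, but here $Q \le P$ means $P$ is a refinement of $Q$, so $G_P \rightsquigarrow G_Q$, i.e. $G \rightsquigarrow G_Q$ with $v \rightsquigarrow v_Q$. Then condition (b) for the extremal assignment $Z'$ applied to $G \rightsquigarrow G_Q$ sending (among other vertices) $v$ into the preimage of $v_Q$: if $v_Q \notin Z'(G_Q)$ then none of the preimage vertices, in particular $v$, lie in $Z'(G)$ — contradicting $v \in Z'(G)$. Hence $v_Q \in Z'(G_Q)$, completing the minimality argument and the proof. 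I should double-check at the outset which of $G \rightsquigarrow G_Q$ or $G_Q \rightsquigarrow G$ holds, as the whole minimality step hinges on it; the correct statement is $G \rightsquigarrow G_Q$ when $Q \le P$ (fewer blocks in $Q$ means $G_Q$ has fewer tails, so it is the more contracted graph), and this is exactly the setup under which condition (b) propagates the assignment of $v$ down to $v_Q$.
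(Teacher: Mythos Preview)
Your proposal has a substantial gap in the verification of axiom (b). You claim it is ``essentially formal for each individual $Z_{Q}$'' and that it ``passes to unions'', but neither assertion is correct in the direction you need. The backward implication --- that $w_{1}, \ldots, w_{k} \in Z(H)$ forces $w' \in Z(H')$ --- does \emph{not} pass to unions: if $w_{1} \in Z_{Q_{1}}(H)$ and $w_{2} \in Z_{Q_{2}}(H)$ with $Q_{1} \ne Q_{2}$, there is no single $Z_{Q}$ witnessing both, so you cannot immediately conclude anything about $w'$. Even for a single $Z_{Q}$ the backward direction is not formal: the hypothesis $w_{i} \in Z_{Q}(H)$ only says that \emph{some} contraction $H \rightsquigarrow G_{Q}$ sends $w_{i}$ to $v_{Q}$, and there is no reason this contraction factors through $H'$. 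In the paper this is Step~3, and it is the heart of the proof. One inducts on $k$; already the base case $k=1$ requires constructing a new coarsening $Q' \le Q$ adapted to $H'$. For $k \ge 2$ one takes two adjacent assigned vertices $v_{1} \in Z_{Q}(H)$, $v_{2} \in Z_{R}(H)$ and, in the delicate case $Z_{Q}(H) \cap Z_{R}(H) = \emptyset$, shows that $Q$ and $R$ admit a common refinement $P' \le P$ with $v_{1}, v_{2} \in Z_{P'}(H)$, thereby reducing to a single $Z_{P'}$. This merging step is the essential content you are missing.

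Your minimality argument also contains an error. You conclude that for $Q \le P$ there is a contraction $G \rightsquigarrow G_{Q}$ with $v \rightsquigarrow v_{Q}$, reasoning that ``fewer blocks in $Q$ means $G_{Q}$ has fewer tails, so it is the more contracted graph''. But singleton blocks do not create internal tail vertices, so $G_{Q}$ may well have \emph{more} internal vertices than $G_{P}$: e.g.\ with $P = \{\{1\},\{2\},\{3\},\{4,5\}\}$ and $Q = \{\{1,2\},\{3\},\{4,5\}\}$ one has $|V(G_{P})| = 2 < 3 = |V(G_{Q})|$; and with $P = \{\{1,2\},\{3,4\},\{5\},\{6\}\}$, $Q = \{\{1,2,3,4\},\{5\},\{6\}\}$ neither of $G_{P} \rightsquigarrow G_{Q}$ or $G_{Q} \rightsquigarrow G_{P}$ holds. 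The paper instead reaches $v_{Q} \in Z'(G_{Q})$ via a sequence of degenerations \emph{and} contractions starting from $(G,v)$, not a single contraction. Finally, your argument for axiom (a) is vague; the paper's is short and direct: for each label $i$, the vertex $w_{i}$ adjacent to $i$ lies in some $Z_{Q}(H)$, which forces $\{i\}$ to be a singleton block of $Q$ (since $w_{i}$ maps to the star's center $v_{Q}$), hence of $P$ as $P$ refines $Q$; doing this for every $i$ makes $P$ the complete partition, a contradiction.
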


\begin{proof}
For $Q \le P$, let $(G_{Q}, v_{Q})$ be the associated basic pair. 

\textsf{Step 1.} The second equality holds.

If $Q \le P$, then for any $v \in Z_{Q}(G)$, by Lemma \ref{lem:curlyless}, there is $Q'$ such that $v \in Z_{Q'}(G)$, $Q \le Q'$ and $Q' \preceq P$. Thus we have
\[
	\bigcup_{Q \le P, |Q| \ge 3}Z_{Q} \subset 
	\bigcup_{Q \preceq P, \;|Q| \ge 3}Z_{Q}.
\]
The opposite inclusion is obvious.

Next we show that $Z$ is indeed an extremal assignment. 

\textsf{Step 2.} For every $H \in S(n)$, $Z(H) \ne V(H)$.

Assume that $Z(H) = V(H)$. Then by definition, $V(H) = \bigcup_{Q \preceq P, \;|Q| \ge 3}Z_{Q}(H)$, where $Z_{Q}(H) = \{v \in V(H)\;|\; \exists H \rightsquigarrow G_{Q}, v \rightsquigarrow v_{Q}\}$. For each vertex $w_{i}$ adjacent to label $i$, $w_{i} \in Z_{Q}(H)$ for some $Q \le P$. Then $Q$ has a singleton set $\{i\}$. Since $P$ is a refinement of $Q$, $\{i\} \in P$. Therefore $P$ is the complete partition, which is a contradiction. Therefore $Z(H) \ne V(H)$.

\textsf{Step 3.} If $\pi : H \rightsquigarrow H'$ and $\{v_{1}, v_{2}, \cdots, v_{k}\} \rightsquigarrow v'$, then $v' \in Z(H')$ if and only if $\{v_{1}, v_{2}, \cdots, v_{k}\} \in Z(H)$.

Suppose that $v' \in Z(H')$. By definition of $Z$, there is $Q \preceq P$ such that $H' \rightsquigarrow G_{Q}$, $v' \rightsquigarrow v_{Q}$. Then $H \rightsquigarrow G_{Q}$ and $\{v_{1}, v_{2}, \cdots, v_{k}\} \rightsquigarrow v_{Q}$. Thus $v_{1}, v_{2}, \cdots, v_{k} \in Z(H)$. 

Conversely, assume that $\{v_{1}, v_{2}, \cdots, v_{k}\} \in Z(H)$. We will use induction on $k$. Consider the case of $k = 1$. There is $Q \preceq P$ with corresponding basic pair $(G_{Q}, v_{Q})$ so that $q : H \rightsquigarrow G_{Q}$ and $v \rightsquigarrow v_{Q}$. On the other hand, let $(G_{Q'}, v_{Q'})$ be the basic pair obtained from $H'$ by 1) contracted all non-contracted vertices in $\pi(q^{-1}(v_{Q}))$ (including $v'$) to a single vertex $v_{Q'}$ and 2) contract each tail connected to $v_{Q'}$ to a vertex. Let $Q'$ be the associated set partition. Then $Q' \le Q$. Thus $Q' \le P$ and $v' \in H'$ is in $Z(H')$. See Figure \ref{fig:k=1}.

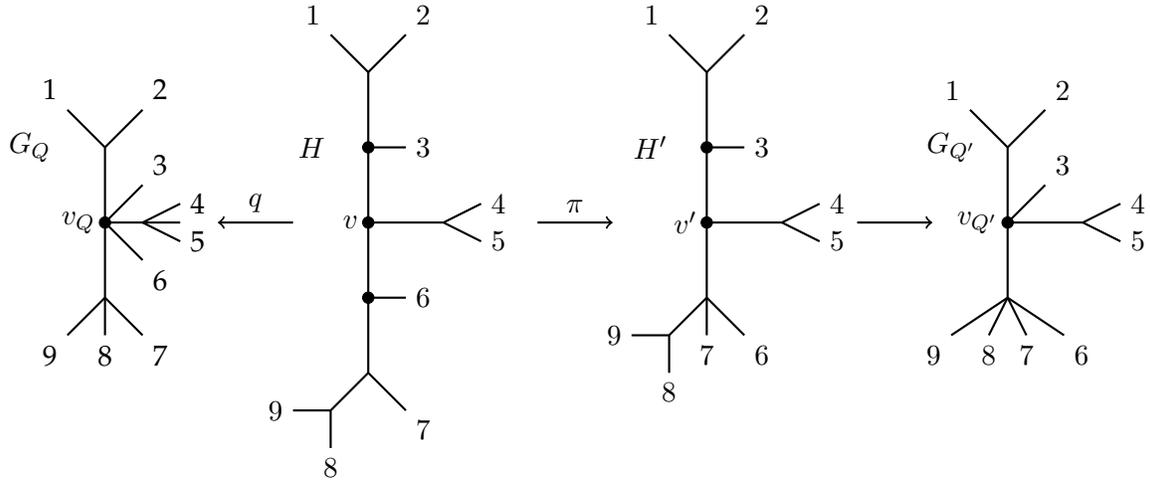
\begin{figure}[!ht]
\begin{tikzpicture}[scale=0.5]
\draw [thick] (0,-4) -- (0,4);
\draw [thick] (0,0) -- (2,0);
\draw [thick] (0,2) -- (1,2);
\node [right] at (1,2) {$3$};
\draw [thick] (0,-2) -- (1,-2);
\node [right] at (1,-2) {$6$};
\draw [thick] (0,4) -- (-1,5);
\node [above left] at (-1,5) {$1$};
\draw [thick] (0,4) -- (1,5);
\node [above right] at (1,5) {$2$};
\draw [thick] (0,-4) -- (1,-5);
\node [below right] at (1,-5) {$7$};
\draw [thick] (0,-4) -- (-1,-5);
\draw [thick] (-1,-5) -- (-2,-5);
\node [left] at (-2,-5) {$9$};
\draw [thick] (-1,-5) -- (-1,-6);
\node [below] at (-1,-6) {$8$};
\draw[thick] (2,0) -- (3,.5);
\node [right] at (3,.5) {$4$};
\draw[thick] (2,0) -- (3,-.5);
\node [right] at (3,-.5) {$5$};

\draw [thick] (9,-3) -- (9,4);
\node [below] at (9,-3) {$7$};
\draw [thick] (9,0) -- (11,0);
\draw [thick] (9,2) -- (10,2);
\node [right] at (10,2) {$3$};
\draw [thick] (9,4) -- (8,5);
\node [above left] at (8,5) {$1$};
\draw [thick] (9,4) -- (10,5);
\node [above right] at (10,5) {$2$};
\draw [thick] (9,-2) -- (10,-3);
\node [below right] at (10,-3) {$6$};
\draw [thick] (9,-2) -- (8,-3);
\draw [thick] (8,-3) -- (7,-3);
\node [left] at (7,-3) {$9$};
\draw [thick] (8,-3) -- (8,-4);
\node [below] at (8,-4) {$8$};
\draw [thick] (11,0) -- (12,.5);
\node [right] at (12,.5) {$4$};
\draw [thick] (11,0) -- (12,-.5);
\node [right] at (12,-.5) {$5$};

\draw [thick] [->] (4.5,0) -- (6.5,0);
\node [above] at (5.5,0) {$\pi$};

\draw [thick] (-7,-3) -- (-7,2);
\node [below] at (-7,-3) {8};
\draw [thick] (-7,0) -- (-5,0);
\draw [thick] (-7,0) -- (-6,1);
\node [above right] at (-6,1) {3};
\draw [thick] (-7,0) -- (-6,-1);
\node [below right] at (-6,-1) {6};
\draw [thick] (-7,2) -- (-8,3);
\node [above left] at (-8,3) {1};
\draw [thick] (-7,2) -- (-6,3);
\node [above right] at (-6,3) {2};
\draw [thick] (-7,-2) -- (-8,-3);
\node [below left] at (-8,-3) {9};
\draw [thick] (-7,-2) -- (-6,-3);
\node [below right] at (-6,-3) {7};
\draw [thick] (-6,0) -- (-5,.5);
\node [right] at (-5,.5) {4};
\draw [thick] (-6,0) -- (-5,-0.5);
\node [right] at (-5,-0.5) {5};
\draw [fill] (-7,0) circle [radius=0.15];
\node [left] at (-7,0) {${v}_{Q}$};
\node at (-9,2) {${G}_{Q}$};

\draw [thick] [->] (-2, 0) -- (-4, 0);
\node [above] at (-3,0) {$q$};

\draw [thick] (17,-2) -- (17,2);
\draw [thick] (17,0) -- (19,0);
\draw [thick] (19,0) -- (20,.5);
\node [right] at (20,.5) {$4$};
\draw [thick] (19,0) -- (20,-0.5);
\node [right] at (20,-0.5) {$5$};
\draw [thick] (17,0) -- (18,1);
\node [above right] at (18,1) {$3$};
\draw [thick] (17,2) -- (16,3);
\node [above left] at (16,3) {$1$};
\draw [thick] (17,2) -- (18,3);
\node [above right] at (18,3) {$2$};
\draw [thick] (17,-2) -- (15.5,-3);
\node [below left] at (15.5,-3) {$9$};
\draw [thick] (17,-2) -- (16.5,-3);
\node [below] at (16.5,-3) {$8$};
\draw [thick] (17,-2) -- (17.5,-3);
\node [below] at (17.5,-3) {$7$};
\draw [thick] (17,-2) -- (18.5,-3);
\node [below right] at (18.5,-3) {$6$};
\draw [fill] (17,0) circle [radius=0.15];
\node [left] at (17,0) {${v}_{Q'}$};
\node at (15.5,2) {${G}_{Q'}$};

\draw [thick] [->] (13,0) -- (15,0);

\draw [fill] (0,0) circle [radius=0.15];
\node [left] at (0,0) {$v$};
\draw [fill] (0,2) circle [radius=0.15];
\draw [fill] (0,-2) circle [radius=0.15];
\draw [fill] (9,0) circle [radius=0.15];
\node [left] at (9,0) {$v'$};
\draw [fill] (9,2) circle [radius=0.15];
\node at (-1.5,2) {$H$};
\node at (7.5,2) {$H'$};
\end{tikzpicture}
\caption{$k=1$ case}
\label{fig:k=1}
\end{figure}

Now consider a general case. Since $\{v_{1}, v_{2}, \cdots, v_{k}\}$ forms a connected subgraph of $H$, there are two adjacent vertices (say $v_{1}, v_{2}$). Then by definition of $Z$, $v_{1} \in Z_{Q}(H)$ and $v_{2} \in Z_{R}(H)$ for some $Q, R \preceq P$. If $Q = R$, then we can contract $v_{1}$ and $v_{2}$ to a single vertex $w$ and make a new graph $\overline{H}$. Then still $\overline{H} \rightsquigarrow G_{Q}$ and $w \rightsquigarrow v_{Q}$, thus $w \in Z(\overline{H})$. Now $\overline{H} \rightsquigarrow H'$ and $\{w, v_{3}, v_{4}, \cdots, v_{k}\} \rightsquigarrow v'$. So it is reduced to the $k-1$ vertices case, which is true by the induction hypothesis. If $Z_{Q}(H) \cap Z_{R}(H) \ne \emptyset$, then both $v_{1}, v_{2}$ are in one of them, say, $Z_{Q}(H)$. Then if we replace $R$ by $Q$, it is reduced to the $Q = R$ case. 

Finally, suppose that $Z_{Q}(H) \cap Z_{R}(H) = \emptyset$. If $Q = \{C_{1}, C_{2}, \cdots, C_{s}\}$ and $R = \{D_{1}, D_{2}, \cdots, D_{t}\}$, then there are $C_{i}$ and $D_{j}$ such that $C_{i} = \bigcup_{k \ne j}D_{k}$ and $D_{j} = \bigcup_{k \ne i}C_{k}$. Since $Q$ and $R$ are corruptions of $P$, $P$ is a refinement of $P' := \{C_{k}\}_{k \ne i} \bigcup \{D_{k}\}_{k \ne j}$. Then $P' \preceq P$ and for $H \rightsquigarrow G_{P'}$, both $v_{1}, v_{2}$ are contracted to the central vertex $v_{P'} \in V(G_{P'})$. Therefore, after replacing $Q, R$ by $P'$, we are reduced to the case of $Q = R$. 

\begin{figure}[!hb]
\begin{tikzpicture}[scale=0.7]
\draw [thick] (0,0) -- (0,3);
\node [above] at (0,3) {2};
\draw [thick] (0,2) -- (-1,3);
\node [above left] at (-1,3) {$1$};
\draw [thick] (0,2) -- (1,3);
\node [above right] at (1,3) {$3$};
\draw [thick] (-1,0) -- (1,0);
\node [left] at (-1,0) {$9$};
\node [right] at (1,0) {$4$};
\draw [thick] (0,0) -- (-1,-1);
\node [below left] at (-1,-1) {$8$};
\draw [thick] (0,0) -- (1,-1);
\node [below right] at (1,-1) {$5$};
\draw [thick] (0,0) -- (-.33,-1);
\node [below] at (-.33,-1) {$7$};
\draw [thick] (0,0) -- (.33,-1);
\node [below] at (.33,-1) {$6$};

\draw [thick] (7,-3) -- (7,3);
\node [above] at (7,3) {$2$};
\node [below] at (7,-3) {$8$};
\draw [thick] (7,0) -- (9,0);
\node [right] at (9,0) {$5$};
\draw [thick] (7,0) -- (9,1);
\node [above right] at (9,1) {$4$};
\draw [thick] (7,0) -- (9,-1);
\node [below right] at (9,-1) {$6$};
\draw [thick] (7,2) -- (6,3);
\node [above left] at (6,3) {$1$};
\draw [thick] (7,2) -- (8,3);
\node [above right] at (8,3) {$3$};
\draw [thick] (7,-2) -- (6,-3);
\node [below left] at (6,-3) {$9$};
\draw [thick] (7,-2) -- (8,-3);
\node [below right] at (8,-3) {$7$};

\draw [thick] (14,2) -- (14,-1);
\node [below] at (14,-1) {8};
\draw [thick] (14,0) -- (13,-1);
\node [below left] at (13,-1) {9};
\draw [thick] (14,0) -- (15,-1);
\node [below right] at (15,-1) {7};
\draw [thick] (13,2) -- (15,2);
\node [left] at (13,2) {1};
\node [right] at (15,2) {6};
\draw [thick] (14,2) -- (13,3);
\node [above left] at (13,3) {2};
\draw [thick] (14,2) -- (15,3);
\node [above right] at (15,3) {5};
\draw [thick] (14,2) -- (13.66,3);
\node [above] at (13.66,3) {3};
\draw [thick] (14,2) -- (14.33,3);
\node [above] at (14.33,3) {4};
\draw [thick] (14,2) -- (13,3);
\draw [fill] (0,2) circle [radius=0.1];
\draw [fill] (7,0) circle [radius=0.1];
\draw [fill] (14,2) circle [radius=0.1];
\node at (0,-3) {$Q = \{\{1\},\{2\},\{3\},\{4,5,6,7,8,9\}\}$};
\node at (7,4.5) {$R = \{\{1,2,3\},\{4\},\{5\},\{6\},\{7,8,9\}\}$};
\node at (14,-3) {$P' = \{\{1\},\{2\},\{3\},\{4\},\{5\},\{6\},\{7,8,9\}\}$};
\end{tikzpicture}
\caption{The case that $Z_{Q}(H) \cap Z_{R}(H) = \emptyset$ in Step 3.}
\label{fig:disjoint}
\end{figure}
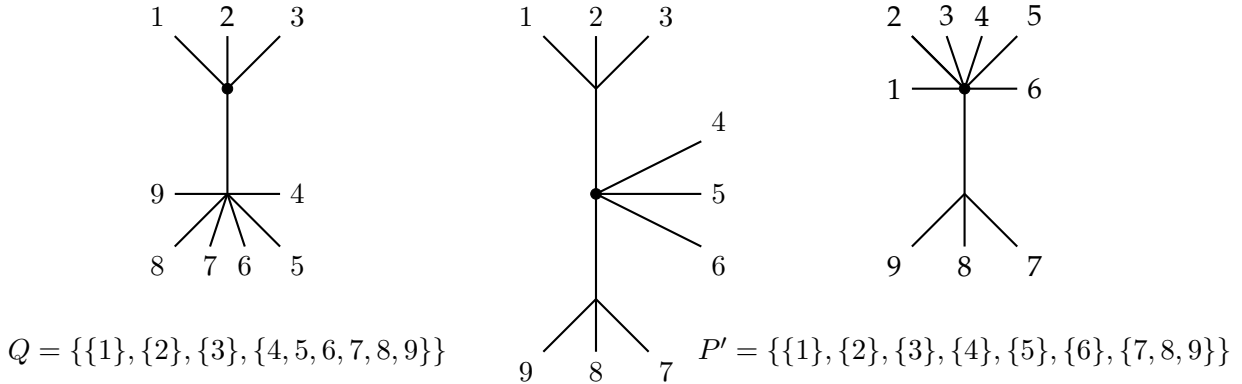

\textsf{Step 4.} $Z$ is the smallest extremal assignment that $v \in Z(G)$. 

Note that for any extremal assignment $Z'$ with $v \in Z'(G)$, by applying degeneration/contraction operations, one can check that $v_{Q} \in Z'(G_{Q})$ for $Q \preceq P$. By the second axiom of extremal assignments, the assignment $Z_{Q}$ generated by $(G_{Q}, v_{Q})$ is contained in $Z'$. Therefore $Z = \bigcup_{Q \preceq P, |Q| \ge 3}Z_{Q}$ is in $Z'$.
\end{proof}

\begin{remark}\label{rem:connectivity}
Let $Z$ be an atomic extremal assignment. Then for any $G \in S(n)$, $Z(G)$ is a connected subset of $V(G)$ if it is nonempty. 
\end{remark}

Every extremal assignment can be described using atomic extremal assignments. 

\begin{lemma}\label{lem:extremalssignmentunionofatomic}
Any nonempty extremal assignment $Z$ is a union of finitely many atomic extremal assignments. 
\end{lemma}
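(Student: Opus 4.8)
The plan is to show that a nonempty extremal assignment $Z$ equals the union of all atomic extremal assignments $Z^{(G,v)}$ generated by those basic pairs $(G,v)$ for which $v$ is actually assigned by $Z$, i.e.\ for which $v \in Z(G)$; and then to argue that this union, though a priori infinite, is really finite.

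First I would set $\cB := \{(G,v) : (G,v) \text{ is a basic pair and } v \in Z(G)\}$ and define $W := \bigcup_{(G,v) \in \cB} Z^{(G,v)}$, where $Z^{(G,v)}$ denotes the atomic extremal assignment generated by $(G,v)$ (equivalently, by Proposition \ref{prop:atomic}, $Z^{(G,v)} = \bigcup_{Q \preceq P,\, |Q|\ge 3} Z_{Q}$ for the set partition $P$ associated to $(G,v)$). The inclusion $W \subset Z$ is immediate: each $Z^{(G,v)}$ with $(G,v)\in\cB$ is by definition the \emph{smallest} extremal assignment with $v \in Z(G)$, hence is contained in $Z$, and a union of sub extremal assignments of $Z$ is again contained in $Z$ (this last point needs a one-line check that the set-theoretic union of things inside $Z$ stays inside $Z$, which is clear). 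For the reverse inclusion $Z \subset W$, take any $H \in S(n)$ and any $w \in Z(H)$; I want to produce some $(G,v) \in \cB$ with $w \in Z^{(G,v)}(H)$. The idea is to degenerate $H$ so as to pull $w$ apart into a star. Concretely, choose a degeneration $\pi : H' \rightsquigarrow H$ such that the preimage $\pi^{-1}(w)$ is a star with central vertex $v$, and every tail hanging off $v$ in $H'$ is a single edge carrying a connected block of labels — so that $(\pi^{-1}(w)$ viewed with its induced labels, $v)$ is a basic pair $(G,v)$ after restricting. By axiom (2) of extremal assignments (Definition \ref{def:assignment}), since $w \in Z(H)$, every vertex of $\pi^{-1}(w)$ lies in $Z(H')$, and in particular $v \in Z(G)$ where $G$ is the basic-pair graph determined by $\pi^{-1}(w)$ (one uses the restriction $Z|_{w}$ from Definition \ref{def:restriction} here, plus axiom (2) again to see that assignment is compatible with passing to the sub-star). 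Hence $(G,v) \in \cB$. Then $H' \rightsquigarrow G$ and the chosen vertex over $w$ maps to $v$, so by the definition of $Z^{(G,v)}$ (or of the generated assignment $Z_{P}$) we get $w \in Z^{(G,v)}(H)$. This gives $Z \subset W$, so $Z = W$.

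It remains to reduce the a priori infinite union $W$ to a finite one. The key observation is that an atomic extremal assignment $Z^{(G,v)}$ depends only on the set partition $P$ of $[n]$ associated to $(G,v)$ — this is exactly Theorem \ref{thm:structurethmintro}(1) / Proposition \ref{prop:atomic} — and there are only finitely many set partitions $P$ of $[n]$ with $3 \le |P| \le n-1$. So although $\cB$ is infinite (many graphs $H$, many vertices $w$ yield the same $P$), the collection $\{Z^{(G,v)} : (G,v)\in\cB\}$ is a finite set of atomic extremal assignments; picking one representative basic pair for each $P$ that occurs, $Z$ is the union of those finitely many atomic extremal assignments.

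The main obstacle, and the step I would spend the most care on, is the middle paragraph: given $w \in Z(H)$, producing the right degeneration $H' \rightsquigarrow H$ and checking cleanly that $v$ is assigned in the associated basic-pair graph $G$. One has to make sure (i) that one can always degenerate a vertex $w$ into a star whose tails are \emph{connected} label blocks giving a genuine set partition with at least $3$ and at most $n-1$ parts — here stability of $H$ (every internal vertex $3$-valent or more) guarantees $w$ has at least $3$ tails, and the existence of at least one block of size $\ge 2$ or enough labels total is what keeps $|P|\le n-1$; and (ii) that the passage from "$w \in Z(H)$" to "$v \in Z(G)$" is legitimate, which is where axiom (2) of Definition \ref{def:assignment} applied to $H' \rightsquigarrow H$ does the work, together with, if needed, the compatibility of $Z$ with restriction $Z|_{w}$ (so that contracting away the tails of the star does not lose the assignment of the central vertex). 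Once that is in place, the rest is bookkeeping: the inclusion $W\subset Z$ is formal from minimality of atomic assignments, and the finiteness is immediate from the finiteness of the set of admissible set partitions.
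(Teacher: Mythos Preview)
Your overall framework is right and matches the paper's: write $Z$ as the union of the atomic sub-assignments $Z^{(G,v)}$ over all basic pairs $(G,v)$ with $v\in Z(G)$, check both inclusions, and then observe that atomic assignments are indexed by finitely many set partitions. The inclusion $W\subset Z$ and the finiteness argument are fine.

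The gap is in the reverse inclusion. You try to go from $w\in Z(H)$ to a basic pair by \emph{degenerating} $H$ to some $H'\rightsquigarrow H$ whose fiber over $w$ is a star; this is the wrong direction. To land $w$ in an atomic assignment you need a \emph{contraction} $H\rightsquigarrow G$ with $w\rightsquigarrow v$ and $v\in Z(G)$; having $H'\rightsquigarrow G$ does not give this. If instead you try to push the conclusion through axiom (2) along $H'\rightsquigarrow H$, you would need \emph{every} vertex of $\pi^{-1}(w)$ to lie in $Z^{(G,v)}(H')$, but your construction only places the central vertex $v$ there (the star leaves map to non-central vertices of $G$, which are not assigned in the atomic $Z^{(G,v)}$). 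Also, the restriction $Z|_{w}$ of Definition \ref{def:restriction} is only defined for \emph{un}assigned $w$, so it cannot be invoked here.

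The paper's fix is simple and worth internalizing: given $w\in Z(H)$, take the maximal connected subset $W\subset Z(H)$ containing $w$, contract $W$ to a single vertex $v'$ (so $v'\in Z(H')$ by axiom (2)), and then contract each tail adjacent to $v'$ to a single vertex. The result is a star $G''$ with central vertex $v''$, one has $H\rightsquigarrow G''$ with $w\rightsquigarrow v''$, and $v''\in Z(G'')$ again by axiom (2). Hence $(G'',v'')\in\cB$ and $w\in Z_{P''}(H)\subset Z^{(G'',v'')}(H)$, which is exactly what you need. (In fact the preliminary step of contracting $W$ is not strictly necessary; contracting the tails of $w$ directly already yields a valid basic pair, since $H\notin S_{1}(n)$ forces at least one tail with $\ge 2$ labels.)
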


\begin{proof}
For any $G \in S(n)$ and $v \in Z(G)$, take the maximal connected subset $W$ of $Z(G)$ containing $v$. By contracting $W$ to a single vertex $v'$, we obtain $G'$ and $v' \in Z(G')$. By contracting each tail to a vertex adjacent to $v'$, we obtain a star $G''$ and a central vertex $v''$ so that $G \rightsquigarrow G''$, $v \rightsquigarrow v''$, and $v'' \in Z(G'')$. Let $Z_{v}$ be the atomic extremal assignment generated by $(G'', v'')$. Then $v \in Z_{v}(G)$ and $Z_{v} \subset Z$, since it is the smallest extremal assignment where $v$ is assigned. Now it is clear that 
\[
	Z = \bigcup_{v \in Z(G), G \in S(n)}Z_{v}.
\]
\end{proof}

\begin{definition}\label{def:commonupperbound}
\begin{enumerate}
\item Two partitions $P, Q \in \cP(n)$ are called \emph{incompatible} if $P \not\le Q$ and $Q \not\le P$. 
\item For $P, Q \in \cP(n)$, we say that $R$ is a \emph{common upper bound} if $P \le R$ and $Q \le R$. A common upper bound $R$ is \emph{tight} if every $B \in R$ is in either $P$ or $Q$. 
\item Two partitions $P \ne Q \in \cP(n)$ are \emph{transversal} if $P, Q$ do not have a tight common upper bound. $P, Q$ are \emph{strongly transversal} if for every $P' \preceq P$, $Q' \preceq Q$ such that $|P'|, |Q'| \ge 3$ and $P'$ and $Q'$ are incompatible, $P'$ and $Q'$ are transversal. 
\end{enumerate}
\end{definition}

\begin{proposition}\label{prop:extremaltightbound}
Let $Z$ be an extremal assignment. 
\begin{enumerate}
\item There are finitely many atomic extremal assignments $Z_{1}, Z_{2}, \cdots, Z_{k}$ with corresponding set partitions $P_{1}, P_{2}, \cdots, P_{k}$ such that 
\begin{enumerate}
\item $P_{i}$ is not the complete partition;
\item If $i \ne j$, $P_{i}$ and $P_{j}$ are incompatible;
\item $Z = \bigcup_{i=1}^{k}Z_{i}$.
\end{enumerate}
\item Suppose that $Q_{i} \preceq P_{i}$ and $Q_{j} \preceq P_{j}$. If $R$ is a tight common upper bound of $Q_{i}$ and $Q_{j}$, then $R \le P_{\ell}$ for some $P_{\ell}$.
\end{enumerate}
\end{proposition}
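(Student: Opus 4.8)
The plan is to prove the two parts of Proposition~\ref{prop:extremaltightbound} in sequence, using Lemma~\ref{lem:extremalssignmentunionofatomic} and Proposition~\ref{prop:atomic} as the main inputs.

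For part (1): By Lemma~\ref{lem:extremalssignmentunionofatomic}, $Z$ is a union of finitely many atomic extremal assignments $Z_P$, indexed by set partitions $P$ with $3 \le |P| \le n-1$. First I would discard any $Z_P$ where $P$ is the complete partition; but note that $Z_P$ for the complete partition assigns exactly those vertices adjacent to a single label, and the proof of Step~2 in Proposition~\ref{prop:atomic} shows such an assignment forces $Z(H) = V(H)$ on an appropriate $H$, so in fact such a $P$ cannot appear among the generators of a genuine extremal assignment — hence (a) is automatic once we start from the decomposition of Lemma~\ref{lem:extremalssignmentunionofatomic}. For (b), suppose two generators $P_i \le P_j$ are comparable. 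Then by Lemma~\ref{lem:curlyless} (or directly from the description $Z_{P_i} = \bigcup_{Q \preceq P_i, |Q|\ge 3} Z_Q$ in Proposition~\ref{prop:atomic}, since any $Q \preceq P_i$ satisfies $Q \le P_i \le P_j$), we get $Z_{P_i} \subset Z_{P_j}$, so $P_i$ is redundant and can be dropped. Iterating removes all comparabilities while preserving the union, giving (c). One should check this terminates — it does, since each step strictly decreases the finite number of generators.

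For part (2): this is the heart of the matter. Let $Q_i \preceq P_i$, $Q_j \preceq P_j$, and let $R$ be a tight common upper bound, so every block of $R$ lies in $Q_i$ or in $Q_j$. The idea is to realize the central vertex $v_R$ of the basic pair $(G_R, v_R)$ as an assigned vertex of $Z$ via a degeneration argument, then read off which $P_\ell$ must dominate $R$. Concretely: since $Q_i \preceq P_i$, Proposition~\ref{prop:atomic} (Step~4, or the description of $Z_{P_i}$) gives $v_{Q_i} \in Z_{P_i}(G_{Q_i}) \subset Z(G_{Q_i})$, and similarly $v_{Q_j} \in Z(G_{Q_j})$. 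Now build a graph $H$ that simultaneously degenerates to $G_{Q_i}$ and to $G_{Q_j}$ in a way that the preimages of $v_{Q_i}$ and $v_{Q_j}$ share a common vertex $w$ — the tightness of $R$ is exactly what makes such an $H$ exist, since the blocks of $Q_i$ and $Q_j$ that are \emph{not} blocks of $R$ get recombined at $w$, and tightness says $R$'s blocks partition cleanly into the "$Q_i$-side" and "$Q_j$-side." In $H$, the vertex $w$ lies in both $Z_{P_i}(H)$ (via $H \rightsquigarrow G_{Q_i}$, $w \rightsquigarrow v_{Q_i}$) and $Z_{P_j}(H)$, hence $w \in Z(H)$. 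Contracting $H$ down to $G_R$ sends $w \rightsquigarrow v_R$, so by the second axiom of extremal assignments $v_R \in Z(G_R)$. Finally, since $Z = \bigcup_\ell Z_{P_\ell}$, there is some $\ell$ with $v_R \in Z_{P_\ell}(G_R)$, i.e. (by definition of $Z_{P_\ell}$) there is a degeneration $G_R \rightsquigarrow G_{P_\ell}$ with $v_R \rightsquigarrow v_{P_\ell}$; unwinding this degeneration of basic pairs gives precisely $R \le P_\ell$.

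I expect the main obstacle to be the explicit construction of the graph $H$ in part (2) and the verification that tightness of $R$ is both necessary and sufficient for $H$ to exist with the desired property that $q_i^{-1}(v_{Q_i})$ and $q_j^{-1}(v_{Q_j})$ overlap in a single vertex $w$. This is the same phenomenon already handled in Step~3 of Proposition~\ref{prop:atomic} (the case $Z_Q(H) \cap Z_R(H) = \emptyset$, via the auxiliary partition $P'$), and I would model the argument on that: the tight upper bound $R$ plays the role of the "combined" partition, and the blocks of $Q_i$, $Q_j$ not appearing in $R$ are the ones that get merged at $w$. The bookkeeping of labels and tails when passing between the basic pairs $(G_{Q_i}, v_{Q_i})$, $(G_{Q_j}, v_{Q_j})$, $(G_R, v_R)$, and $H$ is routine but must be done carefully; everything else follows formally from Proposition~\ref{prop:atomic} and the axioms.
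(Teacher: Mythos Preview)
Your Part (1) is fine and matches the paper's argument: decompose via Lemma~\ref{lem:extremalssignmentunionofatomic}, then discard non-maximal generators using the containment $Z_{P_i} \subset Z_{P_j}$ when $P_i \le P_j$ from Proposition~\ref{prop:atomic}. (Your discussion of the complete partition is unnecessary, since basic pairs by definition have $|P| \le n-1$.)

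Part (2) has a genuine gap. The inference ``$w \in Z(H)$ and $w \rightsquigarrow v_R$ under $H \rightsquigarrow G_R$, so by the second axiom $v_R \in Z(G_R)$'' is invalid: the axiom requires \emph{every} preimage of $v_R$ to lie in $Z(H)$, not just $w$. More seriously, the graph $H$ you describe --- with a single vertex $w$ lying in both $q_i^{-1}(v_{Q_i})$ and $q_j^{-1}(v_{Q_j})$ --- need not exist. Take $n=6$, $Q_i = \{\{1,2\},\{3\},\{4,5,6\}\}$, $Q_j = \{\{1,2,3\},\{4\},\{5,6\}\}$, $R = \{\{1,2\},\{3\},\{4\},\{5,6\}\}$. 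In any stable tree $H$, the subtree $q_i^{-1}(v_{Q_i})$ must separate labels $\{1,2\}$ from $\{4,5,6\}$ and contain label $3$, while $q_j^{-1}(v_{Q_j})$ must separate $\{1,2,3\}$ from $\{5,6\}$ and contain label $4$; the first lies strictly to the ``$\{1,2\}$-side'' of label $4$ and the second strictly to the ``$\{5,6\}$-side'' of label $3$, so they cannot overlap (any vertex in the overlap would have valence at most $2$).

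The paper's fix is exactly the two-vertex picture you allude to from Step~3 of Proposition~\ref{prop:atomic}, but applied differently: one builds a degeneration $F \rightsquigarrow G_R$ in which the preimage of $v_R$ is a pair of \emph{adjacent} vertices $\{v_1,v_2\}$, with $v_1 \rightsquigarrow v_{Q_i}$ under $F \rightsquigarrow G_{Q_i}$ and $v_2 \rightsquigarrow v_{Q_j}$ under $F \rightsquigarrow G_{Q_j}$. Tightness of $R$ is what allows the blocks of $R$ to be distributed as tails of $v_1$ (those in $Q_i$) and tails of $v_2$ (those in $Q_j$). Then $v_1,v_2 \in Z(F)$ separately, and since $\{v_1,v_2\}$ is the \emph{entire} preimage of $v_R$, the axiom gives $v_R \in Z(G_R)$. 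Your instinct to model on Step~3 was right; the error was insisting the two preimages \emph{share} a vertex rather than be \emph{adjacent disjoint} singletons.
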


\begin{proof}
The existence of finitely many atomic extremal assignments is obtained by Lemma \ref{lem:extremalssignmentunionofatomic}. By Proposition \ref{prop:atomic}, if $P_{i} \le P_{j}$, then $Z_{i} \subset Z_{j}$. Therefore, by eliminating non-maximal $P_{i}$'s, we obtain a finite collection of set partitions satisfying (a), (b), and (c). This proves (1).

Let $(G, v)$ be the basic pair associated to $R$. Then there is a degeneration $F \rightsquigarrow G$ such that 
\begin{enumerate}
\item $\{v_{1}, v_{2}\} \rightsquigarrow v$;
\item Two basic pairs $(F_{i}, w_{i})$ and $(F_{j}, w_{j})$ associated to $Q_{i}$ and $Q_{j}$ respectively have the following property: there is a contraction $F \rightsquigarrow F_{i}$ (resp. $F \rightsquigarrow F_{j}$) so that $v_{1} \rightsquigarrow w_{i}$ (resp. $v_{2} \rightsquigarrow w_{j}$). 
\end{enumerate}
Since $w_{i} \in Z(F_{i})$ and $w_{j} \in Z(F_{j})$, $v_{1}, v_{2} \in Z(F)$. Then $v \in Z(G)$. Therefore $v \in Z_{\ell}(G)$ for some $\ell$. Then by Proposition \ref{prop:atomic}, $R \le P_{\ell}$. 
\end{proof}

The following is a converse. 

\begin{theorem}\label{thm:structurethm}
Let $\{P_{1}, P_{2}, \cdots, P_{k}\}$ be a set of incomplete set partitions of order $n$ such that $P_{i} \not\le P_{j}$ for any $i \ne j$. Let $(G_{i}, v_{i})$ be the basic pair corresponding to $P_{i}$ and let $Z_{i}$ be the atomic extremal assignment generated by $(G_{i}, v_{i})$. Suppose that for any $i \ne j$ and $Q_{i} \preceq P_{i}$ and $Q_{j} \preceq P_{j}$, if $R$ is a tight common upper bound of $Q_{i}$ and $Q_{j}$, then there is $P_{\ell}$ such that $R \le P_{\ell}$. Then $Z := \bigcup_{i=1}^{k}Z_{i}$ is extremal.
\end{theorem}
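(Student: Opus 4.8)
We must verify the two axioms of Definition \ref{def:assignment} for $Z = \bigcup_{i=1}^k Z_i$. The second axiom (compatibility with contractions) is the easier one: by Proposition \ref{prop:atomic} each $Z_i$ satisfies $w \in Z_i(H') \Leftrightarrow \pi^{-1}(w) \subseteq Z_i(H)$ for a contraction $\pi: H \rightsquigarrow H'$, and I would run through exactly the same argument as Step 3 of the proof of Proposition \ref{prop:atomic}: if $v' \in Z(H')$ then $v' \in Z_i(H')$ for some $i$, hence $\pi^{-1}(v') \subseteq Z_i(H) \subseteq Z(H)$; conversely, if $\{v_1,\dots,v_k\} = \pi^{-1}(v') \subseteq Z(H)$, I induct on $k$, using the fact (Remark \ref{rem:connectivity}) that each $Z_i(H)$ is connected, so that at each step two adjacent vertices $v_1, v_2$ lie in some $Z_{Q_1}(H)$ and $Z_{Q_2}(H)$ for corruptions $Q_1 \preceq P_{i_1}$, $Q_2 \preceq P_{i_2}$; the key new input is precisely the tight-upper-bound hypothesis, which lets me replace $Q_1, Q_2$ by a common corruption $P'$ of the relevant $P_\ell$ (playing the role of $P'$ in the original Step 3), contract $v_1, v_2$ to a single vertex still assigned by $Z$, and reduce to $k-1$.

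**The main obstacle.** The real content is the first axiom: $Z(H) \ne V(H)$ for every $H \in S(n)$. This is where the transversality condition does genuine work, and where the proof in the paper that "union need not be extremal" shows the hypothesis cannot be dropped. I would argue by contradiction: suppose $Z(H) = V(H)$ for some $H$. Choose $H$ minimal (fewest vertices, or minimal in the poset $S(n)$ among such counterexamples). Since each $Z_i$ is atomic hence each $Z_i(H)$ is a connected proper subset, and their union is all of $V(H)$, there must be at least two of them meeting $H$ nontrivially with their supports covering $V(H)$; I would reduce first to $H$ a star (by contracting each tail of $H$ to a single vertex adjacent to the relevant internal vertex — since assigned status is preserved under contraction by axiom 2, which we already verified, a counterexample on any $H$ propagates to a counterexample on a star lying above it), so $H = G_R$ for a single set partition $R$ with central vertex $v$. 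Then $V(H) = \{v\}$, so $v \in Z_i(H)$ for some $i$, which by Proposition \ref{prop:atomic} forces $R \le P_i$ — but then $v \notin V(H) \setminus Z_i(H)$ and we already have $Z(H) \ne V(H)$ trivially. So the genuine difficulty is with non-star $H$: I instead pick $H$ with $V(H) = Z(H)$ and $|V(H)|$ minimal among all counterexamples. Take two adjacent vertices $v_1, v_2$, say $v_1 \in Z_i(H)$, $v_2 \in Z_j(H)$.

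**Closing the argument.** If $v_1, v_2$ lie in the same $Z_i(H)$, or in $Z_i$ and $Z_j$ with overlapping supports, contracting them to a single vertex $w$ gives $H'$ with $w \in Z(H')$ and (by the $k=1$ case and axiom 2) all other vertices of $H'$ still in $Z(H')$, so $Z(H') = V(H')$, contradicting minimality. The remaining case is $v_1 \in Z_{Q_1}(H)$, $v_2 \in Z_{Q_2}(H)$ with $Q_1 \preceq P_i$, $Q_2 \preceq P_j$, supports disjoint. As in Step 3 of Proposition \ref{prop:atomic}, disjointness of $Z_{Q_1}(H), Z_{Q_2}(H)$ on adjacent vertices forces $Q_1, Q_2$ to have a tight common upper bound $R$ (explicitly, some $C \in Q_1$ equals $\bigcup_{\text{other blocks}} Q_2$ and vice versa), so by hypothesis $R \le P_\ell$ for some $\ell$; then $v_1, v_2$ are both contracted to the central vertex in $H \rightsquigarrow G_R$, hence both lie in $Z_\ell(H)$, reducing to the overlapping case already handled. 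Thus no minimal counterexample exists, and $Z(H) \ne V(H)$ for all $H$. I expect the bookkeeping in matching up "tight common upper bound of $Q_i, Q_j$" with "the two vertices get identified under contraction to $G_R$" — essentially re-deriving the combinatorial picture of Figure \ref{fig:disjoint} in the present setting — to be the fiddly step, but it is entirely parallel to what has already been established for atomic assignments.
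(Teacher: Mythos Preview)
Your proposal is correct and follows essentially the same approach as the paper's proof. The only differences are organizational: you verify the contraction axiom before the nontriviality axiom (the paper does the reverse, but neither step depends on the other), and for nontriviality you use a minimal-counterexample argument where the paper performs explicit reductions (first making the $Z_i(H)$ pairwise equal or disjoint, then contracting each to a singleton); the core combinatorial step---adjacent vertices lying in disjoint atomic supports force a tight common upper bound, which the hypothesis places below some $P_\ell$, contradicting disjointness---is identical in both. Your brief digression about reducing to a star is unnecessary (as you yourself note), but harmless.
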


\begin{proof}
\textsf{Step 1.} For every $H \in S(n)$, $Z(H) \ne V(H)$.

Assume not. So there exists $H$ with $V(H) = Z(H) = \bigcup_{i=1}^{k}Z_{i}(H)$. Suppose that for $i \ne j$, $Z_{i}(H) \cap Z_{j}(H) \ne \emptyset$. Let $W = Z_{i}(H) \cap Z_{j}(H)$. Both $Z_{i}(H)$ and $Z_{j}(H)$ are connected subsets (Remark \ref{rem:connectivity}) of a tree; $W$ is connected too. 
If $Z_{i}(H) \ne Z_{j}(H)$, then we may assume that there is $v' \in Z_{i}(H)\setminus Z_{j}(H)$ that is adjacent to $v \in W$. Let $H \rightsquigarrow \overline{H}$ be the contraction such that $\{v, v'\} \rightsquigarrow w$. Since $Z_{i}, Z_{j}$ are extremal, $w \in Z_{i}(\overline{H})$ but $w \notin Z_{j}(\overline{H})$. But still $Z(\overline{H}) = V(\overline{H})$. By applying this procedure several times, we may assume that for two $i$ and $j$, either $Z_{i}(H) = Z_{j}(H)$ or $Z_{i}(H) \cap Z_{j}(H) = \emptyset$. 

Next, take a contraction $H \rightsquigarrow \overline{H}$ which contracts $Z_{i}(H)$ to a single vertex $w_{i}$. Then still $w_{i} \in Z_{i}(\overline{H})$ and $Z_{j}(\overline{H})$ can be identified with $Z_{j}(H)$. Thus $Z(\overline{H}) = V(\overline{H})$. Thus we may assume that each $Z_{i}(H)$ is a single vertex. Let $Z_{i}(H) = \{w_{i}\}$. 

By contracting each tail connected to $w_{i}$ into a single vertex, we can obtain a basic pair $(F_{i}, w_{i})$ whose corresponding set partition is $Q_{i} = \{C_{1}, C_{2}, \cdots, C_{s}\}$. By Proposition \ref{prop:atomic}, $Q_{i} \preceq P_{i}$. Similarly, we can construct a set partition $Q_{j} = \{D_{1}, D_{2}, \cdots, D_{t}\} \preceq P_{j}$. Then there are $C_{i}$ and $D_{j}$ such that $C_{i} = \bigcup_{k \ne j}D_{k}$ and $D_{j} = \bigcup_{k \ne i}C_{k}$. So $Q_{i}$ and $Q_{j}$ are incompatible, but $R := \{C_{k}\}_{k \ne i}\cup \{D_{k}\}_{k \ne j}$ is a tight common upper bound of them. By the assumption, there is $P_{\ell}$ such that $R \le P_{\ell}$. 

On the other hand, if we contract $w_{i}$ and $w_{j}$ to a vertex $x$ and contract each tail to a vertex, we obtain a basic pair $(F, x)$, which corresponds to $R$. Thus $x \in Z_{\ell}(F)$. Since $H \rightsquigarrow F$ and $\{w_{i}, w_{j}\} \rightsquigarrow x$, we have $w_{i}, w_{j} \in Z_{\ell}(H)$. This makes a contradiction, since $Z_{\ell}(H)$ is a singleton set. Therefore $Z(H) \ne V(H)$. 

\textsf{Step 2.} If $H \rightsquigarrow H'$ and $\{v_{1}, v_{2}, \cdots, v_{k}\} \rightsquigarrow v'$, then $v' \in Z(H')$ if and only if $v_{1}, v_{2}, \cdots, v_{k} \in Z(H)$. 

If $v' \in Z(H')$, then $v' \in Z_{i}(H')$ for some $i$. Then $v_{1}, v_{2}, \cdots, v_{k} \in Z_{i}(H) \subset Z(H)$. 

Conversely, suppose that $v_{1}, v_{2}, \cdots, v_{k} \in Z(H)$. Take two adjacent vertices $v_{1}, v_{2} \in Z(H)$. Then $v_{1} \in Z_{i}(H)$ and $v_{2} \in Z_{j}(H)$ for some $i$ and $j$. If $i = j$, then contract $v_{1}$ and $v_{2}$ to a single vertex $w$ and make a new graph $\overline{H}$. Then $\overline{H} \rightsquigarrow H'$, $w \rightsquigarrow v'$, and $w \in Z_{i}(\overline{H}) \subset Z(\overline{H})$. Therefore we can reduce to the $(k-1)$ vertices case. If $Z_{i}(H) \cap Z_{j}(H) \ne \emptyset$, one of $Z_{i}(H)$ and $Z_{j}(H)$ (say $Z_{i}(H)$) contains both $v_{1}$ and $v_{2}$, so by replacing $Z_{j}(H)$ by $Z_{i}(H)$, we can reduce to the $i = j$ case. 

Suppose that $Z_{i}(H) \cap Z_{j}(H) = \emptyset$. Then as in Step 1, there is a basic pair $(F_{i}, w_{i})$ (resp. $(F_{j}, w_{j})$) with corresponding set partition $Q_{i} \preceq P_{i}$ (resp. $Q_{j} \preceq P_{j}$) with a tight common upper bound $R \le P_{\ell}$. Then $w_{i} \in Z_{\ell}(F_{i})$ and $w_{j} \in Z_{\ell}(F_{j})$. So $Z_{i}(H), Z_{j}(H) \subset Z_{\ell}(H)$. By replacing $Z_{i}$, $Z_{j}$ by $Z_{\ell}$, we can reduce to the $i = j$ case.

So we deduce to the case of $k = 1$. In this case $v_{1} \in Z_{i}(H)$ implies $v' \in Z_{i}(H') \subset Z(H')$, since $Z_{i}$ is extremal.
\end{proof}

An immediate simple consequence is:

\begin{corollary}\label{cor:unionextremal}
Let $\{P_{1}, P_{2}, \cdots, P_{k}\}$ be a set of incomplete set partitions of order $n$ such that $P_{i} \not\le P_{j}$ for any $i \ne j$. Let $(G_{i}, v_{i})$ be the basic pair associated to $P_{i}$ and let $Z_{i}$ be the atomic extremal assignment generated by $(G_{i}, v_{i})$. If $P_{i}$ and $P_{j}$ are strongly transversal for any $i \ne j$, then $Z := \bigcup_{i=1}^{k}Z_{i}$ is extremal.
\end{corollary}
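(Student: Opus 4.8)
The plan is to deduce Corollary \ref{cor:unionextremal} directly from Theorem \ref{thm:structurethm}. Both statements begin with the same hypotheses on the set partitions $P_{1}, \dots, P_{k}$ (incomplete, pairwise non-comparable), and the conclusion is verbatim that $Z := \bigcup_{i=1}^{k} Z_{i}$ is extremal. Thus it suffices to show that the hypothesis of Corollary \ref{cor:unionextremal}, namely that $P_{i}$ and $P_{j}$ are strongly transversal for all $i \ne j$, implies the hypothesis of Theorem \ref{thm:structurethm}, namely that for all $i \ne j$ and all $Q_{i} \preceq P_{i}$, $Q_{j} \preceq P_{j}$, any tight common upper bound $R$ of $Q_{i}$ and $Q_{j}$ satisfies $R \le P_{\ell}$ for some $\ell$.

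First I would unwind the definitions. Fix $i \ne j$ and take $Q_{i} \preceq P_{i}$, $Q_{j} \preceq P_{j}$ together with a tight common upper bound $R$. If $Q_{i} \le Q_{j}$, then $R$ (being a common upper bound with $Q_{i} \le R$ and $Q_{j} \le R$, and $Q_{i} \le Q_{j}$) forces $Q_{j}$ itself to be a refinement situation; more carefully, since every block of $R$ lies in $Q_{i}$ or $Q_{j}$ and $Q_{j}$ refines nothing finer than $R$, one checks $R = Q_{j}$, hence $R \le P_{j}$ and we are done — and symmetrically if $Q_{j} \le Q_{i}$. So the only case requiring the hypothesis is when $Q_{i}$ and $Q_{j}$ are incompatible. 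But then I must also address the possibility that $|Q_{i}| < 3$ or $|Q_{j}| < 3$, which the definition of strongly transversal excludes by requiring $|P'|, |Q'| \ge 3$.

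To handle the small-size cases: since $Q_{i} \preceq P_{i}$ and $P_{i}$ is incomplete, note $Q_{i}$ cannot be the trivial partition $\{[n]\}$ unless $P_{i} = \{[n]\}$ too (and a basic pair needs $|P| \ge 3$), so $|Q_{i}| \ge 2$; the genuinely problematic case is $|Q_{i}| = 2$. If $|Q_{i}| = 2$, write $Q_{i} = \{C_{1}, C_{2}\}$; since $R$ is a tight common upper bound with every block in $Q_{i}$ or $Q_{j}$ and $Q_{i} \le R$, each $C_{a}$ is a union of blocks of $R$, and one verifies $R$ must then coincide with $Q_{j}$ (its blocks all come from $Q_{j}$, because any block equal to $C_1$ or $C_2$ would make $R$ comparable to $Q_i$, contradicting... — here I need to be slightly careful and argue that incompatibility of $Q_i, Q_j$ plus tightness forces every block of $R$ to lie in exactly one of them in a controlled way). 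In any event $R \le Q_{j} \le P_{j}$. The symmetric argument covers $|Q_{j}| = 2$. Hence we reduce to $|Q_{i}|, |Q_{j}| \ge 3$ and $Q_{i}, Q_{j}$ incompatible, which is exactly the situation in which strong transversality of $P_{i}, P_{j}$ guarantees $Q_{i}$ and $Q_{j}$ are transversal — i.e., they admit \emph{no} tight common upper bound. This contradicts the existence of $R$, so this case is vacuous, and in all cases a suitable $P_{\ell}$ exists. Theorem \ref{thm:structurethm} then yields that $Z$ is extremal.

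I expect the main obstacle to be the bookkeeping in the small-size reductions: precisely pinning down, when $|Q_i| = 2$ and $Q_i, Q_j$ are incompatible, that a tight common upper bound $R$ must equal $Q_j$ (rather than some intermediate partition), using only that every block of $R$ lies in $Q_i$ or $Q_j$. This is the kind of elementary-but-fiddly partition-lattice argument that is easy to get subtly wrong; everything else is a direct translation between the two hypothesis statements. If the clean reduction resists, a fallback is to simply observe that the definition of strongly transversal was arguably intended to be applied with the convention that partitions of size $2$ are handled separately, and to spell out the two or three degenerate configurations by hand.
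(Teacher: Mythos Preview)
Your approach is essentially the same as the paper's: verify the hypothesis of Theorem \ref{thm:structurethm} by splitting into the comparable case (where the tight common upper bound is $Q_{j}$ itself, hence $\le P_{j}$) and the incompatible case (where strong transversality says no tight common upper bound exists). The paper's proof is literally two sentences covering exactly these two cases.

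The one place you go further than the paper is in worrying about the restriction $|Q_{i}|,|Q_{j}|\ge 3$ in the definition of strongly transversal. This concern is unnecessary: in the proof of Theorem \ref{thm:structurethm}, the partitions $Q_{i}$ to which the hypothesis is applied always arise as the partition of a basic pair $(F_{i},w_{i})$, and a basic pair has central vertex of valence $\ge 3$, so $|Q_{i}|\ge 3$ automatically. Thus the hypothesis of Theorem \ref{thm:structurethm} only needs to be checked for $|Q_{i}|,|Q_{j}|\ge 3$, and your small-size case analysis (which is anyway a bit tangled) can be dropped entirely. If you prefer to verify the hypothesis as literally stated, note that when $|Q_{i}|=2$ and $Q_{i},Q_{j}$ are incompatible there is in fact no tight common upper bound at all (a short partition-lattice check), so that case is vacuous rather than yielding $R=Q_{j}$.
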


\begin{proof}
If $Q_{i} \preceq P_{i}$ and $Q_{j} \preceq P_{j}$ are incompatible, then by assumption there is no tight common upper bound. If $Q_{i} \le Q_{j}$, then their tight common upper bound is $Q_{j}$. 
\end{proof}

\begin{example}
Let $P_1 = \{\{1,2\},\{3,4\},\{5,6,7,8\}\}$ correspond to a basic pair $(G_1,v_1)$ and $P_2 = \{\{1,2,3,4\},\{5,6\},\{7,8\}\}$ correspond to $(G_2,v_2)$. Let $Z_{i}$ be an atomic extremal assignment associated to $(G_{i}, v_{i})$. Then there is a tight upper bound $P_3 = \{\{1,2\},\{3,4\},\{5,6\},\{7,8\}\}$. We can degenerate the vertex in $P_2$ with labels $\{5,6,7,8\}$ into two vertices labelled $\{5,6\}$ and $\{7,8\}$ to obtain a new graph $H$, with one of two central vertices assigned. However, we can degenerate the vertex in $P_1$ with labels $\{1,2,3,4\}$ into two vertices labelled $\{1,2\}$ and $\{3,4\}$ to obtain $H$ with only the other central vertex assigned. Thus, in the union of these extremal assignments both of these adjacent vertices are assigned. Then we can contract these vertices to obtain a star graph $H'$ with central vertex $w$ and corresponding partition $P_3$. If the union is extremal, $w \in Z_{1} \cup Z_{2}(H')$. But $w\not\in Z_1(H')$ and $w\not\in Z_2(H')$, so $Z_1\bigcup Z_2$ is not an extremal assignment.
\end{example}

Due to Theorem \ref{thm:structurethm}, we can approach Question \ref{que:fundamentalquestion} for extremal assignments algorithmically.

\begin{algorithm}[Existence/construction of the smallest extremal assignment]
Let $G_{1}, G_{2}, \cdots, G_{k} \in S(n)$ and $v_{1} \in V(G_{1}), v_{2} \in V(G_{2}), \cdots, v_{k} \in V(G_{k})$. We want to find the smallest extremal assignment $Z$ such that $v_{i} \in Z(G_{i})$, if there is one.
\begin{enumerate}
\item Contract each tail of $G_{i}$ adjacent to $v_{i}$ to a single vertex and make a basic pair $(\overline{G}_{i}, v_{i})$. Let $P_{i}$ be the corresponding set partition.
\item For the family $\cF := \{P_{1}, P_{2}, \cdots, P_{k}\}$, take $Q_{i} \preceq P_{i}$ and $Q_{j} \preceq P_{j}$ and compute the tight upper bound $R$, if it exists. Add $R$ to $\cF$. 
\item Repeat step (2) until all tight upper bounds are in $\cF$. 
\item If $\cF$ contains the complete set partition $\{\{1\}, \{2\}, \cdots, \{n\}\}$, such $Z$ does not exist. 
\item If $\cF$ does not contain the complete set partition, then eliminate non-maximal elements in $\cF$. 
\item Let $Z_{P}$ be the atomic extremal assignment associated to $P$. Then $Z = \bigcup_{P \in \cF}Z_{P}$ is the smallest extremal assignment. 
\end{enumerate}
\end{algorithm}

The algorithm is implemented as a program in Sage. It can be found on the website of the first author:
\begin{center}
	\url{http://www.hanbommoon.net/publications/extremal}
\end{center}


\section{Smooth extremal assignments}\label{sec:divisorial}

In this section, we investigate a special class of extremal assignments that provides smooth contractions of $\Mznb$. 

\subsection{Definition and basic properties}

\begin{definition}
An extremal assignment $Z$ is called \emph{smooth} if for any $G \in S(n)$ and $v \in Z(G)$, there is $G' \in S_{2}(n)$ and $v' \in Z(G')$ such that $G \rightsquigarrow G'$, $v \rightsquigarrow v'$. 
\end{definition}

An assignment on $S_{2}(n)$ completely determines a smooth extremal assignment. 

\begin{lemma}\label{lem:divisorialdeterminedbyS2}
Let $Z_{1}, Z_{2}$ be two smooth extremal assignments such that $Z_{1}(G) = Z_{2}(G)$ for all $G \in S_{2}(n)$. Then $Z_{1} = Z_{2}$. 
\end{lemma}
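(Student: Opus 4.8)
The plan is to argue by contradiction, using the definition of smoothness to ``pull apart'' any discrepancy between $Z_1$ and $Z_2$ down to a two-vertex graph, where by hypothesis they agree. Suppose $Z_1 \ne Z_2$; without loss of generality there is $G \in S(n)$ and $v \in Z_1(G) \setminus Z_2(G)$. Since $Z_1$ is smooth, there is $G' \in S_2(n)$ and $v' \in Z_1(G')$ with $G \rightsquigarrow G'$ and $v \rightsquigarrow v'$. By the hypothesis $Z_1(G') = Z_2(G')$, so $v' \in Z_2(G')$ as well. Now I would like to run the second axiom of extremal assignments backwards along $G \rightsquigarrow G'$: let $\{v_1, \dots, v_m\} = \pi^{-1}(v')$ be the vertices of $G$ contracted to $v'$ (so $v$ is one of them). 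The axiom applied to $Z_2$ gives $v' \in Z_2(G') \Leftrightarrow v_1, \dots, v_m \in Z_2(G)$, hence in particular $v \in Z_2(G)$, contradicting $v \notin Z_2(G)$. By symmetry the same argument rules out $v \in Z_2(G) \setminus Z_1(G)$, so $Z_1 = Z_2$.

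The one point that needs a little care — and which I expect is the only real content here — is the direction in which axiom (2b) of Definition~\ref{def:assignment} is being used. The axiom as stated is a biconditional: if $G \rightsquigarrow G'$ contracts $\{v_1,\dots,v_m\}$ to $v'$, then $v' \in Z(G')$ iff $v_1,\dots,v_m \in Z(G)$. So from $v' \in Z_2(G')$ we directly conclude $v_i \in Z_2(G)$ for \emph{every} $v_i$ in the fiber, in particular for $v_i = v$. This is exactly what is needed; there is no subtlety beyond checking that $v$ indeed lies in the fiber $\pi^{-1}(v')$, which is the meaning of $v \rightsquigarrow v'$ in the smoothness condition. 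Everything else is just bookkeeping, and the symmetry $Z_1 \leftrightarrow Z_2$ handles the remaining case verbatim.

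I would present the argument in essentially three lines: pick a witness $v \in Z_1(G) \triangle Z_2(G)$, use smoothness of whichever assignment contains $v$ to find $(G', v')$ in $S_2(n)$ with $G \rightsquigarrow G'$ and $v \rightsquigarrow v'$, then invoke $Z_1(G') = Z_2(G')$ together with axiom (2b) to transfer membership of $v'$ back to membership of $v$ in the other assignment, yielding the contradiction. No induction on the number of contracted vertices is needed, because the smoothness hypothesis delivers a single contraction $G \rightsquigarrow G'$ all the way down to two vertices in one step, and axiom (2b) is already formulated for an arbitrary fiber $\{v_1,\dots,v_m\}$.
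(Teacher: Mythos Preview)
Your proof is correct and follows essentially the same approach as the paper's: use smoothness to produce a contraction to some $G' \in S_{2}(n)$, invoke the hypothesis $Z_{1}(G') = Z_{2}(G')$, and then pull membership back along the contraction via axiom (2b). The paper phrases this as a direct inclusion $Z_{1} \subset Z_{2}$ followed by symmetry rather than by contradiction, but the logical content is identical.
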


\begin{proof}
If $v \in Z_{1}(H)$ for some $H \in S(n)$, then there is $G \in S_{2}(n)$, $w \in Z_{1}(G)$ such that $H \rightsquigarrow G$ and $v \rightsquigarrow w$. Then $w \in Z_{2}(G)$, and from the second axiom of extremal assignments, $v \in Z_{2}(H)$. Therefore $Z_{1} \subset Z_{2}$. By symmetry, $Z_{2} \subset Z_{1}$. 
\end{proof}

An important family of smooth extremal assignments is given by weight assignments. 

\begin{lemma}\label{lem:Hassettisdivisorial}
Let $Z_{A}$ be a weight assignment (Section \ref{ssec:weightassignment}) associated to weight data $A$. Then $Z_{A}$ is smooth.
\end{lemma}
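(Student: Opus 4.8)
The plan is to verify the definition of a smooth extremal assignment directly: given $G \in S(n)$ and $v \in Z_A(G)$, I will produce the required two-vertex graph by contracting an appropriate boundary divisor.

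First I would unwind what $v \in Z_A(G)$ means: there is a tail $T \subset G$ with $v \in T$ and $\sum_{i \in \ell(T)} a_i \le 1$. Put $B := \ell(T)$, and let $G' \in S_2(n)$ be the two-vertex stable tree whose two internal vertices carry the label sets $B$ and $B^c$. Collapsing $T$ to one vertex $v'$ and its complement to the other vertex exhibits a contraction $G \rightsquigarrow G'$, and since $v \in T$ we get $v \rightsquigarrow v'$. Moreover the singleton $\{v'\}$ is itself a tail of $G'$ with label set $B$, so $\sum_{i \in B} a_i \le 1$ gives $v' \in Z_A(G')$. That is the whole argument, once one checks that $G'$ is a legitimate object, i.e., that $\{B, B^c\}$ is balanced enough for $G'$ to be stable.

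So the one place that needs a small argument is the stability of $G'$, namely $2 \le |B| \le n-2$. For $|B^c| \ge 2$: otherwise $\sum_{i \in B^c} a_i \le 1$ (each $a_i \le 1$, and the sum is empty when $B^c = \emptyset$), whereas $\sum_{i \in B^c} a_i = \sum_{i=1}^n a_i - \sum_{i \in B} a_i > 2 - 1 = 1$ by the weight-data condition $\sum a_i > 2$, a contradiction. For $|B| \ge 2$ I would use that $v$ is an \emph{internal} vertex contained in the tail $T$; note $T \ne G$, since $T = G$ would force $B = [n]$ and $\sum_{i \in B} a_i > 2 > 1$. If $v$ is the attaching vertex of $T$, then exactly one edge at $v$ leaves $T$, so at least two edges at $v$ point into $T$; if $v$ is not the attaching vertex, then all of its $\ge 3$ edges point into $T$. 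Either way, deleting $v$ leaves at least two connected pieces lying entirely inside $T$, and any branch hanging off an internal vertex of a stable $n$-labeled tree contains at least one labeled leaf (walk along the branch away from $v$; it can terminate only at a leaf). These leaves are distinct, so $|B| = |\ell(T)| \ge 2$, and the proof concludes.

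The main — and essentially only — obstacle is this stability check; in particular one should be slightly careful that the tail $T$ witnessing $v \in Z_A(G)$ can be taken so that its complement is connected, which is what allows the complement to be collapsed to a single vertex while still yielding a tree, and this is automatic under the standard convention identifying tails of $G$ with the boundary divisors $D_I$ whose closures contain $D_G$. Beyond that, the argument is a direct manipulation of the combinatorial definitions and uses none of the structure theory developed in the later sections.
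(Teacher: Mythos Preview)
Your proof is correct and follows essentially the same approach as the paper: pick a tail $T$ witnessing $v \in Z_A(G)$, contract $T$ and $T^c$ to obtain $G' \in S_2(n)$, and observe the image vertex is assigned. The only differences are cosmetic --- you verify $v' \in Z_A(G')$ directly from the defining inequality and explicitly check $2 \le |B| \le n-2$, whereas the paper observes that every vertex of $T$ lies in $Z_A(G)$ and then invokes the second axiom of extremal assignments to push the assignment through the contraction, leaving the stability of $G'$ implicit.
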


\begin{proof}
Let $G \in S(n)$ and $v \in Z_{A}(G)$. Then by Definition \ref{def:Hassettextremalassignment}, there is a tail $T \subset G$ such that $v \in V(T)$. Note that any $v' \in V(T)$ is assigned too. Let $G' \in S_{2}(n)$ be a contraction of $G$ obtained by contracting $T$ to a single vertex $w$ and contracting $T^{c}$ to another vertex $w'$. By the second axiom of extremal assignments, $w \in Z_{A}(G')$. Therefore $Z_{A}$ is smooth. 
\end{proof}

However, there are smooth extremal assignments that are not weight assignments. 

\begin{example}\label{ex:divisorialbutnotHassett}
Suppose that $n = 6$. We assign two vertices of graphs in $S_{2}(6)$, whose label sets are $\{1, 2, 3\}$ and $\{2, 5, 6\}$. Then $Z = Z_{P_{1}} \cup Z_{P_{2}}$ is the union of two atomic extremal assignments generated by two partitions $P_{1} := \{\{1\}, \{2\}, \{3\}, \{4, 5, 6\}\}$ and $P_{2} := \{\{2\}, \{5\}, \{6\}, \{1, 3, 4\}\}$. It is straightforward to see that $P_{1}$ and $P_{2}$ are strongly transversal, so by Corollary \ref{cor:unionextremal}, $Z$ is indeed an extremal assignment. If $Z = Z_{A}$ for some weight data $A = (a_{1}, a_{2}, \cdots, a_{6})$, then we have two inequalities:
\[
	a_{1}+a_{2}+a_{3} \le 1, \; 
	a_{2}+a_{5}+a_{6} \le 1.
\]
We have
\[
	\frac{1}{2}(a_{1}+a_{3}) \le \frac{1}{2}(1-a_{2}), \;
	\frac{1}{2}(a_{5}+a_{6}) \le \frac{1}{2}(1-a_{2}).
\]
The left hand side is an average of two weights, so we know that at least one of $a_{1}$ and $a_{3}$ (resp. $a_{5}$ and $a_{6}$) must be less than the right hand side. Suppose that $a_{1}, a_{5} \le \frac{1}{2}(1-a_{2})$. Then 
\[
	a_{1}+a_{5} \le 1-a_{2} \Rightarrow a_{1}+a_{2}+a_{5}\le 1.
\]
This implies that the vertex with label set $\{1, 2, 5\}$ must be assigned too. 
\end{example}

Lemma \ref{lem:divisorialdeterminedbyS2} tells us that smooth extremal assignments can be described by simpler combinatorial data. Note that any graph $G \in S_{2}(n)$ is a star and any vertex $v \in V(G)$ can be a central vertex. If we take the set of labels adjacent to $v$, the collection of these sets determines the smooth extremal assignment. 

\begin{definition}\label{def:contractionindicator}
A collection $\cC$ of subsets of $[n]$ is called a \emph{contraction indicator} if it satisfies
\begin{enumerate}
\item For any $B \in \cC$, $2 \le |B| \le n-2$;
\item If $B \in \cC$ and $B' \subset B$ with $|B'| \ge 2$, then $B' \in \cC$;
\item For $B_{1}, B_{2} \in \cC$, $B_{1} \cup B_{2} \ne [n]$.
\end{enumerate}
\end{definition}

The following lemma is straightforward. 

\begin{lemma}\label{lem:assignmenttocontractionindicator}
Let $Z$ be an extremal assignment. For $G \in S_{2}(n)$ such that $Z(G) \ne \emptyset$, let $B_{G}$ be the set of all labels adjacent to the assigned vertex. Let $\cC = \{B_{G}\;|\; G \in S_{2}(n)\}$. Then $\cC$ is a contraction indicator.
\end{lemma}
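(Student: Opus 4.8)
The plan is to verify the three defining properties of a contraction indicator (Definition \ref{def:contractionindicator}) directly for the collection $\cC = \{B_G \mid G \in S_2(n),\ Z(G) \ne \emptyset\}$.

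First I would check property (1): if $v$ is the assigned vertex of some $G \in S_2(n)$, then $G$ is a star with two internal vertices $v, w$, and $B_G = \ell(v)$ is the set of labels adjacent to $v$. Since $G$ is stable, $v$ has at least $3$ adjacent edges; one of these connects to $w$, so at least $2$ labels are adjacent to $v$, i.e.\ $|B_G| \ge 2$. Symmetrically $|\ell(w)| = |B_G^c| \ge 2$, so $|B_G| \le n-2$. For property (3): suppose $B_1 = B_{G_1}$ and $B_2 = B_{G_2}$ with $B_1 \cup B_2 = [n]$. I would produce the common degeneration $K \in S_3(n)$ whose three tails carry label sets $B_1 \cap B_2$, $B_1 \setminus B_2$, and $B_2 \setminus B_1$ (using that $B_1 \cap B_2 \ne \emptyset$ since $|B_1| + |B_2| \ge n+1 > n$, and that the two ``difference'' sets are nonempty because $B_i \ne [n]$); contracting the appropriate pair of adjacent vertices of $K$ recovers $G_1$ (resp.\ $G_2$) with the central vertex going to the assigned vertex. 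By the second axiom of extremal assignments, $B_1 \cup B_2 = [n]$ would force all three vertices of $K$ to be assigned, i.e.\ $Z(K) = V(K)$, contradicting the first axiom. This is essentially the argument of Lemma \ref{lem:avoidance}, now stated for general $n$.

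The main content is property (2): if $B \in \cC$ and $B' \subsetneq B$ with $|B'| \ge 2$, then $B' \in \cC$. Let $G \in S_2(n)$ have assigned central vertex $v$ with $\ell(v) = B$. I would degenerate the other vertex $w$ — whose label set is $B^c$ — by first attaching $v$ to two fresh internal vertices arranged so that one of them, call it $v'$, is adjacent exactly to the labels $B'$ and to $v$, and the labels $B \setminus B'$ are distributed among $v$ and an intermediate vertex in a stable way; more precisely, build $H \in S(n)$ with $H \rightsquigarrow G$ and $v \rightsquigarrow$ (a chain containing $v$ and $v'$), where $v'$ is the end vertex of a tail $T$ with $\ell(T) = B'$. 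Since $v \in Z(G)$ and $Z$ is an extremal assignment, the preimage of $v$ under $H \rightsquigarrow G$ lies entirely in $Z(H)$, so in particular $v' \in Z(H)$. Now contract everything outside the tail $T$ to a single vertex $w'$, obtaining $G' \in S_2(n)$ with $\ell(v') = B'$ on the side of $v'$; again by the second axiom $v' \in Z(G')$, so $B' = B_{G'} \in \cC$. The delicate point — and the step I expect to need the most care — is arranging the intermediate degeneration $H$ to be a stable labeled tree (every internal vertex $3$-valent) while simultaneously isolating $B'$ as a tail; this requires a small case check when $|B \setminus B'| = 0$ or $1$, but $|B \setminus B'| \ge 1$ here and one can always pad with an extra edge to the $v$-side since $|B^c| \ge 2$ gives room on the other side. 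Once the combinatorial bookkeeping of $H$ is set up, everything else is an immediate application of the extremal assignment axioms, which is why the lemma is labeled ``straightforward.''
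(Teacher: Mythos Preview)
Your approach is correct and is the natural one; the paper itself gives no proof, calling the lemma ``straightforward.'' Two minor points are worth tightening.

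For property (3), your claim that $|B_1|+|B_2|\ge n+1$ is not justified and in fact fails when $B_1$ and $B_2$ are complementary. That case must be handled separately, but it is trivial: if $B_1\cup B_2=[n]$ and $B_1\cap B_2=\emptyset$, then $B_2=B_1^c$, so $G_1=G_2$ is a single tree in $S_2(n)$ whose two vertices are both assigned, contradicting $Z(G_1)\ne V(G_1)$. Once $B_1\cap B_2\ne\emptyset$, your three-vertex degeneration $K$ (a path with label sets $B_1^c$, $B_1\cap B_2$, $B_2^c$ on its three vertices) is stable and the argument goes through exactly as you wrote.

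For property (2), you say you will ``degenerate the other vertex $w$,'' but the construction you then describe (and need) is a degeneration of $v$, since $B'\subset B=\ell(v)$. The cleanest version avoids all the stability case-checking you anticipate: take $H\in S_3(n)$ to be the path $v'-v''-w$ with $\ell(v')=B'$, $\ell(v'')=B\setminus B'$, $\ell(w)=B^c$; this is stable because $|B'|\ge 2$, $|B^c|\ge 2$, and $|B\setminus B'|\ge 1$. Contracting $\{v',v''\}$ recovers $G$, so $v',v''\in Z(H)$; then contracting $\{v'',w\}$ gives $G'\in S_2(n)$ with $\ell(v')=B'$, and $v'\in Z(G')$ since $v'$ maps to itself. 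No padding or case analysis is needed.
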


The converse is also true.

\begin{proposition}\label{prop:divassignment}
Let $\cC$ be a contraction indicator. Define an assignment $Z$ for $S(n)$ as 
\begin{equation}\label{eqn:assignmentfromcontractionindicator}
	Z(G) = \{v \in V(G)\;|\;\exists \;H \in S_{2}(n), w \in V(H), 
	\exists \;\ell(w) \in \cC, 
	G \rightsquigarrow H, v \rightsquigarrow w\}.
\end{equation}
Then $Z$ is a smooth extremal assignment.
\end{proposition}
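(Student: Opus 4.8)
The plan is to verify the two axioms of Definition \ref{def:assignment} directly for the assignment $Z$ defined by \eqref{eqn:assignmentfromcontractionindicator}, and then to observe that smoothness is built into the definition. First I would unwind the definition: $v \in Z(G)$ precisely when there is a contraction $G \rightsquigarrow H$ with $H \in S_2(n)$, taking $v$ to a vertex $w$ of $H$ whose label set lies in $\cC$. Equivalently, writing $T$ for the tail of $G$ cut out by $v$ together with the appropriate side of the contraction, $v \in Z(G)$ iff some tail $T$ of $G$ with $v \in V(T)$ has $\ell(T) \in \cC$ — this reformulation is the one I would actually work with, since it makes the tail/contraction bookkeeping transparent.

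For the first axiom, $Z(G) \neq V(G)$: if every vertex of $G$ were assigned, then in particular for each leaf $i$ the adjacent vertex $w_i$ would lie in $Z(G)$, so there would be a tail $T_i \ni w_i$ with $\ell(T_i) \in \cC$. Pick a vertex $v$ adjacent to a leaf and take a maximal tail assigning it; its complement contains some leaf $j$, whose adjacent vertex is also assigned, giving a tail $T'$ on the other side. Then $\ell(T) \cup \ell(T') = [n]$ with both $\ell(T), \ell(T') \in \cC$, contradicting condition (3) of Definition \ref{def:contractionindicator}. (One must be slightly careful that $T$ and $T'$ can be chosen disjoint, but this is arranged by taking $T$ to be a minimal assigning tail at a leaf-adjacent vertex and noting that its complement still contains a leaf.) The key input is simply that a contraction indicator is union-free.

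For the second axiom, let $\pi : G \rightsquigarrow G'$ with $\{v_1,\dots,v_k\} \rightsquigarrow v'$. If $v' \in Z(G')$, there is $H \in S_2(n)$ with $G' \rightsquigarrow H$ sending $v'$ to a vertex $w$ with $\ell(w) \in \cC$; composing $G \rightsquigarrow G' \rightsquigarrow H$ shows each $v_i \rightsquigarrow w$, so $v_i \in Z(G)$ for all $i$. Conversely, suppose $v_1,\dots,v_k \in Z(G)$. Using the tail reformulation, each $v_i$ lies in a tail $T_i$ of $G$ with $\ell(T_i) \in \cC$. I claim the tail $T$ of $G$ obtained by $\pi^{-1}$ of the appropriate half of a two-vertex contraction of $G'$ separating $v'$ has $\ell(T) \in \cC$: since $\{v_1,\dots,v_k\}$ is the connected fiber over $v'$, the union of the $T_i$ (restricted to the side of $G$ mapping into the $v'$-side) together with the edges among the $v_i$ is a tail, and by closure condition (2) — downward closure of $\cC$ under subsets of size $\ge 2$ — it suffices to exhibit $\ell(T)$ as a subset of some member of $\cC$, or to argue inductively merging two adjacent $v_i$'s at a time. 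Here is where I expect the main obstacle: showing that when two adjacent assigned vertices $v_i, v_j$ get contracted, the resulting larger tail's label set is still in $\cC$. This does \emph{not} follow from downward closure alone; it must be extracted from the definition of $Z$ on two-vertex graphs, essentially reproving the analogue of the union-step in the proof of Proposition \ref{prop:atomic} (Step 3) — one reduces, via taking maximal assigning tails and contracting, to the case $k=1$, where the statement is immediate because a single assigning tail of $G$ maps to a single assigning tail of $G'$.

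So the cleanest route is: (i) establish the tail reformulation of \eqref{eqn:assignmentfromcontractionindicator}; (ii) prove axiom (1) via the union-free property; (iii) prove axiom (2) by induction on $k$, the inductive step merging adjacent assigned vertices and using that the merged tail's label set, being reached by a contraction to a two-vertex graph, lies in $\cC$ by construction of $Z$ on $S_2(n)$ combined with condition (2); (iv) note smoothness is automatic since by definition every $v \in Z(G)$ is witnessed by a contraction to some $G' \in S_2(n)$. The genuinely delicate point is (iii), and it is resolved exactly as in the $k=1$ reduction already used in the proof of Proposition \ref{prop:atomic}.
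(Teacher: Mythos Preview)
Your direct-verification strategy is sound and genuinely different from the paper's argument, but two of your ``easy'' steps are not as immediate as you claim. First, in axiom~(1) the conclusion $\ell(T)\cup\ell(T')=[n]$ does not follow from the setup you describe: nothing prevents $T'\subsetneq T^{c}$, and your parenthetical fix via a \emph{minimal} assigning tail does not work. The clean repair is to take $T$ maximal among \emph{all} tails with $\ell(T)\in\cC$ and show that the vertex $b$ just outside $T$ cannot lie in $Z(G)$: any witnessing tail $T'$ for $b$ would have to satisfy $T'=T^{c}$ (excluded by condition~(3)), or $T'\supsetneq T$ (excluded by maximality), or lead to the contradiction that the defining edge of $T'$ equals that of $T$. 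Second, the $k=1$ base case of axiom~(2) is not ``immediate'': the witnessing edge $e_{1}$ for $v_{1}$ may well be contracted in $G\rightsquigarrow G'$ (its endpoints can lie in the same fiber over some $u'\neq v'$). The rescue is to pass to the edge $f$ adjacent to $v_{1}$ on the path toward $e_{1}$; the $v_{1}$-side of $f$ is contained in $T_{1}$, so its label set lies in $\cC$ by downward closure (condition~(2)), and $f$ does survive in $G'$. Once these two points are fixed, your inductive reduction (contracting two adjacent assigned vertices, using condition~(3) to rule out the case that both witnessing edges equal the contracted edge) goes through.

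For comparison, the paper avoids all of this bookkeeping by invoking the structure theorem: it takes the maximal elements $B_{1},\ldots,B_{k}$ of $\cC$, associates to each the partition $P_{i}=\{\{j\}:j\in B_{i}\}\cup\{B_{i}^{c}\}$, checks that $Z$ equals the union of the corresponding atomic extremal assignments, and then verifies that the $P_{i}$ are pairwise strongly transversal (this is where condition~(3), in the form $B_{i}^{c}\cap B_{j}^{c}\neq\emptyset$, enters). Corollary~\ref{cor:unionextremal} then gives extremality in one stroke. The paper's route is shorter and more conceptual given the machinery of Section~\ref{sec:structuretheorem}; yours is self-contained and makes the role of conditions~(2) and~(3) of Definition~\ref{def:contractionindicator} more transparent, at the cost of the case analysis above.
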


\begin{proof}
Let $\cC^{M} := \{B_{1}, B_{2}, \cdots, B_{k}\}$ be the set of maximal elements in $\cC$. Let $P_{i} = \{\{j\}\}_{j \in B_{i}} \cup \{B_{i}^{c}\}$ be the set partition consisting of singleton sets $\{j\}$ if $j \in B_{i}$ and $B_{i}^{c}$. Finally, let $Z_{i}$ be the atomic extremal assignment corresponding to $P_{i}$. 

First of all, we claim that $Z = \bigcup_{i=1}^{k}Z_{i}$. For any $v \in Z(G)$, from the definition of $Z$, there is $Q \in \cP(n)$ such that $v \in Z_{Q}(G)$. From the maximality, we can find $P_{i}$ such that $Q \preceq P_{i}$. Therefore $v \in Z_{i}(G)$. This implies $Z(G) \subset \bigcup_{i=1}^{k}Z_{i}(G)$. By definition of $Z$ again, the opposite inclusion is obvious. 

Now we will show that $P_{1}, P_{2}, \cdots, P_{k}$ are pairwise strongly transversal. Let $Q_{i} \preceq P_{i}$, $Q_{j} \preceq P_{j}$ and $Q_{i}$ and $Q_{j}$ be incompatible. Since $P_{i}$ has only one non-singleton set $B_{i}^{c}$, $Q_{i}$ has only one non-singleton set $C_{i} \supset B_{i}^{c}$ too from Definition \ref{def:vdash}. Similarly, $P_{j}$ has $B_{j}^{c}$ and $Q_{j}$ has $C_{j} \supset B_{j}^{c}$. Because $C_{i}^{c}, C_{j}^{c} \in C$, we have $C_{i}^{c} \cup C_{j}^{c} \ne [n]$. Thus $C_{i} \cap C_{j} \ne \emptyset$. If $C_{i} \subset C_{j}$, then $Q_{i} \le Q_{j}$. Therefore $C_{i} \not\subset C_{j}$ and $C_{j} \not\subset C_{i}$. Therefore $Q_{i}$ and $Q_{j}$ do not have a tight common refinement. 

By Corollary \ref{cor:unionextremal}, the union $Z$ is extremal.
\end{proof}

Smooth assignments are special among extremal assignments because of the following result. 

\begin{theorem}\label{thm:divisorialsmooth}
The birational model $\Mznb(Z)$ is smooth if and only if $Z$ is equivalent to a smooth extremal assignment. 
\end{theorem}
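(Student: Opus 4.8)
The plan is to prove both directions by analyzing the local structure of $\Mznb(Z)$ along the images of boundary strata, using Remark \ref{rem:contractedlocus} to reduce everything to the two-vertex case. For the nontrivial direction, suppose $Z$ is equivalent to a smooth extremal assignment; by Proposition \ref{prop:equivalentsamemodel} we may replace $Z$ by its equivalent smooth representative, since equivalent assignments yield homeomorphic models with the same normalization, and smoothness is checked on the normalization once we know $\Mznb(Z)$ is normal. Normality follows from Proposition \ref{prop:functoriality}: $\pi_Z : \Mznb \to \Mznb(Z)$ is a morphism with connected fibers from a smooth variety, so (as in the proof of Proposition \ref{prop:equivalentsamemodel}) the normalization of $\Mznb(Z)$ is bijective onto it, and it suffices to show $\Mznb(Z)$ is normal, or directly that it is smooth by exhibiting smooth charts. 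The key geometric input is that for a smooth extremal assignment, every assigned vertex $v \in Z(G)$ degenerates to an assigned vertex $v'$ of a two-vertex graph $G' \in S_2(n)$; the contracted component is then a \emph{tail} being collapsed to a point, which produces at worst a smooth point (an $A_0$ singularity) rather than a genuine multinodal singularity, because only one branch survives at the contraction point.

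First I would set up the local model at a point $x \in \Mznb(Z)$ lying in the image of $D_G$. By Remark \ref{rem:contractedlocus}, $\pi_Z(\overline{D_G}) \cong \prod_{v \in V(G)\setminus Z(G)} \overline{\mathrm{M}}_{0,|\ell(v)|+|E_v|}(Z|_v)$, so by induction on $n$ (the restricted assignments $Z|_v$ have smaller order, or at least the factors are proper substrata) it suffices to understand the normal directions to the deepest stratum — equivalently, to show that $\Mznb(Z)$ is smooth along the image of a zero-dimensional stratum $D_G$ with $G$ a trivalent tree. At such a point, the étale-local picture of $\Mznb$ is $\AA^{n-3}$ with coordinates indexed by the edges of $G$ (the smoothing parameters of the nodes), and $\pi_Z$ is the contraction that, for each maximal connected assigned subgraph $W \subset V(G)$, collapses the corresponding chain of components. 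The claim is: when $Z$ is smooth, each such $W$ is contained in a \emph{tail}, so collapsing $W$ amounts to forgetting the smoothing parameters of the internal edges of that tail while the rest of the curve is unaffected — i.e. locally $\pi_Z$ is a coordinate projection $\AA^{n-3} \to \AA^{n-3-|W|} \times (\text{point})$ composed with trivial factors, hence the image is smooth. I would make this precise using the deformation theory of multinodal singularities (\cite[Lemma 1.17]{Smy13}): a multinodal singularity with $k$ branches has versal deformation space of dimension $k$, smoothing each branch independently, but when $Z$ is smooth only $k=1$ branches occur at each contracted point because the assigned set, being inside a tail, meets $W^c$ in a single vertex; so the contraction introduces no new singular point and no blow-down of a positive-dimensional locus, just a smooth coordinate projection.

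For the converse, I would argue contrapositively: if $Z$ is not equivalent to any smooth extremal assignment, then (by Lemma \ref{lem:extremalssignmentunionofatomic} and Proposition \ref{prop:extremaltightbound}) there is a graph $G$ and $v \in Z(G)$ that does not degenerate to an assigned vertex of any $G' \in S_2(n)$, and moreover this cannot be fixed by passing to an equivalent assignment (so $v$ is not an isolated trivalent vertex and no amount of reshuffling isolated trivalent vertices removes the obstruction). Concretely, the atomic piece $Z_P$ containing $v$ has a set partition $P$ with at least two non-singleton blocks (otherwise $Z_P$ is smooth — it equals the contraction-indicator assignment of Proposition \ref{prop:divassignment}); taking such a $P$, the basic pair $(G_P, v_P)$ gives a point of $\Mznb(Z)$ that is the image of a positive-dimensional subvariety of $\Mznb$, namely (via Remark \ref{rem:contractedlocus}) a product $\prod \overline{\mathrm{M}}_{0,\bullet}$ over the non-singleton blocks, having dimension $\sum_i (|B_i|-2) \ge 2$; the contraction collapses this to a point that is then a multinodal singularity with $\ge 3$ branches in the contracted curve, and a local computation with the versal deformation shows the corresponding point of $\Mznb(Z)$ has tangent space of dimension exceeding $n-3$. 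I expect the main obstacle to be this last local computation: one must show the image really is singular, i.e. that $\pi_Z$ does not magically resolve the singularity — this requires either comparing the versal deformation of the multinodal singularity inside the universal family over $\Mznb(Z)$ with the smooth chart of $\Mznb$, or an explicit coordinate description of $\pi_Z$ near the deepest stratum showing it is the blow-down $\mathrm{Bl}_0 \AA^m \to \AA^m$-type map along several coordinate subspaces, whose image is genuinely singular when $m \ge 2$. The bookkeeping of which equivalent representative to choose, and ensuring the obstruction survives the equivalence relation, is a secondary but real technical point that I would handle by working with the minimal representative $Z_1 \cap Z_2 \cap \cdots$ as in Proposition \ref{prop:equivalentsamemodel}.
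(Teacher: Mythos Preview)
Your overall strategy is reasonable, but both directions are handled quite differently in the paper, and in each case the paper's argument is substantially shorter.

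For the forward direction, the paper does not do any \'etale-local deformation calculation. Instead it observes in one line that for smooth $Z$, any $Z$-stable curve is obtained from a stable curve by contracting tails, hence is a nodal curve with (not necessarily distinct) smooth marked points; since the stack of nodal curves with smooth marked points is smooth, $\Mznb(Z)$ is smooth. Your claim that maximal assigned subsets $W$ are tails is correct and is exactly the content of this observation, but your description of $\pi_Z$ as ``locally a coordinate projection $\AA^{n-3} \to \AA^{n-3-|W|}$'' is not accurate: $\pi_Z$ contracts the positive-dimensional moduli of the tail, so near such a stratum it is a blow-down, not a projection. What makes the target smooth is not the local shape of $\pi_Z$ but the fact that the target parametrizes nodal curves with smooth markings.

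For the backward direction, the paper's argument is entirely different from yours and sidesteps the ``main obstacle'' you flag. Rather than exhibiting a singular point via a tangent-space or versal-deformation computation, the paper defines the \emph{smooth part} $Z_d \subset Z$ by declaring $v \in Z_d(G)$ iff $v$ degenerates to an assigned vertex of some $G' \in S_2(n)$, checks via Lemma \ref{lem:assignmenttocontractionindicator} and Proposition \ref{prop:divassignment} that $Z_d$ is a smooth extremal assignment, and uses the factorization $\Mznb \to \Mznb(Z_d) \to \Mznb(Z)$. If the second map is not a bijection it is a \emph{small} contraction, since any extra contracted locus comes from an internal vertex with at least four special points and hence sits in codimension $\ge 2$. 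But a small contraction of a smooth space is always singular; so if $\Mznb(Z)$ is smooth the map $\Mznb(Z_d) \to \Mznb(Z)$ is bijective, and Proposition \ref{prop:bijectiveimpliesequivalence} gives $Z \sim Z_d$. This completely avoids the local versal-deformation analysis you propose. Your direct approach could in principle be made to work, but it would require carefully showing that the map near a multinodal point really fails to be an isomorphism onto a smooth germ, and the paper's indirect route via ``small contraction of smooth is singular'' is both shorter and more robust.
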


\begin{proof}
Suppose that $Z$ is smooth. Any $(C, x_{1}, \cdots, x_{n}) \in \Mznb(Z)$ is obtained from $(C^{s}, x_{1}^{s}, \cdots, y_{n}^{s}) \in \Mznb$ by contracting some (not necessarily irreducible) tails. In particular, $C$ is a nodal curve and $x_{i}$'s are not necessarily distinct smooth points on $C$. Since the stack of nodal curves with smooth marked points is smooth, $\Mznb(Z)$ is smooth. 

Conversely, suppose that $\Mznb(Z)$ is a smooth algebraic space. Let $Z_{d} \subset Z$ be an assignment (so-called \emph{smooth part of $Z$}) where $v \in Z_{d}(G)$ if and only if there is $H \in S_{2}(n)$ and $w \in Z(H)$ such that $G \rightsquigarrow H$ and $v \rightsquigarrow w$. We claim that $Z_{d}$ is a smooth extremal assignment. For $G \in S_{2}(n)$, $Z(G) = Z_{d}(G)$. So by Lemma \ref{lem:assignmenttocontractionindicator}, $Z_{d}$ on $S_{2}(n)$ defines a contraction indicator. By Proposition \ref{prop:divassignment}, it is uniquely extended to a smooth extremal assignment $Z_{d}$.

The contraction map $\pi_{Z} : \Mznb \to \Mznb(Z)$ factors through $\Mznb \to \Mznb(Z_{d}) \to \Mznb(Z)$. If $\Mznb(Z) \ne \Mznb(Z_{d})$, then $\Mznb(Z)$ is a small contraction of $\Mznb(Z_{d})$, because it contracts an internal component with at least 4 special points, so the codimension of the exceptional set in $\Mznb(Z_{d})$ is at least 2. Note that $\Mznb(Z_{d})$ is a smooth space by the first part of the proof. A small contraction of a smooth space is singular. Therefore $\Mznb(Z_{d}) = \Mznb(Z)$. Thus $Z_{d}$ is equivalent to $Z$ by Proposition \ref{prop:bijectiveimpliesequivalence}.
\end{proof}

Because of Lemma \ref{lem:divisorialdeterminedbyS2}, a smooth extremal assignment can be described by simpler combinatorial data. There is a correspondence between the set of smooth extremal assignments of order $n$ and a set of well-studied combinatorial objects in hypergraph theory. We refer to \cite{Ber89} for notations on hypergraphs. 

\begin{definition}
A \emph{simple intersecting family} of order $n$, rank $\le n-2$, antirank $\ge 2$ is a hypergraph $H = (V(H), E(H))$ whose vertex set is $V(H) = [n]$ and whose edge set is $E(H) = \{A_{i}\}$, and that satisfies:
\begin{enumerate}
\item For any $A_{i} \in E(H)$, $2 \le |A_{i}| \le n-2$;
\item $H$ is simple (i.e., $A_{i} \subset A_{j} \Rightarrow i = j$);
\item $A_{i} \cap A_{j} \ne \emptyset$ for any $A_{i}, A_{j} \in E(H)$.
\end{enumerate}
\end{definition}

\begin{theorem}\label{thm:divisorialsimpleintersecting}
There is a bijection between the set of smooth extremal assignments of order $n$ and the set of simple intersecting families of order $n$, rank $\le n-2$., antirank $\ge 2$. 
\end{theorem}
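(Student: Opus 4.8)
The plan is to construct the bijection explicitly and show the two constructions are mutually inverse, using the structural results already established. First I would define the map from smooth extremal assignments to hypergraphs: given a smooth extremal assignment $Z$, by Lemma \ref{lem:divisorialdeterminedbyS2} it is determined by its restriction to $S_{2}(n)$, and by Lemma \ref{lem:assignmenttocontractionindicator} this restriction produces a contraction indicator $\cC$. A contraction indicator is not quite a simple intersecting family — it is closed downward under taking subsets of size $\ge 2$, so it is generally not simple — so I would pass to the set $\cC^{M}$ of maximal elements of $\cC$. I would verify that $H := ([n], \cC^{M})$ is a simple intersecting family of order $n$, rank $\le n-2$, antirank $\ge 2$: simplicity is immediate from maximality, the rank and antirank bounds come from condition (1) of Definition \ref{def:contractionindicator}, and the intersecting property $A_{i} \cap A_{j} \ne \emptyset$ follows from condition (3), $B_{1} \cup B_{2} \ne [n]$, applied to the complements (if $A_{i} \cap A_{j} = \emptyset$ then $A_{i} \subset A_{j}^{c}$; one has to be slightly careful here and argue directly that two maximal elements with empty intersection would force $A_i^c \cup A_j^c = [n]$ only after checking $|A_i^c|,|A_j^c| \ge 2$, which holds since $|A_i|,|A_j| \le n-2$ — so actually one uses the intersecting condition on the indicator together with downward closure).

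Next I would define the inverse map. Given a simple intersecting family $H = ([n], \{A_{i}\})$ with the stated bounds, set $\cC := \{B \subset [n] \mid 2 \le |B|, \, B \subset A_{i} \text{ for some } i\}$; this is the downward closure of the edge set. I would check that $\cC$ is a contraction indicator: conditions (1) and (2) of Definition \ref{def:contractionindicator} are built in, and condition (3) requires showing $B_{1} \cup B_{2} \ne [n]$ for $B_1, B_2 \in \cC$ — since $B_1 \subset A_{i}$ and $B_2 \subset A_{j}$, it suffices that $A_{i} \cup A_{j} \ne [n]$, which holds because $A_{i} \cap A_{j} \ne \emptyset$ and $|A_i|, |A_j| \le n-2$ (two sets of size at most $n-2$ whose union is $[n]$ must be disjoint, contradicting the intersecting property). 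Then by Proposition \ref{prop:divassignment}, $\cC$ determines a smooth extremal assignment $Z$ via formula \eqref{eqn:assignmentfromcontractionindicator}.

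Then I would check the two maps are mutually inverse. Starting from a simple intersecting family $H$, forming $\cC$ by downward closure and then taking maximal elements $\cC^{M}$: I must show $\cC^{M} = \{A_i\}$. Every $A_i$ lies in $\cC$, and it is maximal there because any $B \in \cC$ containing $A_i$ lies in some $A_j$, forcing $A_i \subset A_j$, hence $i = j$ by simplicity; conversely a maximal element of $\cC$ is contained in some $A_i$ and by maximality equals it. Starting from a smooth extremal assignment $Z$ with associated contraction indicator $\cC$: the hypergraph is $\cC^{M}$, and its downward closure recovers $\cC$ because condition (2) of Definition \ref{def:contractionindicator} says $\cC$ is already downward closed, so it equals the downward closure of its maximal elements (here one uses that $\cC$ is finite, so every element sits below a maximal one). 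Then $\cC$ recovers $Z$ on $S_{2}(n)$ by construction, and by Lemma \ref{lem:divisorialdeterminedbyS2} recovers $Z$ entirely — one should check the $Z$ produced by Proposition \ref{prop:divassignment} from $\cC$ agrees on $S_2(n)$ with the original, which is essentially the content of Lemma \ref{lem:assignmenttocontractionindicator} run in reverse.

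The main obstacle I expect is the bookkeeping around the distinction between a contraction indicator (downward closed, not simple) and a simple intersecting family (simple, the "top layer"), and making sure the passage to maximal elements is a clean bijection — in particular verifying that no information is lost, i.e. that a smooth extremal assignment really is determined by the maximal elements of its contraction indicator rather than the full indicator. This is handled by the observation in the proof of Proposition \ref{prop:divassignment} that $Z = \bigcup_i Z_i$ where $Z_i$ is the atomic extremal assignment attached to the maximal element $B_i$, so only $\cC^M$ matters; combined with Lemma \ref{lem:divisorialdeterminedbyS2}, this pins down the correspondence. The intersecting-property translations (between "$B_1 \cup B_2 \ne [n]$" on the indicator side and "$A_i \cap A_j \ne \emptyset$" on the hypergraph side) are routine once one notes that the size bounds $|A_i| \le n-2$ rule out the degenerate case of a disjoint cover.
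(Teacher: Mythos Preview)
Your construction omits a crucial complementation step, and as written the bijection fails in both directions.

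In the forward direction you set $H := ([n], \cC^{M})$ and claim the intersecting property $A_i \cap A_j \ne \emptyset$ follows from condition (3) of Definition \ref{def:contractionindicator}. It does not: condition (3) says $B_1 \cup B_2 \ne [n]$, which is equivalent to $B_1^{c} \cap B_2^{c} \ne \emptyset$, not to $B_1 \cap B_2 \ne \emptyset$. Concretely, for $n = 6$ the set $\cC = \{\{1,2\}, \{3,4\}\}$ is a contraction indicator (all three axioms hold), and $\cC^{M} = \cC$, but its two elements are disjoint. Your attempted salvage via ``$A_i^{c} \cup A_j^{c} = [n]$ contradicts (3)'' does not work because $A_i^{c}, A_j^{c}$ need not lie in $\cC$.

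In the reverse direction you take $\cC$ to be the downward closure of the edge set $\{A_i\}$ and argue that $A_i \cup A_j \ne [n]$ because ``two sets of size at most $n-2$ whose union is $[n]$ must be disjoint.'' This is false: for $n = 6$, $A_1 = \{1,2,3,4\}$ and $A_2 = \{3,4,5,6\}$ form a simple intersecting family of rank $n-2$, yet $A_1 \cup A_2 = [6]$, so your $\cC$ violates axiom (3) and is not a contraction indicator.

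The paper's proof fixes this by passing to complements: it sets $\cE_{Z} := \{B^{c} \mid B \in \cC_{Z}^{M}\}$. Then $B_1 \cup B_2 \ne [n]$ translates exactly to $B_1^{c} \cap B_2^{c} \ne \emptyset$, giving the intersecting property, and the inverse map takes $\{A_i\}$ to the downward closure of $\{A_i^{c}\}$. Your verification that passing to maximal elements and back to downward closure is lossless is fine; you only need to insert the complementation on each side, after which the rest of your argument goes through.
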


\begin{proof}
For a smooth extremal assignment $Z$, we can construct a contraction indicator $\cC_{Z}$. Let $\cC_{Z}^{M} = \{B\in \cC_{Z}\;|\; B \subset B' \Rightarrow B = B'\}$, that is, the set of maximal elements. Then let $\cE_{Z} = \{B^{c}\;|\; B \in \cC_{Z}^{M}\}$. Then it is straightforward to check that $\cE_{Z}$ is a simple intersecting family. Obviously this construction is reversible. 
\end{proof}

\subsection{Projectivity of contractions}

Although $\Mznb(Z)$ is a smooth divisorial contraction of a smooth projective variety $\Mznb$ when $Z$ is smooth, in general it is not a projective variety. In particular, it does not appear in Mori's program (or the log minimal model program) for $\Mznb$. In this section, we show the projectivity of some $\Mznb(Z)$. 

\begin{lemma}\label{lem:blowup}
Let $Z_{1}, Z_{2}$ be two smooth extremal assignments such that $Z_{1} \subset Z_{2}$. Let $\cC_{i}$ be the corresponding contraction indicator for $Z_{i}$. Suppose that either one of the following two conditions holds.
\begin{enumerate}
\item $\cC_{2} = \cC_{1} \cup \{B' \;|\; B' \subset B, |B'| \ge 2\}$ where $|B| = 3$;
\item $\cC_{2}\setminus \cC_{1} = \{B\}$ with $|B| \ge 4$.
\end{enumerate}
Then the reduction map $\pi_{Z_{1}, Z_{2}} : \Mznb(Z_{1}) \to \Mznb(Z_{2})$ is a smooth blow-down. 
\end{lemma}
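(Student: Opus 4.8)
The plan is to show in each of the two cases that $\pi_{Z_1,Z_2}$ realizes $\overline{\mathrm{M}}_{0,n}(Z_1)$ as the blow-up of $\overline{\mathrm{M}}_{0,n}(Z_2)$ along a smooth center, by identifying the exceptional locus and its image, and then invoking a standard criterion (a proper birational morphism of smooth varieties that is an isomorphism away from a smooth subvariety, with exceptional divisor a projective bundle over it, is the blow-up along that subvariety). By Theorem \ref{thm:divisorialsmooth} (the ``first part of the proof'' of it), both $\overline{\mathrm{M}}_{0,n}(Z_1)$ and $\overline{\mathrm{M}}_{0,n}(Z_2)$ are smooth algebraic spaces since $Z_1$ and $Z_2$ are smooth, and by Proposition \ref{prop:functoriality} there is a birational morphism $\pi_{Z_1,Z_2}$ with connected fibers which is an isomorphism over the open locus where no ``new'' component (assigned by $Z_2$ but not $Z_1$) appears.

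First I would describe the exceptional locus set-theoretically using Remark \ref{rem:contractedlocus}. In case (1), $\cC_2$ differs from $\cC_1$ only by adding all $2$-subsets of a fixed $3$-set $B$ (and $B$ itself has $|B|=3$, so the new atomic partition $P_B=\{\{j\}\}_{j\in B}\cup\{B^c\}$ corresponds to a vertex of valence $3$ carrying the labels in $B$); the locus contracted by $\pi_{Z_1,Z_2}$ is exactly the image in $\overline{\mathrm{M}}_{0,n}(Z_1)$ of the boundary divisor $D_B$, which is a $\overline{\mathrm{M}}_{0,B\cup\{p\}}(Z_1|_{\text{tail}})\times\overline{\mathrm{M}}_{0,B^c\cup\{q\}}(Z_1|_{\cdot})$; since $|B|=3$, the first factor is a point, so this locus and its image in $\overline{\mathrm{M}}_{0,n}(Z_2)$ differ by the projection killing a $\overline{\mathrm{M}}_{0,3}$-direction — but that is not a divisor-contraction, so I must be careful: the genuine blow-down picture comes from the fact that passing from $Z_1$ to $Z_2$ adds the $3$-valent assigned vertex, which by the equivalence relation (Definition of $\sim$ and Proposition \ref{prop:equivalentsamemodel}) changes the model only along the stratum where a component acquires exactly $4$ special points and gets contracted; there $\overline{\mathrm{M}}_{0,n}(Z_1)\to\overline{\mathrm{M}}_{0,n}(Z_2)$ looks like $\overline{\mathrm{M}}_{0,4}\to\mathrm{pt}$, i.e. $\PP^1\to\mathrm{pt}$, fibered over the product of the other restricted moduli spaces. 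Hence the exceptional divisor is a $\PP^1$-bundle over a smooth subvariety, and the morphism is a smooth blow-down of codimension-$2$ type. In case (2), $\cC_2\setminus\cC_1=\{B\}$ with $|B|\ge 4$, so a single new atomic partition $P_B$ is adjoined; the newly contracted component has $|B|+1\ge 5$ special points at worst, the fiber of $\pi_{Z_1,Z_2}$ over a point of the exceptional center is $\overline{\mathrm{M}}_{0,|B|+1}(Z|_{\cdot})$ restricted, which is $\overline{\mathrm{M}}_{0,|B|+1}$ modulo further contractions dictated by $\cC_1$ — and since all proper subsets of $B$ of size $\ge 2$ are already in $\cC_1$, this fiber is exactly $\overline{\mathrm{M}}_{0,|B|+1}(Z_{\text{Fulton}})$, the Losev--Manin / Fulton--MacPherson type space that is $\PP^{|B|-2}$. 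So again the exceptional divisor is a projective bundle $\PP^{|B|-2}\to(\text{smooth center})$.

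The key computational step, which I expect to be the main obstacle, is verifying rigorously that in case (2) the fiber $\pi_{Z_1,Z_2}^{-1}(x)$ over a point $x$ in the exceptional center is precisely projective space $\PP^{|B|-2}$ and that the exceptional divisor is globally a Zariski-locally-trivial $\PP^{|B|-2}$-bundle over the smooth center, with normal bundle the tautological (or its dual) so that the universal property of the blow-up applies. This requires: (a) computing the restricted extremal assignment $Z_1|_v$ on the star with label set $B\cup\{\text{one edge}\}$ and checking that it is the assignment making $\overline{\mathrm{M}}_{0,|B|+1}(Z_1|_v)\cong\PP^{|B|-2}$ (this is the statement that contracting all $\ge 3$-labeled tails with label set a proper subset of $B$ collapses $\overline{\mathrm{M}}_{0,|B|+1}$ to $\PP^{|B|-2}$, which is the Kapranov / Losev--Manin model); (b) using the smoothness of $\overline{\mathrm{M}}_{0,n}(Z_1)$ and $\overline{\mathrm{M}}_{0,n}(Z_2)$ from Theorem \ref{thm:divisorialsmooth} together with a local description of the contraction near the stratum $D_B$ to see the normal-bundle identification. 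I would carry out (a) by induction on $|B|$, peeling off one label at a time and using that each elementary step $\cC\leadsto\cC\cup\{2\text{-subsets of a }3\text{-set}\}$ is the previously-handled case (1); and (b) by the standard blow-up recognition lemma (e.g. the criterion that a birational proper morphism $f\colon Y\to X$ of smooth varieties contracting an irreducible divisor $E$ to a smooth subvariety $W$ with $E\to W$ a $\PP^{r}$-bundle and $\cO_E(E)$ restricting to $\cO(-1)$ on fibers is the blow-up of $X$ along $W$). Finally I would note that this blow-up description in particular gives that $\pi_{Z_1,Z_2}$ is projective relative to its target, which is how the lemma gets used downstream for the projectivity results.
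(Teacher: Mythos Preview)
Your outline is essentially the paper's approach: identify the exceptional divisor $D_{B}$, describe its fibers as projective spaces, check the normal bundle restricts to $\cO(-1)$ on fibers, and invoke a blow-down recognition criterion. A few points where your write-up diverges or needs tightening:

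\textbf{Case (1) confusion.} Your claim that ``since $|B|=3$, the first factor is a point'' is wrong: $D_{B}\cong \overline{\mathrm{M}}_{0,4}(Z_{1}^{B})\times \overline{\mathrm{M}}_{0,n-2}(Z_{1}^{B^{c}})\cong \PP^{1}\times \overline{\mathrm{M}}_{0,n-2}(Z_{1}^{B^{c}})$, so the first factor is $\PP^{1}$, not a point. You then self-correct (``$\overline{\mathrm{M}}_{0,4}\to\mathrm{pt}$''), but the detour through the equivalence relation and $3$-valent vertices is unnecessary and should be removed.

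\textbf{Fiber identification in case (2).} The paper does this more directly than your proposed induction on $|B|$: since every $(|B|-1)$-subset of $B$ lies in $\cC_{1}$, the restricted assignment $Z_{1}^{B}$ on $B\cup\{p\}$ is exactly the weight assignment $Z_{A}$ with $A=((\tfrac{1}{|B|-1})^{|B|},1)$ (the last weight on $p$), so $\overline{\mathrm{M}}_{0,|B|+1}(Z_{1}^{B})\cong \overline{\mathrm{M}}_{0,A}\cong \PP^{|B|-2}$ by Hassett. This one-line identification replaces your induction.

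\textbf{The recognition criterion.} Your ``standard blow-up recognition lemma'' is stated for smooth \emph{varieties}, but at this stage $\overline{\mathrm{M}}_{0,n}(Z_{i})$ is only known to be a smooth algebraic space (Theorem \ref{thm:extassignmentmodel}). The paper therefore invokes the Fujiki--Nakano criterion, which works in the category of Moishezon manifolds / algebraic spaces. For the normal bundle computation itself, the paper picks a general point of $D_{B}$ where the ``other'' component is smooth and unassigned, so that locally the situation agrees with $\overline{\mathrm{M}}_{0,n}$ (case (1)) or with a Hassett space $\overline{\mathrm{M}}_{0,A'}$ with $A'=((\tfrac{1}{|B|-1})^{|B|},1^{n-|B|})$ (case (2)); in either model the restriction of the normal bundle to a fiber is $\cO(-1)$. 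You flagged this as the main obstacle but did not indicate a mechanism; this local comparison is the missing ingredient.
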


\begin{proof}
We denote the image $\pi_{Z_{i}}(D_{B})$ by $D_{B}$ if the image is a divisor. For the first case, consider $D_{B} \subset \Mznb(Z_{1})$. Then $D_{B} \cong \overline{\mathrm{M}}_{0, 4}(Z_{1}^{B}) \times \overline{\mathrm{M}}_{0, n-2}(Z_{1}^{B^{c}}) \cong \PP^{1} \times \overline{\mathrm{M}}_{0,n-2}(Z_{1}^{B^{c}})$, and $\pi_{Z_{1}, Z_{2}}(D_{B}) \cong \overline{\mathrm{M}}_{0, n-2}(Z_{1}^{B^{c}})$ (Remark \ref{rem:contractedlocus}). We compute the restriction of the normal bundle along the fiber $\PP^{1}$ containing $x \in D_{B}$. To calculate it, it suffices to pick a general point $X = (C = C_{1} \cup C_{2}, x_{1}, x_{2}, \cdots, x_{n})$ where $C_{1} \in \overline{\mathrm{M}}_{0, 4}(Z_{1}^{B})$ and $C_{2} \in \mathrm{M}_{0, n-2}$. In this case, the normal bundle to $D_{B}$ is isomorphic to that in $\Mznb$. Thus the restriction is $\cO(-1)$. Therefore, by the Fujiki-Nakano criterion (\cite{FN71}), it is a blow-down with 1-dimensional exceptional fibers, in the category of Moishezon manifolds, or equivalently, that of algebraic spaces.

In the second case, the exceptional set $D_{B} \subset \Mznb(Z_{1})$ is isomorphic to $\overline{\mathrm{M}}_{0, |B|+1}(Z_{1}^{B})\times \overline{\mathrm{M}}_{0, n - |B| + 1}(Z_{1}^{B^{c}})$. The extremal assignment $Z_{1}^{B}$ is on the label set $B \cup \{p\}$. Note that every subset $C \subset B \cup \{p\}$ such that $|C| = |B| - 1$ and $p \notin C$ is assigned. Therefore $Z_{1}^{B} = Z_{A}$ with $A = ((\frac{1}{|B|-1})^{|B|},1)$, where the last weight is for $p$. Then $\overline{\mathrm{M}}_{0, |B|+1}(Z_{1}^{B}) \cong \Mza \cong \PP^{|B|-2}$ (\cite[Section 6.1]{Has03}). If we take a general point $X = (C = C_{1} \cup C_{2}, x_{1}, x_{2}, \cdots, x_{n}) \in D_{B}$ such that $C_{2} \in \mathrm{M}_{0, n - |B| + 1}$, then locally the normal bundle to $D_{B}$ is isomorphic to the normal bundle to $D_{B}$ in $\overline{\mathrm{M}}_{0, A'}$, where $A' = ((\frac{1}{|B|-1})^{|B|}, 1^{n-|B|})$. In this case, the restriction of the normal bundle to a fiber $\PP^{|B|-2}$ is $\cO(-1)$. Therefore this is a smooth blow-down, too.
\end{proof}

\begin{proposition}\label{prop:blowup}
Let $Z, Z'$ be two smooth extremal assignments such that $Z \subsetneq Z'$. Then the reduction map $\pi_{Z, Z'} : \Mznb(Z) \to \Mznb(Z')$ is decomposed into a sequence of blow-downs. 
\end{proposition}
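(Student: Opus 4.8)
The plan is to interpolate between $Z$ and $Z'$ by a finite chain of smooth extremal assignments, each consecutive pair related exactly as in Lemma \ref{lem:blowup}, and then to paste the resulting blow-downs together using the commutative triangles of Proposition \ref{prop:functoriality}. First I would move to the combinatorial side. By Lemma \ref{lem:divisorialdeterminedbyS2} a smooth extremal assignment is determined by its restriction to $S_{2}(n)$, and by Lemma \ref{lem:assignmenttocontractionindicator} together with Proposition \ref{prop:divassignment} that restriction is the same datum as a contraction indicator (Definition \ref{def:contractionindicator}), with the assignment recovered from the indicator by \eqref{eqn:assignmentfromcontractionindicator}. Hence $Z \subsetneq Z'$ corresponds to contraction indicators $\cC \subseteq \cC'$ with $\cC \ne \cC'$, i.e. $\cC \subsetneq \cC'$, and it suffices to produce a chain $\cC = \cC_{0} \subsetneq \cC_{1} \subsetneq \cdots \subsetneq \cC_{m} = \cC'$ of contraction indicators in which every step $\cC_{t} \to \cC_{t+1}$ is of one of the two types in Lemma \ref{lem:blowup}, or else merely adjoins a single $2$-subset of $[n]$ that is maximal in $\cC'$.

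The chain I would build greedily: at stage $t$ choose $B \in \cC' \setminus \cC_{t}$ of minimal cardinality. If $|B| \ge 4$ then, by minimality, all proper subsets of $B$ of size $\ge 2$ already lie in $\cC_{t}$, and setting $\cC_{t+1} = \cC_{t} \cup \{B\}$ is a Lemma \ref{lem:blowup}(2) move. If $|B| = 3$ then again all $2$-subsets of $B$ lie in $\cC_{t}$, and $\cC_{t+1} = \cC_{t} \cup \{B\}$ is a Lemma \ref{lem:blowup}(1) move. If $|B| = 2$ and $B$ is not maximal in $\cC'$, then by downward closure of $\cC'$ there is a $3$-element $S \in \cC'$ with $B \subsetneq S$; taking $\cC_{t+1} = \cC_{t} \cup \{B' : B' \subseteq S,\ |B'| \ge 2\}$ is a Lemma \ref{lem:blowup}(1) move for $S$. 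If $|B| = 2$ and $B$ is maximal in $\cC'$, set $\cC_{t+1} = \cC_{t} \cup \{B\}$, the adjunction of a maximal $2$-set. In each case $\cC_{t+1}$ is downward closed (the only new maximal set is the one just adjoined, and all its subsets of size $\ge 2$ are present), is contained in $\cC'$ (so the union condition persists), is a genuine enlargement of $\cC_{t}$, and $|\cC' \setminus \cC_{t+1}| < |\cC' \setminus \cC_{t}|$; thus the process terminates with $\cC_{m} = \cC'$. The main thing to get right — and the only place where the argument could break — is this bookkeeping: checking that every $\cC_{t}$ really is a contraction indicator and that a non-maximal $2$-set of $\cC' \setminus \cC$ always sits inside a $3$-element of $\cC'$, so that all low-cardinality discrepancies can indeed be absorbed into moves of the prescribed shapes.

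Finally, let $Z_{t}$ be the smooth extremal assignment with contraction indicator $\cC_{t}$, so $Z = Z_{0} \subsetneq \cdots \subsetneq Z_{m} = Z'$ and, by the commutativity in Proposition \ref{prop:functoriality}(2) applied repeatedly, $\pi_{Z,Z'} = \pi_{Z_{m-1},Z_{m}} \circ \cdots \circ \pi_{Z_{0},Z_{1}}$ as maps $\Mznb(Z) = \Mznb(Z_{0}) \to \cdots \to \Mznb(Z_{m}) = \Mznb(Z')$. Each factor arising from a type (1) or type (2) step is a smooth blow-down by Lemma \ref{lem:blowup}. Each factor arising from the adjunction of a maximal $2$-set only introduces isolated $3$-valenced vertices, so $Z_{t} \sim Z_{t+1}$, and since by Theorem \ref{thm:divisorialsmooth} the spaces $\Mznb(Z_{t})$ and $\Mznb(Z_{t+1})$ are smooth hence normal, Proposition \ref{prop:equivalentsamemodel} forces $\pi_{Z_{t},Z_{t+1}}$ to be an isomorphism. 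Absorbing these isomorphisms into the adjacent maps exhibits $\pi_{Z,Z'}$ as a composition of smooth blow-downs, which is the assertion.
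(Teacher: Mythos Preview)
Your proof is correct and follows essentially the same strategy as the paper: pass to contraction indicators, interpolate $\cC \subsetneq \cC'$ by a chain of contraction indicators differing by a single Lemma~\ref{lem:blowup} move, and compose using Proposition~\ref{prop:functoriality}. Your greedy rule (choose $B \in \cC'\setminus\cC_t$ of minimal cardinality) is a mild variant of the paper's (choose a minimal $D\subset B'$ with $D\notin\cC_0$ and $|D|\ge 3$ for some $B'\in\cC_m$), but the inductive mechanism is identical.

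The one place you are more careful than the paper is the case where $\cC'\setminus\cC$ contains a $2$-set $B$ that is maximal in $\cC'$. The paper's procedure, as written, only ever adjoins sets $D$ with $|D|\ge 3$ (together with their $2$-subsets), so it never reaches such a $B$ and tacitly assumes this case does not arise or does not matter. You handle it explicitly: adjoining a maximal $2$-set adds only isolated $3$-valenced end vertices to each $Z_t(G)$, so $Z_t\sim Z_{t+1}$, and since both $\Mznb(Z_t)$ and $\Mznb(Z_{t+1})$ are smooth (hence normal) by Theorem~\ref{thm:divisorialsmooth}, Proposition~\ref{prop:equivalentsamemodel} makes $\pi_{Z_t,Z_{t+1}}$ an isomorphism. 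This is a genuine (if minor) improvement in rigor over the paper's argument, not a different approach.
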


\begin{proof}
We will construct a sequence of contraction indicators $\cC_{0}, \cC_{1}, \cdots, \cC_{m}$ such that 
\begin{enumerate}
\item $\cC_{i} \subset \cC_{i+1}$,
\item $\cC_{0}$ corresponds to $Z$ and $\cC_{m}$ corresponds to $Z'$, 
\item $\cC_{i}$ and $\cC_{i+1}$ satisfy one of two conditions in Lemma \ref{lem:blowup}.
\end{enumerate}

For any $B \in \cC_{0}$, there is $B' \in \cC_{m}$ such that $B \subset B'$. Suppose that $B \subsetneq B'$. Take a minimal $D \subset B'$ such that $D$ is not in $\cC_{0}$ and $|D| \ge 3$. If $|D| = 3$, then set $\cC_{1} := \cC_{0} \cup \{D'\;|\; D' \subset D, |D'| \ge 2\}$. If $|D| \ge 4$, set $\cC_{1} := \cC_{0} \cup \{D\}$. We can repeat this procedure until we reach $\cC_{m}$. 

This implies that there is a sequence of smooth extremal assignments $Z = Z_{0} \subset Z_{1} \subset \cdots \subset Z_{m} = Z'$ such that 
\[
	\Mznb(Z) = \Mznb(Z_{0}) \to \Mznb(Z_{1}) \to \cdots \to 
	\Mznb(Z_{m}) = \Mznb(Z')
\]
is a sequence of smooth blow-downs. 
\end{proof}

\begin{corollary}\label{cor:projective}
Let $Z$ be a smooth extremal assignment and let $A$ be weight data such that $Z \subset Z_{A}$. Then $\Mznb(Z)$ is a smooth projective variety.
\end{corollary}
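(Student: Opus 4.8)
The plan is to deduce projectivity from Hassett's theorem by descending the blow-up tower furnished by Proposition~\ref{prop:blowup}. By Lemma~\ref{lem:Hassettisdivisorial} the weight assignment $Z_{A}$ is a smooth extremal assignment, and $\Mznb(Z_{A}) \cong \Mza$ is a smooth projective variety by Hassett's theorem (\cite[Theorem 4.1]{Has03}). If $Z = Z_{A}$ there is nothing to prove, so assume the inclusion $Z \subsetneq Z_{A}$ is strict.

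Since $Z$ and $Z_{A}$ are smooth extremal assignments with $Z \subsetneq Z_{A}$, Proposition~\ref{prop:blowup} produces a factorization
\[
	\Mznb(Z) = \Mznb(Z_{0}) \longrightarrow \Mznb(Z_{1}) \longrightarrow \cdots \longrightarrow \Mznb(Z_{m}) = \Mza
\]
into smooth blow-downs, and by Theorem~\ref{thm:divisorialsmooth} every $\Mznb(Z_{i})$ is a smooth algebraic space. I would argue by descending induction on $i$ that $\Mznb(Z_{i})$ is a smooth projective variety. The base case $i = m$ is Hassett's theorem. For the inductive step, the proof of Lemma~\ref{lem:blowup} exhibits $\Mznb(Z_{i})$ as the blow-up of $\Mznb(Z_{i+1})$ along a smooth closed center $W_{i}$: the exceptional divisor is a product of a projective space with a boundary-type moduli space, with normal bundle $\cO(-1)$ along the projective-space fibers, which is exactly the shape of a blow-up exceptional divisor. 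Given that $\Mznb(Z_{i+1})$ is projective by the inductive hypothesis, $W_{i}$ is then a projective scheme, the algebraic blow-up $\mathrm{Bl}_{W_{i}}\Mznb(Z_{i+1})$ is projective, and it coincides with the Fujiki--Nakano contraction produced in Lemma~\ref{lem:blowup}; hence $\Mznb(Z_{i})$ is projective. Taking $i = 0$ finishes the proof, smoothness having already been recorded in Theorem~\ref{thm:divisorialsmooth}.

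The one delicate point is the identification of $\Mznb(Z_{i})$ --- a priori only an algebraic space --- with the \emph{algebraic} blow-up of $\Mznb(Z_{i+1})$ along $W_{i}$, rather than merely with an analytic (Moishezon) blow-down; this is what allows algebraic projectivity to propagate up the tower. A clean way to sidestep it is to argue directly with divisors: pull back an ample divisor $H$ from $\Mza$, let $E_{1}, E_{2}, \ldots, E_{m} \subset \Mznb(Z)$ denote the pullbacks to $\Mznb(Z)$ of the exceptional divisors of the successive blow-downs, and check that $\pi_{Z,Z_{A}}^{*}H - \sum_{i=1}^{m}\epsilon_{i}E_{i}$ is ample for a suitably chosen sequence of small positive rationals $\epsilon_{i}$. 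This uses only that each blow-down has relatively anti-ample exceptional divisor --- so that the pullback of an ample class plus a sufficiently small multiple of a relatively anti-ample class is ample --- applied one step at a time, and it produces an ample $\QQ$-divisor on $\Mznb(Z)$ directly.
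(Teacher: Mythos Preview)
Your proof is correct and follows essentially the same route as the paper: invoke Proposition~\ref{prop:blowup} to factor $\pi_{Z,Z_{A}}$ into smooth blow-downs over the projective base $\Mza$, then propagate projectivity up the tower by descending induction. The paper's own argument is terser and does not flag the algebraic-space-versus-scheme issue you raise; your caution there is reasonable, though the Fujiki--Nakano criterion already identifies each $\Mznb(Z_{i})$ with the blow-up of $\Mznb(Z_{i+1})$ along its smooth center, so once the base is a projective scheme the identification with the algebraic blow-up is automatic and your alternative divisor argument, while correct, is not strictly needed.
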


\begin{proof}
By Proposition \ref{prop:blowup}, we can find a sequence of smooth extremal assignments $Z = Z_{0} \subset Z_{1} \subset \cdots \subset Z_{m} = Z_{A}$ such that 
\[
	\pi : \Mznb(Z) = \Mznb(Z_{0}) \to \Mznb(Z_{1}) \to \cdots 
	\to \Mznb(Z_{m}) = \Mznb(Z_{A})
\]
is a sequence of smooth blow-ups. Since $\Mznb(Z_{A}) = \Mza$ is a projective variety and a blow-up of a projective variety is also projective, we obtain the result.
\end{proof}

\begin{example}\label{ex:noweightassignment}
There is a smooth extremal assignment $Z$ without any larger weight assignment. Let $\{\{1,3,4,5\},\{2,4,5,6\},\{1,5,6,7\},\{2,3,5,7\}\}$ be the set of maximal elements in the contraction indicator for $Z$. If there is a weight $A = (a_{1}, a_{2}, \cdots, a_{7})$ such that $Z \subset Z_{A}$, we have
\[
	a_{1}+a_{3}+a_{4}+a_{5} \le 1, a_{2} + a_{4} + a_{5} + a_{6}\le 1, 
	a_{1}+a_{5}+a_{6}+a_{7} \le 1, a_{2}+a_{3}+a_{5}+a_{7}\le 1.
\]
By adding all of them, we have
\[
	2(\sum_{i=1}^{7}a_{i}) + 2a_{5} \le 4.
\]
But since $\sum_{i=1}^{7}a_{i} > 2$, we have $a_{5} < 0$, which is impossible.
\end{example}

In Section \ref{sec:nonprojectiveexamples}, we present some non-projective examples. 

We leave three sufficient conditions for the existence of a larger weight assignment, which guarantees the projectivity of the birational model.

\begin{proposition}\label{prop:existenceweightassignment}
Let $Z$ be a smooth extremal assignment and let $\cC = \{B_{i}\}_{1 \le i \le k}$ be the corresponding contraction indicator. $Z$ has a larger weight assignment $Z_{A}$ under any of following conditions:
\begin{enumerate}
\item There is a label $j \in [n]$ such that $j \notin \bigcup_{i=1}^{k}B_{i}$.
\item There is only one $B_{i}$ containing $j$. 
\item $|B_{i}| < n/2$ for all $i$.
\end{enumerate}
\end{proposition}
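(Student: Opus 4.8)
The plan is to first reduce the statement to a purely numerical existence question about weight vectors, and then to write down explicit rational weights in each of the three cases; essentially all of the difficulty will sit in case (2).

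For the reduction, recall that ``$Z$ has a larger weight assignment $Z_A$'' means $Z\subseteq Z_A$, and that $Z_A$ is itself smooth (Lemma~\ref{lem:Hassettisdivisorial}). Since $Z$ is smooth and $Z_A$ obeys the second axiom of extremal assignments, it is enough to guarantee $Z(G)\subseteq Z_A(G)$ only for $G\in S_2(n)$: for general $H$ a vertex $v\in Z(H)$ admits, by smoothness, some $G\in S_2(n)$ and $w\in Z(G)$ with $H\rightsquigarrow G$, $v\rightsquigarrow w$, and once $w\in Z_A(G)$ the second axiom forces $v\in Z_A(H)$. For $G\in S_2(n)$ the assigned vertex, if any, is a tail with label set $B\in\cC$, and by Definition~\ref{def:Hassettextremalassignment} it lies in $Z_A(G)$ exactly when $\sum_{i\in B}a_i\le 1$. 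Since a contraction indicator is closed under subsets of size $\ge 2$ and the $a_i$ are nonnegative, it suffices to impose this for the maximal members $B_1,\dots,B_m$ of $\cC$. So the task is: under each hypothesis, produce rational $A=(a_1,\dots,a_n)$ with $0<a_i\le 1$, $\sum_i a_i>2$, and $\sum_{i\in B_\ell}a_i\le 1$ for every $\ell$.

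Cases (1) and (3) are then bookkeeping. For (1) I would choose $j_0\notin\bigcup_\ell B_\ell$ and set $a_{j_0}=1$, $a_i=1/(n-2)$ otherwise: then $\sum_{i\in B_\ell}a_i=|B_\ell|/(n-2)\le 1$ since $|B_\ell|\le n-2$ and $j_0\notin B_\ell$, while $\sum_i a_i=1+\tfrac{n-1}{n-2}=2+\tfrac{1}{n-2}>2$. For (3), with $c:=\max_\ell|B_\ell|<n/2$, I would fix a rational $\delta$ with $0<\delta\le(n-2c)/(nc)$ and take $a_i=2/n+\delta$ for all $i$: then $\sum_i a_i=2+n\delta>2$, each $a_i\le 1$, and $\sum_{i\in B_\ell}a_i=|B_\ell|(2/n+\delta)\le c(2/n+\delta)\le 2c/n+(n-2c)/n=1$.

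Case (2) is where the real work lies, and I expect it to be the main obstacle: neither of the two tricks above applies, since the $B_\ell$ are not assumed small, and by Example~\ref{ex:noweightassignment} a smooth $Z$ need not admit any larger weight assignment at all, so the hypothesis must be used in an essential way. I would write $j=n$, let $B_1$ be the unique maximal member of $\cC$ containing $n$, put $B_1':=B_1\setminus\{n\}$ with $r:=|B_1'|\ge 1$ and $W:=[n]\setminus B_1$ with $k:=|W|\ge 2$ (using $2\le|B_1|\le n-2$), choose a rational $\epsilon$ with $0<\epsilon<1/(kr)$, and set $a_n=1-r\epsilon$, $a_i=\epsilon$ for $i\in B_1'$, and $a_i=(1-r\epsilon)/(k-1)$ for $i\in W$. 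One then verifies $0<a_i\le 1$, $\sum_{i\in B_1}a_i=a_n+r\epsilon=1$, and $\sum_i a_i=1+\tfrac{k(1-r\epsilon)}{k-1}>2$ (this last inequality being exactly the choice of $\epsilon$). The crucial point is the bound on the remaining maximal members: any $B_\ell$ with $\ell\ge 2$ avoids $n$, and the contraction-indicator condition $B_\ell\cup B_1\ne[n]$ forces $B_\ell$ to omit some vertex of $W$, so $|B_\ell\cap W|\le k-1$ and hence $\sum_{i\in B_\ell}a_i\le r\epsilon+(k-1)\cdot\tfrac{1-r\epsilon}{k-1}=1$. Thus having a label in only one maximal set, combined with $B_\ell\cup B_1\ne[n]$, means every competing constraint ignores a full unit of the mass placed on $W=B_1^{\,c}$, which is precisely what lets the total exceed $2$; making that estimate, together with the positivity and rationality checks, clean will be the bulk of the proof. (A more structural alternative would read the numerical feasibility off from LP duality, in terms of the fractional edge-cover number of the hypergraph $\{B_\ell\}_\ell$, but the explicit weights seem more transparent.)
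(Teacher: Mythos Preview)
Your proposal is correct and follows essentially the same approach as the paper: reduce to finding explicit rational weight data with $\sum_{i\in B}a_i\le 1$ for each maximal $B\in\cC$, and then write down such weights in each case. Your choices for cases (1) and (3) coincide with the paper's; in case (2) the paper instead takes $a_i=\frac{1}{n-\ell}+\epsilon$ on $B_1^{\,c}$ (with $\ell=|B_1|$), but your choice $a_i=\frac{1-r\epsilon}{k-1}$ is a minor variant that makes the key inequality $\sum_{i\in B_\ell}a_i\le 1$ an exact equality and hence slightly cleaner---both arguments hinge on the same observation that $B_\ell\cup B_1\ne[n]$ forces $|B_\ell\cap B_1^{\,c}|\le k-1$.
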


\begin{proof}
For each case, we construct weight data $A = (a_{1}, a_{2}, \cdots, a_{n})$ with the property $\sum_{k \in B_{i}}a_{k} \le 1$. For case (1), assume that $j = 1$. Then $A = (1, \left(\frac{1}{n-2}\right)^{n-2})$ works. For (2), assume that $j = 1$ and $1 \in B_{1}$. Let $\ell := |B_{1}|$. Set 
\[
	a_{i} = \begin{cases}1-(\ell-1)\epsilon, & i = 1\\
	\epsilon, & i \in B_{1} - \{1\}\\
	\frac{1}{n-\ell}+\epsilon, & i \notin B_{1}
	\end{cases}
\]
for $0 < \epsilon \ll 1$. Finally, for (3), $A = (\left(\frac{2}{n}+\epsilon\right)^{n})$ works. 
\end{proof}


\section{Toric birational models and graph associahedra}\label{sec:toric}

In \cite{LM00}, Losev and Manin introduced a modular birational model $\Mznb^{LM}$ of $\Mznb$, which is a smooth projective \emph{toric} variety whose corresponding polytope is the $(n-3)$-dimensional permutohedron. This space is called the \emph{Losev-Manin space} and it parametrizes $n$-pointed chains of rational curves. Later Hassett described it as $\Mza$ with $A = (\left(\frac{1}{n-2}\right)^{n-2},1,1)$ (\cite{Has03}). In particular, there is a reduction map $\Mznb^{LM} \to \PP^{n-3}$ if we interpret $\PP^{n-3}$ as $\overline{\mathrm{M}}_{0, (\left(\frac{1}{n-2}\right)^{n-1},1)}$ (\cite[Section 6.1, 6.4]{Has03}). 

The Losev-Manin space was generalized as toric varieties associated to graph assciahedra. Recall that for any rational $n$-dimensional polytope $P$, one can associate a complete toric variety $X(P)$ (\cite{Ful93}). In particular, for any connected graph $G$ with $n-2$ vertices, one can associate a toric variety $X(PG)$ as the following. 

\begin{definition}
Let $G$ be a connected graph. A \emph{tube} of $G$ is a subset $B \subset V(G)$ which generates a connected subgraph of $G$. A tube $B$ is trivial if $|B| = 1$. 
\end{definition}

Note that an edge of $G$ is a nontrivial tube. 

Let $G$ be a connected graph such that $V(G) = [n]$. Then we can associate an $(n-1)$-dimensional polytope $PG$, a so-called \emph{graph associahedron}. We start with an $(n-1)$-dimensional simplex $PG^{(0)}$ with $n$ facets indexed by $[n]$. We can identify higher codimensional faces with subsets of $[n]$. Let $T_{i}$ be the set of tubes in $G$ of size $n-i$. Let $PG^{(1)}$ be the polytope obtained by truncating faces in $T_{n-1}$ (vertices). Then $PG^{(i)}$ is obtained by truncating faces of $PG^{(i-1)}$ in $T_{n-i}$. Let $PG = PG^{(n-2)}$ (See \cite[Construction 1]{dRJR14}). 

In particular, if $G = K_{n-2}$, $X(PG) = \Mznb^{LM}$ (\cite[Section 6.4]{Has03}). 

In \cite[Theorem 1]{dRJR14}, it was shown that $X(PG) \cong \Mza$ if and only if $G$ is an iterated cone over a discrete set. In this section, we show the following quick application of the theory of smooth extremal assignments. 

\begin{definition}
A connected graph $G$ is \emph{co-transitive} if for any edge $e$ and a vertex $v$, there is an edge $f$ that connects $v$ and one end point of $e$. 
\end{definition}

Lemma \ref{lem:cotransitivemultipartite} is due to Andreas Blass and Daniel Solt\'esz.

\begin{lemma}\label{lem:cotransitivemultipartite}
A connected graph $G$ is co-transitive if and only if it is a complete multipartite graph. 
\end{lemma}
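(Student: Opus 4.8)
The plan is to prove the two implications separately, translating the graph-theoretic condition ``co-transitive'' into the combinatorial description of complete multipartite graphs. Recall that $G$ is complete multipartite if and only if its complement $\overline{G}$ is a disjoint union of cliques, equivalently, ``non-adjacency'' is an equivalence relation on $V(G)$. So the real content is to show that co-transitivity of $G$ is equivalent to non-adjacency being transitive.

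First I would prove the easy direction: if $G$ is complete multipartite, then $G$ is co-transitive. Given an edge $e = \{x, y\}$ and a vertex $v$, I need an edge joining $v$ to $x$ or to $y$. Since $x$ and $y$ are adjacent, they lie in different parts of the partition. If $v$ is non-adjacent to both $x$ and $y$, then $v$ lies in the same part as $x$ and also in the same part as $y$ (because non-adjacency within a complete multipartite graph means ``same part''), forcing $x$ and $y$ to be in the same part — contradicting that $e$ is an edge. Hence $v$ is adjacent to $x$ or to $y$, which is exactly co-transitivity.

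For the converse, assume $G$ is co-transitive; I want to show non-adjacency is transitive, i.e.\ if $u \not\sim w$ and $w \not\sim z$ (all distinct) then $u \not\sim z$. Suppose for contradiction that $u \sim z$, so $e := \{u, z\}$ is an edge. Apply co-transitivity with this edge $e$ and the vertex $w$: there must be an edge joining $w$ to $u$ or to $z$. But $w \not\sim u$ and $w \not\sim z$ by hypothesis — contradiction. Thus non-adjacency is transitive (reflexivity and symmetry being trivial), so it is an equivalence relation; its classes are the parts, and since $G$ is connected any two vertices in distinct classes are adjacent, so $G$ is complete multipartite. (A small bookkeeping point: one should check the degenerate cases where some of $u, w, z$ coincide, but these are immediate.)

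The argument is short and the main (very mild) obstacle is simply pinning down the equivalence ``complete multipartite $\Leftrightarrow$ non-adjacency is an equivalence relation'' and being careful that connectedness is used to rule out the possibility of an isolated part (which would make $G$ disconnected) — here it guarantees the multipartite graph is genuinely ``complete'' across parts rather than merely having the right complement structure. No heavy machinery is needed; this is purely a finite combinatorial verification.
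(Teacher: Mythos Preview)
Your proof is correct and follows essentially the same approach as the paper: both directions hinge on the observation that co-transitivity is equivalent to transitivity of the non-adjacency relation, whence the complement of $G$ is a disjoint union of cliques and $G$ is complete multipartite. Your write-up supplies more detail (including the easy direction the paper leaves to the reader); note only that the appeal to connectedness at the end is unnecessary, since once non-adjacency is an equivalence relation, vertices in distinct classes are adjacent by definition, so $G$ is already complete multipartite.
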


\begin{proof}
If $G$ is co-transitive, then non-adjacency is transitive: If $v$ and $w$ are not adjacent and $w$ and $x$ are not adjacent, then $v$ and $x$ are not adjacent. Thus the complement graph of $G$ is a disjoint union of cliques. Therefore $G$ is complete multipartite. The converse is easy to check.
\end{proof}

\begin{lemma}\label{lem:connectingpropertytube}
Let $G$ be a co-transitive graph. Then for any subset $B'$ of a non-tube, $B$ is also a non-tube.
\end{lemma}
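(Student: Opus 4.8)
The plan is to reduce everything to the structural description supplied by Lemma \ref{lem:cotransitivemultipartite}: since $G$ is co-transitive, write it as a complete multipartite graph with parts $V_{1}, \dots, V_{k}$, where two vertices are adjacent precisely when they lie in distinct parts. The key intermediate fact I would prove first is an explicit characterization of non-tubes in a complete multipartite graph: a subset $B \subseteq V(G)$ with $|B| \ge 2$ is a non-tube (i.e. $G[B]$ is disconnected) if and only if $B$ is contained in a single part $V_{i}$.

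To establish the characterization, suppose first that $B$ meets at least two parts, say $u \in B \cap V_{i}$ and $v \in B \cap V_{j}$ with $i \ne j$. Then $u$ and $v$ are adjacent, and for any $w \in B$ lying in a part $V_{\ell}$ we cannot have both $\ell = i$ and $\ell = j$; hence $w$ is adjacent to $u$ (if $\ell \ne i$) or to $v$ (if $\ell \ne j$). Thus $G[B]$ is connected, so $B$ is a tube. Conversely, if $B \subseteq V_{i}$ for a single part, then $G[B]$ has no edges at all, so $B$ is a non-tube whenever $|B| \ge 2$. Granting this, the lemma is immediate: if $B$ is a non-tube then $B \subseteq V_{i}$ for some $i$, and therefore any subset $B' \subseteq B$ also satisfies $B' \subseteq V_{i}$, so $B'$ is again a non-tube (for $B'$ in the relevant size range, i.e. $|B'| \ge 2$, which is the only range that matters in view of the size constraints in Definition \ref{def:contractionindicator}).

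I do not expect a serious obstacle here: the entire content is the translation of the abstract "co-transitive" hypothesis into the concrete multipartite picture, which is exactly what Lemma \ref{lem:cotransitivemultipartite} provides, together with the elementary connectivity computation above. The only point that calls for a little care is the size convention — a singleton is a \emph{trivial tube}, not a non-tube — so the statement should be read for subsets of size at least $2$, which I would note explicitly. If one prefers not to invoke Lemma \ref{lem:cotransitivemultipartite}, essentially the same proof can be run directly from the definition of co-transitivity by observing that non-adjacency is then transitive and so partitions $V(G)$ into the "parts", but routing through the complete multipartite description is the cleanest presentation.
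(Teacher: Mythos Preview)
Your proof is correct, but it takes a different route from the paper's. You invoke Lemma~\ref{lem:cotransitivemultipartite} to realize $G$ as a complete multipartite graph, then give an explicit characterization of non-tubes (subsets of a single part) from which the closure-under-subsets property is immediate. The paper instead argues the contrapositive directly from the definition of co-transitivity, without passing through the multipartite description: if $B' \subset B$ and $B'$ is a tube with $|B'| \ge 2$, pick an edge $e$ of $G[B']$; for any $v \in B \setminus B'$, co-transitivity produces an edge from $v$ to an endpoint of $e$, so $B' \cup \{v\}$ is again a tube, and inductively $B$ is a tube. Your approach has the advantage of yielding the full tube/non-tube dichotomy as a byproduct, while the paper's one-line argument is more economical and does not depend on Lemma~\ref{lem:cotransitivemultipartite}. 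Your closing remark anticipates a direct proof via transitivity of non-adjacency; note that the paper's direct argument is slightly different still --- it grows tubes rather than partitions vertices --- though the two are of course equivalent in content. Your care about the $|B'| \ge 2$ convention is well placed and applies equally to the paper's argument, which needs an edge in $B'$ to get started.
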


\begin{proof}
If $B'$ is a tube, then by using co-transitivity, we can enlarge $B'$ to a tube $B$. 
\end{proof}

\begin{theorem}
For any connected graph $G$ of $(n-2)$ vertices, $X(PG) \cong \Mznb(Z)$ for some smooth extremal assignment $Z$ if and only if $G$ is a complete multipartite graph. 
\end{theorem}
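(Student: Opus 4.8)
The plan is to prove both directions by relating the combinatorics of tubes of $G$ to the axioms of a contraction indicator (Definition \ref{def:contractionindicator}), using the toric description of $X(PG)$ as an iterated truncation of the simplex. First I would recall from \cite{dRJR14} that the facets of $PG$ are indexed exactly by the proper nonempty tubes of $G$ together with the $n$ coordinate hyperplanes, and that the $X(PG)$ obtained from the Losev--Manin space by the truncation construction is a sequence of toric blow-ups of $\mathbb{P}^{n-3}$; in particular $X(PG)$ is always a smooth projective toric variety, and it is an intermediate space in the construction $\Mznb^{LM} \to \mathbb{P}^{n-3}$ precisely when it is a ``partial'' truncation. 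The key translation is: a modular birational model $\Mznb(Z)$ coming from a smooth extremal assignment $Z$ sits between $\Mznb$ and $\Mza$ for $A = ((\tfrac{1}{n-3})^{n-3},1,1)$-type weights only in a constrained way, but more to the point, the blow-up centers performed to build $X(PG)$ must correspond to boundary strata $\overline{D_G}$ that are being \emph{created} (i.e. un-contracted) — so $X(PG)$ is $\Mznb(Z)$ iff the collection of tubes of $G$ is closed under the operations forced by Lemma \ref{lem:assignmenttocontractionindicator} and Proposition \ref{prop:divassignment}.

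For the forward direction (if $X(PG) \cong \Mznb(Z)$ then $G$ is complete multipartite), I would argue as follows. The smooth extremal assignment $Z$ has a contraction indicator $\cC$ (Lemma \ref{lem:assignmenttocontractionindicator}), and $\Mznb(Z)$ is obtained from $\Mznb^{LM}$ (or from $\mathbb{P}^{n-3}$) by the blow-up sequence of Proposition \ref{prop:blowup}. Comparing with the truncation description of $X(PG)$, the set of faces that are \emph{not} truncated in building $PG$ must match, up to complementation, the maximal elements of $\cC$; equivalently, the \emph{non-tubes} of $G$ are exactly the subsets $B$ with $|B| \geq 2$ that are \emph{not} truncated, and these must form a down-closed family in the sense of condition (2) of Definition \ref{def:contractionindicator}. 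Down-closedness of the non-tubes is precisely the statement of Lemma \ref{lem:connectingpropertytube}, which (reading the proof backwards) forces $G$ to be co-transitive, hence complete multipartite by Lemma \ref{lem:cotransitivemultipartite}. One has to be a little careful here: condition (2) says $\cC$ is closed under taking subsets of size $\geq 2$, which on the complement side is the statement that if $B$ is a tube then every superset of $B$ (among proper subsets of $[n]$) that arises as a face is also a tube — so I would phrase the argument in terms of: if $G$ is not co-transitive, there is a tube $B$ and a subset $B' \supsetneq B$ which is not a tube, producing a face of $PG$ that is truncated while a smaller face is not, which is incompatible with $\cC$ being a down-closed contraction indicator, contradicting Lemma \ref{lem:assignmenttocontractionindicator}.

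For the reverse direction (if $G$ is complete multipartite then $X(PG) \cong \Mznb(Z)$ for some smooth $Z$), I would set $\cC := \{B \subset [n] : 2 \leq |B|,\ B^c \text{ is not a tube of } G,\ |B^c| \geq 1\}$ — more precisely, let $\cC$ be the set of $B$ with $2 \leq |B| \leq n-2$ such that $B^c$ (viewed inside $V(G) = [n]$, with two of the $n$ markings being ``extra'' in the $\Mznb$ vs.\ graph-associahedron bookkeeping) is a non-tube. Then I must check the three axioms of Definition \ref{def:contractionindicator}: the size bound is built in; closure under subsets of size $\geq 2$ follows from Lemma \ref{lem:connectingpropertytube} (a subset of a non-tube is a non-tube, because $G$ co-transitive); and $B_1 \cup B_2 \neq [n]$ because its complement $B_1^c \cap B_2^c$ being empty would say two non-tubes are complementary, which one checks is impossible for complete multipartite $G$ on $\geq 4$ vertices using co-transitivity again. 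Then Proposition \ref{prop:divassignment} yields a smooth extremal assignment $Z$ with indicator $\cC$, and Proposition \ref{prop:blowup} expresses $\Mznb(Z) \to \mathbb{P}^{n-3}$ (or $\to \Mza$) as exactly the same sequence of toric blow-ups along strata that the truncation construction of $PG$ performs, identifying $\Mznb(Z)$ with $X(PG)$. I expect the main obstacle to be the bookkeeping that reconciles the two different dimension/index conventions — $PG$ has $n$ vertices-as-facets but $X(PG)$ is $(n-1)$-dimensional while $\Mznb$ has $n$ markings and dimension $n-3$, so the earlier source \cite{dRJR14} is really using a shifted $n$ — and making the correspondence ``tube $\leftrightarrow$ boundary stratum being blown up'' precise enough that the blow-up sequences literally coincide; once that dictionary is pinned down, both directions reduce to the two combinatorial lemmas already proved.
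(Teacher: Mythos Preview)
Your overall strategy matches the paper's: translate the tube combinatorics of $G$ into a candidate contraction indicator $\cC$, and show that the axioms of Definition~\ref{def:contractionindicator} hold precisely when $G$ is co-transitive (hence complete multipartite, by Lemma~\ref{lem:cotransitivemultipartite}), with Lemma~\ref{lem:connectingpropertytube} supplying the down-closure needed for axiom~(2). That much is right.

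The genuine gap is in the definition of $\cC$ itself. You propose $\cC = \{B \subset [n] : B^{c} \text{ is not a tube}\}$ (up to size constraints), and you try to verify axiom~(3) by arguing that two non-tubes cannot be complementary. Neither of these is what is actually going on, and the issue is not merely bookkeeping. In the paper's setup $V(G) = [n-2]$, and the two extra markings $n-1$ and $n$ play \emph{asymmetric} roles inherited from the Losev--Manin description $\Mznb^{LM} = \overline{\mathrm{M}}_{0,((1/(n-2))^{n-2},1,1)}$ sitting over $\PP^{n-3} = \overline{\mathrm{M}}_{0,((1/(n-2))^{n-1},1)}$. The correct indicator is
\[
\cC \;=\; \bigl\{\,B \cup \{n-1\} : B \subset [n-2],\ B \notin T,\ |B| \ge 2\,\bigr\}
\;\cup\;
\bigl\{\,B' \subset [n-2] : |B'| \ge 2\,\bigr\},
\]
where $T$ is the set of tubes of $G$. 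In particular $n$ never lies in any member of $\cC$, so axiom~(3) is \emph{automatic}; your proposed argument for (3) via ``complementary non-tubes'' is both unnecessary and not obviously correct. Likewise, axiom~(2) splits into two cases depending on whether $n-1$ lies in the subset, and only the case $n-1 \in B'$ invokes Lemma~\ref{lem:connectingpropertytube}; the other case is trivial because \emph{every} $B' \subset [n-2]$ with $|B'| \ge 2$ is already in $\cC$.

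So the ``bookkeeping'' you flag as an obstacle is in fact the crux: without pinning down the roles of the two heavy markings, you cannot write down the correct $\cC$, and both the verification of the axioms in the reverse direction and the derivation of co-transitivity in the forward direction (which in the paper goes via $\{i,j,k,n-1\} \in \cC \Rightarrow \{i,j,n-1\} \in \cC \Rightarrow \{i,j\}$ is not a tube) depend on it. Once you have the right $\cC$, your plan to finish via Proposition~\ref{prop:divassignment} and Proposition~\ref{prop:blowup} is exactly what the paper does.
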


\begin{proof}
Note that $PG$ is obtained from an $(n-3)$-dimensional simplex by truncating faces in an increasing order of dimension. Dually, $X(PG)$ is obtained by taking successive blow-ups of torus invariant subvarieties of $\PP^{n-3}$ in an increasing order of dimension. 

Let $T$ be the set of tubes in $G$. Let 
\[
	\cC := \{B \cup \{n-1\} \subset [n-1]\;|\; B \notin T, |B| \ge 2\}
	\bigcup \{B' \subset [n-2]\;|\;|B'| \ge 2\}
	\subset [n].
\]
This set $\cC$ indexes the set of contracted divisors. 

We claim that $\cC$ is a contraction indicator (Definition \ref{def:contractionindicator}) if and only if $G$ is co-transitive. If this is true, then $X(PG)$ is a smooth blow-up of $\PP^{n-3} = \overline{\mathrm{M}}_{0, ((1/(n-2))^{n-1}.1)}$ and it can be interpreted as $\Mznb(Z)$, as in Proposition \ref{prop:blowup}.

Items (1) and (3) of Definition \ref{def:contractionindicator} are clear from the definition, since $n \notin B$ for all $B \in \cC$. So it suffices to check item (2).

Suppose that $G$ is co-transitive. Assume that $B \in \cC$ and $B' \subset B$ and $|B'| \ge 2$. If $B' \subset [n-2]$, then by definition, $B' \in \cC$. If $n-1 \in B'$, then $B'' := B' \setminus \{n-1\}$ is a subset of a non-tube $B \setminus \{n-1\}$. Therefore $B''$ is also a non-tube by Lemma \ref{lem:connectingpropertytube} and $B' \in \cC$.

Conversely, suppose that $\cC$ is a contraction indicator. Let $e = \{i, j\}$ be an edge of $G$ and let $k$ be a vertex not on $e$. If there is no edge connecting $e$ and $k$, then $\{i, j, k\}$ is not a tube. Then $\{i, j, k, n-1\} \in \cC$. Since $\cC$ is a contraction indicator, $\{i, j, n-1\} \in \cC$. So $\{i, j\}$ is not a tube, which is a contradiction.Therefore $G$ is co-transitive.
\end{proof}

For any connected graph $G$ of order $n-2$, the corresponding toric variety $X(PG)$ is smooth, and it is a divisorial contraction of $X(PK_{n-2}) = \Mznb^{LM}$. However, only very few of them have a moduli theoretic interpretation. This is further evidence that the current moduli-theoretic description of birational models is insufficient to understand birational geometric behavior of $\Mznb$.


\section{$S_{n}$-invariant extremal assignments}\label{sec:invariant}

There is a natural $S_{n}$-action on $\Mznb$, permuting $n$ labels. Let $\Mznt$ be the quotient space $\Mznb/S_{n}$. This space has a natural moduli-theoretic interpretation as a moduli space of stable rational curves with unordered marked points, and the birational geometric properties are relatively simple compared to that of $\Mznb$. For instance, the Picard number $\rho(\Mznt)$ is $\lfloor n/2\rfloor - 1$, and the effective cone is simplicial. It is expected that $\Mznt$ is a Mori dream space, in contrast to $\Mznb$, which is not a Mori dream space for large $n$ (\cite{CT15}).

The $S_{n}$-action naturally induces $S_{n}$-actions on the set of stable $n$-labeled trees $S(n)$, the set of set partitions $\cP(n)$, and the set of all (extremal) assignments of order $n$. Let $Z$ be an $S_{n}$-invariant extremal assignment. Then the contraction map $\pi_{Z} : \Mznb \to \Mznb(Z)$ is $S_{n}$-equivariant. Therefore we have a quotient map $\bar{\pi}_{Z} : \Mznt  \to \Mznb(Z)/S_{n}$. Thus $S_{n}$-invariant extremal assignments give contractions of $\Mznt$. In this section, we study $S_{n}$-invariant extremal assignments.

In Section \ref{sec:structuretheorem}, we have seen that the structure of an extremal assignment can be described in terms of set partitions or, equivalently, basic pairs. Let $(G, v)$ be a basic pair and suppose that $v \in Z(G)$ for an $S_{n}$-invariant extremal assignment $Z$. Since $Z$ is $S_{n}$-invariant, for any permutation of $n$ labels of $G$, the central vertex must be assigned too. Therefore the fact that a central vertex is assigned or not depends on the topological type of the underlying graph, not a specific labeling. Thus in the case of $S_{n}$-invariant assignments, a natural description is obtained in terms of integer partitions. This is a crucial difference between ordinary extremal assignments and invariant extremal assignments, because there are nontrivial automorphisms for the underlying tree without labeling. 

Let $\cI(n)$ be the set of integer partitions of $n$. For $p, q \in \cI(n)$, we say $p \le q$ if $q$ is a finer partition than $p$. Then $\cI(n)$ is a partially ordered set. The maximum element is the complete partition $\{1, 1, \cdots, 1\}$, and the minimum element is $\{n\}$. We can define refinements, corruptions, and upper bounds of two integer partitions in a similar way. 

Recall that $\cP(n)$ is the set of set partitions of $[n]$. There is a natural forgetful map 
\[
	f : \cP(n) \to \cI(n),
\]
which maps $P = \{B_{1}, B_{2}, \cdots, B_{k}\}$ to the partition $f(P) = \{|B_{1}|, |B_{2}|, \cdots, |B_{k}|\}$. Then $f$ is an $S_{n}$-invariant order-preserving map. 

The following lemma explains relations between set partitions and integer partitions. The proof is straightforward.

\begin{lemma}\label{lem:comparisonofupperbound}
Let $P, Q \in \cP(n)$. 
\begin{enumerate}
\item Suppose that $P \ne Q$ and $p = f(P)$, $q = f(Q)$. Let $p = \{\lambda_{1}, \lambda_{2}, \cdots, \lambda_{s}\}$ and $q = \{\mu_{1}, \mu_{2}, \cdots, \mu_{t}\}$. Then $P$ and $Q$ have a tight upper bound only if there are $i$ and $J$ with $|J| \ge 2$ such that $\lambda_{i} = \sum_{j \in J}\mu_{j}$ or vice versa.
\item If $f(P) \le f(Q)$, then there is $\sigma \in S_{n}$ such that $P \le \sigma \cdot Q$. 
\end{enumerate}
\end{lemma}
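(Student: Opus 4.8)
The claim is Lemma~\ref{lem:comparisonofupperbound}, with two parts, each of which should follow by a direct combinatorial argument; I would treat them separately.

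For part (1), the plan is to unpack the definition of a tight common upper bound (Definition~\ref{def:commonupperbound}). Suppose $R$ is a tight common upper bound of $P$ and $Q$, so $P\le R$, $Q\le R$, and every block $B\in R$ lies in $P$ or in $Q$. I would first observe that, since $P\ne Q$, we cannot have every block of $R$ simultaneously in both $P$ and $Q$ (that would force $R=P=Q$), so some block $B\in R$ is in, say, $P$ but not $Q$. Because $Q\le R$, the block $B$ is partitioned by the blocks of $Q$ it contains; write $B=\bigsqcup_{j\in J}C_j$ where the $C_j\in Q$. Since $B\notin Q$, we have $|J|\ge 2$. Now $B\in P$ contributes the part $\lambda_i := |B|$ to $p=f(P)$ for some index $i$, and the $C_j$ contribute parts $\mu_j := |C_j|$ to $q=f(Q)$, so $\lambda_i = \sum_{j\in J}\mu_j$ with $|J|\ge 2$, which is exactly the stated conclusion (the ``or vice versa'' covers the symmetric case where the distinguished block lies in $Q$). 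This is the easy direction and I expect it to be essentially immediate from the definitions.

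For part (2), assume $f(P)\le f(Q)$, i.e.\ $q=f(Q)$ refines $p=f(P)$ as an integer partition. Concretely, the multiset $q$ is obtained from $p$ by replacing each part $\lambda_i$ of $p$ with a list of parts $\mu_{i,1},\dots,\mu_{i,r_i}$ summing to $\lambda_i$. I would construct the desired $\sigma\in S_n$ explicitly: list the blocks of $P$ as $B_1,\dots,B_s$ with $|B_i|=\lambda_i$, and list the blocks of $Q$ grouped so that $C_{i,1},\dots,C_{i,r_i}$ have sizes $\mu_{i,1},\dots,\mu_{i,r_i}$ (possible since $q$ refines $p$; this requires choosing a matching of the multiset data, which exists by definition of refinement of integer partitions). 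Since $\sum_k|C_{i,k}|=\lambda_i=|B_i|$, I can choose a bijection $\tau_i\colon B_i\to\bigsqcup_k C_{i,k}$ for each $i$, and let $\sigma$ be the permutation of $[n]$ assembled from the $\tau_i$. Then $\sigma\cdot Q$ has, inside each $B_i$, exactly the blocks $\tau_i^{-1}(C_{i,k})$, which partition $B_i$; hence every block of $\sigma\cdot Q$ is contained in some block of $P$, i.e.\ $P\le\sigma\cdot Q$, as required.

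The only point that needs a little care — and the closest thing to an obstacle — is the bookkeeping in part (2): making precise that ``$q$ refines $p$ as integer partitions'' yields a grouping of the parts of $q$ into sub-multisets, one per part of $p$, and that this grouping can be realized by an actual assignment of the blocks of $Q$ to the blocks of $P$. This is routine once one fixes notation, but it is where the proof has actual content, so in the write-up I would state the refinement data as an explicit map from parts of $q$ to parts of $p$ at the outset and then carry it through. Everything else is formal manipulation with set partitions and their image under $f$, so I would not belabor it.
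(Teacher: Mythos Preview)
Your overall approach is fine, and the paper itself omits the proof entirely (it just says ``The proof is straightforward''), so there is no alternative argument to compare against. Part (2) is correct as written.

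However, part (1) contains a genuine directional error. You pick $B\in R$ with $B\in P$ but $B\notin Q$, and then write ``Because $Q\le R$, the block $B$ is partitioned by the blocks of $Q$ it contains.'' This is backwards: in the paper's convention, $Q\le R$ means $R$ \emph{refines} $Q$, so every block of $R$ is contained in a single block of $Q$, not partitioned by blocks of $Q$. In particular your $B$ sits inside one block $C\in Q$, and since $B\notin Q$ you have $B\subsetneq C$. The fix is to work with $C$ instead of $B$: since $R$ refines $Q$, $C$ is a disjoint union of blocks of $R$; by tightness each of these is in $P$ or in $Q$, but any one lying in $Q$ would have to equal $C$, which is impossible because $B\subsetneq C$ is already a block of $R$. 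Hence $C$ is a disjoint union of at least two blocks of $P$, so some $\mu_k=|C|$ equals $\sum_{j\in J}\lambda_{i_j}$ with $|J|\ge 2$ --- the ``vice versa'' case of the stated conclusion. With this correction the argument goes through.
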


As in the case of ordinary extremal assignments, one may try to define `atomic' $S_{n}$-invariant extremal assignments. However, in contrast to the case of ordinary extremal assignments, it is not true that for any basic pair $(G, v)$, there is an $S_{n}$-invariant extremal assignment $Z$ such that $v \in Z(G)$. 

\begin{example}
Let $(G,v)$ a basic pair with corresponding set partition $P = \{\{1,2,3\},\{4\},\{5\},\{6\}\}$, and let $Z$ be an $S_n$-invariant extremal assignment so that $v \in Z(G)$. We can permute the labels in $P$ to obtain another basic pair $(H,w)$ with set partition $P' = \{\{1\},\{2\},\{3\},\{4,5,6\}\}$. If $v\in Z(G)$, then $w\in Z(H)$, by definition of an $S_n$-invariant extremal assignment. We can see that indeed $H = G$ and $v$ and $w$ are two vertices of $G$, so $Z(G) = V(G)$. Thus this basic pair has no $S_n$-invariant extremal assignment.
\end{example}

Let $Z_{G}$ be the atomic extremal assignment associated to $(G, v)$. Then by Proposition \ref{prop:atomic}, $Z_{G} = \bigcup_{Q \preceq P, |Q| \ge 3}Z_{Q}$, where $P$ is the set partition corresponding to $(G, v)$. It is clear that any $S_{n}$-invariant extremal assignment contains $\bigcup_{\sigma \in S_{n}}\sigma \cdot Z_{G} = \bigcup_{\sigma \in S_{n}}Z_{\sigma \cdot G}$. 

\begin{definition}\label{def:specialpartition}
Let $p = \{\lambda_{1}, \lambda_{2}, \cdots, \lambda_{k}\}$ be an integer partition of $n$. 
\begin{enumerate}
\item A \emph{replacement} of $p$ is an integer partition of $[n]$ which is obtained by performing the following operation several times: Remove a subpartition $\{\lambda_{i}\}_{i \in I} \subset p$ and add another subpartition $\{\lambda_{j}\}_{j \in J} \subset p$ for two index sets $I, J$ such that $\sum_{i \in I}\lambda_{i} = \sum_{j \in J}\lambda_{j}$. 
\item An integer partition $\{\lambda_{1}, \lambda_{2},\cdots, \lambda_{k}\}$ of $n$ is \emph{special} if for any replacement $r$ of $p$, $r \le p$. 
\item For a family $\cF := \{p_{1}, p_{2}, \cdots, p_{m}\}$ of integer partitions of $[n]$, a \emph{replacement in $\cF$} is an integer partition of $[n]$ which is obtained by performing the following operation several times: Start from $q \le p_{s}$. Remove a subpartition $\{\lambda_{i}\}_{i \in I} \subset p_{s}$ and add another subpartition $\{\mu_{j}\}_{j \in J} \subset p_{t}$ for two index sets $I, J$ such that $\sum_{i \in I}\lambda_{i} = \sum_{j \in J}\mu_{j}$. 
\item A family $\cF$ is called a \emph{special family} if for any replacement $r$ in $F$, $r \le p_{j}$ for some $p_{j} \in \cF$.
\end{enumerate}
\end{definition}

Thus a singleton famliy $\cF = \{p\}$ is special if and only if $p$ is special. 

\begin{example}
\begin{enumerate}
\item Let $p_1 = \{4,3,2,1\}$. Since $4+1 = 3+2$, $r_1 = \{4,4,1,1\}$ and $r_2 = \{3,3,2,2\}$ are replacements of $p_1$. Since $r_2 \not\leq p_1$,  $p_1$ is not special.
\item Let $p_2 = \{6,6,1,1\}$. Clearly the only replacement of $p_2$ is itself, so $p_2$ is special.
\item Let $\cF = \{p_1,p_2,p_3\}$ where $p_1 = \{3,3,3,3,2\}$, $p_2 = \{3,3,2,2,2,2\}$, and $p_3 = \{2,2,2,2,2,2,2\}$. It is easy to check that for any replacement $r$ in $\cF$, $r \le p_1$, $p_2$, or $p_3$, so $\cF$ is a special family.
\item Let $\cF = \{p_1,p_2\}$ where $p_1 = \{5,5,2,2\}$ and $p_2 = \{4,4,3,3\}$. There is a replacement in $\cF$, $r = \{5,4,3,2\}$, such that $r \not\leq p_1$, $r \not\leq p_2$. Therefore $\cF$ is not a special family.
\end{enumerate}
\end{example}

\begin{theorem}\label{thm:invariantstructurethm}
Let $(G_{1}, v_{1}), (G_{2}, v_{2}), \cdots, (G_{k}, v_{k})$ be basic pairs and let $p_{1}, p_{2}, \cdots, p_{k}$ be the corresponding integer partitions. Let $Z_{p_{i}} := \bigcup_{\sigma \in S_{n}}\sigma \cdot Z_{G_{i}}$ where $Z_{G_{i}}$ is the atomic extremal assignment associated to $(G, v)$. Then $Z := \bigcup_{i=1}^{k}Z_{p_{i}}$ is an $S_{n}$-invariant extremal assignment if and only if $\cF := \{p_{1}, p_{2}, \cdots, p_{k}\}$ is a special family.
\end{theorem}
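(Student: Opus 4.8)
The plan is to prove both directions by reducing to the structure theorem for ordinary extremal assignments (Theorem \ref{thm:structurethm}) combined with the translation between set partitions and integer partitions given in Lemma \ref{lem:comparisonofupperbound}. The key observation is that the family $Z_i := \bigcup_{\sigma \in S_n}\sigma\cdot Z_{G_i}$ is, by definition, $S_n$-invariant, and each $\sigma\cdot Z_{G_i}$ is the atomic extremal assignment associated to $\sigma\cdot P_i$ (where $P_i$ is a fixed set partition with $f(P_i)=p_i$). So $Z=\bigcup_i Z_{p_i}$ is precisely the union $\bigcup_{i}\bigcup_{\sigma}Z_{\sigma\cdot P_i}$ of atomic extremal assignments. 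The plan is therefore to apply Theorem \ref{thm:structurethm} to this (possibly larger but still finite) list of set partitions $\{\sigma\cdot P_i : \sigma\in S_n,\; 1\le i\le k\}$, after discarding any non-maximal ones, and show that its hypothesis — every tight common upper bound $R$ of $Q\preceq\sigma P_i$, $Q'\preceq\tau P_j$ lies below some $\rho P_\ell$ — is equivalent to the special family condition on $\cF$.

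First I would set up the dictionary. Given set partitions $Q\preceq \sigma P_i$ and $Q'\preceq \tau P_j$ with a tight common upper bound $R$, Lemma \ref{lem:comparisonofupperbound}(1) tells us that $f(R)$ is obtained from $f(Q)$ (equivalently from $p_i$, since $Q\preceq\sigma P_i$ forces $f(Q)\le p_i$ with the non-singleton block sizes of $Q$ still dividing up the block sizes of $p_i$) by an elementary replacement move using block sizes drawn from $p_j$: a subpartition of $f(Q)$ summing to some value $v$ is swapped for a subpartition of $p_j$ summing to $v$. Tracking this through the iterated construction of $Z$ — where new tight upper bounds get added and can themselves be the source of further moves — one sees that the set of all integer partitions $f(R)$ arising as $f$ of iterated tight upper bounds of refinements is exactly the set of replacements in $\cF$ as in Definition \ref{def:specialpartition}(3). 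Conversely, given a replacement $r$ in $\cF$, I would build, block by block, an explicit sequence of set partitions realizing it as the image under $f$ of a tight common upper bound: each step "remove $\{\lambda_i\}_{i\in I}$, add $\{\mu_j\}_{j\in J}$" is realized by choosing refinements $\preceq$ of the relevant set partitions whose non-singleton blocks have the prescribed sizes and whose union of blocks is a common partition — this is where one uses that $S_n$-invariance lets us relabel freely, so the "overlap" constraint in the definition of a tight common upper bound can always be arranged.

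With the dictionary in hand, the two implications are formal. If $\cF$ is special, then for every replacement $r$ in $\cF$ there is $p_\ell$ with $r\le p_\ell$; unwinding, any tight common upper bound $R$ of refinements of relabeled $P_i$'s satisfies $f(R)\le p_\ell$, so by Lemma \ref{lem:comparisonofupperbound}(2) there is $\rho\in S_n$ with $R\le \rho\cdot P_\ell$, which is exactly the hypothesis of Theorem \ref{thm:structurethm} for the list $\{\sigma\cdot P_i\}$; hence $Z$ is extremal, and it is $S_n$-invariant by construction. Conversely, if $\cF$ is not special, pick a replacement $r$ in $\cF$ with $r\not\le p_j$ for all $j$; realize $r$ as $f(R)$ for an actual tight common upper bound $R$ of refinements, so the central vertex of the basic pair for $R$ is forced to be assigned in the union $Z$; but since $f(R)=r\not\le p_j$ for any $j$, the block-size counting (Lemma \ref{lem:comparisonofupperbound}(1) again, in the contrapositive) shows $R\not\le\rho\cdot P_j$ for any $\rho,j$, so that central vertex is \emph{not} in any $Z_{\sigma P_j}$ — reproducing the failure of axiom (2) (or (1)) exactly as in the worked Example after Corollary \ref{cor:unionextremal}. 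The main obstacle I anticipate is the bookkeeping in the forward realization step: showing that an abstract replacement in $\cF$, which is defined purely in terms of integer partitions and can chain several moves, can always be lifted to a genuine iterated tight-common-upper-bound of set partitions. The subtlety is that a tight common upper bound requires the two set partitions to \emph{agree} on the blocks they share, not merely to have compatible block sizes; handling this requires carefully choosing the relabelings $\sigma,\tau$ at each stage so that the "moved" blocks sit in the right place, and verifying that iterating this stays within a finite collection. I expect this to be the heart of the proof, with everything else being a translation exercise via $f$.
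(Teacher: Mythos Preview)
Your forward direction (\,$\cF$ special $\Rightarrow$ $Z$ extremal) is exactly the paper's argument: translate via Lemma~\ref{lem:comparisonofupperbound} and invoke Theorem~\ref{thm:structurethm}. For the converse, however, you and the paper part ways. You argue by contrapositive, proposing to lift a bad replacement $r$ to a single tight common upper bound $R$ at the set-partition level and then exhibit a direct violation of axiom~(2). The paper instead argues directly: assuming $Z$ extremal, it shows that for \emph{every} replacement $r$ the central vertex of the associated basic pair lies in $Z$, via an explicit six-step chain of degenerations and contractions (degenerate to separate the $I$-tails, contract them, re-degenerate to expose the $J$-tails, verify the new central vertex is assigned using extremality and $S_n$-invariance, then contract back). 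Induction on the number of replacement operations then handles iteration automatically, because at each stage extremality is available as a hypothesis.

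The obstacle you flag --- lifting an \emph{iterated} replacement to a single tight common upper bound --- is real, and your contrapositive framing makes it harder than necessary. An iterated replacement is not in general $f(R)$ for one tight upper bound of refinements of two $\sigma P_i$'s; it is an iterated such object, and in the contrapositive you cannot assume the intermediate central vertices are assigned (that would require extremality, which you are trying to disprove). You could rescue your approach by passing to a minimal bad replacement, so that the penultimate partition $q$ does satisfy $q\le p_m$ for some $m$ and a single step suffices --- but even then, turning one replacement operation into a genuine two-vertex degeneration with both vertices in $Z$ is essentially the content of the paper's six-step figure, just read backwards. The paper's direct route buys you exactly the freedom to use extremality throughout the induction, which is what makes the bookkeeping disappear.
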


\begin{proof}
Obviously $Z$ is $S_{n}$-invariant. Thus it suffices to check the extremality. Let $P_{i}$ be the set partition corresponding to $(G_{i}, v_{i})$. 

Suppose that $\cF$ is a special family. Let $Q_{1} \preceq \sigma_{i}\cdot P_{i}$ and $Q_{2} \preceq \sigma_{j}\cdot P_{j}$. And let $p_{i} = f(P_{i}) = \{\lambda_{1}, \lambda_{2}, \cdots, \lambda_{s}\}$ and $p_{j} = f(P_{j}) = \{\mu_{1}, \mu_{2}, \cdots, \mu_{t}\}$. Suppose that $Q_{1}, Q_{2}$ have a tight upper bound $R$ and $r := f(R)$. Let $q_{1} := f(Q_{1}) = \{\nu_{1}, \nu_{2}, \cdots, \nu_{u}\}$ and $q_{2} := \{\xi_{1}, \xi_{2}, \cdots, \xi_{v}\}$. By (1) of Lemma \ref{lem:comparisonofupperbound}, there are several pairs $(i, J)$ with $|J| \ge 2$ such that $\nu_{i} = \sum_{j \in J}\xi_{j}$ (or vice versa). Since $q_{2}\le p_{j}$, $\nu_{i} = \sum_{j \in J}\xi_{j} = \sum_{k \in K}\mu_{k}$ for some $K$. Thus $r$ is a replacement of $q_{1}$ in $\cF$. Since $\cF$ is a special family, $r \le p_{k}$ for some $k$. By (2) of Lemma \ref{lem:comparisonofupperbound}, there is $\tau \in S_{n}$ such that $R \le \tau \cdot P_{k}$. By Theorem \ref{thm:structurethm}, $Z$ is extremal. 

Conversely, assume that $Z$ is extremal. We claim that for any replacement $r$ in $\cF$ and a corresponding basic pair $(H, w)$, $w \in Z(H)$. By induction, it is sufficient to show for a replacement $r$ which is obtained by one operation. Let $q \le p_{i} = \{\lambda_{1}, \lambda_{2}, \cdots, \lambda_{s}\}$ and $r$ be obtained by replacing $\{\lambda_{i}\}_{i \in I} \subset p_{i}$ and adding $\{\mu_{j}\}_{j \in J} \subset p_{j}$. Consider the following list of degenerations/contractions (Figure \ref{fig:degcont}). 
\begin{enumerate}
\item Start with $(G_{0}, v_{0})$, which corresponds to $q$. $v_{0} \in Z(G_{0})$. Degenerate $v_{0}$ to two vertices $v_{1}, v_{1}'$ and obtain $G_{1}$. $v_{1}, v_{1}' \in Z(G_{1})$. Here $v_{1}$ has tails indexed by $I$.
\item Contract $v_{1}$ and the tails connected to $v_{1}$ to a vertex $v_{2}$ and obtain $(G_{2}, v_{2}')$. $v_{2}' \in Z(G_{2})$. 
\item Degenerate $v_{2}$ and get $G_{3}$ where $v_{3}$ has tails indexed by $J$. $v_{3}' \in Z(G_{3})$.
\item Contract $v_{3}'$ and its tails to a vertex and obtain $(G_{4}, v_{4})$. Since it corresponds to a partition $s \le p_{j}$, $v_{4} \in Z(G_{4})$.
\item On $G_{3}$, $v_{3} \in Z(G_{3})$ from (4). 
\item By contracting $v_{3}, v_{3}'$ to $v_{5}$, we obtain $G_{5}$. $v_{5} \in Z(G_{5})$. $G_{5}$ corresponds to the replacement $r$. 
\end{enumerate}

\begin{figure}[!ht]
\begin{tikzpicture}[scale=0.35]
\draw[line width = 1 pt] (5, 5) -- (3, 3.2);
\draw[line width = 1 pt] (5, 5) -- (2.6, 5);
\draw[line width = 1 pt] (5, 5) -- (3, 6.8);
\draw[line width = 1 pt] (5, 5) -- (7, 7.2);
\draw[line width = 1 pt] (5, 5) -- (7.3, 5);
\draw[line width = 1 pt] (5, 5) -- (7, 2.9);
\draw[line width = 1 pt] (3, 3.2) -- (2.1, 3.2);
\draw[line width = 1 pt] (3, 3.2) -- (2.2, 2.8);
\draw[line width = 1 pt] (3, 3.2) -- (2.4, 2.5);
\draw[line width = 1 pt] (2.6, 5) -- (1.9, 5.6);
\draw[line width = 1 pt] (2.6, 5) -- (1.7, 5.2);
\draw[line width = 1 pt] (2.6, 5) -- (1.7, 4.8);
\draw[line width = 1 pt] (2.6, 5) -- (1.9, 4.5);
\draw[line width = 1 pt] (3, 6.8) -- (2.1, 7);
\draw[line width = 1 pt] (3, 6.8) -- (2.5, 7.7);
\draw[line width = 1 pt] (7.3, 5) -- (8.2, 5.2);
\draw[line width = 1 pt] (7.3, 5) -- (8, 5.6);
\draw[line width = 1 pt] (7.3, 5) -- (8.2, 4.8);
\draw[line width = 1 pt] (7.3, 5) -- (8, 4.4);
\draw[line width = 1 pt] (7, 7.2) -- (7.4, 8);
\draw[line width = 1 pt] (7, 7.2) -- (7.8, 7.6);
\draw[line width = 1 pt] (7, 2.9) -- (7.7, 2.5);
\draw[line width = 1 pt] (7, 2.9) -- (7.4, 2.2);
\fill (5, 5) circle (8pt);
\node (v) at (5, 4.2) {$v_{0}$};
\node (G) at (5, 2.5) {$G_{0}$};
\node (a) at (9.3, 5) {$\leftsquigarrow$};
\end{tikzpicture}
\begin{tikzpicture}[scale=0.35]
\draw[line width = 1 pt] (5, 5) -- (3, 3.2);
\draw[line width = 1 pt] (5, 5) -- (2.6, 5);
\draw[line width = 1 pt] (5, 5) -- (3, 6.8);
\draw[line width = 1 pt] (5, 5) -- (7, 5);
\draw[line width = 1 pt] (7, 5) -- (9, 7.2);
\draw[line width = 1 pt] (7, 5) -- (9.3, 5);
\draw[line width = 1 pt] (7, 5) -- (9, 2.9);
\draw[line width = 1 pt] (3, 3.2) -- (2.1, 3.2);
\draw[line width = 1 pt] (3, 3.2) -- (2.2, 2.8);
\draw[line width = 1 pt] (3, 3.2) -- (2.4, 2.5);
\draw[line width = 1 pt] (2.6, 5) -- (1.9, 5.6);
\draw[line width = 1 pt] (2.6, 5) -- (1.7, 5.2);
\draw[line width = 1 pt] (2.6, 5) -- (1.7, 4.8);
\draw[line width = 1 pt] (2.6, 5) -- (1.9, 4.5);
\draw[line width = 1 pt] (3, 6.8) -- (2.1, 7);
\draw[line width = 1 pt] (3, 6.8) -- (2.5, 7.7);
\draw[line width = 1 pt] (9.3, 5) -- (10.2, 5.2);
\draw[line width = 1 pt] (9.3, 5) -- (10, 5.6);
\draw[line width = 1 pt] (9.3, 5) -- (10.2, 4.8);
\draw[line width = 1 pt] (9.3, 5) -- (10, 4.4);
\draw[line width = 1 pt] (9, 7.2) -- (9.4, 8);
\draw[line width = 1 pt] (9, 7.2) -- (9.8, 7.6);
\draw[line width = 1 pt] (9, 2.9) -- (9.7, 2.5);
\draw[line width = 1 pt] (9, 2.9) -- (9.4, 2.2);
\fill (5, 5) circle (8pt);
\fill (7, 5) circle (8pt);
\node (v) at (5, 4.2) {$v_{1}$};
\node (v) at (7, 4.2) {$v_{1}'$};
\node (G) at (6, 2.5) {$G_{1}$};
\node (a) at (11.3, 5) {$\rightsquigarrow$};
\end{tikzpicture}
\begin{tikzpicture}[scale=0.35]
\draw[line width = 1 pt] (5, 5) -- (7, 5);
\draw[line width = 1 pt] (7, 5) -- (9, 7.2);
\draw[line width = 1 pt] (7, 5) -- (9.3, 5);
\draw[line width = 1 pt] (7, 5) -- (9, 2.9);
\draw[line width = 1 pt] (5, 5) -- (4, 3.2);
\draw[line width = 1 pt] (5, 5) -- (3.9, 6);
\draw[line width = 1 pt] (5, 5) -- (4, 6.4);
\draw[line width = 1 pt] (5, 5) -- (3.9, 3.6);
\draw[line width = 1 pt] (5, 5) -- (3.9, 3.9);
\draw[line width = 1 pt] (5, 5) -- (3.7, 4.8);
\draw[line width = 1 pt] (5, 5) -- (3.8, 4.5);
\draw[line width = 1 pt] (5, 5) -- (3.7, 5.4);
\draw[line width = 1 pt] (5, 5) -- (3.8, 5.7);
\draw[line width = 1 pt] (9.3, 5) -- (10.2, 5.2);
\draw[line width = 1 pt] (9.3, 5) -- (10, 5.6);
\draw[line width = 1 pt] (9.3, 5) -- (10.2, 4.8);
\draw[line width = 1 pt] (9.3, 5) -- (10, 4.4);
\draw[line width = 1 pt] (9, 7.2) -- (9.4, 8);
\draw[line width = 1 pt] (9, 7.2) -- (9.8, 7.6);
\draw[line width = 1 pt] (9, 2.9) -- (9.7, 2.5);
\draw[line width = 1 pt] (9, 2.9) -- (9.4, 2.2);
\fill (7, 5) circle (8pt);
\node (v) at (7, 4.2) {$v_{2}'$};
\node (vp) at (5.4, 4.2) {$v_{2}$};
\node (G) at (6, 2.5) {$G_{2}$};
\node (a) at (11.3, 5) {$\leftsquigarrow$};
\end{tikzpicture}
\begin{tikzpicture}[scale=0.35]
\draw[line width = 1 pt] (5, 5) -- (3, 3.8);
\draw[line width = 1 pt] (5, 5) -- (3, 6.2);
\draw[line width = 1 pt] (5, 5) -- (7, 5);
\draw[line width = 1 pt] (7, 5) -- (9, 7.2);
\draw[line width = 1 pt] (7, 5) -- (9.3, 5);
\draw[line width = 1 pt] (7, 5) -- (9, 2.9);
\draw[line width = 1 pt] (3, 3.8) -- (2.1, 3.2);
\draw[line width = 1 pt] (3, 3.8) -- (2.2, 2.8);
\draw[line width = 1 pt] (3, 3.8) -- (2.4, 2.5);
\draw[line width = 1 pt] (3, 3.8) -- (2, 3.6);
\draw[line width = 1 pt] (3, 3.8) -- (2, 4);
\draw[line width = 1 pt] (3, 6.2) -- (1.9, 5.6);
\draw[line width = 1 pt] (3, 6.2) -- (1.7, 6);
\draw[line width = 1 pt] (3, 6.2) -- (1.7, 6.3);
\draw[line width = 1 pt] (3, 6.2) -- (1.9, 6.7);
\draw[line width = 1 pt] (9.3, 5) -- (10.2, 5.2);
\draw[line width = 1 pt] (9.3, 5) -- (10, 5.6);
\draw[line width = 1 pt] (9.3, 5) -- (10.2, 4.8);
\draw[line width = 1 pt] (9.3, 5) -- (10, 4.4);
\draw[line width = 1 pt] (9, 7.2) -- (9.4, 8);
\draw[line width = 1 pt] (9, 7.2) -- (9.8, 7.6);
\draw[line width = 1 pt] (9, 2.9) -- (9.7, 2.5);
\draw[line width = 1 pt] (9, 2.9) -- (9.4, 2.2);
\fill (7, 5) circle (8pt);
\node (v) at (5, 4.2) {$v_{3}$};
\node (v) at (7, 4.2) {$v_{3}'$};
\node (G) at (6, 2.5) {$G_{3}$};
\end{tikzpicture}\\
\begin{tikzpicture}[scale=0.35]
\draw[line width = 1 pt] (5, 5) -- (3, 3.8);
\draw[line width = 1 pt] (5, 5) -- (3, 6.2);
\draw[line width = 1 pt] (5, 5) -- (7, 5);
\draw[line width = 1 pt] (3, 3.8) -- (2.1, 3.2);
\draw[line width = 1 pt] (3, 3.8) -- (2.2, 2.8);
\draw[line width = 1 pt] (3, 3.8) -- (2.4, 2.5);
\draw[line width = 1 pt] (3, 3.8) -- (2, 3.6);
\draw[line width = 1 pt] (3, 3.8) -- (2, 4);
\draw[line width = 1 pt] (3, 6.2) -- (1.9, 5.6);
\draw[line width = 1 pt] (3, 6.2) -- (1.7, 6);
\draw[line width = 1 pt] (3, 6.2) -- (1.7, 6.3);
\draw[line width = 1 pt] (3, 6.2) -- (1.9, 6.7);
\draw[line width = 1 pt] (7, 5) -- (8.2, 5.2);
\draw[line width = 1 pt] (7, 5) -- (8, 5.6);
\draw[line width = 1 pt] (7, 5) -- (8.2, 4.8);
\draw[line width = 1 pt] (7, 5) -- (8, 4.4);
\draw[line width = 1 pt] (7, 5) -- (7.4, 6.4);
\draw[line width = 1 pt] (7, 5) -- (7.8, 6);
\draw[line width = 1 pt] (7, 5) -- (7.7, 4);
\draw[line width = 1 pt] (7, 5) -- (7.4, 3.6);
\fill (5, 5) circle (8pt);
\node (v) at (5, 4.2) {$v_{4}$};
\node (G) at (6, 2.5) {$G_{4}$};
\end{tikzpicture}
\quad
\begin{tikzpicture}[scale=0.35]
\draw[line width = 1 pt] (5, 5) -- (3, 3.8);
\draw[line width = 1 pt] (5, 5) -- (3, 6.2);
\draw[line width = 1 pt] (5, 5) -- (7, 7.2);
\draw[line width = 1 pt] (5, 5) -- (7.3, 5);
\draw[line width = 1 pt] (5, 5) -- (7, 2.9);
\draw[line width = 1 pt] (3, 3.8) -- (2.1, 3.2);
\draw[line width = 1 pt] (3, 3.8) -- (2.2, 2.8);
\draw[line width = 1 pt] (3, 3.8) -- (2.4, 2.5);
\draw[line width = 1 pt] (3, 3.8) -- (2, 3.6);
\draw[line width = 1 pt] (3, 3.8) -- (2, 4);
\draw[line width = 1 pt] (3, 6.2) -- (1.9, 5.6);
\draw[line width = 1 pt] (3, 6.2) -- (1.7, 6);
\draw[line width = 1 pt] (3, 6.2) -- (1.7, 6.3);
\draw[line width = 1 pt] (3, 6.2) -- (1.9, 6.7);
\draw[line width = 1 pt] (7.3, 5) -- (8.2, 5.2);
\draw[line width = 1 pt] (7.3, 5) -- (8, 5.6);
\draw[line width = 1 pt] (7.3, 5) -- (8.2, 4.8);
\draw[line width = 1 pt] (7.3, 5) -- (8, 4.4);
\draw[line width = 1 pt] (7, 7.2) -- (7.4, 8);
\draw[line width = 1 pt] (7, 7.2) -- (7.8, 7.6);
\draw[line width = 1 pt] (7, 2.9) -- (7.7, 2.5);
\draw[line width = 1 pt] (7, 2.9) -- (7.4, 2.2);
\fill (5, 5) circle (8pt);
\node (v) at (5, 4.2) {$v_{5}$};
\node (G) at (5, 2.5) {$G_{5}$};
\end{tikzpicture}
\caption{A sequence of contractions and degenerations}
\label{fig:degcont}
\end{figure}
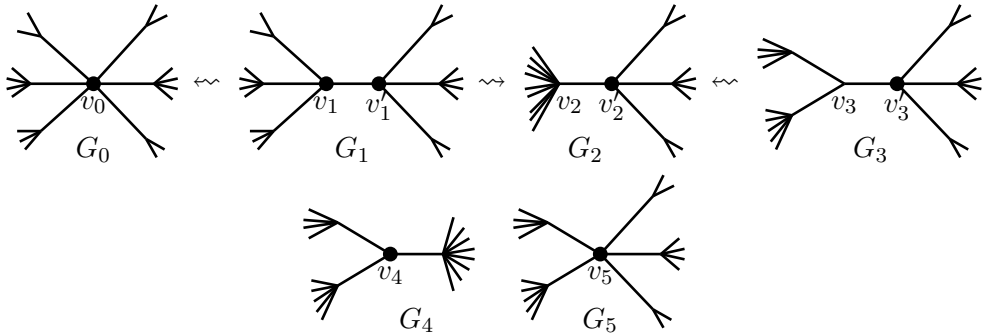

Therefore if $r$ is a replacement, then the corresponding central vertex is assigned. By Proposition \ref{prop:extremaltightbound}, for $R$ with $f(R) = r$, there must be $Z_{P} \subset Z$ so that $R \le P$. Then $f(P) \le p_{k}$ for some $k$ and $r = f(R) \le f(P) \le p_{k}$. Therefore $\cF$ is special.
\end{proof}

The following simple special case of Theorem \ref{thm:invariantstructurethm} is already useful. 

\begin{corollary}
Let $(G, v)$ be a basic pair. Let $Z := \bigcup_{\sigma \in S_{n}}\sigma \cdot Z_{G}$ as above and let $p$ be the corresponding integer partition for $(G, v)$. Then $Z$ is an extremal assignment if and only if $p$ is special.
\end{corollary}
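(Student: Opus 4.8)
The plan is to derive this corollary directly from Theorem \ref{thm:invariantstructurethm} by specializing to a singleton family. We are given a single basic pair $(G, v)$ with corresponding integer partition $p$, and we set $Z := \bigcup_{\sigma \in S_{n}} \sigma \cdot Z_{G}$. In the notation of Theorem \ref{thm:invariantstructurethm}, this is exactly $Z = Z_{p_{1}}$ with $k = 1$ and $p_{1} = p$, since $Z_{p_{1}} = \bigcup_{\sigma \in S_{n}} \sigma \cdot Z_{G_{1}}$. By that theorem, $Z = \bigcup_{i=1}^{1} Z_{p_{i}}$ is an $S_{n}$-invariant extremal assignment if and only if $\cF := \{p\}$ is a special family.

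The remaining point is to verify the (already-noted) equivalence that a singleton family $\cF = \{p\}$ is special if and only if the integer partition $p$ itself is special. This was recorded right after Definition \ref{def:specialpartition}: for $\cF = \{p\}$, a \emph{replacement in $\cF$} starts from some $q \le p$ and replaces a subpartition of $p$ by another subpartition of $p$ with equal sum — which is precisely the operation defining a \emph{replacement of $p$} (the extra flexibility of drawing the new block from a different $p_{t}$ collapses, since there is only one member of the family). Hence the set of replacements in $\{p\}$ coincides with the set of replacements of $p$, and the condition ``every replacement in $\cF$ is $\le$ some $p_{j} \in \cF$'' becomes ``every replacement $r$ of $p$ satisfies $r \le p$,'' which is the definition of $p$ being special.

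There is essentially no obstacle here — the statement is a pure bookkeeping specialization. The only thing to be careful about is making the identification between the operation ``replacement in $\cF$'' and ``replacement of $p$'' completely explicit, including the starting condition $q \le p$, so that the two classes of partitions genuinely coincide rather than merely being related by inclusions. Once that is spelled out, the corollary is immediate from Theorem \ref{thm:invariantstructurethm}.

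\begin{proof}
Apply Theorem \ref{thm:invariantstructurethm} with $k = 1$, $(G_{1}, v_{1}) = (G, v)$, and $p_{1} = p$. Then $Z_{p_{1}} = \bigcup_{\sigma \in S_{n}} \sigma \cdot Z_{G_{1}}$ is exactly $Z$, so $Z = \bigcup_{i=1}^{1} Z_{p_{i}}$ and the theorem says $Z$ is an $S_{n}$-invariant extremal assignment if and only if $\cF := \{p\}$ is a special family. It remains to observe that $\cF = \{p\}$ is special precisely when $p$ is special. Indeed, by Definition \ref{def:specialpartition}(3) a replacement in $\cF = \{p\}$ is obtained by starting from some $q \le p$ and repeatedly removing a subpartition $\{\lambda_{i}\}_{i \in I} \subset p$ and adding a subpartition $\{\mu_{j}\}_{j \in J} \subset p_{t}$ with $\sum_{i \in I}\lambda_{i} = \sum_{j \in J}\mu_{j}$; since the only member of the family is $p$ itself, we have $p_{t} = p$, so this operation (together with the allowed starting point $q \le p$, which is the trivial replacement obtained by removing and re-adding blocks of $p$) is exactly the operation defining a replacement of $p$ in Definition \ref{def:specialpartition}(1). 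Hence the set of replacements in $\cF$ equals the set of replacements of $p$, and the condition in Definition \ref{def:specialpartition}(4) that every replacement $r$ in $\cF$ satisfies $r \le p_{j}$ for some $p_{j} \in \cF$ becomes the condition in Definition \ref{def:specialpartition}(2) that every replacement $r$ of $p$ satisfies $r \le p$. Thus $\cF$ is a special family if and only if $p$ is special, and the corollary follows.
\end{proof}
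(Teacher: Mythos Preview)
Your proof is correct and matches the paper's approach exactly: the paper states this corollary without proof, treating it as the immediate $k=1$ specialization of Theorem \ref{thm:invariantstructurethm} together with the remark (made right after Definition \ref{def:specialpartition}) that a singleton family $\{p\}$ is special if and only if $p$ is special. Your write-up simply makes this specialization explicit.
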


\begin{example}
Let $(G, v)$ be a basic pair with corresponding integer partition $p = \{4,4,3,3\}$, and let $Z := \bigcup_{\sigma \in S_{n}}\sigma \cdot Z_{G}$. Clearly $p$ is special, so $Z$ is an extremal assignment.
\end{example}

Due to Theorem \ref{thm:invariantstructurethm}, we can translate Question \ref{que:fundamentalquestion} for $S_{n}$-invariant extremal assignments in terms of integer partitions. 

\begin{algorithm}[Existence/construction of the smallest extremal assignment]
Let $G_{1}, G_{2}, \cdots, G_{k} \in S(n)$ and $v_{1} \in V(G_{1}), v_{2} \in V(G_{2}), \cdots, v_{k} \in V(G_{k})$. We want to find the smallest $S_{n}$-invariant extremal assignment $Z$ such that $v_{i} \in Z(G_{i})$, if there is one.
\begin{enumerate}
\item Contract each tail of $G_{i}$ adjacent to $v_{i}$ to a single vertex and make a basic pair $(\overline{G}_{i}, v_{i})$. Let $p_{i}$ be the corresponding integer partition.
\item For the family $\cF := \{p_{1}, p_{2}, \cdots, p_{k}\}$, add all replacements of $p_{i}$ in $\cF$ and enlarge $\cF$. 
\item If $\cF$ contains the complete partition $\{1, 1, \cdots, 1\}$, then such $Z$ does not exist.
\item If $\cF$ does not contain the complete partition, then eliminate non-maximal elements in $\cF$. 
\item $Z = \bigcup_{p \in \cF}Z_{p}$ is the smallest $S_{n}$-invariant extremal assignment. 
\end{enumerate}
\end{algorithm}

The algorithm is implemented as a program in Sage. It can be found on the website of the first author:
\begin{center}
	\url{http://www.hanbommoon.net/publications/extremal}
\end{center}

Although there are many new types of $S_{n}$-invariant extremal assignments, all $S_{n}$-invariant smooth assignments are classically known. 

\begin{proposition}\label{prop:invariantdivisorial}
Every $S_{n}$-invariant smooth extremal assignment is a weight assignment $Z_{A}$ for some $S_{n}$-invariant weight $A$.
\end{proposition}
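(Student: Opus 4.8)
The plan is to translate the statement into the combinatorics of $S_{n}$-invariant contraction indicators and then realize each one by explicit symmetric weight data.

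\textbf{Step 1: classify the contraction indicator.} Given an $S_{n}$-invariant smooth extremal assignment $Z$, I would first pass to its contraction indicator $\cC := \cC_{Z}$ (Lemma \ref{lem:assignmenttocontractionindicator}). This $\cC$ is $S_{n}$-invariant: if $B = \ell(w)$ for the assigned vertex $w$ of some star $G\in S_{2}(n)$, then for $\sigma\in S_{n}$ the star $\sigma\cdot G\in S_{2}(n)$ has assigned vertex $\sigma\cdot w$ with label set $\sigma(B)$, so $\sigma(B)\in\cC$. Since the $S_{n}$-orbits of subsets of $[n]$ are exactly the collections of $k$-subsets, $\cC$ is a union of full orbits, and condition (2) of Definition \ref{def:contractionindicator} forces the set of cardinalities that occur to be of the form $\{2,3,\ldots,m\}$. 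Hence either $\cC = \emptyset$, or $\cC = \{B\subset[n] : 2\le|B|\le m\}$ for some integer $m$ with $2\le m\le n-2$. In the latter case condition (3) of Definition \ref{def:contractionindicator} forbids $m\ge n/2$: otherwise take an $m$-subset $B_{1}$ and set $B_{2} = [n]\setminus B_{1}$, so that $2\le|B_{2}| = n-m\le m$ and hence $B_{2}\in\cC$, yet $B_{1}\cup B_{2} = [n]$. So in the nontrivial case $2m<n$.

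\textbf{Step 2: realize both cases by symmetric weights.} If $\cC = \emptyset$, then $Z$ restricts to the empty assignment on $S_{2}(n)$; the weight assignment $Z_{1^{n}}$ is also empty, because any tail containing an internal vertex carries at least two labels and hence total weight at least $2>1$; both $Z$ and $Z_{1^{n}}$ are smooth, so $Z = Z_{1^{n}}$ by Lemma \ref{lem:divisorialdeterminedbyS2}. Now suppose $\cC = \{B : 2\le|B|\le m\}$ with $m\ge 2$ and $2m<n$. I would choose a rational number $a$ in the interval $\left(\max\!\left(\frac{1}{m+1},\frac{2}{n}\right),\ \frac{1}{m}\right)$, which is nonempty precisely because $2m<n$ (so $\frac{2}{n}<\frac{1}{m}$). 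Then $0<a<1$ and $na>2$, so $A := a^{n}$ is admissible weight data, and $m<1/a<m+1$. For any $B\subset[n]$ one has $\sum_{i\in B}a_{i} = a|B|\le 1$ if and only if $|B|\le m$. Reading off $\cC_{Z_{A}}$ from Definition \ref{def:Hassettextremalassignment}: for a star $G\in S_{2}(n)$ the vertex with label set $B$ (where automatically $2\le|B|\le n-2$, and $m\le n-2$ since $2m<n$) is assigned by $Z_{A}$ exactly when $a|B|\le 1$, i.e. when $|B|\le m$. Hence $\cC_{Z_{A}} = \cC = \cC_{Z}$, so $Z$ and $Z_{A}$ agree on $S_{2}(n)$: for $G\in S_{2}(n)$ a vertex is assigned by either one iff its label set lies in the common contraction indicator, and $Z(G)\ne V(G)$ prevents both vertices from being assigned. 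Since $Z_{A}$ is smooth by Lemma \ref{lem:Hassettisdivisorial}, Lemma \ref{lem:divisorialdeterminedbyS2} gives $Z = Z_{A}$ with $A = a^{n}$ visibly $S_{n}$-invariant.

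\textbf{Expected main obstacle.} The only place that needs care is matching the normalization $\sum a_{i}>2$ (i.e. $a>2/n$) against the requirement $\lfloor 1/a\rfloor = m$ (i.e. $a\le 1/m$); the compatibility of these two constraints is exactly the inequality $2m<n$ extracted from axiom (3) of contraction indicators in Step 1, so Steps 1 and 2 must be dovetailed. Everything else is formal bookkeeping with the dictionary between smooth extremal assignments and contraction indicators (Lemma \ref{lem:assignmenttocontractionindicator}, Proposition \ref{prop:divassignment}, Lemma \ref{lem:divisorialdeterminedbyS2}) already established in this section.
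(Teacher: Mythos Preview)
Your proposal is correct and follows essentially the same approach as the paper: extract the integer $m$ governing the $S_{n}$-invariant contraction indicator, use the complementation argument to get $2m<n$, and then realize $\cC$ by the constant weight $A=a^{n}$. The only cosmetic difference is that the paper takes the boundary value $a=1/m$ (so $\sum_{i\in B}a_{i}\le 1$ becomes $|B|\le m$ on the nose) and omits the empty case and the explicit appeal to Lemma \ref{lem:divisorialdeterminedbyS2}, whereas you choose $a$ in the open interval and spell out those steps; your version is in fact the more complete of the two.
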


\begin{proof}
Let  $Z$ be an $S_{n}$-invariant smooth extremal assignment, and let $m$ be the largest number of labels on any assigned vertex. For any graph in $S_2(n)$, only the vertex with fewer labels can be assigned, otherwise we could permute the labels to show that the other vertex is assigned as well. So we have $n>2m$. We can simply assign $A=((\frac{1}{m})^n)$. We have $\sum a_i = \frac{n}{m} > \frac{2m}{m} = 2$. So $A$ gives a valid weight assignment.
\end{proof}


\section{Geography of contractions of $\Mznt$}\label{sec:geography}

Currently the two most generalized constructions of modular compactifications of $\Mzn$ are given by 1) extremal assignments and 2) Veronese quotients. They provide two large families of birational contractions of $\Mznb$, large portions of which overlap. Since these two constructions are very general, one may wonder if these two families are sufficient to describe all birational contractions of $\Mznb$. More precisely, we may ask the following question:

\begin{question}\label{que:modularity}
Let $X$ be a projective birational contraction of $\Mznb$. Are there finitely many contractions $\Mznb \to Y_{i}$ such that 
\begin{enumerate}
\item Each $Y_{i}$ is either (the normalization of) $\Mznb(Z)$ or $V_{\gamma, \vec{c}}^{d}$;
\item $X$ is (the normalization of) the image of the product map 
\[
	\Mznb \to \prod_{i=1}^{k}Y_{i}?
\]
\end{enumerate}
\end{question}

\begin{remark}\label{rem:normality}
In projective birational geometry in the sense of Mori's program, if $X$ is a normal projective variety and $Y$ is a projective birational contraction of $X$, then $Y$ is also a normal variety. Indeed, $Y = \proj \bigoplus_{m \ge 0}\rH^{0}(X, mD)$ for some big semi-ample divisor $D$, and the section ring is integrally closed. We do not know if $\Mznb(Z)$ and $V_{\gamma, \vec{c}}^{d}$ are normal or not. Thus in Question \ref{que:modularity}, we must use normalizations of modular contractions. On the other hand, in both cases, the contraction map from $\Mznb$ has a connected fiber, so the normalization $\Mznb(Z)^{\nu}$ (resp. $(V_{\gamma, \vec{c}}^{d})^{\nu}$) is homeomorphic to $\Mznb(Z)$ (resp. $V_{\gamma, \vec{c}}^{d})$. 

From this point, for notational simplicity, every modular compactification will be replaced by its normalization. 
\end{remark}

\begin{remark}
As we have seen in Section \ref{sec:toric}, extremal assignments are not sufficient to construct all projective birational contractions of $\Mznb$. 
\end{remark}

We will see that the answer of Question \ref{que:modularity} is negative even we restrict ourselves to $S_{n}$-invariant contractions, or equivalently, contractions of $\Mznt$. In this section, we compute all faces of the nef cone of $\Mznt$ for small $n$ and try to find the corresponding contractions. 

For $n = 4, 5$, the Picard number $\rho(\Mznt)$ is one. Thus there is no nontrivial contraction. 

\begin{example}[$n = 6$]
In this case, the Picard number $\rho(\widetilde{\mathrm{M}}_{0,6}) = \dim \mathrm{N}^{1}(\widetilde{\mathrm{M}}_{0, 6}) = 2$. $F_{3,1,1,1}$ and $F_{2,2,1,1}$ generate two extremal rays of $\mathrm{NE}_{1}(\widetilde{\mathrm{M}}_{0,6})$. Dually, $\mathrm{Nef}(\widetilde{\mathrm{M}}_{0,6})$ is generated by $D_{2}+3D_{3}$ and $2D_{2}+D_{3}$. The birational model corresponding to $D_{2}+3D_{3}$ is $(\PP^{1})^{6}\git \SL_{2}$, which is classically known as Segre cubic. The model corresponding to $2D_{2}+D_{3}$ is $V_{0, (3/7)^{7}}^{2}$, which is Igusa quartic (\cite{Moo15b}). Both birational models do not come from extremal assignments. Indeed, any $S_{6}$-invariant extremal assignment is equivalent to the empty assignment. 
\end{example}

\begin{example}[$n = 7$]
In this case, $\rho(\widetilde{\mathrm{M}}_{0,7})$ is two. There are three types of F-curves, $F_{4,1,1,1}$, $F_{3,2,1,1}$, and $F_{2,2,2,1}$. $F_{4,1,1,1}$ and $F_{2,2,2,1}$ generate two extremal rays of $\mathrm{NE}_{1}(\widetilde{\mathrm{M}}_{0,7})$. $\mathrm{Nef}(\widetilde{\mathrm{M}}_{0,7})$ is generated by $D_{2}+3D_{3}$ and $D_{2}+D_{3}$. The model corresponding to $D_{2}+3D_{3}$ is $\overline{\mathrm{M}}_{0, (1/3)^{7}}$. The model corresponding to $D_{2}+D_{3}$ is $\overline{\mathrm{M}}_{0,7}(Z)$ where $Z$ is the $S_{7}$-invariant extremal assignment contracting the spine of $F_{2,2,2,1}$. So all contractions of $\widetilde{\mathrm{M}}_{0,7}$ are obtained from extremal assignments. 
\end{example}

\begin{example}[$n=8$]
The nef cone is a 3-dimensional cone generated by 4 extremal rays $6D_{2}+11D_{3}+8D_{4}$, $3D_{2}+2D_{3}+4D_{4}$, $D_{2}+3D_{3}+6D_{4}$, and $2D_{2}+6D_{3}+5D_{4}$ (Figure \ref{fig:nefconefor8}). There are 8 proper faces of the nef cone. The corresponding birational contractions are listed in Table \ref{tbl:contraction8}. Therefore for $n \le 8$, the answer for Question \ref{que:modularity} is affirmative for $S_{n}$-invariant contractions. 

\begin{figure}[!ht]
\begin{tikzpicture}[scale=0.45]
\draw[line width = 1 pt] (0, 0) -- (10, 0);
\draw[line width = 1 pt] (0, 0) -- (5, 7);
\draw[line width = 1 pt] (5, 7) -- (10, 0);
\node (2) at (-0.5, -0.5) {$D_{2}$};
\node (3) at (10.5, -0.5) {$D_{3}$};
\node (4) at (5, 7.5) {$D_{4}$};
\draw[line width = 1 pt] (150/25, 56/25) -- (40/9, 28/9);
\draw[line width = 1 pt] (6,42/10) -- (40/9, 28/9);
\draw[line width = 1 pt] (6,42/10) -- (85/13,35/13);
\draw[line width = 1 pt] (150/25, 56/25) -- (85/13,35/13);
\footnotesize
\node (c1) at (6.7, 3.7) {$\circled{1}$};
\node (c8) at (6, 4.5) {$\circled{2}$};
\node (c7) at (5, 4) {$\circled{3}$};
\node (c6) at (4, 3.2) {$\circled{4}$};
\node (c5) at (4.8, 2.3) {$\circled{5}$};
\node (c4) at (5.8, 1.9) {$\circled{6}$};
\node (c3) at (6.5, 2.2) {$\circled{7}$};
\node (c2) at (7, 2.7) {$\circled{8}$};
\normalsize
\end{tikzpicture}
\caption{Nef cone and effective cone of $\widetilde{\mathrm{M}}_{0, 8}$}
\label{fig:nefconefor8}
\end{figure}
\begin{table}[!ht]
\begin{tabular}{|c|c|c|}\hline
face & contracted F-curves & birational model\\ \hline \hline
1&$F_{5,1,1,1}$ & $\overline{\mathrm{M}}_{0, ((1/3)^{8})}/S_{8}$ \\ \hline
2&$F_{4,2,1,1}, F_{5,1,1,1}$ & $(\PP^{1})^{8}\git \SL_{2}/S_{8} =\PP^{8}\git \SL_{2}$ \\ \hline
3&$F_{4,2,1,1}$ & Image of $\widetilde{\mathrm{M}}_{0, 8}$ in $((\PP^{1})^{8}\git \SL_{2} \times 
V_{0, (1/2)^{8}}^{3})/S_{8}$ \\ \hline
4&$F_{2,2,2,2}, F_{3,2,2,1}, F_{4,2,1,1}$ & $V_{0, (1/2)^{8}}^{3}/S_{8}$ \\ \hline
5&$F_{2,2,2,2}$ & $\overline{\mathrm{M}}_{0, 8}(Z)/S_{8}$ where $Z$ is generated by $\{2,2,2,2\}$ \\ \hline
6&$F_{3,3,1,1}, F_{2,2,2,2}$ & $V_{0, (3/8)^{8}}^{2}/S_{8}$ \\ \hline
7&$F_{3,3,1,1}$ & $\overline{\mathrm{M}}_{0, 8}(Z)/S_{8}$ where $Z$ is generated by $\{3,3,1,1\}$ \\ \hline
8&$F_{5,1,1,1}, F_{3,3,1,1}$ & $V_{1/3, (1/3)^{8}}^{2}/S_{8}$ \\ \hline
\end{tabular}
\medskip
\caption{List of contractions of $\widetilde{\mathrm{M}}_{0, 8}$}
\label{tbl:contraction8}
\end{table}
\end{example}

\begin{example}[$n=9$]
The nef cone is a 3-dimensional cone generated by 4 extremal rays $3D_{2}+3D_{3}+4D_{4}$, $D_{2}+D_{3}+2D_{4}$, $D_{2}+3D_{3}+6D_{4}$, and $D_{2}+3D_{3}+2D_{4}$ (Figure \ref{fig:nefconefor9}). There are 8 faces of the nef cone. The corresponding birational contractions are listed in Table \ref{tbl:contraction9}.

Note that three faces are obtained from neither extremal assignments nor Veronese quotients. By using results in Section \ref{sec:invariant}, it is straightforward to see that there is no $S_{n}$-invariant extremal assignment which contracts the given F-curves only. On the other hand, Veronese quotients may provide contractions which are not from extremal assignments. This occurs if there are strictly semistable points, or equivalently, if the quantity $\frac{c_{T}-1}{1-\gamma}$ has an integer value for some $T$ (Remark \ref{rem:semistable}). If it is an integer $i$ and $|T| = k$, then 
\[
	(d-1)\gamma + nc = d+1, \quad \frac{kc-1}{1-\gamma} = i
\]
where $\vec{c} = (c, c, \cdots, c)$. We can find all possible solutions for $\gamma$ and $c$ if we fix $d$, $k$, and $i$. Furthermore, if $d > 2n - 3$, then any parametrized curve has nodal singularities without marked points only (\cite[Corollary 2.7]{GJM13}). Therefore the possible models are Hassett's moduli spaces of weighted stable curves. Thus it suffices to check finitely many $d$, $k$, and $i$. For each solution, by using \cite[Theorem 2.1]{GJMS13}, one can compute the set of contracted F-curves. 

Therefore we can conclude that the answer for Question \ref{que:modularity} is negative for $n \ge 9$ in general.

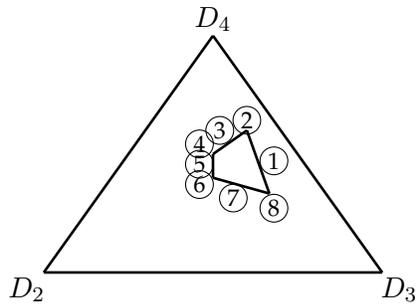
\begin{figure}[!ht]
\begin{tikzpicture}[scale=0.45]
\draw[line width = 1 pt] (0, 0) -- (10, 0);
\draw[line width = 1 pt] (0, 0) -- (5, 7);
\draw[line width = 1 pt] (5, 7) -- (10, 0);
\node (2) at (-0.5, -0.5) {$D_{2}$};
\node (3) at (10.5, -0.5) {$D_{3}$};
\node (4) at (5, 7.5) {$D_{4}$};
\draw[line width = 1 pt] (5, 28/10) -- (5, 14/4);
\draw[line width = 1 pt] (5, 14/4) -- (6, 42/10);
\draw[line width = 1 pt] (6, 42/10) -- (40/6, 14/6);
\draw[line width = 1 pt] (40/6, 14/6) -- (5, 28/10);
\footnotesize
\node (c1) at (6.8, 3.3) {$\circled{1}$};
\node (c2) at (6, 4.5) {$\circled{2}$};
\node (c3) at (5.2, 4.2) {$\circled{3}$};
\node (c4) at (4.6, 3.8) {$\circled{4}$};
\node (c5) at (4.6, 3.2) {$\circled{5}$};
\node (c6) at (4.6, 2.6) {$\circled{6}$};
\node (c7) at (5.6, 2.2) {$\circled{7}$};
\node (c8) at (6.8, 1.9) {$\circled{8}$};
\normalsize
\end{tikzpicture}
\caption{Nef cone and effective cone of $\widetilde{\mathrm{M}}_{0, 9}$}
\label{fig:nefconefor9}
\end{figure}
\begin{table}[!ht]
\begin{tabular}{|c|c|c|}\hline
face & contracted F-curves & birational model\\ \hline \hline
1&$F_{6,1,1,1}$ & $\overline{\mathrm{M}}_{0, ((1/3)^{9})}/S_{9}$ \\ \hline
2&$F_{6,1,1,1}, F_{5,2,1,1}$ & $\overline{\mathrm{M}}_{0, ((1/4)^{9})}/S_{9} = (\PP^{1})^{9}\git \SL_{2}/S_{9} = \PP^{9}\git \SL_{2}$ \\ \hline
3&$F_{5,2,1,1}$ & not obtained from assignments or GIT \\ \hline
4&$F_{5,2,1,1}, F_{4,2,2,1}$ & not obtained from assignments or GIT \\ \hline
5&$F_{4,2,2,1}$ & not obtained from assignments or GIT \\ \hline
6&$F_{4,2,2,1}, F_{3,2,2,2}$ & $\overline{\mathrm{M}}_{0, 9}(Z)/S_{9}$ where $Z$ is generated by $\{2, 2, 2, 2, 1\}$ \\ \hline
7&$F_{3,2,2,2}$ & $\overline{\mathrm{M}}_{0, 9}(Z)/S_{9}$ where $Z$ is generated by $\{3,2,2,2\}$ \\ \hline
8&$F_{3,2,2,2}, F_{6,1,1,1}, F_{4,3,1,1}, F_{3,3,2,1}$ & $V_{0, (1/3)^{9}}^{2}/S_{9}$ \\ \hline
\end{tabular}
\medskip
\caption{List of contractions of $\widetilde{\mathrm{M}}_{0, 9}$}
\label{tbl:contraction9}
\end{table}
\end{example}

\begin{example}[$n=10$]
In this case, the Picard number is 4. The $f$-vector of a slice of the nef cone is $(1, 7, 13, 8, 1)$ (Figure \ref{fig:slice}). In particular, the nef cone has 7 extremal rays. Table \ref{tbl:contraction10} gives us the list of contracted curves and the corresponding birational models for all extremal rays.

\begin{figure}[!ht]
\begin{tikzpicture}%
	[x={(0.393085cm, -0.360638cm)},
	y={(0.899030cm, 0.343769cm)},
	z={(-0.192951cm, 0.867043cm)},
	scale=30.000000,
	back/.style={dotted, thick},
	edge/.style={color=orange, thick},
	facet/.style={fill=red,fill opacity=0.200000},
	vertex/.style={inner
sep=1pt,circle,draw=blue!25!black,fill=blue!75!black,thick,anchor=base}]
%
%
\coordinate (0.0909, 0.273, 0.273) at (0.0909, 0.273, 0.273);
\coordinate (0.0500, 0.150, 0.300) at (0.0500, 0.150, 0.300);
\coordinate (0.154, 0.231, 0.231) at (0.154, 0.231, 0.231);
\coordinate (0.105, 0.316, 0.316) at (0.105, 0.316, 0.316);
\coordinate (0.0645, 0.194, 0.387) at (0.0645, 0.194, 0.387);
\coordinate (0.235, 0.176, 0.353) at (0.235, 0.176, 0.353);
\coordinate (0.174, 0.261, 0.261) at (0.174, 0.261, 0.261);
\draw[edge,back] (0.0909, 0.273, 0.273) -- (0.0500, 0.150, 0.300);
\draw[edge,back] (0.0909, 0.273, 0.273) -- (0.154, 0.231, 0.231);
\draw[edge,back] (0.0909, 0.273, 0.273) -- (0.105, 0.316, 0.316);
\node[vertex] at (0.0909, 0.273, 0.273)     {};
\fill[facet] (0.235, 0.176, 0.353) -- (0.105, 0.316, 0.316) -- (0.0645,
0.194, 0.387) -- cycle {};
\fill[facet] (0.235, 0.176, 0.353) -- (0.0500, 0.150, 0.300) -- (0.154,
0.231, 0.231) -- cycle {};
\fill[facet] (0.235, 0.176, 0.353) -- (0.0500, 0.150, 0.300) -- (0.0645,
0.194, 0.387) -- cycle {};
\fill[facet] (0.174, 0.261, 0.261) -- (0.154, 0.231, 0.231) -- (0.235,
0.176, 0.353) -- cycle {};
\fill[facet] (0.174, 0.261, 0.261) -- (0.105, 0.316, 0.316) -- (0.235,
0.176, 0.353) -- cycle {};
\draw[edge] (0.0500, 0.150, 0.300) -- (0.154, 0.231, 0.231);
\draw[edge] (0.0500, 0.150, 0.300) -- (0.0645, 0.194, 0.387);
\draw[edge] (0.0500, 0.150, 0.300) -- (0.235, 0.176, 0.353);
\draw[edge] (0.154, 0.231, 0.231) -- (0.235, 0.176, 0.353);
\draw[edge] (0.154, 0.231, 0.231) -- (0.174, 0.261, 0.261);
\draw[edge] (0.105, 0.316, 0.316) -- (0.0645, 0.194, 0.387);
\draw[edge] (0.105, 0.316, 0.316) -- (0.235, 0.176, 0.353);
\draw[edge] (0.105, 0.316, 0.316) -- (0.174, 0.261, 0.261);
\draw[edge] (0.0645, 0.194, 0.387) -- (0.235, 0.176, 0.353);
\draw[edge] (0.235, 0.176, 0.353) -- (0.174, 0.261, 0.261);
\node[vertex] at (0.0500, 0.150, 0.300)     {};
\node[vertex] at (0.154, 0.231, 0.231)     {};
\node[vertex] at (0.105, 0.316, 0.316)     {};
\node[vertex] at (0.0645, 0.194, 0.387)     {};
\node[vertex] at (0.235, 0.176, 0.353)     {};
\node[vertex] at (0.174, 0.261, 0.261)     {};
\end{tikzpicture}
\quad
\begin{tikzpicture}%
	[x={(0.374551cm, -0.193627cm)},
	y={(0.854879cm, 0.450783cm)},
	z={(-0.359018cm, 0.871380cm)},
	scale=30.000000,
	back/.style={dotted, thick},
	edge/.style={color=orange, thick},
	facet/.style={fill=red,fill opacity=0.200000},
	vertex/.style={inner
sep=1pt,circle,draw=blue!25!black,fill=blue!75!black,thick,anchor=base}]
%
%
\coordinate (0.167, 0.167, 0.333) at (0.167, 0.167, 0.333);
\coordinate (0.0667, 0.200, 0.400) at (0.0667, 0.200, 0.400);
\coordinate (0.0500, 0.150, 0.300) at (0.0500, 0.150, 0.300);
\coordinate (0.200, 0.200, 0.300) at (0.200, 0.200, 0.300);
\coordinate (0.100, 0.300, 0.267) at (0.100, 0.300, 0.267);
\coordinate (0.0857, 0.257, 0.371) at (0.0857, 0.257, 0.371);
\coordinate (0.114, 0.200, 0.257) at (0.114, 0.200, 0.257);
\coordinate (0.171, 0.229, 0.314) at (0.171, 0.229, 0.314);
\coordinate (0.133, 0.289, 0.244) at (0.133, 0.289, 0.244);
\coordinate (0.180, 0.240, 0.280) at (0.180, 0.240, 0.280);
\draw[edge,back] (0.0500, 0.150, 0.300) -- (0.100, 0.300, 0.267);
\fill[facet] (0.180, 0.240, 0.280) -- (0.171, 0.229, 0.314) -- (0.0857,
0.257, 0.371) -- (0.100, 0.300, 0.267) -- (0.133, 0.289, 0.244) -- cycle
{};
\fill[facet] (0.180, 0.240, 0.280) -- (0.200, 0.200, 0.300) -- (0.114,
0.200, 0.257) -- (0.133, 0.289, 0.244) -- cycle {};
\fill[facet] (0.114, 0.200, 0.257) -- (0.0500, 0.150, 0.300) -- (0.167,
0.167, 0.333) -- (0.200, 0.200, 0.300) -- cycle {};
\fill[facet] (0.171, 0.229, 0.314) -- (0.200, 0.200, 0.300) -- (0.167,
0.167, 0.333) -- (0.0667, 0.200, 0.400) -- (0.0857, 0.257, 0.371) --
cycle {};
\fill[facet] (0.0500, 0.150, 0.300) -- (0.167, 0.167, 0.333) -- (0.0667,
0.200, 0.400) -- cycle {};
\fill[facet] (0.180, 0.240, 0.280) -- (0.200, 0.200, 0.300) -- (0.171,
0.229, 0.314) -- cycle {};
\draw[edge] (0.167, 0.167, 0.333) -- (0.0667, 0.200, 0.400);
\draw[edge] (0.167, 0.167, 0.333) -- (0.0500, 0.150, 0.300);
\draw[edge] (0.167, 0.167, 0.333) -- (0.200, 0.200, 0.300);
\draw[edge] (0.0667, 0.200, 0.400) -- (0.0500, 0.150, 0.300);
\draw[edge] (0.0667, 0.200, 0.400) -- (0.0857, 0.257, 0.371);
\draw[edge] (0.0500, 0.150, 0.300) -- (0.114, 0.200, 0.257);
\draw[edge] (0.200, 0.200, 0.300) -- (0.114, 0.200, 0.257);
\draw[edge] (0.200, 0.200, 0.300) -- (0.171, 0.229, 0.314);
\draw[edge] (0.200, 0.200, 0.300) -- (0.180, 0.240, 0.280);
\draw[edge] (0.100, 0.300, 0.267) -- (0.0857, 0.257, 0.371);
\draw[edge] (0.100, 0.300, 0.267) -- (0.133, 0.289, 0.244);
\draw[edge] (0.0857, 0.257, 0.371) -- (0.171, 0.229, 0.314);
\draw[edge] (0.114, 0.200, 0.257) -- (0.133, 0.289, 0.244);
\draw[edge] (0.171, 0.229, 0.314) -- (0.180, 0.240, 0.280);
\draw[edge] (0.133, 0.289, 0.244) -- (0.180, 0.240, 0.280);
\node[vertex] at (0.167, 0.167, 0.333)     {};
\node[vertex] at (0.0667, 0.200, 0.400)     {};
\node[vertex] at (0.0500, 0.150, 0.300)     {};
\node[vertex] at (0.200, 0.200, 0.300)     {};
\node[vertex] at (0.100, 0.300, 0.267)     {};
\node[vertex] at (0.0857, 0.257, 0.371)     {};
\node[vertex] at (0.114, 0.200, 0.257)     {};
\node[vertex] at (0.171, 0.229, 0.314)     {};
\node[vertex] at (0.133, 0.289, 0.244)     {};
\node[vertex] at (0.180, 0.240, 0.280)     {};
\end{tikzpicture}
\caption{Slices of the nef cones of $\widetilde{\mathrm{M}}_{0, 10}$ and $\widetilde{\mathrm{M}}_{0, 11}$}
\label{fig:slice}
\end{figure}

\begin{table}[!ht]
\begin{tabular}{|c|c|}\hline
contracted F-curves & birational model\\ \hline \hline
$F_{4,2,2,2}, F_{3,3,3,1}, F_{3,3,2,2}$ & not obtained from assignments or GIT \\ \hline
$F_{7,1,1,1}, F_{4,4,1,1}, F_{3,3,3,1}, F_{3,3,2,2}$ & $V_{0, (3/10)^{10}}^{2}/S_{10}$\\ \hline
$F_{5,3,1,1}, F_{5,2,2,1}, F_{4,2,2,2}, F_{3,3,3,1}$ & $V_{0, (2/5)^{10}}^{3}/S_{10}$ \\ \hline
$F_{7,1,1,1}, F_{5,3,1,1}, F_{3,3,3,1}$ & $V_{1/3, (1/3)^{10}}^{3}/S_{10}$ \\ \hline
$F_{7,1,1,1}, F_{6,2,1,1}, F_{5,3,1,1}, F_{5,2,2,1}$ & $(\PP^{1})^{10}\git \SL_{2}/S_{10} = \PP^{10}\git \SL_{2}$ \\ \hline
$F_{6,2,1,1}, F_{5,2,2,1}, F_{4,4,1,1}, F_{4,3,2,1}, F_{4,2,2,2}, F_{3,3,2,2}$ & $V_{0, (1/2)^{10}}^{4}/S_{10}$ \\ \hline
$F_{7,1,1,1}, F_{6,2,1,1}, F_{4,4,1,1}$ & $V_{1/2, (1/4)^{10}}^{2}/S_{10}$ \\ \hline
\end{tabular}
\medskip
\caption{List of contractions of $\widetilde{\mathrm{M}}_{0, 10}$ corresponding to extremal rays}
\label{tbl:contraction10}
\end{table}
\end{example}

\begin{example}[$n=11$ case]
Again, the Picard number is 4 and the $f$-vector of a slice of the nef cone is $(1, 10, 16, 8, 1)$ (Figure \ref{fig:slice}). So the nef cone has 10 extremal rays. Table \ref{tbl:contraction11} is the list of contracted curves and the corresponding birational models for all extremal rays.

\begin{table}[!ht]
\begin{tabular}{|c|c|}\hline
contracted F-curves & birational model\\ \hline \hline
$F_{4,4,2,1}, F_{4,3,2,2}, F_{3,3,3,2}$ & not obtained from assignments or GIT \\ \hline
$F_{8,1,1,1}, F_{5,4,1,1}, F_{4,4,2,1}, F_{3,3,3,2}$ & $V_{0, (3/11)^{11}}^{2}/S_{11}$\\ \hline
$F_{5,2,2,2}, F_{4,3,2,2}, F_{3,3,3,2}$ & $\overline{\mathrm{M}}_{0, 11}(Z)/S_{11}$ for $Z$ generated by $\{3, 2, 2, 2\}$ and $\{3, 3, 3, 2\}$\\ \hline
$F_{6,3,1,1}, F_{5,2,2,2}, F_{4,3,3,1}, F_{3,3,3,2}$ & $V_{0, (4/11)^{11}}^{3}/S_{11}$\\ \hline
$F_{8,1,1,1}, F_{6,3,1,1}, F_{4,3,3,1}, F_{3,3,3,2}$ & $V_{1/6, (1/3)^{11}}^{3}/S_{11}$ \\ \hline
$F_{6,2,2,1}, F_{5,2,2,2}, F_{4,4,2,1}, F_{4,3,2,2}$ & $V_{0, (5/11)^{11}}^{4}/S_{11}$ \\ \hline
$F_{7,2,1,1}, F_{6,2,2,1}, F_{4,4,2,1}$ & not obtained from assignments or GIT \\ \hline
$F_{8,1,1,1}, F_{7,2,1,1}, F_{5,4,1,1}, F_{4,4,2,1}$ & $V_{1/4, (1/4)^{11}}^{2}/S_{11}$ \\ \hline
$F_{6,3,1,1}, F_{6,2,2,1}, F_{5,2,2,2}$ & not obtained from assignments or GIT \\ \hline
$F_{8,1,1,1}, F_{7,2,1,1}, F_{6,3,1,1}, F_{6,2,2,1}$ & $\overline{\mathrm{M}}_{0, ((1/5)^{11})}/S_{11} = (\PP^{1})^{11}\git \SL_{2}/S_{11} = \PP^{11}\git \SL_{2}$ \\ \hline
\end{tabular}
\medskip
\caption{List of contractions of $\widetilde{\mathrm{M}}_{0, 11}$ corresponding to extremal rays}
\label{tbl:contraction11}
\end{table}
\end{example}


\section{Non-projective examples}\label{sec:nonprojectiveexamples}

A priori, for an extremal assignment $Z$, the birational contraction $\Mznb(Z)$ exists in the category of algebraic spaces (Theorem \ref{thm:extassignmentmodel}). There are indeed many examples in which $\Mznb(Z)$ is not a projective variety. In this section, we present several examples of non-projective birational models. 

There is a smooth extremal assignment $Z$ which provides a non-projective birational model. 

\begin{example}[Non-projective smooth assignment]\label{ex:nonprojective1}
Let $Z$ be a smooth extremal assignment of order 6 whose maximal elements in the contraction indicator are
\[
	\{1, 2, 3, 4\}, \{1, 2, 5\}, \{3, 4, 5\}, 	\{2, 3, 6\}, \{1, 4, 6\}.
\]
Then it is straightforward to check that the contracted F-curves by $\pi_{Z} : \overline{\mathrm{M}}_{0, 6} \to \overline{\mathrm{M}}_{0, 6}(Z)$ are
\[
	F_{\{1\}, \{2\}, \{3\}, \{4,5,6\}},
	F_{\{1\}, \{2\}, \{4\}, \{3,5,6\}},
	F_{\{1\}, \{2\}, \{5\}, \{3,4,6\}}, 
	F_{\{1\}, \{3\}, \{4\}, \{2,5,6\}},
	F_{\{1\}, \{4\}, \{6\}, \{2,3,5\}},
\]
\[
	F_{\{2\}, \{3\}, \{4\}, \{1,5,6\}},
	F_{\{2\}, \{3\}, \{6\}, \{1,4,5\}},
	F_{\{3\}, \{4\}, \{5\}, \{1,2,6\}},
	F_{\{1\}, \{2\}, \{3,4\}, \{5,6\}},
	F_{\{1\}, \{3\}, \{2,4\}, \{5,6\}},
\]
\[
	F_{\{1\}, \{4\}, \{2,3\}, \{5,6\}},
	F_{\{2\}, \{3\}, \{1,4\}, \{5,6\}},
	F_{\{2\}, \{4\}, \{1,3\}, \{5,6\}},
	F_{\{3\}, \{4\}, \{1,2\}, \{5,6\}}.
\]
Note that the curve cone $\mathrm{NE}_{1}(\overline{\mathrm{M}}_{0, 6})$ is generated by F-curves (\cite[Theorem 1.2]{KM13}). If $\overline{\mathrm{M}}_{0, 6}(Z)$ is projective, then there is $N \in \mathrm{Nef}(\overline{\mathrm{M}}_{0, 6})$ such that for an F-curve $F$, $N \cdot F = 0$ if and only if $F$ is in the above list. By using a computer program, it can be shown that there is no such divisor $N$.
\end{example}

\begin{example} 
By a similar idea, one can check that the smooth extremal assignment $Z$ in Example \ref{ex:noweightassignment} gives a non-projective birational contraction $\overline{\mathrm{M}}_{0, 7}(Z)$. 
\end{example}

\begin{example}[Non-projective $S_{n}$-invariant assignment]
Let $Z$ be an $S_{12}$-invariant extremal assignment of order 12, which corresponds to integer partitions $\{7,3,1,1\}$ and $\{3,3,3,3\}$. Then the contracted F-curves are exactly $F_{7,3,1,1}$ and $F_{3,3,3,3}$. The $S_{n}$-invariant F-conjecture is true up to $n \le 24$ (\cite[Theorem 6.1]{Gib09}), so $\mathrm{NE}_{1}(\widetilde{\mathrm{M}}_{0, 12})$ is generated by F-curves. By using a similar computation with Example \ref{ex:nonprojective1}, we can check that $\overline{\mathrm{M}}_{0, 12}(Z)$ is not projective. 
\end{example}

\begin{remark}
However, by Proposition \ref{prop:invariantdivisorial}, every $S_{n}$-invariant smooth extremal assignment gives a projective birational contraction. 
\end{remark}

We finish this section with a partial positive result showing that some birational models are indeed varieties. 

\begin{proposition}\label{prop:scheme}
Let $Z$ be a smooth extremal assignment. Then $\Mznb(Z)$ is a proper variety. 
\end{proposition}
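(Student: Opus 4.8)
The plan is as follows. By Theorem~\ref{thm:divisorialsmooth} the space $\overline{\mathrm{M}}_{0,n}(Z)$ is a smooth algebraic space, and by Theorem~\ref{thm:extassignmentmodel} it is proper; it is irreducible and reduced because $\pi_{Z}$ is dominant with connected fibers onto it from the variety $\overline{\mathrm{M}}_{0,n}$. So ``proper variety'' reduces to the single assertion that $\overline{\mathrm{M}}_{0,n}(Z)$ is a \emph{scheme}. I would prove this by induction on $n$: the cases $n\le 4$ are trivial ($\overline{\mathrm{M}}_{0,n}$ is a point or $\PP^{1}$), and for $n=5$ every $\overline{\mathrm{M}}_{0,5}(Z)$ is a Hassett space by the classification in Section~\ref{sec:n=5}, hence a projective scheme.

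For the inductive step I may assume $Z\neq\emptyset$ (otherwise $\overline{\mathrm{M}}_{0,n}(Z)=\overline{\mathrm{M}}_{0,n}$) and apply Proposition~\ref{prop:blowup} to $\emptyset\subsetneq Z$. This exhibits $\pi_{Z}$ as a tower
\[
\overline{\mathrm{M}}_{0,n}=X_{0}\xrightarrow{\,f_{0}\,}X_{1}\xrightarrow{\,f_{1}\,}\cdots\xrightarrow{\,f_{m-1}\,}X_{m}=\overline{\mathrm{M}}_{0,n}(Z),
\]
in which every $X_{t}=\overline{\mathrm{M}}_{0,n}(Z_{t})$ is smooth and proper but in general not quasi-projective --- which is exactly why the naive ``it is a smooth blow-down of a projective variety'' is not enough --- and each $f_{t}$ is a smooth blow-down whose exceptional divisor $E_{t}$ is a $\PP^{k_{t}}$-bundle ($k_{t}\ge1$) over its image $Y_{t}\subset X_{t+1}$, with normal bundle $\mathcal{O}(-1)$ along the fibers (Lemma~\ref{lem:blowup}). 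The crucial structural input, from Lemma~\ref{lem:blowup} and Remark~\ref{rem:contractedlocus}, is that each center $Y_{t}$ is a product of moduli spaces $\overline{\mathrm{M}}_{0,m_{j}}(Z'_{j})$ with $m_{j}<n$ and $Z'_{j}$ the restriction of a smooth extremal assignment, hence again smooth; so by the inductive hypothesis each $Y_{t}$ is a proper scheme. Since $X_{0}=\overline{\mathrm{M}}_{0,n}$ is a scheme, it then suffices to prove that the scheme property propagates along the tower: \emph{if $X_{t}$ is a scheme and $Y_{t}$ is a scheme, then $X_{t+1}$ is a scheme.}

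This implication is the main obstacle. The complement $X_{t+1}\setminus Y_{t}\cong X_{t}\setminus E_{t}$ is open in a scheme, so the problem is to produce, around each $y\in Y_{t}$, an open scheme neighborhood in $X_{t+1}$. The favorable feature is that $\mathcal{O}_{X_{t}}(-E_{t})$ is $f_{t}$-ample --- it restricts to $\mathcal{O}(1)$ on every exceptional $\PP^{k_{t}}$ --- so $f_{t}$ is a projective morphism, and \'etale-locally, and after restriction over an affine chart of the \emph{scheme} $Y_{t}$ genuinely Zariski-locally, it is the model contraction $\mathrm{Bl}_{\mathbb{A}^{m}}\mathbb{A}^{m+k_{t}+1}\to\mathbb{A}^{m+k_{t}+1}$, with affine target. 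I would turn this into a proof either by invoking a contractibility theorem for algebraic spaces in the form ``the contraction of a relatively ample effective Cartier divisor onto a closed subscheme of the target is again a scheme'' (Artin), or by hand, building affine scheme neighborhoods of the affine charts of $Y_{t}$ inside $X_{t+1}$ and checking they are open. A different route that avoids the tower is to show each $x\in\overline{\mathrm{M}}_{0,n}(Z)$ has a neighborhood isomorphic to an open subset of a Hassett space $\overline{\mathrm{M}}_{0,A}$: one wants weight data $A$ with $Z_{A}\subset Z$ that \emph{agrees with $Z$ near $x$}, so that $\pi_{Z_{A},Z}$ is a local isomorphism there and a neighborhood of $x$ embeds into the projective scheme $\overline{\mathrm{M}}_{0,A}$. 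I expect this route to be messier, since by Example~\ref{ex:noweightassignment} no global such $A$ need exist, so one must construct $A$ pointwise and check that the finitely many relevant weight inequalities are simultaneously satisfiable at each point; the blow-down argument is the one I would pursue.
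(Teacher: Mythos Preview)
Your ``alternative route'' is exactly what the paper does, and it is not messy at all once you drop the unnecessary containment $Z_{A}\subset Z$. The point is that both $\overline{\mathrm{M}}_{0,n}(Z)$ and $\overline{\mathrm{M}}_{0,A}$ are \emph{open} substacks of the stack of all $n$-pointed genus-zero curves (this is the definition of a modular compactification, Theorem~\ref{thm:extassignmentmodel}). Hence their intersection is open in each, and to get a scheme neighborhood of $X=(C,x_{1},\dots,x_{n})\in\overline{\mathrm{M}}_{0,n}(Z)$ it suffices to produce \emph{any} weight vector $A$ for which this particular curve is $A$-stable. Since $Z$ is smooth, $C$ is nodal and is obtained from its stable model by contracting disjoint tails $T_{1},\dots,T_{r}$; choosing $a_{i}$ close to $1/|T_{j}|$ on $T_{j}$ and $a_{i}=1$ elsewhere makes $C$ an $A$-stable curve. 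No global relation between $Z$ and $Z_{A}$ is required, so Example~\ref{ex:noweightassignment} is irrelevant. That is the entire proof.

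Your preferred tower argument, by contrast, has a real gap at the step ``$X_{t}$ a scheme and $Y_{t}$ a scheme imply $X_{t+1}$ a scheme.'' The relative ampleness of $\mathcal{O}_{X_{t}}(-E_{t})$ tells you $f_{t}$ is projective, but a projective surjection from a scheme to an algebraic space does not force the target to be a scheme (this is precisely why Chow's lemma for algebraic spaces is not a scheme criterion). Your sketch of producing Zariski-open affine neighborhoods of points of $Y_{t}$ inside $X_{t+1}$ requires passing from the \'etale-local model to a Zariski-local one, which is exactly the scheme/algebraic-space distinction you are trying to establish; the appeal to an unspecified ``Artin contractibility theorem'' does not close this. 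The argument may be completable, but it is substantially harder than the one-line observation that $\overline{\mathrm{M}}_{0,n}(Z)$ and $\overline{\mathrm{M}}_{0,A}$ share an open neighborhood of $X$ inside the ambient stack.
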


\begin{proof}
Since we already know that $\Mznb(Z)$ is a smooth proper algebraic space, it suffices to show that it is a scheme. Let $X := (C, x_{1}, x_{2}, \cdots, x_{n}) \in \Mznb(Z)$. Since $Z$ is smooth, $C$ is obtained from $(C^{s}, x_{1}^{s}, x_{2}^{s}, \cdots, x_{n}^{s}) \in \Mznb$ by contracting some (not necessarily irreducible) tails. Thus if we define weight data $A$ such that the sum over each tail is at most one, then $X \in \Mza$. Furthermore, since a deformation of $X$ is obtained by perturbing marked points and resolving some singularities, any small deformation of $X$ is also in $\Mza$. Therefore there is a Zariski open neighborhood $U$ of $X$ in $\Mznb(Z)$, which is isomorphic to an open neighborhood of $X$ in $\Mza$. Since $\Mza$ is a projective variety, $U$ is a scheme. Therefore any point of $\Mznb(Z)$ has a Zariski open neighborhood which is a scheme. This implies $\Mznb(Z)$ is a scheme.
\end{proof}

\begin{remark}
Therefore, by using extremal assignments, we obtain many examples of smooth proper and non-projective varieties. 
\end{remark}

We expect that this is true for any extremal assignment $Z$. 

\begin{conjecture}
For any extremal assignment $Z$, $\Mznb(Z)$ is a proper variety.
\end{conjecture}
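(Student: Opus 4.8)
In the terminology of Proposition \ref{prop:scheme} and its proof, since $\Mznb(Z)$ is always a proper algebraic space by Theorem \ref{thm:extassignmentmodel} and is irreducible, being the image of the irreducible $\Mznb$, to prove the conjecture it suffices to show that $\Mznb(Z)$ is a scheme. An algebraic space all of whose points admit a Zariski-open scheme neighborhood is a scheme, so the plan is, exactly as in the proof of Proposition \ref{prop:scheme}, to fix an arbitrary $X = (C, x_{1}, \ldots, x_{n}) \in \Mznb(Z)$, produce a Zariski-open scheme neighborhood of $X$ in $\Mznb(Z)$, and glue.

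To construct such a neighborhood I would use the contraction $\phi : (C^{s}, x_{1}^{s}, \ldots, x_{n}^{s}) \to (C, x_{1}, \ldots, x_{n})$ of Definition \ref{def:Zstability}, with $(C^{s}, x_{i}^{s}) \in \Mznb$. The collapsed subcurve $Z(C^{s})$ is a disjoint union of connected components, each of which is either a tail of $C^{s}$ or an internal subtree whose image in $C$ is a multinodal point (recall from the proof of Proposition \ref{prop:functoriality} that arithmetic genus zero singularities are multinodal). For the tail components one copies the argument of Proposition \ref{prop:scheme}: choose weight data $A$ making exactly these tails light, and use that a small deformation of $X$ only perturbs the marked points and partially resolves the singularities of $C$, so that the ``tail direction'' of a neighborhood of $X$ is identified with an open subset of the projective scheme $\Mza$. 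The new ingredient one needs is a local model for the collapse of an internal subtree of valence $\ge 3$ onto a multinodal point. When the local contraction pattern at $X$ happens to be cut out by a Veronese quotient $Z_{\gamma, \vec{c}}$ (Section \ref{ssec:GITmodels}), one can try to identify a neighborhood of $X$ with an open subset of the projective variety $V_{\gamma, \vec{c}}^{d}$; the overall strategy would be to patch such local models for the tail and internal directions.

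The main obstacle is that an arbitrary internal contraction pattern need not be realized by a Veronese quotient, nor by any projective modular space: as Sections \ref{sec:toric} and \ref{sec:geography} already show, extremal assignments together with Veronese quotients do not exhaust the birational contractions of $\Mznb$. Said differently, a neighborhood of $X$ in $\Mznb(Z)$ is the image under $\pi_{Z}$ of a neighborhood of the positive-dimensional fiber $\pi_{Z}^{-1}(X) \subset \Mznb$, and contracting a positive-dimensional subvariety of a smooth projective variety can leave the category of schemes; this is precisely why proper algebraic spaces need not be schemes, and why the conjecture has content. (Note that for smooth $Z$ one does stay among schemes by Proposition \ref{prop:scheme}, even though, as the examples of this section show, one may leave the category of projective varieties.) A genuinely new input therefore seems unavoidable. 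Two routes look most promising: (i) a schemeness criterion for the good moduli space of the stack of $Z$-stable curves, e.g. an \'etale-local presentation of $\Mznb(Z)$ near each point as the quotient of an affine scheme by a finite group, which is again affine, reducing matters to showing that every fiber of $\pi_{Z}$ sits in a $\pi_{Z}$-saturated neighborhood mapping into a scheme-theoretic open of $\Mznb(Z)$; or (ii) a direct construction of $\Mznb(Z)$, for a class of extremal assignments broader than weight data and Veronese quotients, as a quotient of a quasi-projective parameter space, say a locus of multinodal curves in a Chow or Hilbert scheme, by a reductive group, in the spirit of the GIT construction of the Veronese quotients. In either approach one must work around the issues noted in Remark \ref{rem:normality}: $\Mznb(Z)$ is not known to be normal, so it cannot be replaced by its normalization, and it need not be projective, so no ample canonical polarization is available. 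I expect the hardest part, common to both routes, to be the control of the positive-dimensional fibers of $\pi_{Z}$: showing that each such fiber admits a saturated neighborhood whose image in $\Mznb(Z)$ is a scheme.
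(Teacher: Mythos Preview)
This statement is a \emph{conjecture} in the paper, not a theorem: the paper offers no proof, and the preceding Proposition \ref{prop:scheme} establishes the result only for smooth extremal assignments. So there is nothing to compare your proposal against.

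Your write-up is not a proof either, and you say as much: after sketching the natural extension of the argument for Proposition \ref{prop:scheme}, you correctly identify the obstruction (internal, non-tail contractions to multinodal points need not be locally modeled by any known projective compactification) and then outline two speculative strategies. This is an accurate assessment of the situation. The one place where your discussion could be sharpened is the suggestion to use Veronese quotients as local models for internal contractions: the degree function $\sigma$ of Section \ref{ssec:GITmodels} is rigid enough that matching an arbitrary local contraction pattern at a multinodal point is generally impossible, so route (ii) would need a genuinely larger parameter space than $U_{d,n}$. Route (i), producing an \'etale-local affine presentation near each point, is closer to how such schemeness questions are typically settled, but as you note the positive-dimensional fibers of $\pi_{Z}$ are the real issue, and nothing in the paper controls them beyond the smooth case.

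In short: the paper leaves this open, your proposal does not close it, and your diagnosis of why it is hard is sound.
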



\bibliographystyle{alpha}
\newcommand{\etalchar}[1]{$^{#1}$}


\end{document}